\def\Z {\mathbb{Z}}
\def\id{\mathrm{id}}
\def\h {\check{\mathrm{H}}}
\def\trivlin{\mathbf{I}}
\def\quand{\quad\text{ and }\quad}
\def\quomma{\quad\text{, }\quad}
\def\quere{\quad\text{ where }\quad}
\def\ev{\mathrm{ev}}
\def\lw#1#2{{}^{#1\!}#2}
\def\lli#1{\,_{#1}\!}
\def\upi{\underline{\pi}}
\def\idmorph#1{#1_{dis}}
\renewcommand{\varepsilon}{\epsilon}
\def\bigset#1#2{\left\lbrace\;\begin{minipage}[c]{#1}\begin{center}#2\end{center}\end{minipage}\;\right\rbrace}
\newcommand\erf[1]{(\ref{#1})}
\newlength{\myl}
\newcommand{\ueins}{{\mathrm{U}}(1)}
\def\af{\mathcal{A}na^{\infty}}
\def\act#1#2{#1/\!\!/#2}
\def\hom#1#2{\mathcal{H}\!om(#1,#2)}
\def\triv{\mathcal{T}\!\!riv}
\def\vC {\check {\mathcal C}}
\def\brackets#1{\IfStrEq{#1}{-}{}{(#1)}}
\def\buntech#1#2{\mathcal{B}\hspace{-0.01em}un^{#2}_{\hspace{-0.04em}#1}}
\def\bun#1#2{\buntech{#1}{}\brackets{#2}}
\def\zwoabun#1#2{2\text{\text{-}}\buntech{#1}{}\brackets{#2}}
\def\grbtech#1{\mathcal{G}\hspace{-0.06em}r\hspace{-0.06em}b_{\hspace{-0.07em}#1}}
\def\grb#1#2{\grbtech#1\brackets{#2}}
\def\iso {\xymatrix{\ar[r]^{\sim}&}}
\newcommand{\alxydim}[2]{\begin{aligned}\xymatrix#1{#2}\end{aligned}}
\renewcommand{\to}{\!\xymatrix@R=0cm@C=1.4em{\ar[r] &}}
\renewcommand{\mapsto}{\!\xymatrix@R=0cm@C=1.4em{\ar@{|->}[r] &}\!}
\renewcommand{\Rightarrow}{\!\xymatrix@R=0cm@C=1.4em{\ar@{=>}[r] &}\!}
\renewcommand{\Leftarrow}{\!\xymatrix@R=0cm@C=1.4em{\ar@{<=}[r] &}\!}
\newcommand{\incl}{\!\xymatrix@R=0cm@C=1.4em{\ar@{^(->}[r] &}\!}
\renewcommand\Leftrightarrow{\!\xymatrix@R=0cm@C=1.4em{\ar@{<=>}[r] &}\!}
\newcounter{authorcounter}
\newcounter{adresscounter}
\gdef\@ntitle{\@title}
\def\subtitle#1{\gdef\@subtitle{#1}}
\def\@subtitle{}
\def\authortagsused{0}
\def\adresstag#1{\if!#1!\else$^{\;#1\;}$\fi}
\renewcommand{\author}[2][]{
  \stepcounter{authorcounter}
  \if!#1!\else\gdef\authortagsused{1}\fi
  \ifnum\value{authorcounter}=1
    \def\@authorstringa{#2\adresstag{#1}}
    \def\@authorstringb{#2}
    \def\@authorstringc{#2\adresstag{#1}}
  \else
    \g@addto@macro\@authorstringa{\ and #2\adresstag{#1}}
    \g@addto@macro\@authorstringb{\ and #2}
    \g@addto@macro\@authorstringc{\\#2\adresstag{#1}}
  \fi}
\def\@author{\ifnum\value{denseversion}=0\@authorstringa\else\@authorstringb\fi}
\def\@adressstringa{}
\def\@adressstringb{}
\newcommand{\adress}[2][]{
  \stepcounter{adresscounter}
  \ifnum\value{adresscounter}=1
    \g@addto@macro\@adressstringa{\ifnum\authortagsused=0\def\br{\\}\else\def\br{, }\fi\adresstag{#1}#2}
    \g@addto@macro\@adressstringb{\def\br{\\}\adresstag{#1}\parbox[t]{14cm}{#2}}
  \else
    \g@addto@macro\@adressstringa{\\[\bigskipamount]\adresstag{#1}#2}
    \g@addto@macro\@adressstringb{\\[\medskipamount]\adresstag{#1}\parbox[t]{14cm}{#2}}
  \fi}
\def\@adress{\ifnum\value{denseversion}=0\@adressstringa\else\@adressstringb\fi}
\def\preprint#1{\gdef\@preprint{#1}}
\def\@preprint{}
\def\keywords#1{\gdef\@keywords{#1}}
\def\@keywords{}
\def\email#1{\gdef\@email{#1}}
\def\@email{}
\def\msc#1{\gdef\@msc{#1}}
\def\@msc{}
\def\dedication#1{\gdef\@dedication{#1}}
\def\@dedication{}
\def\mybaselinestretch#1{\gdef\@mybaselinestretch{#1}}
\def\@mybaselinestretch{}
\def\refname{References}
\def\showkeywords{\begin{flushleft}\footnotesize\textbf{Keywords}: \@keywords\end{flushleft}}
\def\showmsc{\begin{flushleft}\footnotesize\textbf{MSC 2010}: \@msc\end{flushleft}}
\newcounter{denseversion}
\newlength{\zeilenlaenge}
\def\putindent#1{
  \settowidth{\zeilenlaenge}{#1}
  \ifnum\zeilenlaenge>\textwidth
    #1
  \else
    \noindent #1
  \fi
}
\def\nobr{~\hspace{-0.26em}}
\def\maps{\nobr:\nobr}
\def\df{\nobr := \nobr}
\def\eq{\nobr = \nobr}
\let\Oldin\in\renewcommand{\in}{\nobr\Oldin\nobr}
\let\Oldtimes\times\renewcommand{\times}{\nobr\Oldtimes}
\let\Oldotimes\otimes\renewcommand{\otimes}{\nobr\Oldotimes}
\newlength{\myparskip}
\newlength{\myproofparskip}
\renewcommand{\baselinestretch}{\@mybaselinestretch}
\def\denseversion{
  \setcounter{denseversion}{1}
  \usepackage[left=2.2cm,right=2.2cm,top=2.2cm]{geometry}
  \mybaselinestretch{1.1}
  \renewcommand{\baselinestretch}{\@mybaselinestretch}
  \normalfont
  \fancyfoot[C]{\itshape{\hspace{4.5cm}--$\,\,$\thepage$\,\,$--}}}
\renewcommand{\baselinestretch}{1.2}
\renewcommand{\emph}[1]{\def\reserved@a{it}\ifx\f@shape\reserved@a\uline{#1}\else\textit{#1}\fi}
\newcommand{\mytableofcontents}{
   \ifnum\value{denseversion}=0
     \tableofcontents
   \else
     \renewcommand{\baselinestretch}{0.8}
     \normalfont
     \tableofcontents
     \renewcommand{\baselinestretch}{\@mybaselinestretch}
     \normalfont
   \fi}
\newcounter{mythm}[subsection]
\newcounter{mainthm}
\def\setsecnumdepth#1{
  \setcounter{secnumdepth}{#1}
  \setcounter{mythm}{0}
  \ifnum \c@secnumdepth >0
    \ifnum \c@secnumdepth >1
      \def\themythm{\thesubsection.\arabic{mythm}}
      \numberwithin{equation}{subsection}
      \renewcommand\theequation{\thesubsection.\arabic{equation}}
    \else
      \def\themythm{\thesection.\arabic{mythm}}
      \numberwithin{equation}{section}
      \renewcommand\theequation{\thesection.\arabic{equation}}
    \fi
  \else
    \def\themythm{\arabic{mythm}}
  \fi}
\newenvironment{mythmenv}{\strut\ \setlength{\parskip}{\myproofparskip}}{\setlength{\parskip}{\myparskip}}
\newlength{\mythmskip}
\newlength{\mythmtopskip}
\newtheoremstyle{mythmstylea}{\mythmtopskip}{\mythmskip}{\it}{}{\bf}{.}{0em}{}
\newtheoremstyle{mythmstyleb}{\mythmtopskip}{\mythmskip}{}{}{\bf}{.}{0em}{}
\theoremstyle{mythmstylea}
\newtheorem{mytheorem}[mythm]{Theorem}
\newtheorem{mydefinition}[mythm]{Definition}
\newtheorem{mycorollary}[mythm]{Corollary}
\newtheorem{myproposition}[mythm]{Proposition}
\newtheorem{mylemma}[mythm]{Lemma}
\newtheorem{mymaintheorem}[mainthm]{Theorem}
\newtheorem{mymaincorollary}[mainthm]{Corollary}
\newtheorem{mymaindefinition}[mainthm]{Definition}
\theoremstyle{mythmstyleb}
\newtheorem{myremark}[mythm]{Remark}
\newtheorem{myexample}[mythm]{Example}
\newenvironment{theorem}[1][]{\begin{mytheorem}[#1]\begin{mythmenv}}{\end{mythmenv}\end{mytheorem}}
\newenvironment{definition}[1][]{\begin{mydefinition}[#1]\begin{mythmenv}}{\end{mythmenv}\end{mydefinition}}
\newenvironment{corollary}[1][]{\begin{mycorollary}[#1]\begin{mythmenv}}{\end{mythmenv}\end{mycorollary}}
\newenvironment{proposition}[1][]{\begin{myproposition}[#1]\begin{mythmenv}}{\end{mythmenv}\end{myproposition}}
\newenvironment{lemma}[1][]{\begin{mylemma}[#1]\begin{mythmenv}}{\end{mythmenv}\end{mylemma}}
\newenvironment{remark}[1][]{\begin{myremark}[#1]\begin{mythmenv}}{\end{mythmenv}\end{myremark}}
\newenvironment{example}[1][]{\begin{myexample}[#1]\begin{mythmenv}}{\end{mythmenv}\end{myexample}}
\renewenvironment{proof}[1][Proof]{\noindent #1. \begin{mythmenv}}{\ \strut\hfill{$\square$}\end{mythmenv}\medskip}
\def\maketitle{
  \setlength{\parskip}{\myparskip}
  \newpage
  \noindent
  \begin{center}
    \LARGE\@title\\
    \if!\@subtitle!\else \smallskip\LARGE\@subtitle\\\fi
    \bigskip
    \if!\@author!\else     \bigskip\large\@author\\\fi
    \ifnum\value{denseversion}=0
      \if!\@adress!\else     \bigskip\normalsize\@adress\\\fi
      \if!\@email!\else      \bigskip\normalsize\textit{\@email}\\\fi
    \else\fi
    \if!\@dedication!\else \bigskip\normalsize{\@dedication}\\\fi
  \end{center}
  \ifnum\value{denseversion}=0\vskip 1.5cm\else\vskip0.5cm\fi
  \thispagestyle{empty}}
\def\kobiburl#1{
   \IfBeginWith
     {#1}
     {http://arxiv.org/abs/}
     {\kobibarxiv{#1}}
     {\kobiblink{#1}}}
\def\kobibarxiv#1{\href{#1}{\texttt{[arxiv:\StrGobbleLeft{#1}{21}]}}}
\def\kobiblink#1{Available as: \href{#1}{\texttt{\StrSubstitute{#1}{_}{\underline{\;\;}}}}}
\def\kobib#1{
  \begin{raggedright}
  \ifnum\value{denseversion}=0\else\small\fi

  \end{raggedright}
  \ifnum\value{denseversion}=0\else
      \noindent
      \if!\@authorstringc!\else
        \ifnum\authortagsused=0\ifnum\value{authorcounter}>1\normalsize\@authorstringc\\[\medskipamount]\else\fi\else\normalsize\@authorstringc\\[\medskipamount]\fi       \fi
      \if!\@adress!\else\normalsize\@adress\\\fi
      \ifnum\authortagsused=0\ifnum\value{authorcounter}=1\if!\@email!\else\linebreak\normalsize\textit{\@email}\\\fi\else\fi\else\fi
  \fi}
\newenvironment{commentfigure}{}
\newenvironment{sidewayscommentfigure}{\begin{minipage}}{\end{minipage}}
\def\showcomments{ -- Comments suppressed}
\newif\if@fewtab\@fewtabtrue{
  \count255=\time\divide\count255 by 60
  \xdef\hourmin{\number\count255}
  \multiply\count255 by-60\advance\count255 by\time
  \xdef\hourmin{\hourmin:\ifnum\count255<10 0\fi\the\count255}}
\def\ps@draft{
  \let\@mkboth\@gobbletwo
  \def\@oddfoot{
    \hbox to 7 cm{\tiny \versionno\hfil}
    \hskip -7cm\hfil\rm\thepage\hfil{\tiny\draftdate}}
  \def\@oddhead{}
  \def\@evenhead{}
  \let\@evenfoot\@oddfoot}
\def\draftdate{\number\month/\number\day/\number\year\ \ \ \hourmin }
\newcommand\version[1]{
  \typeout{}\typeout{#1}\typeout{}
  \vskip-1.7cm \centerline{\fbox{{\normalsize\tt DRAFT -- #1 -- 
  \draftdate\showcomments}}} \vskip0.92cm}
\def\draft#1{
  \def\versionno{#1}
  \pagestyle{draft}\thispagestyle{draft}
  \gdef\@ntitle{\version\versionno \@title}
  \global\def\draftcontrol{1}}
\global\def\draftcontrol{0}
\def\quot#1{``#1''}
\title{Four Equivalent Versions of Non-Abelian Gerbes}
\author[a]{Thomas Nikolaus}
\author[b]{Konrad Waldorf}
\keywords{non-abelian gerbe, principal 2-bundle, 2-group, non-abelian cohomology, 2-stack}
\begin{document}


\setsecnumdepth{3}

\maketitle

\begin{abstract}
We recall and partially improve four versions of smooth, non-abelian gerbes: \v Cech cocycles, classifying maps, bundle gerbes, and  principal 2-bundles. We prove that all these four versions are equivalent, and so establish new relations between  interesting recent developments. Prominent partial results that we prove are  a bijection between the  continuous and smooth non-abelian cohomology, and an explicit equivalence between bundle gerbes and principal 2-bundles as 2-stacks. 
\showkeywords
\showmsc
\end{abstract}


\mytableofcontents

\section{Introduction}

\label{intro}

Let $G$ be a Lie group and $M$ be a smooth manifold.
There are (among others) the following four   ways to say what a  {\em smooth $G$-bundle} over $M$ is:
\begin{enumerate}[(1)]

\item 
\emph{\v Cech 1-Cocycles}: an open cover $\left \lbrace U_i \right \rbrace$ of $M$, and for each non-empty intersection $U_i \cap U_j$ a smooth map $g_{ij}:U_i \cap U_j \to G$ satisfying the cocycle condition 
\begin{equation*}
g_{ij} \cdot g_{jk}=g_{ik}\text{.}
\end{equation*}

\item 
\emph{Classifying maps}: a continuous map 
$$ f: M \to \mathfrak B G$$
to the classifying space $\mathfrak B G$ of the group $G$.

\item
\emph{Bundle 0-gerbes}: a surjective submersion $\pi:Y \to M$ and a smooth map $g: Y \times_M Y \to G$ satisfying 
\begin{equation*}
\pi_{12}^{*}g \cdot \pi_{23}^{*}g = \pi_{13}^{*}g\text{,}
\end{equation*}
where $\pi_{ij}: Y \times_M Y \times_M Y \to Y \times_M Y$ denotes the projection to the $i$th and the $j$th factors.

\item
\emph{Principal bundles}: a surjective submersion $\pi: P \to M$ with a smooth action of $G$ on $P$ that preserves $\pi$, such that the map
\begin{equation*}
P \times G \to P \times_M P : (p,g) \mapsto (p,p.g)
\end{equation*}
is a diffeomorphism. 

\end{enumerate}
It is well-known that these four versions of \quot{smooth $G$-bundles} are all equivalent. Indeed, (1) forms the smooth $G$-valued \v Cech cohomology in degree one, whereas (2) is known to be equivalent to continuous $G$-valued \v Cech cohomology, which in turn coincides with the smooth $G$-valued \v Cech cohomology. Further, (3) and (4) form equivalent categories; and isomorphism classes of the objects  (3) are in bijection with equivalence classes of the cocycles (1).

In this article we provide an analogous picture for \emph{smooth $\Gamma$-gerbes}, where  $\Gamma$ is a strict Lie 2-group. In particular, $\Gamma$ can be the automorphism 2-group of an ordinary Lie group $G$, in which case the term \quot{non-abelian $G$-gerbe} is commonly used. We compare the following four versions:
\begin{description}
\item[\normalfont Version I:] 
\emph{Smooth, non-abelian \v Cech $\Gamma$-cocycles} (Definition \ref{def:cocycles}). These form the classical, smooth groupoid-valued cohomology $\h^1(M,\Gamma)$ in the sense of Giraud \cite{giraud} and Breen \cite[Ch. 4]{breen3}, \cite{breen2}. 

\item[\normalfont Version II:] 
\emph{Classifying maps} (Definition \ref{def:classmap}). These are continuous maps $f: M \to \mathfrak B |\Gamma|$ to the classifying space of the geometric realization of $\Gamma$; such maps have been introduced and studied by Baez and Stevenson \cite{baez8}.

\item[\normalfont Version III:]
\emph{$\Gamma$-bundle gerbes} (Definition \ref{def:grb}). These have been developed by Aschieri, Cantini and Jurco \cite{aschieri} as a generalization of the abelian bundle gerbes of Murray \cite{murray}. Here we present an equivalent  definition by applying a higher categorical version  \cite{nikolaus2} of  Grothendieck's stackification construction to the monoidal pre-2-stack of principal $\Gamma$-bundles. 

\item[\normalfont Version IV:]
\emph{Principal $\Gamma$-2-bundles} (Definition \ref{def:zwoabun}). These have been introduced by Bartels \cite{bartels}; their total spaces are Lie groupoids on which the Lie 2-group $\Gamma$ acts in a certain way. Compared to Bartels' definition, ours uses a  stricter and easier notion of such an action. 

\end{description}
We prove that all four versions are equivalent, and follow the same line of arguments as in the case of $G$-bundles outlined above:
\begin{itemize}

\item 
Baez and Stevenson have shown that homotopy classes of classifying maps of Version II are in bijection with the continuous groupoid-valued \v Cech cohomology $\h^1_c(M,\Gamma)$. We prove  (Proposition \ref{smoothening}) that the inclusion of \emph{smooth} into \emph{continuous} \v Cech $\Gamma$-cocycles induces a bijection $\h^1_c(M,\Gamma) \cong \h^1(M,\Gamma)$. These two results establish the equivalence between our Versions I and II (Theorem \ref{th:classifying}). 
\item
$\Gamma$-bundle gerbes and principal $\Gamma$-2-bundles over $M$ form bicategories. We prove (Theorem \ref{th:equivalence}) that these bicategories are equivalent, and so establish the equivalence between Versions III and IV. Our proof provides explicit 2-functors in both directions. 

\item
We prove the equivalence between Versions I and III by showing that non-abelian $\Gamma$-bundle gerbes are classified by the non-abelian cohomology group $\h^1(M,\Gamma)$ (Theorem \ref{th:gerbesclass}).
\end{itemize}

The  first aim of this paper is to  simplify and clarify the notion of a non-abelian gerbe. This concerns  the notion of a $\Gamma$-bundle gerbe (Version III), for which we give a new, conceptually clear, and manifestly 2-categorical definition. It also concerns the notion of a principal 2-bundle (Version IV), for which we provide a new definition that is carefully balanced between generality and simplicity.

The second aim of this paper is to make it possible to compare and transfer available  results between  the various versions. Indeed, none of the three equivalences above is available in the existing literature. As an example why such equivalences can be useful, we use Theorem \ref{th:equivalence} -- the equivalence between $\Gamma$-bundle gerbes and principal $\Gamma$-2-bundles -- in order to carry two facts  about $\Gamma$-bundle gerbes over to principal $\Gamma$-2-bundles. We prove: 
\begin{enumerate}
\item 
Principal $\Gamma$-2-bundles form a 2-stack over smooth manifolds (Theorem \ref{th:2stack}).
This is a new and evidently important result, since it explains precisely in which way one can \emph{glue} 2-bundles from local patches. 

\item
If $\Gamma$ and $\Omega$ are weakly equivalent Lie 2-groups, the 2-stacks of principal $\Gamma$-2-bundles and principal $\Omega$-2-bundles are equivalent (Theorem \ref{extequivbun}). This is another new result that generalizes the well-known fact that principal $G$-bundles and principal $H$-bundles form equivalent stacks, whenever $G$ and $H$ are isomorphic Lie groups.
 
\end{enumerate}
The two facts about $\Gamma$-bundle gerbes (Theorems \ref{th:stack} and \ref{extequiv}) on which these results are based are proved in an outmost abstract way: the first is a mere consequence of the definition of $\Gamma$-bundle gerbes that we give, namely via a 2-stackification procedure for principal $\Gamma$-bundles. The second follows  from the fact that principal $\Gamma$-bundles and principal $\Omega$-bundles form equivalent monoidal pre-2-stacks, which we deduce as a corollary of their description by anafunctors.

The present paper is part of a larger program. In a forthcoming paper, we address the discussion of non-abelian lifting problems, in particular string structures. In a second forthcoming paper we will present the picture of four equivalent versions in a setting \emph{with connections}, based on the results of the present paper. Our motivation is  to understand the role of 2-bundles with connections in higher gauge theories, where they serve as \quot{B-fields}.  
Here, two (non-abelian) 2-groups are especially important, namely the string group \cite{baez9} and the Jandl group \cite{nikolaus2}. More precisely, string-2-bundles appear in supersymmetric sigma models that describe fermionic string theories \cite{bunke1}; while Jandl-2-bundles appear in unoriented sigma models that describe e.g. bosonic type-I string theories \cite{schreiber1}.

This paper is organized as follows. In Section \ref{preliminaries} we recall and summarize the theory of principal groupoid bundles and their description by anafunctors. The rest of the paper is based on this theory. In Sections \ref{versionI}, \ref{versionII}, \ref{versionIII} and \ref{sec:2bundle} we introduce our four versions of smooth $\Gamma$-gerbes, and establish all but one equivalence. The remaining equivalence, the one between bundle gerbes and principal 2-bundles, is discussed in Section \ref{sec:equivalences}.

\paragraph{Acknowledgements.}  
We thank Christoph Wockel for helpful
discussions. We also thank the Erwin Schrödinger Institute in Vienna and the Instituto Superior Técnico in Lisbon for kind invitations.  
TN is supported by the Collaborative 
Research Centre 676 ``Particles, Strings and the Early Universe - the Structure of Matter 
and Space-Time'' and the cluster of excellence
``Connecting particles with the cosmos''.

\setsecnumdepth{2}
\section{Preliminaries}
\label{preliminaries}
There is no claim of originality in this section. Our sources are Lerman \cite{lerman1}, Metzler \cite{metzler}, Heinloth \cite{heinloth} and Moerdijk-Mr\v{c}un \cite{moerdijk}. A slightly different but equivalent approach is developed in \cite{murray5}.

\subsection{Lie Groupoids and Groupoid Actions on Manifolds}

We assume that the reader is familiar with the notions of Lie groupoids, smooth functors and smooth natural transformations. 
In this paper, the following examples of Lie groupoids  appear:

\begin{example}
\label{example_groupoids}
\label{exliecat}
\begin{enumerate}[(a)]
\item \label{exM}
Every smooth manifold $M$ defines a Lie groupoid  denoted by $\idmorph{M}$ whose objects and morphisms are $M$, and all of whose structure maps are identities.

\item \label{exBG}
Every Lie group $G$ defines a Lie groupoid denoted by $\mathcal{B}G$, with one object, with $G$ as its smooth manifold of morphisms, and with the composition  $g_2 \circ g_1 := g_2g_1$.

\item
Suppose $X$ is a smooth manifold and $\rho:H \times X \to X$ is a smooth left action of a Lie group $H$  on $X$. Then, a Lie groupoid $\act X H$ is defined with objects $X$ and morphisms $H \times X$, and with
\begin{equation*}
s(h,x) := x
\quad\text{, }\quad
t(h,x) := \rho(h,x) 
\quad\text{ and }\quad
\id_x := (1,x)\text{.}
\end{equation*}
The composition is
\begin{equation*}
(h_2,x_2) \circ (h_1,x_1) := (h_2h_1,x_1)\text{,}
\end{equation*}
where $x_2 = \rho(h_1,x_1)$. The Lie groupoid $\act X H$ is called the \emph{action groupoid} of the action of $H$ on $X$.

\item \label{extHG}
Let $t: H \to G$ be a homomorphism of Lie groups. Then, 
\begin{equation*}
\rho: H \times G \to G: (h,g) \mapsto (t(h)g)
\end{equation*}
defines a smooth left action of $H$ on $G$. Thus, we have a Lie groupoid $\act G H$. 

\item
To every Lie groupoid $\Gamma$ one can associate an \emph{opposite Lie groupoid} $\Gamma^{\mathrm{op}}$ which has the source and the target map exchanged.

\end{enumerate}
\end{example}

We say that a \emph{right action} of a Lie groupoid $\Gamma$ on a smooth manifold $M$
is a pair $(\alpha,\rho)$ consisting of smooth maps $\alpha: M \to \Gamma_0$ and $\rho: M \lli \alpha \times_{t} \Gamma_1 \to M$ such that
\begin{equation*}
\rho(\rho(x,g),h)= \rho(x,g \circ h)
\quad\text{, }\quad
\rho(x,\id_{\alpha(x)})=x
\quad\text{ and }\quad
\alpha(\rho(x,g)) = s(g)
\end{equation*}
for all possible $g,h\in \Gamma_1$, $p\in \Gamma_0$ and $x\in M$. The map $\alpha$ is  called  \emph{anchor}. Later on we will replace the letter $\rho$ for the action by the symbol $\circ$ that denotes the composition of the groupoid.  A \emph{left action} of $\Gamma$ on $M$ is a right action of the opposite Lie groupoid $\Gamma^{\mathrm{op}}$. A smooth map $f:M \to M'$ between  $\Gamma$-spaces with actions $(\alpha,\rho)$ and $(\alpha',\rho')$ is called \emph{$\Gamma$-equivariant} if 
\begin{equation*} 
\alpha' \circ f=\alpha
\quad\text{ and }\quad
f(\rho(x,g)) = \rho' (f(x),g)\text{.}
\end{equation*}

\begin{example}
\label{exliegroupoidact}
\begin{enumerate}[(a)]
\item
Let $\Gamma$ be a Lie groupoid. Then, $\Gamma$ acts on the right on its morphisms $\Gamma_1$ by  $\alpha :=s$ and $\rho := \circ\;$. It acts on the left on its morphisms by $\alpha:= t$ and $\rho:=\circ\;$.

\item
Let $G$ be a Lie group. Then, a right/left action of the Lie groupoid $\mathcal{B}G$ (see Example \ref{example_groupoids} \erf{exBG}) on $M$ is the same as an ordinary smooth right/left action of $G$ on  $M$. 

\item
Let $X$ be a smooth manifold. A right/left action of $\idmorph X$ (see Example \ref{example_groupoids} \erf{exM}) on $M$ is the same as a smooth map $\alpha: M \to X$. 

\end{enumerate}
\end{example}

\subsection{Principal Groupoid Bundles}

We give the definition of a principal  bundle in exactly the same way as we are going to define principal \emph{2-}bundles in Section \ref{sec:2bundle}.

\begin{definition}
\label{def:pgb}
Let $M$ be a smooth manifold, and let $\Gamma$ be a Lie groupoid.
\begin{enumerate}
\item 
A \emph{principal $\Gamma$-bundle over $M$} is a smooth manifold $P$ with a surjective submersion $\pi: P \to M$ and a right $\Gamma$-action $(\alpha,\rho)$ that respects the projection $\pi$, such that
\begin{equation*}
\tau: P \lli{\alpha} \times_{t} \Gamma_1 \to P \times_M P : (p,g) \mapsto (p,\rho(p,g))
\end{equation*}
is a diffeomorphism.

\item
Let $P_1$ and $P_2$ be  principal $\Gamma$-bundles over $M$. A \emph{morphism} $\varphi: P_1 \to P_2$
is a $\Gamma$-equivariant smooth map that respects the projections to $M$.
\end{enumerate}
\end{definition}

Principal $\Gamma$-bundles over $M$ form a category $\bun {\Gamma} M$.
In fact, this category is a groupoid, i.e. all morphisms between principal $\Gamma$-bundles are invertible. 
There is an evident notion of a pullback $f^{*}P$ of a principal $\Gamma$-bundle $P$ over $M$ along a smooth map $f: X \to M$, and similarly, morphisms between principal $\Gamma$-bundles pull back. These define a functor
\begin{equation*}
f^{*}: \bun \Gamma M \to \bun \Gamma X\text{.}
\end{equation*} 
These functors make principal $\Gamma$-bundles a prestack over smooth manifolds. One can  easily show that this prestack is a stack (for the Grothendieck topology of surjective submersions).

\begin{example}[Ordinary principal bundles]
\label{ex:grpdbun}
For $G$ a Lie group, we have an equality of categories
\begin{equation*}
\bun {\mathcal{B}G} M = \bun G M\text{,}
\end{equation*}
i.e. Definition \ref{def:pgb} reduces consistently to the definition of an ordinary principal $G$-bundle. 
\end{example}

\begin{example}
[Trivial principal groupoid bundles]  
\label{trivialbundle}
For $M$ a smooth manifold and $f: M \to \Gamma_0$ a smooth map, 
$P := M \lli f \times_t \Gamma_1$
and
$\pi(m,g) := m$ define a surjective submersion, and
$\alpha(m,g) := s(g)$ and $\rho((m,g),h) \df (m, g \circ h)$
define a right action of $\Gamma$ on $P$ that preserves the fibers. 
The map $\tau$
we have to look at has the inverse 
\begin{equation*}
\tau^{-1}: P \times_M P \to P \lli{\pi} \times_{t} \Gamma_1: ((m,g_1),(m,g_2)) \mapsto ((m,g_1),g_1^{-1} \circ g_2)\text{,}
\end{equation*}
which is smooth. Thus we have defined a principal $\Gamma$-bundle, which we denote by $\trivlin_f$ and which we  call the \emph{trivial bundle for the map $f$}. Any bundle that is isomorphic to a trivial bundle is called \emph{trivializable}. 
\end{example}

\begin{example}[Discrete structure groupoids]
\label{ex:idmorphbundle}
For $X$ a smooth manifold, we have an equivalence of categories
\begin{equation*}
\bun {\idmorph X}M \cong \idmorph{C^{\infty}(M,X)}\text{.}
\end{equation*}
Indeed, for a given principal $\idmorph{X}$-bundle $P$ one observes that the anchor $\alpha:P \to X$ descends along the bundle projection to a smooth map $f:M \to X$, and that isomorphic bundles determine the same map. Conversely, one associates to a smooth map $f: M \to X$ the trivial principal $\idmorph{X}$-bundle $\trivlin_f$ over $M$. 
\end{example}

\begin{example}[Exact sequences]
Let
\begin{equation}
\label{exact1}
\alxydim{}{1 \ar[r] & H \ar[r]^{t} & G \ar[r]^{p} & K \ar[r] & 1}
\end{equation} 
be an exact sequence of Lie groups, and let  $\Gamma := \act GH$ be the action groupoid associated to the Lie group homomorphism $t:H \to G$ as explained in Example \ref{exliecat} \erf{extHG}. In this situation, $p:G \to K$ is a surjective submersion, and
\begin{equation*}
\alpha:G \to \Gamma_0:g \mapsto g
\quand
\rho: G \lli{\alpha}\times_{t}\Gamma_1 \to G: (g,(h,g')) \mapsto g'
\end{equation*}
define a smooth right action of $\Gamma$ on $G$ that preserves $p$. The inverse of the  map $\tau$ is
\begin{equation*}
\tau^{-1}: G \times_K G \to G \lli{\alpha}\times_{t}\Gamma_1: (g_1,g_2) \mapsto (g_1, (t^{-1}(g_1g_2^{-1}),g_2))\text{,}
\end{equation*}
which is smooth because $t$ is an embedding. Thus, $G$ is a principal $\Gamma$-bundle over $K$. 
\end{example}

Next we  provide some elementary statements about trivial principal $\Gamma$-bundles.

\begin{lemma}
\label{lem:loctriv}
A principal $\Gamma$-bundle over $M$ is trivializable if and only if it has a smooth section.
\end{lemma}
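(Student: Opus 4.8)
The plan is to prove both implications, with the ``only if'' direction being essentially immediate and the ``if'' direction carrying the real content.

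\textbf{The easy direction.} Suppose $P$ is trivializable, so there is an isomorphism $\varphi: \trivlin_f \to P$ of principal $\Gamma$-bundles over $M$ for some smooth map $f: M \to \Gamma_0$. The trivial bundle $\trivlin_f = M \lli{f}\times_t \Gamma_1$ has an obvious smooth section $m \mapsto (m, \id_{f(m)})$, using that $\id: \Gamma_0 \to \Gamma_1$ is smooth and $\alpha(m,\id_{f(m)}) = s(\id_{f(m)}) = f(m)$ so that the point lies in the correct fiber. Composing with $\varphi$ gives a smooth section of $P$.

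\textbf{The hard direction.} Now suppose $P$ admits a smooth section $\sigma: M \to P$, i.e.\ $\pi \circ \sigma = \id_M$. I would build an explicit isomorphism $\trivlin_f \cong P$. The natural candidate for the map $f$ is $f := \alpha \circ \sigma: M \to \Gamma_0$. Then define
\begin{equation*}
\varphi: \trivlin_f = M \lli{f}\times_t \Gamma_1 \to P : (m,g) \mapsto \rho(\sigma(m), g)\text{.}
\end{equation*}
This is well-defined since $\alpha(\sigma(m)) = f(m) = t(g)$ is exactly the condition defining the fiber product, and it is smooth as a composite of smooth maps. One checks that $\varphi$ respects the projections to $M$ (because the action respects $\pi$, so $\pi(\rho(\sigma(m),g)) = \pi(\sigma(m)) = m$) and is $\Gamma$-equivariant: $\alpha(\varphi(m,g)) = \alpha(\rho(\sigma(m),g)) = s(g)$, which matches the anchor of $\trivlin_f$, and $\varphi((m,g)\circ h) = \varphi(m, g\circ h) = \rho(\sigma(m), g \circ h) = \rho(\rho(\sigma(m),g),h) = \rho(\varphi(m,g), h)$ by associativity of the action. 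Hence $\varphi$ is a morphism of principal $\Gamma$-bundles, and since $\bun{\Gamma}M$ is a groupoid, $\varphi$ is automatically an isomorphism.

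\textbf{Main obstacle.} The only genuinely delicate point is producing the inverse of $\varphi$ smoothly (or, equivalently, justifying why being a morphism in the groupoid $\bun{\Gamma}M$ suffices). If one wants the inverse explicitly rather than invoking that $\bun{\Gamma}M$ is a groupoid, the idea is: given $p \in P$ with $\pi(p) = m$, both $p$ and $\sigma(m)$ lie in the same fiber $P_m$, so by the diffeomorphism $\tau: P \lli{\alpha}\times_t \Gamma_1 \to P\times_M P$ there is a unique $g \in \Gamma_1$ with $\alpha(\sigma(m)) = t(g)$ and $\rho(\sigma(m),g) = p$; smoothness of $g$ as a function of $p$ comes from $\tau^{-1}$ being smooth, composed with $p \mapsto (\sigma(\pi(p)), p)$. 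Then $p \mapsto (\pi(p), g)$ is the smooth inverse of $\varphi$. I expect the write-up to simply note that $\varphi$ is a morphism of principal bundles and invoke the groupoid property already recorded after Definition \ref{def:pgb}, making even this point routine.
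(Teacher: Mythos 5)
Your proof is correct and follows essentially the same route as the paper: the section $m \mapsto (m,\id_{f(m)})$ for the trivial bundle in one direction, and in the other the map $\varphi(m,g) = \rho(\sigma(m),g)$ with $f := \alpha\circ\sigma$, whose invertibility follows from the groupoid property of $\bun{\Gamma}M$ (your explicit inverse via $\tau^{-1}$ is a welcome extra detail the paper leaves implicit).
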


\begin{proof}
A trivial bundle $\trivlin_f$ has the section
\begin{equation*}
s_f: M \to \trivlin_{f}: x \mapsto (x,\id_{f(x)})\text{;}
\end{equation*}
and so any trivializable bundle has a section. Conversely, suppose a principal $\Gamma$-bundle $P$ has a smooth section $s: M \to P$. Then, with $f := \alpha \circ s$,
\begin{equation*}
\varphi: \trivlin_f \to P: (m,g) \mapsto \rho(s(m),g)
\end{equation*}
is an isomorphism.
\end{proof}

The following consequence shows that principal $\Gamma$-bundles of Definition \ref{def:pgb}  are locally trivializable in the usual sense.

\begin{corollary}
Let $P$ be a principal $\Gamma$-bundle over $M$. Then, every point $x\in M$ has an open neighborhood $U$ over which $P$ has a  trivialization: a smooth map $f: U \to \Gamma_0$ and a morphism
$\varphi: \trivlin_f \to P|_U$.
\end{corollary}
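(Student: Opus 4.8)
The plan is to combine the defining property of a surjective submersion with Lemma \ref{lem:loctriv}. First I would recall that a surjective submersion $\pi: P \to M$ admits local smooth sections: given $x\in M$, pick $p\in P$ with $\pi(p)=x$; since $\pi$ is a submersion, there is an open neighborhood $U$ of $x$ in $M$ together with a smooth map $s: U \to P$ such that $\pi \circ s = \id_U$ (this is immediate from the local normal form of a submersion, or one may simply cite it as the standard fact it is).

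Next I would observe that $P|_U := \pi^{-1}(U)$, with the restricted projection $\pi|_U : P|_U \to U$ and the restricted $\Gamma$-action, is again a principal $\Gamma$-bundle over $U$ — indeed it is the pullback of $P$ along the inclusion $U \hookrightarrow M$, and pullbacks of principal $\Gamma$-bundles are principal $\Gamma$-bundles. The map $s$ is then a smooth section of $P|_U$ over $U$. Applying Lemma \ref{lem:loctriv} to $P|_U$ yields that $P|_U$ is trivializable; explicitly, setting $f := \alpha \circ s : U \to \Gamma_0$, the map
\begin{equation*}
\varphi: \trivlin_f \to P|_U : (m,g) \mapsto \rho(s(m),g)
\end{equation*}
is an isomorphism of principal $\Gamma$-bundles over $U$. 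This is precisely the desired trivialization.

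There is essentially no obstacle here: the only points requiring a word of care are the existence of local sections of a surjective submersion (standard) and the fact that the restriction $P|_U$ inherits the structure of a principal $\Gamma$-bundle (immediate, being a pullback). Everything else is a direct invocation of Lemma \ref{lem:loctriv}.
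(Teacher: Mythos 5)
Your argument is correct and is exactly the paper's proof: choose $U$ so that the surjective submersion $\pi$ admits a smooth section over $U$, and apply Lemma \ref{lem:loctriv} to the restriction $P|_U$. You have merely spelled out the two standard facts (local sections of submersions, and that $P|_U$ is again a principal $\Gamma$-bundle) that the paper leaves implicit.
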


\begin{proof}
One can choose $U$ such that the surjective submersion $\pi:U \to P$ has a smooth section. Then, Lemma \ref{lem:loctriv} applies to the restriction $P|_U$.
\end{proof}

We  determine the Hom-set $\hom {\trivlin_{f_1}} {\trivlin_{f_2}}$ between trivial principal $\Gamma$-bundles defined by smooth maps $f_1,f_2\maps M \to \Gamma_0$. To a bundle morphism $\varphi: \trivlin_{f_1} \to \trivlin_{f_2}$ one associates the smooth function $g: M \to \Gamma_1$ which is uniquely defined by the condition
\begin{equation*}
(\varphi \circ s_{f_1})(x) = s_{f_2}(x) \circ g(x)\text{.}
\end{equation*}
for all $x\in M$. It is straightforward to see that

\begin{lemma}
\label{homsets}
The above construction defines a bijection
\begin{equation*}
\hom {\trivlin_{f_1}} {\trivlin_{f_2}} \to \left \lbrace g \in C^{\infty}(M,\Gamma_1) \;|\; s \circ  g=f_1\;\text{ and }\;t \circ g=f_2 \right \rbrace\text{,}
\end{equation*}
under which identity morphisms correspond to constant maps and the composition of  bundle morphisms corresponds to the point-wise composition of functions. \end{lemma}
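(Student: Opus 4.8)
The plan is to write the inverse assignment down explicitly and then verify, by direct computation with the concrete model $\trivlin_f = M \lli f \times_t \Gamma_1$ and the section $s_f(x) = (x,\id_{f(x)})$ of Example \ref{trivialbundle}, that the two assignments are mutually inverse and that they intertwine identity morphisms with the maps $x \mapsto \id_{f(x)}$ and composition of morphisms with point-wise composition. The one structural observation that organises everything is that $s_{f_1}$ is a \emph{generating section}: every point of $\trivlin_{f_1}$ lying over $x$ equals $\rho(s_{f_1}(x),h)$ for a unique $h\in\Gamma_1$, namely its $\Gamma_1$-component; hence any projection-preserving, $\Gamma$-equivariant map $\varphi$ out of $\trivlin_{f_1}$ is completely determined by the composite $\varphi\circ s_{f_1}$, via $\varphi(x,h)=\rho\big((\varphi\circ s_{f_1})(x),h\big)$.

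First I would check that $\varphi\mapsto g$ is well defined. Since $\varphi$ respects the projections to $M$, the point $\varphi(s_{f_1}(x))$ lies over $x$, hence equals $(x,g(x))$ for a unique $g(x)\in\Gamma_1$; the fibre-product condition defining $\trivlin_{f_2}$ forces $t(g(x)) = f_2(x)$, while $\alpha(\varphi(s_{f_1}(x))) = \alpha(s_{f_1}(x)) = f_1(x)$ together with $\alpha(x,g(x)) = s(g(x))$ forces $s(g(x)) = f_1(x)$; so $g$ takes values in the indicated subset of $C^{\infty}(M,\Gamma_1)$. Smoothness of $g$ is clear, since $g$ is the composite of $s_{f_1}$, $\varphi$ and the projection $\trivlin_{f_2}\to\Gamma_1$. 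Moreover $s_{f_2}(x)\circ g(x) = \rho((x,\id_{f_2(x)}),g(x)) = (x,g(x))$, so this $g$ is precisely the one characterised by the stated equation $(\varphi\circ s_{f_1})(x) = s_{f_2}(x)\circ g(x)$.

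Conversely, given $g\in C^{\infty}(M,\Gamma_1)$ with $s\circ g = f_1$ and $t\circ g = f_2$, $\Gamma$-equivariance leaves no choice but $\varphi_g(x,h) := \rho((x,g(x)),h) = (x,g(x)\circ h)$; I would then check directly that $\varphi_g$ is smooth (the composition of $\Gamma$ and $g$ are smooth, and all the relevant source/target conditions match), respects the projections to $M$, and is $\Gamma$-equivariant, hence a morphism $\trivlin_{f_1}\to\trivlin_{f_2}$.

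It remains to see that the two assignments are mutually inverse and compatible with the categorical structure. They are mutually inverse: $\varphi_g(s_{f_1}(x)) = (x,g(x)\circ\id_{f_1(x)}) = (x,g(x))$ recovers $g$, and, for a morphism $\varphi$ with associated function $g$, the generating-section remark gives $\varphi(x,h) = \varphi(\rho(s_{f_1}(x),h)) = \rho(\varphi(s_{f_1}(x)),h) = (x,g(x)\circ h) = \varphi_g(x,h)$. The identity of $\trivlin_f$ sends $s_f(x)$ to $(x,\id_{f(x)})$, so corresponds to $x\mapsto\id_{f(x)}$; and for composable morphisms $\varphi_1,\varphi_2$ with associated functions $g_1,g_2$ one computes $(\varphi_2\circ\varphi_1)(s_{f_1}(x)) = \varphi_2(x,g_1(x)) = \varphi_2(\rho(s_{f_2}(x),g_1(x))) = \rho(\varphi_2(s_{f_2}(x)),g_1(x)) = (x,g_2(x)\circ g_1(x))$, so the associated function of $\varphi_2\circ\varphi_1$ is the point-wise composite $x\mapsto g_2(x)\circ g_1(x)$. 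There is no genuine obstacle here; the only thing requiring care is the bookkeeping of source and target conditions, ensuring that every composition in $\Gamma_1$ and every application of $\rho$ occurring above is actually defined, together with the systematic use of equivariance to push each identity back to the canonical section $s_{f_1}$.
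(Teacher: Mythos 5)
Your proof is correct and is exactly the argument the paper intends: the forward assignment is the one described immediately before the lemma, and your key observation that $s_{f_1}$ generates $\trivlin_{f_1}$ under the $\Gamma$-action (so that $\varphi(x,h)=\rho(\varphi(s_{f_1}(x)),h)$) is precisely what makes the verification of bijectivity, identities and composition routine. The paper omits the proof as straightforward, and your write-up supplies the omitted details with all source/target conditions correctly tracked.
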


Finally, we consider the case of principal bundles for action groupoids. 

\begin{lemma}
\label{actionbundles}
Suppose $\act X H$ is a smooth action groupoid. The category $\bun {\act XH}M$ is equivalent to a category with 
\begin{itemize}
\item
Objects: principal $H$-bundles $P_H$ over $M$ together with a smooth, $H$-anti-equivariant map $f\colon P_H \to X$, i.e. $f(p\cdot h) = h^{-1}f(p)$. 

\item
Morphisms: bundle morphisms $\varphi_H:P_H \to P'_H$ that respect the maps $f$ and $f'$.
\end{itemize}
\end{lemma}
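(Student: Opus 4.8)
The plan is to exhibit an explicit equivalence of categories $\bun{\act XH}M \simeq \mathcal{C}$, where $\mathcal{C}$ is the category described in the statement, by constructing functors in both directions and natural isomorphisms witnessing that they are mutually quasi-inverse. First I would unpack what a principal $\act XH$-bundle $P$ over $M$ concretely is: its anchor is a map $\alpha\colon P \to (\act XH)_0 = X$, and the action is a map $P\,\lli\alpha\times_t (\act XH)_1 \to P$, where $(\act XH)_1 = H\times X$ and $t(h,x) = \rho_X(h,x) = h.x$. Writing the composable pairs out, an element of $P\,\lli\alpha\times_t(H\times X)$ is a pair $(p,(h,x))$ with $\alpha(p) = h.x$, and by the groupoid relations the new anchor is $s(h,x) = x$. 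The key observation is that every morphism $(h,x)$ of $\act XH$ with fixed $h$ and varying source is determined by $h$ alone once we know the source $x$, so the $\act XH$-action on $P$ restricts to a bare $H$-action: set $p\cdot h := \rho(p,(h, h^{-1}.\alpha(p)))$. One checks this is a well-defined smooth right $H$-action, that $\pi$ is preserved, and that $\alpha(p\cdot h) = h^{-1}.\alpha(p)$, i.e. $\alpha$ is $H$-anti-equivariant. So we define the functor $F\colon \bun{\act XH}M \to \mathcal{C}$ by $F(P) = (P, \alpha)$ with the restricted $H$-action, and $F(\varphi) = \varphi$ on morphisms (a $\act XH$-equivariant map is in particular $H$-equivariant and preserves the anchor).

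The main point to verify for $F$ is that $(P, \cdot)$ is actually a principal $H$-bundle, i.e. that $P\times H \to P\times_M P$, $(p,h)\mapsto (p, p\cdot h)$ is a diffeomorphism. For this I would use that $P$ is a principal $\act XH$-bundle, so $\tau\colon P\,\lli\alpha\times_t(H\times X) \to P\times_M P$ is a diffeomorphism, and compose with the diffeomorphism $P\times H \to P\,\lli\alpha\times_t(H\times X)$ sending $(p,h)$ to $(p,(h, h^{-1}.\alpha(p)))$ — this is smooth with smooth inverse $(p,(h,x))\mapsto (p,h)$ because on the fiber product $x$ is forced to equal $h^{-1}.\alpha(p)$. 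Under this identification $\tau$ becomes exactly the map $(p,h)\mapsto(p,p\cdot h)$, so the latter is a diffeomorphism.

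In the reverse direction I would define $G\colon \mathcal{C} \to \bun{\act XH}M$ by sending $(P_H, f)$ to the same manifold $P_H$ with the same $\pi$, anchor $\alpha := f$, and $\act XH$-action $\rho(p,(h,x)) := p\cdot h$ for $(p,(h,x))$ with $f(p) = h.x$ (so $x = h^{-1}.f(p)$ is determined). One verifies the three action axioms using the $H$-action axioms and anti-equivariance of $f$ (in particular $\alpha(\rho(p,(h,x))) = f(p\cdot h) = h^{-1}.f(p) = x = s(h,x)$), and that the relevant $\tau$ is a diffeomorphism by reversing the argument of the previous paragraph. On morphisms $G$ is again the identity on underlying maps. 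Then $F\circ G$ and $G\circ F$ are literally the identity functors on objects and morphisms once one checks that restricting the $\act XH$-action built from $(P_H,f)$ gives back $\cdot$ (immediate) and that the anchor/action data reconstructed from $F(P)$ agrees with the original (here the only thing to check is that $\rho(p,(h,x)) = p\cdot h = \rho_{\text{orig}}(p,(h, h^{-1}.\alpha(p)))$ equals $\rho_{\text{orig}}(p,(h,x))$, which holds because $\alpha(p) = h.x$ forces $h^{-1}.\alpha(p) = x$). Hence $F$ and $G$ are mutually inverse, so in fact we get an isomorphism of categories, which in particular gives the claimed equivalence.

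The step I expect to be the main obstacle is bookkeeping the fiber-product conditions cleanly: the domain of the $\act XH$-action is not $P\times(H\times X)$ but the fiber product cut out by $\alpha(p) = t(h,x) = h.x$, and it is this constraint that makes the restriction to a bare $H$-action well-defined and reversible. Getting the smoothness of the various comparison maps right — especially that $(p,(h,x))\mapsto(p,h)$ is a smooth inverse on the fiber product, and that $x$ depends smoothly on $(p,h)$ via $x = h^{-1}.\alpha(p)$ — is the only real content; everything else is routine diagram chasing with the groupoid and action axioms. There is also a minor point that one should note the target category $\mathcal{C}$ is only a category up to the usual identification of $H$-bundles, but since $F$ and $G$ are strict inverses this causes no trouble.
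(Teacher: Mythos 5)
Your proposal is correct and follows exactly the paper's approach: the paper likewise defines the $H$-action by $p\star h := \rho(p,(h,h^{-1}\cdot\alpha(p)))$, takes $f$ to be the anchor $\alpha$, and leaves the remaining verifications (principality via the shear map, the inverse functor, and the strict mutual inverseness) as an exercise, which you have carried out correctly.
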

\begin{proof}
For a principal $\act XH$-bundle $(P, \alpha, \rho)$ we set $P_H := P$ with the given projection to $M$. The action of $H$ on $P_H$ is defined by
\begin{equation*}
p\star h := \rho(p,(h,h^{-1}\cdot\alpha(p)))\text{.}
\end{equation*}
This action is smooth, and it follows from the axioms of the principal bundle $P$ that it is principal.
The map $f: P_H \to X$ is  the anchor $\alpha$. 
The remaining steps are straightforward and  left as an exercise.
\end{proof}

\subsection{Anafunctors}\label{sec:ana}

An anafunctor is a generalization of a smooth functor between Lie groupoids, similar to a Morita equivalence, and also known as a Hilsum-Skandalis morphism.   The idea goes  back to Benabou \cite{benabou1}, also see \cite{johnstone1}. The references for the following definitions are \cite{lerman1,metzler}. 

\begin{definition}
Let $\mathcal{X}$ and $\mathcal{Y}$ be Lie groupoids. 
\begin{enumerate}
\item 
An \emph{anafunctor}\ $F: \mathcal{X} \to \mathcal{Y}$
is a smooth manifold $F$, a left action $(\alpha_l,\rho_l)$ of $\mathcal{X}$ on $F$, and  a right action $(\alpha_r,\rho_r)$ of $\mathcal{Y}$ on $F$ such that the actions commute and $\alpha_l: F \to \mathcal{X}_0$ is a principal $\mathcal{Y}$-bundle over $\mathcal{X}_0$.  

\item

A \emph{transformation} between anafunctors $f: F \Rightarrow F'$ is a smooth map $f: F \to F'$ which is $\mathcal{X}$-equivariant, $\mathcal{Y}$-equivariant, and satisfies $\alpha_l' \circ f= \alpha_l$ and $\alpha_r' \circ f = \alpha_r$. 
\end{enumerate}
\end{definition}

The smooth manifold $F$ of an anafunctor is called its \emph{total space}. Notice that the condition that the two actions on $F$ commute implies that each respects the anchor of the other. 
For fixed Lie groupoids $\mathcal{X}$ and $\mathcal{Y}$, anafunctors $F: \mathcal{X} \to \mathcal{Y}$ and transformations form a category $\af(\mathcal{X},\mathcal{Y})$. Since transformations are in particular morphisms between principal $\mathcal{Y}$-bundles, every transformation is invertible so that $\af(\mathcal{X},\mathcal{Y})$ is in fact a groupoid. 

\begin{example}[Anafunctors from ordinary functors]
\label{ex:anafunctor}
Given a smooth functor $\phi:\mathcal{X} \to \mathcal{Y}$, we obtain an anafunctor in the following way. We set $F := \mathcal{X}_0 \lli\phi\times _t \mathcal{Y}_1$ with anchors $\alpha_l:F \to \mathcal{X}_0$ and $\alpha_r: F \to \mathcal{Y}_0$ defined by $\alpha_l(x,g) := x$ and $\alpha_r(x,g) := s(g)$, and  actions \begin{equation*}
\rho_l: \mathcal{X}_1 \lli s \times_{\alpha_l} F \to F
\quad\text{ and }\quad
\rho_r: F \lli{\alpha_r}\times_t \mathcal{Y}_1 \to F
\end{equation*}
 defined by $\rho_l(f,(x,g)) := (t(f),\phi(f) \circ g)$ and $\rho_r((x,g),f) := (x, g \circ f)$. In the same way, a smooth natural transformation $\eta:\phi\Rightarrow \phi'$ defines a transformation $f_{\eta}:F \Rightarrow F'$ by $f_{\eta}(x,g) := (x,\eta(x) \circ g)$. Conversely, one can show that an anafunctor comes from a smooth functor, if its principal $\Gamma$-bundle has a smooth section. 
\end{example}

\begin{example}[Anafunctors with discrete source]
\label{anafunctors}
For $M$ a smooth manifold and $\Gamma$ a Lie groupoid, we have an equality of categories
\begin{equation*}
\bun\Gamma M = \af(\idmorph{M},\Gamma)\text{.}
\end{equation*}
Further, \emph{trivial} principal $\Gamma$-bundles correspond to smooth \emph{functors}.  
In particular, with Example \ref{ex:grpdbun} we have,
\begin{enumerate}[(a)]
\item
For $G$ a Lie group and $M$ a smooth manifold, an anafunctor $F: \idmorph{M} \to \mathcal{B}G$ is the same as an ordinary principal $G$-bundle over $M$.

\item
For $M$ and $X$ smooth manifolds, an anafunctor $F: \idmorph{M} \to \idmorph{X}$ is the same as a smooth map. 

\end{enumerate}
\end{example}

\begin{example}[Anafunctors with discrete target]
\label{anafunctors:c}
For $\Gamma$ a Lie groupoid and $M$ a smooth manifold, we have an equivalence of categories
\begin{equation*}
\idmorph{C^{\infty}(\Gamma_0,M)^{\Gamma}} \cong \af(\Gamma,\idmorph M) 
\end{equation*}
where $C^{\infty}(\Gamma_0,M)^{\Gamma}$ denotes the set of smooth maps $f: \Gamma_0 \to M$ such that $f \circ s = f \circ t$ as maps $\Gamma_1 \to M$. The equivalence is induced by regarding a map $f \in C^{\infty}(\Gamma_0,M)^{\Gamma}$ as a smooth functor $f:\Gamma \to \idmorph{M}$, which in turn induces an anafunctor. Conversely, an anafunctor $F: \Gamma \to \idmorph{M}$ is in particular an $\idmorph{M}$-bundle over $\Gamma_0$, which is nothing but a smooth function $f: \Gamma_0 \to M$ by Example \ref{ex:idmorphbundle}. The additional $\Gamma$-action assures the $\Gamma$-invariance of $f$. 
\end{example}

\begin{example}[Anafunctors between one-object Lie groupoids]
Let $G$ and $H$ be Lie groups, and let $\mathcal{B}G$ and $\mathcal{B}H$ be the associated one-object Lie groupoids (Example \ref{example_groupoids} \ref{exBG}). Then, there is an equivalence of categories
\begin{equation*}
\act{\mathrm{Hom}(G,H)}H \cong \af(\mathcal{B}G,\mathcal{B}H) \text{,}
\end{equation*}
where the action of $H$ on $\mathrm{Hom}(G,H)$ is by point-wise conjugation. The functor which establishes this equivalence sends a smooth group homomorphism $\alpha:G \to H$ to the evident smooth functor $F_{\alpha}: \mathcal{B}G \to \mathcal{B}H$ and converts this into an anafunctor (Example \ref{ex:anafunctor}). A morphism $h:\alpha_1 \to \alpha_2$ is sent to the natural transformation $\eta_h: F_{\alpha_1} \to F_{\alpha_2}$ whose component at the single object is the morphism $h \in H$. 
In order to see that this is essentially surjective, it suffices to notice that the principal $H$-bundle of any smooth anafunctor $F: \mathcal{B}G \to \mathcal{B}H$ has a section. The proof that the functor is full and faithful is straightforward. 
\end{example}

For the following definition, we suppose $\mathcal{X}$, $\mathcal{Y}$ and $\mathcal{Z}$ are Lie groupoids, and
$F:\mathcal{X} \to \mathcal{Y}$
and
$G: \mathcal{Y} \to \mathcal{Z}$
are anafunctors given by $F=(F,\alpha_l,\rho_l,\alpha_r,\rho_r)$ and $G=(G,\beta_l,\tau_l,\beta_r,\tau_r)$.

\begin{definition}
 The \emph{composition}
$G \circ F: \mathcal{X} \to \mathcal{Z}$
is the anafunctor  defined in the following way:
\begin{enumerate}
\item 
Its total space is
\begin{equation*}
E := (F \lli{\alpha_r}\times_{\beta_l} G) / \sim
\end{equation*}
where $(f,\tau_l(h,g)) \sim (\rho_r(f,h),g)$ for all $h\in \mathcal{Y}_1$ with $\alpha_r(f)=t(h)$ and $\beta_l(g)=s(h)$.

\item
The anchors are  $(f,g) \mapsto \alpha_l(f)$ and $(f,g) \mapsto \beta_r(g)$. 
\item
The actions $\mathcal{X}_1 \lli{s}\times_{\alpha} E \to E$ and $E \lli{\beta}\times_{t} \mathcal{Z}_1 \to E$ are given, respectively, by
\begin{equation*}
(\gamma,(f,g)) \mapsto (\rho_l(\gamma,f),g)
\quand
((f,g), \gamma) \mapsto (f,\tau_r(g,\gamma))\text{.}
\end{equation*}
\end{enumerate}
\end{definition}

\begin{remark}
Lie groupoids, anafunctors and transformations form a bicategory. This bicategory  is equivalent to the bicategory of differentiable stacks (also known as geometric stacks) \cite{pronk}.
\end{remark}

In this article, anafunctors serve  two purposes. The first is that one can use conveniently the composition of anafunctors to define \emph{extensions} of principal groupoid bundles:

\begin{definition}
\label{def:extension}
 If $P: \idmorph M \to \Gamma$ is a principal $\Gamma$-bundle over $M$, and $\Lambda: \Gamma \to \Omega$ is an anafunctor, then the principal $\Omega$-bundle
\begin{equation*}
\Lambda P := \Lambda \circ P :\idmorph M \to \Omega
\end{equation*}
is called the \emph{extension of $P$ along $\Lambda$}.
\end{definition}

Unwinding this definition, the principal $\Omega$-bundle $\Lambda P$ has the total space
\begin{equation}
\label{bundleextension}
\Lambda P = (P \lli{\alpha}\times_{\alpha_l} \Lambda) \;/\; \sim
\end{equation}
where $(p,\rho_l(\gamma,\lambda)) \sim (\rho(p,\gamma),\lambda)$ for all $p\in P$, $\lambda\in \Lambda$ and $\gamma\in \Gamma_1$ with $\alpha(p)=t(\gamma)$ and $\alpha_l(\lambda)=s(\gamma)$. Here $\alpha$ is the anchor and $\rho$ is the action of $P$, and $\Lambda = (\Lambda,\alpha_l,\alpha_r,\rho_l,\rho_r)$. The bundle projection is $(p,\lambda) \mapsto \pi(p)$, where $\pi$ is the bundle projection of $P$, the anchor  is $(p,\lambda) \mapsto \alpha_r(\lambda)$, and the action is $(p,\lambda)\circ \omega=(p,\rho_r(\lambda,\omega))$. 

Extensions of bundles are accompanied by extensions of bundle morphisms. If $\varphi: P_1 \to P_2$ is a morphism between $\Gamma$-bundles, a morphism $\Lambda\varphi: \Lambda P_1 \to \Lambda P_2$ is defined by $\Lambda\varphi(p_1,\lambda) := (\varphi(p_1),\lambda)$ in terms of \erf{bundleextension}. Summarizing, we have

\begin{lemma}
\label{extensionfunctor}
Let $M$ be a smooth manifold and $\Lambda:\Gamma \to \Omega$ be an anafunctor. Then, extension along $\Lambda$ is a functor
\begin{equation*}
\Lambda: \bun\Gamma M \to \bun\Omega M\text{.}
\end{equation*}
Moreover, it commutes with pullbacks and so extends to a morphism between stacks. 
\end{lemma}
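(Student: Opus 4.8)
The key observation is that, under the identification $\bun\Gamma M = \af(\idmorph M,\Gamma)$ of Example \ref{anafunctors}, extension along $\Lambda$ is nothing but post-composition with the anafunctor $\Lambda$ in the bicategory of Lie groupoids, anafunctors and transformations. The plan is to deduce every assertion from the (already available) well-definedness and coherence of anafunctor composition, invoking the explicit formula \erf{bundleextension} only where convenient. Since the composition of two anafunctors is again an anafunctor, $\Lambda P = \Lambda\circ P$ is an anafunctor $\idmorph M \to \Omega$, hence a principal $\Omega$-bundle over $M$; specialising the composition formula to $F = P$ -- so that $\alpha_l$ is the bundle projection, $\alpha_r$ the anchor and $\mathcal{Y} = \Gamma$ -- and to $G = \Lambda$ reproduces precisely the description \erf{bundleextension}. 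For a bundle morphism $\varphi\colon P_1 \to P_2$, viewed as a transformation of anafunctors, we take $\Lambda\varphi := \id_\Lambda \star \varphi$, the left whiskering of $\varphi$ by $\Lambda$; unwinding this gives exactly $\Lambda\varphi(p,\lambda) = (\varphi(p),\lambda)$, and one checks directly (using that $\varphi$ is $\Gamma$-equivariant and covers $\id_M$) that this map is well defined on the quotient \erf{bundleextension}, is $\Omega$-equivariant, and covers $\id_M$, hence is a morphism of principal $\Omega$-bundles. Functoriality, $\Lambda\id_{P} = \id_{\Lambda P}$ and $\Lambda(\psi\circ\varphi) = \Lambda\psi\circ\Lambda\varphi$, then follows from the explicit formula, or abstractly from functoriality of whiskering.

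For the compatibility with pullbacks, let $f\colon X \to M$ be a smooth map, inducing a smooth functor, hence an anafunctor, $\idmorph X \to \idmorph M$. Under the identification with anafunctors, the pullback functor $f^{*}$ corresponds to precomposition with this anafunctor; thus $f^{*}(\Lambda P)$ and $\Lambda(f^{*}P)$ are obtained by composing $\Lambda$, $P$ and the anafunctor induced by $f$ in the two possible bracketings, and the associator of the bicategory supplies a canonical isomorphism $c_{f}\colon\Lambda(f^{*}P) \to f^{*}(\Lambda P)$, natural in $P$. (Concretely, one simply writes down the evident map between the two total spaces and checks that it is an isomorphism of $\Omega$-bundles.) This is the coherence datum exhibiting $\Lambda$ as a morphism of prestacks.

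It remains to check the two coherence axioms of such a morphism: that $c_{\id_M}$ is the identity, and that for composable smooth maps $f\colon X \to Y$ and $g\colon Y \to M$ the isomorphism $c_{g\circ f}$ factors, in the usual way, through $c_{f}$, $c_{g}$ and the pseudofunctoriality isomorphisms $(g\circ f)^{*} \cong f^{*}\circ g^{*}$ of the prestacks of principal $\Gamma$- and $\Omega$-bundles. Since each $c_{f}$ and each of these pseudofunctoriality isomorphisms is assembled out of associators and unitors of the bicategory of Lie groupoids and anafunctors, both axioms reduce to instances of the pentagon and triangle coherence of that bicategory and hence hold. As the prestacks of principal $\Gamma$- and $\Omega$-bundles are in fact stacks, this exhibits $\Lambda$ as a morphism of stacks.

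The only step carrying real content is the well-definedness of $\Lambda P$ as a smooth manifold: that the quotient in \erf{bundleextension} is a manifold with submersive orbit map. This is part of the cited fact that composition of anafunctors is again an anafunctor \cite{lerman1,metzler}, and can also be seen directly -- $\Gamma$ acts principally on $P$ because $P$ is a principal $\Gamma$-bundle over $M$, so the induced diagonal $\Gamma$-action on $P\lli{\alpha}\times_{\alpha_l}\Lambda$ is free with a locally trivial, in particular submersive, orbit projection. Everything else is routine bookkeeping with anafunctor composition.
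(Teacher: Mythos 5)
Your proof is correct and follows essentially the same route as the paper, which states this lemma as a summary of the preceding discussion: extension is by definition post-composition with $\Lambda$ in the bicategory of Lie groupoids, anafunctors and transformations, the morphism assignment $\Lambda\varphi(p,\lambda)=(\varphi(p),\lambda)$ is the whiskering you describe, and compatibility with pullbacks comes from associativity of anafunctor composition up to coherent transformations. Your added detail on why the quotient \erf{bundleextension} is a manifold is the standard argument underlying the cited fact that composition of anafunctors is well defined.
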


Next we suppose that  $t:H \to G$ is a Lie group homomorphism, and $\act GH$ is the associated action groupoid of Example \ref{exliecat} \erf{extHG}. We look at the functor $\Theta: \act GH \to \mathcal{B}H$ which is defined by $\Theta(h,g) := h$. Combining Lemma
\ref{actionbundles} with the extension along $\Theta$, we obtain 

\begin{lemma}
\label{crossedbundles}
The category $\bun {\act GH}M$ of principal $\act G H$-bundles over a smooth manifold $M$  is equivalent to a category with
\begin{itemize}
\item 
Objects: principal $H$-bundles $P_H$ over $M$ together with a section of  $\Theta (P_H)$.

\item
Morphisms: morphisms $\varphi$ of $H$-bundles so that $\Theta(\varphi)$ preserves the sections.
\end{itemize}
\end{lemma}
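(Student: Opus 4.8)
The plan is to use Lemma \ref{actionbundles} essentially as a black box and then translate its output. We apply that lemma to the action groupoid $\act GH$, which is exactly the case $X = G$ with the left $H$-action $h\cdot g = t(h)g$ of Example \ref{exliecat} \erf{extHG}. Lemma \ref{actionbundles} then identifies $\bun{\act GH}M$ with the category $\mathcal{C}$ whose objects are principal $H$-bundles $P_H$ over $M$ equipped with a smooth map $f\colon P_H \to G$ satisfying $f(p\cdot h) = h^{-1}\!\cdot f(p) = t(h)^{-1}f(p)$, and whose morphisms are $H$-bundle morphisms commuting with the $f$'s. So it remains to exhibit an isomorphism between $\mathcal{C}$ and the category described in the statement, i.e. to convert the datum "$f$'' into the datum "section of $\Theta(P_H)$'', naturally in $P_H$, and to match up the morphism conditions.

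Here $\Theta(P_H)$ is to be read as the principal $G$-bundle over $M$ obtained by extending $P_H$ along the Lie group homomorphism $t\colon H \to G$ — the associated bundle $P_H \times_H G$ for the left $H$-action $h\cdot g = t(h)g$ — with $\Theta(\varphi) := \varphi \times_H \id_G$ on morphisms. This is consistent with the functor $\Theta\colon \bun{\act GH}M \to \bun HM$ obtained by extending along the functor $\Theta\colon \act GH \to \mathcal{B}H$ (Lemma \ref{extensionfunctor}): unwinding \erf{bundleextension} one checks that this functor sends a principal $\act GH$-bundle $P$ to the underlying $H$-bundle $P_H$ of Lemma \ref{actionbundles}, the isomorphism being $[p,\omega]\mapsto\rho(p,(\omega,\omega^{-1}\!\cdot\alpha(p)))$. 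The core of the argument is then the standard dictionary between sections of an associated bundle and equivariant maps into the fibre: a section of $P_H\times_H G$ over $M$ is the same thing as a smooth map $f\colon P_H\to G$ with $f(p\cdot h) = t(h)^{-1}f(p)$, via $f\mapsto\sigma_f$, $\sigma_f(x) := [p,f(p)]$ for any $p$ in the fibre over $x$. Under this bijection an $H$-bundle morphism $\varphi$ satisfies $f'\circ\varphi = f$ if and only if $\Theta(\varphi)\circ\sigma_f = \sigma_{f'}$, since on each fibre the $G$-coordinate of a class $[p,\,\cdot\,]$ is rigid.

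Assembling the pieces, $(P_H,f)\mapsto(P_H,\sigma_f)$ is the desired isomorphism of categories from $\mathcal{C}$ to the target category, and precomposing it with the equivalence of Lemma \ref{actionbundles} proves the claim. The only real obstacle is bookkeeping with handedness: one has to make sure the left $H$-action $h\cdot g = t(h)g$ on $G$, the right $H$-action coming from the modified $\star$-action of Lemma \ref{actionbundles} on $P_H$, the anti-equivariance $f(p\cdot h) = h^{-1}\!\cdot f(p)$, and the convention for the associated bundle all fit together, and that the identification of extension-along-$\Theta$ with "pass to the underlying $H$-bundle'' is arranged so that "section of $\Theta(P_H)$'' means exactly the anti-equivariant map. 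None of this is deep, but every inverse has to land in the right place.
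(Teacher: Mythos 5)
Your proof is correct and follows exactly the route the paper indicates: the paper offers no written argument beyond ``combine Lemma \ref{actionbundles} with extension along $\Theta$'', and your fleshing-out --- Lemma \ref{actionbundles} for $X=G$ plus the standard bijection between $H$-anti-equivariant maps $P_H \to G$ and sections of the associated $G$-bundle $P_H \times_H G$, with the fibrewise rigidity of the $G$-coordinate matching up the morphism conditions --- is precisely that combination. Your decision to read $\Theta(P_H)$ as extension of the $H$-bundle $P_H$ along $t:H\to G$ is the right resolution of the statement's loose notation (the functor $\Theta$ as literally defined only accepts $\act GH$-bundles), and the handedness bookkeeping you flag does all come out consistently.
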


The second motivation for introducing anafunctors is that they provide the inverses to certain smooth functors which are not necessarily equivalences of categories.

\begin{definition}\label{defweak}
A smooth functor or anafunctor $F:\mathcal{X} \to \mathcal{Y}$ is called a \emph{weak equivalence}, if there exists an anafunctor $G: \mathcal{Y} \to \mathcal{X}$ together with transformations $G \circ F \cong \id_{\mathcal{X}}$ and $F \circ G \cong \id_\mathcal{Y}$. 
\end{definition}

We have the following immediate consequence for the stack morphisms of Lemma \ref{extensionfunctor}.

\begin{corollary}
\label{co:extensionweakequivalence}
Let $\Lambda: \Gamma \to \Omega$ be a weak equivalence between Lie groupoids. Then, extension of principal  bundles along $\Lambda$ is an equivalence
$\Lambda: \bun\Gamma M \to \bun\Omega M$ of categories. Moreover, these define an equivalence between the stacks $\bun\Gamma-$ and $\bun \Omega-$.
\end{corollary}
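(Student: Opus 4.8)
The plan is to reduce the statement to the general bicategorical fact that whiskering with an equivalence is an equivalence. The key observation is that, via the identification $\bun\Gamma M = \af(\idmorph M,\Gamma)$ of Example \ref{anafunctors}, the extension functor $\Lambda:\bun\Gamma M \to \bun\Omega M$ of Lemma \ref{extensionfunctor} is nothing but post-composition with the anafunctor $\Lambda$ in the bicategory of Lie groupoids, anafunctors and transformations: on objects $P \mapsto \Lambda \circ P$ (this is Definition \ref{def:extension} read inside $\af$), and on morphisms by whiskering. So everything will follow from the structure of that bicategory once we unpack what a weak equivalence is.

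Concretely, I would proceed as follows. Let $G:\Omega \to \Gamma$ be an anafunctor, and let $\varepsilon: G \circ \Lambda \cong \id_\Gamma$ and $\delta: \Lambda \circ G \cong \id_\Omega$ be the transformations witnessing that $\Lambda$ is a weak equivalence. By Lemma \ref{extensionfunctor}, extension along $G$ gives a functor $G:\bun\Omega M \to \bun\Gamma M$. I would then build a natural isomorphism $G \circ \Lambda \cong \id_{\bun\Gamma M}$ of functors $\bun\Gamma M \to \bun\Gamma M$ by composing three bicategorical isomorphisms, each applied with second argument a principal bundle $P:\idmorph M \to \Gamma$: first the associator $G \circ (\Lambda \circ P) \cong (G \circ \Lambda) \circ P$; then $\varepsilon$ whiskered with $P$, giving $(G \circ \Lambda) \circ P \cong \id_\Gamma \circ P$; and finally the left unitor $\id_\Gamma \circ P \cong P$, where one uses that the anafunctor induced by the identity functor $\id_\Gamma$ (Example \ref{ex:anafunctor}) is the unit of the bicategory. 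Naturality in $P$ and compatibility with composition of bundle morphisms are exactly the bicategory axioms, so this is a genuine natural isomorphism of functors. The symmetric construction from $\delta$ gives $\Lambda \circ G \cong \id_{\bun\Omega M}$, and hence $\Lambda$ is an equivalence of categories with inverse $G$.

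To obtain the statement about stacks, I would note that extension along an anafunctor commutes with pullbacks (Lemma \ref{extensionfunctor}), so $\Lambda$ and $G$ are morphisms between the stacks $\bun\Gamma-$ and $\bun\Omega-$; and that the associator, the unitors and the whiskering operation are all natural with respect to pre-composition by the smooth functors $\idmorph f:\idmorph X \to \idmorph M$ induced by smooth maps $f:X \to M$ — which under the identification $\bun\Gamma M = \af(\idmorph M,\Gamma)$ is precisely the pullback $f^{*}$. Hence the two natural isomorphisms above are natural transformations of stack morphisms, and this exhibits $\Lambda$ as an equivalence of stacks.

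The main thing one has to be careful about is the pseudofunctoriality of extension in the anafunctor variable, i.e.\ that the comparison isomorphisms $(\Lambda' \circ \Lambda)P \cong \Lambda'(\Lambda P)$ are coherent and natural in $P$. But since extension is literally post-composition in the bicategory of Lie groupoids and anafunctors (whose existence was recalled after the definition of anafunctor composition), this coherence comes for free from the bicategory structure, and one never has to manipulate the explicit quotient description \erf{bundleextension}.
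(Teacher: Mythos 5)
Your proposal is correct and is essentially the argument the paper intends: the paper states Corollary \ref{co:extensionweakequivalence} as an immediate consequence of Lemma \ref{extensionfunctor} and Definition \ref{defweak}, the point being exactly that extension is post-composition in the bicategory of Lie groupoids, anafunctors and transformations, so composing with the inverse anafunctor $G$ and using the given transformations yields the quasi-inverse functor and the natural isomorphisms. Your additional remarks on compatibility with pullback (pre-composition) correctly supply the stack-level statement.
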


Concerning the claimed generalization of invertibility, we have 
the following well-known theorem, see \cite[Lemma 3.34]{lerman1}, \cite[Proposition 60]{metzler}. 

\begin{theorem}
\label{weak}
A smooth functor $F: \mathcal{X} \to \mathcal{Y}$ is a weak equivalence if and only if the following two conditions are satisfied:
\begin{enumerate}[(a)]
\item 
it is smoothly essentially surjective: the map
\begin{equation*}
s \circ \mathrm{pr}_2: \mathcal{X}_0 \lli{F_0}\times_{t} \mathcal{Y}_1 \to \mathcal{Y}_0
\end{equation*}
is a surjective submersion.

\item
it is smoothly fully faithful: the diagram
\begin{equation*}
\alxydim{@C=1.5cm@R=1.3cm}{\mathcal{X}_1 \ar[r]^{F} \ar[d]_{s \times t} & \mathcal{Y}_1 \ar[d]^{s \times t} \\ \mathcal{X}_0 \times \mathcal{X}_0 \ar[r]_-{F \times F} & \mathcal{Y}_0 \times \mathcal{Y}_0}
\end{equation*}
is a pullback diagram.
\end{enumerate}
\end{theorem}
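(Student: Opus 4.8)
The plan is to establish the \quot{if} direction by exhibiting an explicit inverse anafunctor, and the \quot{only if} direction by a local-section argument. The substantive content, and the only place where both hypotheses are genuinely needed, lies in the \quot{if} direction.

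So assume $F$ is smoothly essentially surjective and smoothly fully faithful. Recall from Example \ref{ex:anafunctor} that $F$ already defines an anafunctor $\hat F: \mathcal{X} \to \mathcal{Y}$ with total space $\mathcal{X}_0\lli{F_0}\times_t\mathcal{Y}_1$. I would take the inverse $G: \mathcal{Y} \to \mathcal{X}$ to be the \quot{flip} of this: total space $G := \mathcal{Y}_1\lli{s}\times_{F_0}\mathcal{X}_0$, anchors $\beta_l(g,x) := t(g)$ and $\beta_r(g,x) := x$, left $\mathcal{Y}$-action $\gamma \cdot (g,x) := (\gamma\circ g, x)$, and right $\mathcal{X}$-action $(g,x)\cdot\chi := (g\circ F(\chi), s(\chi))$; a one-line check shows the two actions commute. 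To see that $G$ is an anafunctor one must check that $\beta_l: G \to \mathcal{Y}_0$ is a principal $\mathcal{X}$-bundle. The diffeomorphism inverting the $\mathcal{Y}_1$-coordinate identifies $\beta_l$ with the map $s\circ\mathrm{pr}_2$ of condition (a), so $\beta_l$ is a surjective submersion; and the torsor map $\tau_G: G\lli{\beta_r}\times_t\mathcal{X}_1 \to G \times_{\mathcal{Y}_0} G$, $\big((g,x),\chi\big) \mapsto \big((g,x),(g\circ F(\chi),s(\chi))\big)$, has a smooth inverse precisely because, given $\big((g_1,x_1),(g_2,x_2)\big)$ with $t(g_1)=t(g_2)$, the only possible $\chi$ must satisfy $s(\chi)=x_2$, $t(\chi)=x_1$ and $F(\chi)=g_1^{-1}\circ g_2$, and the existence of a unique such $\chi$ depending smoothly on the data is exactly the pullback property (b), applied to $\big(g_1^{-1}\circ g_2,(x_2,x_1)\big)$. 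Hence $G$ is a well-defined anafunctor.

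Next I would construct the two transformations. Unwinding the composition of anafunctors, the total space of $G\circ F$ is the quotient of the set of tuples $\big((x,g),(g',x')\big)$ with $F_0(x)=t(g)$, $s(g)=t(g')$, $s(g')=F_0(x')$ by the relation that absorbs a $\mathcal{Y}$-morphism inserted between $g$ and $g'$; the class of a tuple is therefore pinned down by $x$, $x'$ and the composite $g\circ g': F_0(x')\to F_0(x)$, hence by (b) by the unique $\chi$ with $F(\chi)=g\circ g'$, $s(\chi)=x'$, $t(\chi)=x$. This gives a smooth bijection onto $\mathcal{X}_1$, the total space of $\id_{\mathcal{X}}$, which one checks intertwines anchors and actions, i.e.\ a transformation $G\circ F\cong\id_{\mathcal{X}}$. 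Symmetrically, the total space of $F\circ G$ carries the smooth map sending (the class of) a tuple to the composite $g'\circ g\in\mathcal{Y}_1$; it is $\mathcal{Y}$-equivariant and respects both anchors, hence is a transformation $F\circ G\Rightarrow\id_{\mathcal{Y}}$, and every transformation is invertible. This proves $F$ is a weak equivalence.

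For the converse, suppose $G:\mathcal{Y}\to\mathcal{X}$ is an anafunctor with transformations $G\circ F\cong\id_{\mathcal{X}}$ and $u: F\circ G\cong\id_{\mathcal{Y}}$. Since $\beta_l: G\to\mathcal{Y}_0$ is a surjective submersion it has a smooth section $\sigma: V\to G$ over each sufficiently small open $V\subseteq\mathcal{Y}_0$; set $\phi:=\beta_r\circ\sigma: V\to\mathcal{X}_0$. Passing $\big(\sigma(y),\id_{F_0(\phi(y))}\big)$ through $u$ and using the anchor compatibility of a transformation yields a smooth family of morphisms $\theta(y): F_0(\phi(y))\to y$; then $y\mapsto\big(\phi(y),\theta(y)^{-1}\big)$ is a smooth local section of $s\circ\mathrm{pr}_2:\mathcal{X}_0\lli{F_0}\times_t\mathcal{Y}_1\to\mathcal{Y}_0$, which is thus a surjective submersion, proving (a). For (b) one uses in addition the transformation $G\circ F\cong\id_{\mathcal{X}}$ to extract, locally, a smooth \quot{counit} family of isomorphisms $\phi(F_0(x))\cong x$; transporting a given $\mathcal{Y}$-morphism through $G$ and conjugating by these counit isomorphisms produces a smooth inverse to $\chi\mapsto\big(F(\chi),(s(\chi),t(\chi))\big)$, with injectivity being the naturality of the counit. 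Hence the square in (b) is a pullback.

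I expect the main obstacle to be the bookkeeping in the \quot{if} direction: recognising that the inverse of $\tau_G$ \emph{is} the pullback datum of (b), and tracking the quotient by the $\mathcal{Y}$-action in the total space of $G\circ F$ (and of $F\circ G$) carefully enough to see it collapses onto $\mathcal{X}_1$ (resp.\ $\mathcal{Y}_1$). In the converse, the only delicate point is that a local section of $\beta_l$ does not assemble into a global smooth functor, so one must work over patches of $\mathcal{Y}_0$ and check that the transformations restrict compatibly; I expect that to be routine but notation-heavy.
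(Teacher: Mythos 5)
Your \quot{if} direction is essentially the paper's own argument (which appears, commented out, in the source): one inverts $F$ by flipping the anafunctor it induces --- your total space $\mathcal{Y}_1\lli{s}\times_{F_0}\mathcal{X}_0$ is the paper's $\mathcal{X}_0\lli{F_0}\times_{t}\mathcal{Y}_1$ with the $\mathcal{Y}_1$-coordinate inverted --- with (a) supplying the surjective submersion $\beta_l$ and (b) supplying the smooth inverse of the torsor map, and then one identifies the two composites with the identity anafunctors by tracking the quotients exactly as you describe. The converse, however, is not proved in the paper at all (it is delegated to \cite{lerman1,metzler}), so that half of your argument is genuinely additional content. Your derivation of (a) from a local section of $\beta_l$ together with the transformation $F\circ G\Rightarrow\id_{\mathcal{Y}}$ is complete and correct. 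The one place where your sketch is thinner than a proof is (b): \quot{transport through $G$ and conjugate by counits} is the right plan (it is the smooth incarnation of the standard argument that $GF\cong\id$ and $FG\cong\id$ force $F$ to be fully faithful), but you still owe the verification that $F$ applied to the conjugated morphism recovers the original $\eta$. That step cannot be extracted from the counit $G\circ F\cong\id_{\mathcal{X}}$ alone --- with only that transformation, $F$ need not be full --- and what makes it work is that the transformation $F\circ G\Rightarrow\id_{\mathcal{Y}}$ forces the left $\mathcal{Y}$-action on the total space of $G$ to be free, which is the anafunctor form of \quot{$G$ is faithful} and is precisely what makes the transported morphism $\tilde\eta$ determine $\eta$. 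Since Definition \ref{defweak} imposes no triangle identities on the two transformations, it is worth being explicit that none are needed; once this point and the routine gluing of the locally defined inverses are written out, the converse goes through.
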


\begin{remark}
\label{rem:inverses}
One can show that any smooth functor $F:\mathcal{X} \to \mathcal{Y}$ that is a weak equivalence actually has a \emph{canonical} inverse  anafunctor. \end{remark}

\subsection{Lie 2-Groups and crossed Modules}

A (strict) \emph{Lie 2-group} is a  Lie groupoid $\Gamma$ whose objects and morphisms are Lie groups, and all of whose structure maps are Lie group homomorphisms. One can conveniently bundle the multiplications and the inversions into smooth functors
\begin{equation*}
m: \Gamma \times \Gamma \to \Gamma
\quad\text{ and }\quad
i: \Gamma \to \Gamma\text{.} 
\end{equation*}

\begin{example} 
For $A$ an abelian Lie group, the Lie groupoid $\mathcal{B}A$ from Example \ref{exliecat} (b) is a Lie 2-group. The condition that $A$ is abelian is necessary.
\end{example}

\begin{example}
\label{crossedmodule}
Let $t: H \to G$ be a homomorphism of Lie groups, and let $\act GH$ be the corresponding Lie groupoid from Example \ref{exliecat} \erf{extHG}.  This Lie groupoid becomes  a Lie 2-group if the following structure is given: a smooth left action of $G$ on $H$ by Lie group homomorphisms, denoted $(g,h) \mapsto \lw gh$, satisfying
\begin{equation*}
t(\lw gh) = gt(h)g^{-1}
\quad\text{ and }\quad
\lw{t(h)}x=hxh^{-1}
\end{equation*} 
for all $g\in G$ and $h,x\in H$. Indeed, the objects $G$ of $\act G H$  already form a Lie group, and the multiplication on the morphisms $H \times G$ of $\act G H$ is the semi-direct product
\begin{equation}
\label{multiplication}
(h_2,g_2) \cdot (h_1,g_1) = (h_2\;\lw{g_2}{h_1},g_2g_1)\text{.}
\end{equation}
The homomorphism $t:H \to G$ together with the action of $G$ on $H$ is called a \emph{smooth crossed module}. Summarizing, every smooth crossed module defines a Lie 2-group.
\end{example}

\begin{remark}
\label{rem:crossedmodule}
Every Lie 2-group $\Gamma$ can be obtained from a smooth crossed module. Indeed, one puts
$G := \Gamma_0$ and $H := \mathrm{ker}(s)$,
equipped with the Lie group structures defined by the multiplication functor $m$ of $\Gamma$. The homomorphism $t : H \to G$ is the target map $t: \Gamma_1 \to \Gamma_0$, and the action of $G$ on $H$ is given by the formula
$\lw g\gamma := \id_{g} \cdot \gamma \cdot \id_{g^{-1}}$  for $g \in \Gamma_0$ and $\gamma \in \mathrm{ker}(s)$. These two constructions are inverse to each other (up to canonical Lie group isomorphisms and strict Lie 2-group isomorphisms, respectively). 
\end{remark}

\begin{example}
\label{aut2group}
Consider a connected Lie group $H$, so that its automorphism group $\mathrm{Aut}(H)$ is again a Lie group \cite{onishchik1}. Then, we have a smooth crossed module $(\mathrm{Aut}(H),H,i,\ev)$, where $i: H \to \mathrm{Aut}(H)$ is the assignment of inner automorphisms to group elements, and $\ev: \mathrm{Aut}(H) \times H \to H$ is the evaluation action. The associated Lie 2-group is denoted $\mathrm{AUT}(H)$ and is called the \emph{automorphism 2-group} of $H$.
\end{example}

\begin{example}
\label{exactsequence}
Let
\begin{equation*}
\alxydim{}{1 \ar[r] & H \ar[r]^{t} & G \ar[r]^{p} & K \ar[r] & 1}
\end{equation*} 
be an exact sequence of Lie groups, i.e. an exact sequence in which $p$ is a  submersion and $t$ is an embedding. The homomorphisms $t:H \to G$ and $p:G \to K$ define action groupoids $\act G H$ and $\act K G$ as explained in Example \ref{exliecat}. The first one is even a Lie 2-group:  the action  of $G$ on $H$ is defined by $\lw gh := t^{-1}(gt(h)g^{-1})$. This is well-defined: since 
\begin{equation*}
p(gt(h)g^{-1}) = p(g)p(t(h))p(g^{-1})=p(g)p(g)^{-1}=1\text{,}
\end{equation*}
the element $gt(h)g^{-1}$ lies in the image of $t$, and has a unique preimage. The action is smooth because $t$ is an embedding.  
The axioms of a crossed module are obviously satisfied. 
\end{example}

If a Lie groupoid $\Gamma$ is a Lie 2-group in virtue of a multiplication functor $m: \Gamma \times \Gamma \to \Gamma$, then the category $\bun \Gamma M$ of principal $\Gamma$-bundles over a smooth manifold $M$ is monoidal: 

\begin{definition}
\label{def:tensorproduct}
Let $P: \idmorph M \to \Gamma$ and $Q:\idmorph M \to \Gamma$ be principal $\Gamma$-bundles. The \emph{tensor product} $P \otimes Q$ is the anafunctor
\begin{equation*}
\alxydim{@C=1.5cm}{\idmorph M \ar[r]^-{\mathrm{diag}} & \idmorph M \times \idmorph M \ar[r]^-{P \times Q} & \Gamma \times \Gamma \ar[r]^-{m} &\Gamma \text{.}}
\end{equation*}
\end{definition}

\begin{example}
\label{ex:tensorproduct}
\begin{enumerate}[(a)]
\item 
\label{ex:tensorproduct:trivial}
Since trivial principal $\Gamma$-bundles $\trivlin_f$ correspond to \emph{smooth} functors $f:\idmorph M \to \Gamma$ (Example \ref{anafunctors}), it is clear that  $\trivlin_{f} \otimes \trivlin_g = \trivlin_{fg}$. 

\item
\label{ex:tensorproduct:general}
Unwinding Definition \ref{def:tensorproduct} in the  general case, the tensor product of two principal $\Gamma$-bundles $P_1$ and $P_2$ with anchors $\alpha_1$ and $\alpha_2$, respectively, and actions $\rho_1$ and $\rho_2$, respectively, is given by
\begin{equation}
\label{eq:tensorexplicit}
P_1 \otimes P_2 = ((P_1 \times_M P_2) \lli{m \circ (\alpha_1 \times \alpha_2)} \times_{t} \Gamma_1) \;/\; \sim\text{,}
\end{equation}
where 
\begin{equation}
\label{deftensorproductrel}
(p_1,p_2,m(\gamma_1,\gamma_2) \circ \gamma) \sim (\rho_1(p_1, \gamma_1), \rho_2(p_2 , \gamma_2), \gamma)
\end{equation}
for all $p_1\in P_1$, $p_2\in P_2$ and morphisms $\gamma,\gamma_1,\gamma_2\in \Gamma_1$ satisfying $t(\gamma_i)=\alpha_i(p_i)$ for $i=1,2$ and $s(\gamma_1)s(\gamma_2)\eq t(\gamma)$. The bundle projection is $\tilde\pi(p_1,p_2,\gamma) := \pi_1(p_1)=\pi_2(p_2)$, the anchor is $\tilde\alpha(p_1,p_2,\gamma) :=  s(\gamma)$, and the $\Gamma$-action is given by $\tilde\rho((p_1,p_2,\gamma),\gamma') := (p_1,p_2,\gamma \circ \gamma')$.
\end{enumerate}
\end{example}

As a consequence of Lemma \ref{extensionfunctor} and the fact that the composition of anafunctors is associative up to coherent transformations, we have

\begin{proposition}
\label{prop:monoidalstack}
For $M$ a smooth manifold and $\Gamma$ a Lie 2-group, the tensor product
\begin{equation*}
\otimes : \bun\Gamma M \times \bun\Gamma M \to \bun\Gamma M
\end{equation*}
equips the groupoid of principal $\Gamma$-bundles over $M$ with a monoidal structure.  Moreover, it turns the stack $\bun\Gamma-$ into a monoidal stack. \end{proposition}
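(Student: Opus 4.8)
The plan is to reduce the proposition to formal properties of the bicategory of Lie groupoids, anafunctors and transformations — specifically, the fact (stated before Definition~\ref{def:extension}) that composition of anafunctors is associative and unital up to coherent transformations, together with Lemma~\ref{extensionfunctor}, which tells us that extension along an anafunctor is a functor commuting with pullbacks. The tensor product of Definition~\ref{def:tensorproduct} is, by construction, the composite of the diagonal functor $\mathrm{diag}\colon\idmorph M\to\idmorph M\times\idmorph M$, the product anafunctor $P\times Q$, and extension along the multiplication functor $m\colon\Gamma\times\Gamma\to\Gamma$. So the associativity, unit, and pentagon/triangle coherences for $\otimes$ should all descend from the corresponding 2-group axioms for $\Gamma$ (associativity of $m$, existence of a strict unit object, etc.) once these are fed through the coherence of anafunctor composition.

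First I would make precise that $\otimes$ is a functor $\bun\Gamma M\times\bun\Gamma M\to\bun\Gamma M$: on objects it is Definition~\ref{def:tensorproduct}, and on morphisms one uses that a pair of bundle morphisms induces a transformation of the product anafunctor $P\times Q$, to which one then applies the extension 2-functor of Lemma~\ref{extensionfunctor}; functoriality follows because extension is a functor and anafunctor composition respects transformations. Next I would exhibit the associator: for $P,Q,R$ principal $\Gamma$-bundles, both $(P\otimes Q)\otimes R$ and $P\otimes(Q\otimes R)$ are, after unwinding, extensions of the triple product $P\times Q\times R$ along the two functors $m\circ(m\times\id)$ and $m\circ(\id\times m)$; since $\Gamma$ is a strict 2-group these two functors are \emph{equal}, and the associativity transformation is then assembled purely from the coherence isomorphisms for anafunctor composition. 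The unit object is the trivial bundle $\trivlin_{1}$ for the constant map at the identity $1\in\Gamma_0$; by Example~\ref{ex:tensorproduct}~\erf{ex:tensorproduct:trivial} tensoring trivial bundles corresponds to pointwise multiplication in the group $\Gamma_0$, which gives the left and right unitors at least on trivial bundles, and one propagates to general bundles via local trivializations (Lemma~\ref{lem:loctriv} and its corollary). Finally the pentagon and triangle identities are checked; because all the higher structure comes from the associativity coherence of anafunctor composition (which is itself part of a bicategory, hence already coherent), these should reduce to diagram chases that commute automatically.

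For the second assertion — that $\bun\Gamma-$ is a \emph{monoidal stack} — I would note that Lemma~\ref{extensionfunctor} already records that extension along a fixed anafunctor commutes with pullback, so for a smooth map $h\colon X\to M$ there is a canonical natural isomorphism $h^*(P\otimes Q)\cong h^*P\otimes h^*Q$ compatible with the associator and unitors; this is exactly what it means for the monoidal structure to be defined at the level of the stack rather than fiberwise. One should check that these pullback-compatibility isomorphisms satisfy the evident cocycle condition over iterated pullbacks, which again follows from the corresponding property of anafunctor composition.

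The main obstacle I anticipate is \emph{bookkeeping of coherence data} rather than any conceptual difficulty: writing down the associator and unitors as honest transformations of anafunctors (in the explicit quotient models such as \erf{eq:tensorexplicit}) and verifying the pentagon/triangle is tedious. The cleanest route, which I would adopt, is to avoid the explicit models entirely: observe that a strict Lie 2-group $\Gamma$ with its multiplication functor $m$ is precisely a monoid object in the bicategory of Lie groupoids, and that the assignment $M\mapsto\bun\Gamma M = \af(\idmorph M,\Gamma)$ is a (weak) 2-functor into groupoids; a monoid object in the target of a 2-functor gets carried to a monoid object, i.e.\ the hom-groupoids acquire a monoidal structure functorially in $M$ — and that is the statement. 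This packages all the coherence into general nonsense and leaves only the identification of $\otimes$ with Definition~\ref{def:tensorproduct}, which is immediate.
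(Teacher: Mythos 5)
Your proposal is correct and takes essentially the same route as the paper, which derives the proposition in one line from Lemma \ref{extensionfunctor} together with the coherence of anafunctor composition; your final packaging (a strict Lie 2-group is a monoid object in the bicategory of Lie groupoids, and $\bun\Gamma M=\af(\idmorph M,\Gamma)$ inherits the monoidal structure functorially in $M$) is precisely the abstract form of that argument. The only cosmetic remark is that the unitors need no local-trivialization argument: since $\Gamma$ has a strict unit, $m\circ(\id\times 1)=\id$ as functors, so the unitors also come directly from the coherence of anafunctor composition, just like the associator.
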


Notice that the tensor unit of the monoidal groupoid $\bun\Gamma M$ is the trivial principal $\Gamma$-bundle $\trivlin_{1}$ associated to the constant map $1: M \to \Gamma_0$, or, in terms of anafunctors, the one associated to the constant functor $1: M \to \Gamma$.

A (weak) \emph{Lie 2-group homomorphism} between Lie 2-groups $(\Gamma,m_{\Gamma})$ and $(\Omega,m_{\Omega})$ is an anafunctor $\Lambda: \Gamma \to \Omega$ together with a transformation
\begin{equation}
\label{eq:lie2grouphom}
\alxydim{@=1.3cm}{\Gamma \times \Gamma \ar[r]^-{m_{\Gamma}}   \ar[d]_{\Lambda \times \Lambda} &  \Gamma \ar[d]^{\Lambda} \ar@{=>}[dl]|*+{\eta} \\ \Omega \times \Omega \ar[r]_-{m_{\Omega}} & \Omega}
\end{equation}
satisfying the evident coherence condition. Under the equivalence with smooth crossed modules (Remark \ref{rem:crossedmodule}),  Lie 2-group homomorphisms correspond to so-called butterflies \cite{Aldrovandi2009}. 
A Lie 2-group homomorphism is called \emph{weak equivalence}, if the anafunctor $\Lambda$ is a weak equivalence.
Since extensions and tensor products are both defined via  composition of anafunctors,  we immediately obtain

\begin{proposition}
\label{extensionstackmorphism}
Extension along a Lie 2-group homomorphism $\Lambda: \Gamma \to \Omega$ between Lie 2-groups is a monoidal functor
\begin{equation*}
\Lambda: \bun{\Gamma}M \to \bun\Omega M
\end{equation*}
between monoidal categories. Moreover, these form a monoidal morphism between monoidal stacks.
\end{proposition}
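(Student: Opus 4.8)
The plan is to exploit that both the tensor product of Definition~\ref{def:tensorproduct} and the extension of Definition~\ref{def:extension} are assembled purely from composition of anafunctors, so that the whole statement becomes a formal consequence of the coherence of that composition -- Lie groupoids, anafunctors and transformations form a bicategory -- together with the only new ingredient, the transformation $\eta$ of \erf{eq:lie2grouphom} and its coherence. Put differently, a Lie 2-group is a pseudomonoid in the bicategory of anafunctors, a Lie 2-group homomorphism is a morphism of pseudomonoids, and the assignment $\mathcal{Y}\mapsto\af(\idmorph M,\mathcal{Y})=\bun{\mathcal{Y}}M$ turns the former into a monoidal category (which is Proposition~\ref{prop:monoidalstack}) and should turn the latter into a monoidal functor; the proof simply spells this out.

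Concretely, I would let the underlying functor $\Lambda\colon\bun\Gamma M\to\bun\Omega M$ be the one already supplied by Lemma~\ref{extensionfunctor}, and construct the monoidal comparison as follows. For principal $\Gamma$-bundles $P,Q\colon\idmorph M\to\Gamma$ we have, by definition, $\Lambda(P\otimes Q)=\Lambda\circ m_\Gamma\circ(P\times Q)\circ\mathrm{diag}$; whiskering $\eta$ with $(P\times Q)\circ\mathrm{diag}$ produces an isomorphism of this with $m_\Omega\circ(\Lambda\times\Lambda)\circ(P\times Q)\circ\mathrm{diag}$, and then the associativity constraints of anafunctor composition together with the canonical distributivity isomorphism $(\Lambda\times\Lambda)\circ(P\times Q)\cong(\Lambda\circ P)\times(\Lambda\circ Q)$ identify the result with $\Lambda P\otimes\Lambda Q$. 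Naturality of the resulting isomorphism $\mu_{P,Q}$ in $P$ and $Q$ comes from naturality of $\eta$ and of the bicategorical constraints; invertibility is automatic, since every transformation between anafunctors is invertible. For the unit comparison I would observe that the monoidal unit of $\bun\Gamma M$ is $\trivlin_1$ and that $\trivlin_1\otimes\trivlin_1=\trivlin_1$ by Example~\ref{ex:tensorproduct}~\erf{ex:tensorproduct:trivial}, so that $\mu$ makes $\Lambda\trivlin_1$ an idempotent object of $\bun\Omega M$; since the latter is group-like (every object is invertible for $\otimes$, because $\Omega$ is a Lie 2-group), this forces a unique isomorphism $\Lambda\trivlin_1\cong\trivlin_1$, which I take as the unit comparison.

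It then remains to check the coherence axioms of a monoidal functor -- the associativity coherence diagram relating $\mu$ with the associators, and the two unit triangles -- and I expect this to be the only place needing genuine care, since one must keep track of the whiskerings and of the behaviour of composition under $\times$; the only inputs, however, are the coherence axiom imposed on $\eta$ in the definition of a Lie 2-group homomorphism and the pentagon and triangle coherences of the bicategory of anafunctors, so it is a bounded diagram chase. A tidier alternative, which I would likely adopt, is to isolate the lemma that $\mathcal{Y}\mapsto\af(\idmorph M,\mathcal{Y})$ is compatible with finite products up to equivalence, after which the whole statement reduces to the slogan that a product-compatible pseudofunctor carries pseudomonoids and their morphisms to monoidal categories and monoidal functors. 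Finally, for the ``moreover'' clause I would invoke that extension commutes with pullbacks (Lemma~\ref{extensionfunctor}) and that the tensor product does too (Proposition~\ref{prop:monoidalstack}); since $\mu$ and the unit comparison are built solely from anafunctor compositions and bicategorical constraints, all of which are stable under pullback, the pair $(\Lambda,\mu)$ is pseudonatural with respect to the restriction functors, which is precisely the assertion that $\Lambda$ is a monoidal morphism between the monoidal stacks $\bun\Gamma-$ and $\bun\Omega-$.
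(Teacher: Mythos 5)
Your proposal is correct and follows essentially the same route as the paper, which simply observes that since extensions and tensor products are both defined via composition of anafunctors, the monoidal structure and its coherence are inherited from the coherent associativity of that composition together with the transformation $\eta$ and its coherence condition. You spell out the whiskering, the unit comparison, and the stability under pullback that the paper leaves implicit, but no new idea is involved.
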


Since a monoidal functor is an equivalence of monoidal categories if it is an equivalence of the underlying categories, Corollary \ref{co:extensionweakequivalence} implies:

\begin{corollary}
\label{co:extensionweakequivalence2groups}
For $\Lambda: \Gamma \to \Omega$ a weak equivalence between Lie 2-groups, the monoidal functor of Proposition \ref{extensionstackmorphism} is an equivalence of monoidal categories. Moreover, these form a monoidal equivalence between monoidal stacks. 
\end{corollary}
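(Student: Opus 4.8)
The plan is to deduce the statement by assembling results already in hand: Proposition~\ref{extensionstackmorphism} supplies the monoidal structure, Corollary~\ref{co:extensionweakequivalence} supplies the (non-monoidal) equivalence, and the elementary principle mentioned in the sentence preceding the statement — that a monoidal functor which is an equivalence of underlying categories is automatically an equivalence of monoidal categories — glues the two together. Accordingly the argument will be short.

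First I would record what each ingredient delivers. By Proposition~\ref{extensionstackmorphism}, extension along $\Lambda$ is a monoidal functor $\Lambda\colon \bun\Gamma M \to \bun\Omega M$ and, being compatible with pullbacks, it assembles into a monoidal morphism between the monoidal stacks $\bun\Gamma-$ and $\bun\Omega-$. On the other hand, since $\Lambda$ is by hypothesis a weak equivalence of the underlying Lie groupoids, Corollary~\ref{co:extensionweakequivalence} tells us that the \emph{underlying} functor $\Lambda\colon \bun\Gamma M \to \bun\Omega M$ is an equivalence of categories, and that these assemble into an equivalence of the underlying stacks $\bun\Gamma-$ and $\bun\Omega-$.

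Then I would invoke the categorical fact that completes the argument: if $(F,\eta)$ is a monoidal functor whose underlying functor $F$ is an equivalence of categories, then any quasi-inverse $\bar F$ inherits a canonical monoidal structure — built from $\eta$ and the unit and counit of an adjoint equivalence — for which $\bar F$ is monoidal and both unit and counit are monoidal natural isomorphisms; hence $(F,\eta)$ is already an equivalence of monoidal categories. Applying this to $\Lambda$, equipped with the monoidal structure from Proposition~\ref{extensionstackmorphism}, yields the first assertion. For the second assertion the same reasoning is applied over each smooth manifold $M$; the underlying quasi-inverses are available pseudonaturally in $M$ — this is precisely what the stack-level statement of Corollary~\ref{co:extensionweakequivalence} provides — and the monoidal structures transported onto them are then automatically compatible with the pullback functors, so one obtains a monoidal equivalence of the monoidal stacks $\bun\Gamma-$ and $\bun\Omega-$.

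The only step that is more than a citation is the categorical lemma about upgrading an equivalence to a monoidal equivalence, and this is classical; I do not expect a genuine obstacle. The one mild point requiring attention is the pseudonaturality in $M$ when passing from categories to stacks, but this is already encoded in the word ``stack'' in Corollary~\ref{co:extensionweakequivalence}, so it costs nothing extra here.
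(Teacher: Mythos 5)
Your proposal is correct and follows exactly the paper's own route: the paper derives the corollary in one sentence from Corollary \ref{co:extensionweakequivalence} together with the fact that a monoidal functor is a monoidal equivalence as soon as its underlying functor is an equivalence. Your additional remarks on transporting the monoidal structure to a quasi-inverse and on pseudonaturality in $M$ just make explicit what the paper leaves implicit.
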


If we represent the Lie 2-group $\Gamma$ by a smooth crossed module $t:H \to G$ as described in Example \ref{crossedmodule}, we want to determine explicitly what the tensor product looks like under the correspondence of $\act GH$-bundles and principal $H$-bundles with anti-equivariant maps to $G$, see Lemma \ref{actionbundles}.

\begin{lemma}
\label{lem:actiontensorbundles}
Let $t: H \to G$ be a crossed module and let $P$ and $Q$ be $\act GH$-bundles over $M$. Let $(P_H,f)$ and $(Q_H,g)$ be the principal $H$-bundles together with their $H$-anti-equivariant maps that belong to $P$ and $Q$, respectively, under the equivalence of Lemma \ref{actionbundles}. Then, the principal $H$-bundle that corresponds to the tensor product $P \otimes Q$ is given by 
\begin{equation*}
(P \otimes Q)_H = \big(P \times_M Q\big) / \sim
\quere
(p \star h,q) \sim (p,q \star (\lw{f(p)^{-1}}{h}))\text{.}
\end{equation*}
The action of $H$ on $(P \otimes Q)_H$ is  $ [(p,q)] \star h = [(p \star h,q)] $, and the $H$-anti-equivariant map of $(P \otimes Q)_H$ is   
$ [(p,q) ] \mapsto f(p) \cdot g(q)$.
\end{lemma}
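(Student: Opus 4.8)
Write $\Gamma := \act{G}{H}$, so that $\Gamma_0 = G$ and $\Gamma_1 = H \rtimes G$ with the semidirect multiplication \erf{multiplication}, with $s(h,g) = g$, $t(h,g) = t(h)g$, $\id_g = (1,g)$, and with the $H$-action on the object manifold given by $h\bullet g = t(h)g$. Recall that the equivalence of Lemma \ref{actionbundles} sends, on objects, a principal $\Gamma$-bundle $(P,\alpha,\rho)$ to the pair $(P_H,f) = (P,\alpha)$ carrying the $H$-action $p\star h := \rho(p,(h, t(h)^{-1}\alpha(p)))$. The plan is to apply this functor to $P\otimes Q$ and spell everything out: pushing the description of Example \ref{ex:tensorproduct} \erf{ex:tensorproduct:general} (with $P_1 = P$, $\alpha_1 = f$ and $P_2 = Q$, $\alpha_2 = g$) through that functor, one finds that $(P\otimes Q)_H$ has underlying manifold $\big((P\times_M Q)\lli{m\circ(\alpha_1\times\alpha_2)}\times_{t}\Gamma_1\big)/\!\sim\,$, its map to $G$ is $[(p,q,\gamma)]\mapsto s(\gamma)$, and its $H$-action is $[(p,q,\gamma)]\star h = [(p,q,\gamma\circ(h, t(h)^{-1}s(\gamma)))]$. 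The goal is then to identify this triple with the quotient $R := (P\times_M Q)/\!\sim$ from the statement, endowed with the asserted $H$-action and anti-equivariant map.

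I would then exhibit the explicit comparison morphism
\begin{equation*}
\Psi\colon R \longrightarrow (P\otimes Q)_H \quomma [(p,q)] \longmapsto [(p,q,\id_{f(p)g(q)})]\text{,}
\end{equation*}
which lands in $P\otimes Q$ since $f(p)g(q) = \alpha_1(p)\alpha_2(q) = t(\id_{f(p)g(q)})$, and which is smooth because it is locally visibly so. Because morphisms of principal bundles are automatically invertible (Section \ref{preliminaries}), it suffices to check that $\Psi$ is a well-defined morphism of principal $H$-bundles respecting the maps to $G$; this breaks into three verifications, carried out in this order: (1) $\Psi$ descends to $R$, i.e. the pairs $(p\star h,q)$ and $(p, q\star\lw{f(p)^{-1}}{h})$ have the same $\Psi$-image; (2) $\Psi$ intertwines the $H$-actions, $\Psi([(p\star h,q)]) = \Psi([(p,q)])\star h$, which in particular shows that the $H$-action on $(P\otimes Q)_H$ transported along $\Psi$ is the one $[(p,q)]\star h = [(p\star h, q)]$ claimed; and (3) $\Psi$ commutes with the maps to $G$, i.e. $s(\id_{f(p)g(q)}) = f(p)g(q)$, which is immediate and recovers the claimed formula $[(p,q)]\mapsto f(p)\cdot g(q)$. (Alternatively one can check surjectivity of $\Psi$ directly, using \erf{deftensorproductrel} with $\gamma_2 = \id_{\alpha_2(q)}$ to move any class of $P\otimes Q$ to a representative with unit third entry; but this is unnecessary once invertibility of bundle morphisms is invoked.)

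Steps (1) and (2) carry the actual content, and both reduce to feeding a suitable pair $\gamma_1,\gamma_2 \in \Gamma_1$ into the defining relation \erf{deftensorproductrel}. For (1) the correct choice is $\gamma_1 = (h, t(h)^{-1}f(p))$ and $\gamma_2 = (\lw{f(p)^{-1}}{h^{-1}},\, g(q))$: one first computes $g(q\star\lw{f(p)^{-1}}{h}) = f(p)^{-1}t(h)^{-1}f(p)\,g(q)$, so that both $\Psi$-images lie in the class with third entry $\id_c$ for $c := t(h)^{-1}f(p)g(q)$, and then one verifies $m(\gamma_1,\gamma_2) = \id_c$, whereupon \erf{deftensorproductrel} identifies the two. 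These manipulations, and the analogous ones in (2), use precisely the two crossed-module axioms $t(\lw{a}{b}) = a\,t(b)\,a^{-1}$ and $\lw{t(a)}{b} = aba^{-1}$ recorded in Example \ref{crossedmodule}, together with the identity $(h,g)^{-1} = (h^{-1}, t(h)g)$ for inverses of morphisms in $\Gamma$; everything else is routine bookkeeping in the semidirect product $H\rtimes G$ and in the groupoid composition of $\Gamma$.

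The only genuine obstacle is notational rather than conceptual: one must simultaneously keep track of the original $\Gamma$-action $\rho$, the twisted $H$-action $\star$, the semidirect multiplication on $\Gamma_1$, the groupoid composition of $\Gamma$, and the twisting factor $\lw{f(p)^{-1}}{h}$ built into $R$, and a wrong convention anywhere yields a formula that is not even well defined --- which is a useful consistency check that the constants above are the right ones. A more abstract alternative would note that $(P,Q)\mapsto (P\otimes Q)_H$ and $(P,Q)\mapsto R$ both commute with pullback along smooth maps and agree tautologically on pairs of trivial bundles (via Example \ref{ex:tensorproduct} \erf{ex:tensorproduct:trivial} and Example \ref{anafunctors}), so a descent argument would also conclude; but since the well-definedness of the relation defining $R$ has to be verified in any case, the direct construction of $\Psi$ is the more economical path.
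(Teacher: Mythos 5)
Your proof is correct and follows essentially the same route as the paper's: both arguments amount to unwinding the explicit relation \erf{deftensorproductrel} of Example \ref{ex:tensorproduct} in crossed-module terms. The paper normalizes every class of $P\otimes Q$ to a representative with identity third entry and reads off the induced relation on $P\times_M Q$, whereas you go in the opposite direction via $\Psi([(p,q)])=[(p,q,\id_{f(p)g(q)})]$; the computations are identical up to orientation. Two small points. First, in step (1) your $\gamma_2=(\lw{f(p)^{-1}}{h^{-1}},g(q))$ has target $t(\lw{f(p)^{-1}}{h})^{-1}g(q)$, so it is composable with $q\star\lw{f(p)^{-1}}{h}$ rather than with $q$; the relation \erf{deftensorproductrel} must therefore be applied with $p_2=q\star\lw{f(p)^{-1}}{h}$ (which $\gamma_2$ carries back to $q$), and with that reading your identity $m(\gamma_1,\gamma_2)=\id_c$ does exactly what you claim. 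Second, appealing to the invertibility of morphisms of principal bundles to avoid checking bijectivity is mildly circular, since that requires already knowing that $R=(P\times_M Q)/\!\sim$ with the $\star$-action is a principal $H$-bundle; you should either verify principality of that action directly or check that $\Psi$ is bijective (surjectivity as you sketch by normalizing the third entry, injectivity by running \erf{deftensorproductrel} backwards). This is precisely what the paper's normalization argument delivers in a single pass, which is the only real economy of its version over yours.
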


Similar to the tensor product of principal $\Gamma$-bundles, the dual $P^{\vee}$ of a principal $\Gamma$-bundle $P$ over $M$ is the extension of $P$ along the inversion $i:\Gamma \to \Gamma$ of the 2-group, $P^{\vee} := i(P)$. The equality $m \circ (\id,i) = 1$ of functors $M \to \Gamma$ induces a \quot{death map} $d: P \otimes P^{\vee} \to \trivlin_{1}$.
We are going to use this bundle morphism in Section \ref{sec:gerbeprop}, but omit a further systematical treatment of duals for the sake of brevity.

\setsecnumdepth{1}

\section{Version I: Groupoid-valued Cohomology}

\label{versionI}

We have already mentioned group valued \v Cech 1-cocycles in the introduction. They consist of an open cover $\mathscr{U} = \{U_i\}_{i\in I}$ of $M$ and smooth functions 
$g_{ij}: U_i \cap U_j \to G$ satisfying the cocycle condition
$ g_{ij} \cdot g_{jk} = g_{ik}$. 
Segal realized \cite{segal3} that this is the same as a smooth functor 
$$g: \vC (\mathscr{U}) \to \mathcal{B}G $$
where $\mathcal{B} G $ denotes the one-object groupoid introduced in Example \ref{example_groupoids} \eqref{exBG} and 
$\vC (\mathscr{U})$ denotes the \emph{\v Cech groupoid} corresponding to the cover $\mathscr{U}$. It has objects $\bigsqcup_{i\in I} U_i$ and morphisms $\bigsqcup_{i,j\in I} U_i \cap U_j$,  and its structure maps are
\begin{equation*}
s(x,i,j) = (x,i)  
\quomma
t(x,i,j) =  (x,j)
\quomma
\id_{(x,i)} = (x,i,i)
\quand
(x,j,k) \circ (x,i,j) = (x,i,k)\text{.}
\end{equation*}

Analogously, smooth natural transformations between smooth functors $\vC (\mathscr{U}) \to \mathcal{B}G$ give rise to \v Cech coboundaries. Thus the set $\big[\vC(\mathscr{U}), \mathcal{B}G\big]$ of equivalence classes of smooth functors equals the usual first \v Cech cohomology with respect to the cover $\mathscr{U}$. The classical first \v Cech-cohomology $\h^1(M,G)$  of $M$ is hence given by the colimit over all open covers $\mathscr{U}$ of $M$
$$  \h^1(M,G) = \varinjlim_{\mathscr{U}} \big[\vC(\mathscr{U}), \mathcal{B}G\big]\text{.} $$

We use this coincidence  in order  to define the 0-th \v Cech cohomology with coefficients in a general Lie groupoid $\Gamma$:

\begin{definition}
If $\Gamma$ is a Lie groupoid we set
$$ \h^0(M,\Gamma) := \varinjlim_{\mathscr{U}} \big[\vC(\mathscr{U}), \Gamma\big]$$
where the colimit is taken over all covers $\mathscr{U}$ of $M$ and $\big[\vC(\mathscr{U}), \Gamma\big]$ denotes the set of equivalence classes of smooth functors $\vC(\mathscr{U}) \to \Gamma$.
\end{definition}

\begin{remark}
The choice of the degree is such that $ \h^0(M,\Gamma)$ agrees in the case $\Gamma = \idmorph G$  (Example \ref{example_groupoids} \eqref{exM}) with the classical 0-th \v Cech-cohomology $ \h^0 (M, G)$ of $M$ with values in $G$.
\end{remark}

The geometrical meaning of the set is given in the following well-known theorem, which can be proved e.g. using Lemma \ref{homsets}.
\begin{theorem}
\label{bijection}
There is a  bijection
\begin{equation*}
\h^0(M,\Gamma) \cong \bigset{4.2cm}{Isomorphism classes of principal $\Gamma$-bundles over $M$}\text{.}
\end{equation*}
\end{theorem}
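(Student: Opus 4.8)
The plan is to construct explicit maps in both directions and check they are mutually inverse. The starting point is the identification of $\h^0(M,\Gamma)$ as the colimit $\varinjlim_{\mathscr U}[\vC(\mathscr U),\Gamma]$ over open covers. Given a smooth functor $g:\vC(\mathscr U)\to\Gamma$, I would first apply Example \ref{anafunctors}: a smooth functor out of a \v Cech groupoid is a particular kind of anafunctor, and more to the point, restricted to each $U_i$ it gives a smooth functor $g_i:\idmorph{U_i}\to\Gamma$, i.e.\ a trivial principal $\Gamma$-bundle $\trivlin_{f_i}$ over $U_i$ with $f_i=g|_{U_i}$ on objects. The transition morphisms $g_{ij}:U_i\cap U_j\to\Gamma_1$ are, via Lemma \ref{homsets}, exactly isomorphisms $\varphi_{ij}\colon \trivlin_{f_i}|_{U_i\cap U_j}\to\trivlin_{f_j}|_{U_i\cap U_j}$, and the cocycle condition $g_{jk}\circ g_{ij}=g_{ik}$ translates into the cocycle condition $\varphi_{jk}\circ\varphi_{ij}=\varphi_{ik}$ for these isomorphisms. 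Since principal $\Gamma$-bundles form a stack for the surjective-submersion topology (stated just after Definition \ref{def:pgb}), and $\bigsqcup_i U_i\to M$ is such a submersion, this descent datum glues to a principal $\Gamma$-bundle $P$ over $M$, unique up to canonical isomorphism. This defines the forward map on the level of functors; one then checks it respects the equivalence relations (smooth natural transformations between functors $\vC(\mathscr U)\to\Gamma$ give isomorphisms of the glued bundles) and is compatible with refinement of covers, hence descends to the colimit.

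Conversely, given a principal $\Gamma$-bundle $P$ over $M$, the Corollary after Lemma \ref{lem:loctriv} gives an open cover $\mathscr U=\{U_i\}$ and trivializations $\varphi_i\colon\trivlin_{f_i}\to P|_{U_i}$ with smooth $f_i:U_i\to\Gamma_0$. On overlaps, $\varphi_j^{-1}\circ\varphi_i$ is an isomorphism $\trivlin_{f_i}|_{U_i\cap U_j}\to\trivlin_{f_j}|_{U_i\cap U_j}$, which by Lemma \ref{homsets} is a smooth map $g_{ij}:U_i\cap U_j\to\Gamma_1$ with $s\circ g_{ij}=f_i$, $t\circ g_{ij}=f_j$, and the composition statement in Lemma \ref{homsets} forces $g_{jk}\circ g_{ij}=g_{ik}$. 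So $(f_i,g_{ij})$ is precisely a smooth functor $\vC(\mathscr U)\to\Gamma$, giving a class in $\h^0(M,\Gamma)$. Independence of the choices — different covers, different trivializations — is handled by passing to common refinements and again invoking Lemma \ref{homsets}: two sets of trivializations differ by isomorphisms $h_i$ of trivial bundles, i.e.\ smooth $h_i:U_i\to\Gamma_1$, which assemble into a natural transformation between the two functors, hence the same colimit class.

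That the two constructions are mutually inverse is essentially bookkeeping: starting from a functor $(f_i,g_{ij})$, gluing to $P$, and re-trivializing recovers the same cocycle up to the equivalence already quotiented out; starting from $P$, extracting a cocycle and re-gluing recovers $P$ up to isomorphism, since the glued bundle is characterized by the descent datum it solves. Throughout, the essential inputs are exactly the three lemmas cited: Lemma \ref{lem:loctriv} (trivializable $\Leftrightarrow$ has a section) to produce local trivializations, the Corollary after it to get them on a cover, and Lemma \ref{homsets} to translate morphisms of trivial bundles into $\Gamma_1$-valued functions with the right source/target/composition behavior.

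The main obstacle — such as it is — is the descent step: verifying that the stack property genuinely lets one glue a principal $\Gamma$-bundle from the cocycle $(f_i,g_{ij})$, and that this gluing is functorial enough to be well-defined on colimit classes and compatible with refinements. The paper has granted that $\bun\Gamma-$ is a stack, so this is not hard, but one must be a little careful that the equivalence relation defining $\h^0$ (smooth natural transformations, up to refinement) matches exactly the isomorphism relation on glued bundles; a common refinement argument settles it. Everything else is the routine unwinding of Lemma \ref{homsets} that the statement anticipates ("it is straightforward to see that\ldots").
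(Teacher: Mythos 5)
Your proposal is correct and follows exactly the route the paper indicates: the paper gives no written proof, saying only that the theorem is well-known and "can be proved e.g.\ using Lemma \ref{homsets}", and your argument --- extracting cocycles from local trivializations via Lemma \ref{homsets} and gluing bundles from cocycles via the stack property --- is precisely that argument, carried out with the right care about refinements and the equivalence relations on both sides.
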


If $\Gamma$ is not only a Lie 2-groupoid but a Lie 2-group one can also define a \emph{first} cohomology group $ \h^1(M,\Gamma)$. Indeed, in this case one can consider the Lie 2-groupoid $\mathcal {B} \Gamma$ with one object, morphisms  $\Gamma_0$ and 2-morphisms  $\Gamma_1$.
Multiplication in $\Gamma$ gives the composition of morphisms in $\mathcal B \Gamma$. Let 
$\big[ \vC(\mathscr{U}) , \mathcal B \Gamma]$
denote the set of equivalence classes of smooth, weak 2-functors from the \v Cech-groupoid $\vC(\mathscr{U})$ to the Lie 2-groupoid
$\mathcal B \Gamma$. For the definition of weak functors see \cite{benabou2} -- below we will determine this set   explicitly.

\begin{definition}
\label{def:cohomology}
For a 2-group $\Gamma$ we set
$$ \h^1(M,\Gamma) := \varinjlim_{\mathscr{U}} \big[\vC( \mathscr{U} ), \mathcal B\Gamma\big]\text{.}$$
\end{definition} 

\begin{remark}
This agrees for  $\Gamma = \idmorph G$ with the classical $ \h^1(M,G)$. Furthermore, for an abelian Lie group $A$ the Lie groupoid $\mathcal BA$ is even a 2-group and $ \h^1(M,\mathcal B A)$ agrees with the classical \v Cech-cohomology $ \h^2(M,A)$.
\end{remark}

Unwinding the above definition, we get Version I of smooth $\Gamma$-gerbes:

\begin{definition}
\label{def:cocycles}
Let $\Gamma$ be a Lie 2-group, and let $\mathscr{U}=\left \lbrace U_{\alpha} \right \rbrace_{\alpha \in A}$ be an open cover of $M$.
\begin{enumerate}
\item 
A \emph{$\Gamma$-1-cocycle} with respect to $\mathscr{U}$ is a pair $(f_{\alpha\beta}, g_{\alpha\beta\gamma})$ of smooth maps
\begin{equation*}
 f_{\alpha\beta} : U_\alpha \cap U_\beta \to \Gamma_0 
\quad \text{ and } \quad
g_{\alpha\beta\gamma} : U_\alpha \cap U_\beta \cap U_\beta \to \Gamma_1
\end{equation*}
satisfying $s(g_{\alpha\beta\gamma}) = f_{\beta\gamma} \cdot f_{\alpha\beta}$ and $t(g_{\alpha\beta\gamma}) = f_{\alpha\gamma}$, and
\begin{equation}
\label{cocyclecond}
g_{\alpha\beta\delta} \circ (g_{\beta\gamma\delta} \cdot \id_{f_{\alpha\beta}}) = 
g_{\alpha \gamma \delta} \circ (\id_{f_{\gamma\delta}} \cdot g_{\alpha\beta\gamma}) \text{.}
\end{equation}
Here, the symbols $\circ$ and $\cdot$ stand for the composition and multiplication of $\Gamma$, respectively.

\item
Two $\Gamma$-1-cocycles $(f_{\alpha\beta},g_{\alpha\beta\gamma})$ and $(f_{\alpha\beta}', g_{\alpha\beta\gamma}')$ are \emph{equivalent}, 
if there exist smooth maps $h_{\alpha}: U_{\alpha} \to \Gamma_0$ and 
$s_{\alpha\beta}: U_{\alpha} \cap U_{\beta} \to \Gamma_1$
with
\begin{multline*}
s(s_{\alpha\beta}) = g_{\alpha\beta}' \cdot h_{\alpha}
\quad\text{, }\quad
t(s_{\alpha\beta}) = h_{\beta} \cdot g_{\alpha\beta}
\\\quad\text{ and }\quad
(\id_{h_{\gamma}} \cdot g_{\alpha\beta\gamma})  \circ (s_{\beta\gamma} \cdot \id_{f_{\alpha\beta}}) \circ (\id_{f_{\beta\gamma}} \cdot s_{\alpha\beta}) = s_{\alpha\gamma} \circ (g'_{\alpha\beta\gamma} \cdot \id_{h_{\alpha}})\text{.}
\end{multline*}
\end{enumerate}
\end{definition}

\begin{remark}
For a crossed module $t:H \to G$ and $\Gamma := \act GH$ the associated Lie 2-group (Example \ref{crossedmodule}) one can reduce $\Gamma$-1-cocycles  to pairs 
\begin{equation*}
 \tilde f_{\alpha\beta} : U_\alpha \cap U_\beta \to G 
\quad \text{ and } \quad
\tilde g_{\alpha\beta\gamma} : U_\alpha \cap U_\beta \cap U_\beta \to H
\end{equation*}
which satisfies then a cocycle condition similar to \erf{cocyclecond}. Analogously, coboundaries can be reduced to pairs 
\begin{equation*}
\tilde h_{\alpha}: U_{\alpha} \to G \quad \text{ and } \quad
\tilde s_{\alpha\beta}: U_{\alpha} \cap U_{\beta} \to H \text{.}
\end{equation*}
This yields the  common definition of non-abelian cocycles, which can for example be found in \cite{breen3} or \cite{baez8}.   
\end{remark}

\begin{example}
In  case of the crossed module $i:H \to \mathrm{Aut}(H)$ with $\Gamma = \mathrm{AUT}(H)$ (see Example \ref{aut2group}) $\Gamma$-1-cocycles consist of pairs $\tilde f_{\alpha\beta}: U_{\alpha} \cap U_{\beta} \to \mathrm{Aut}(H)$ and $\tilde g_{\alpha\beta}: U_{\alpha} \cap U_{\beta} \cap U_{\gamma} \to H$.  Cocycles of this kind classify so-called Lie groupoid $H$-extensions \cite[Proposition 3.14]{laurent1}, which can hence be seen as another equivalent version for $\mathrm{AUT}(H)$-gerbes.
\end{example}

\setsecnumdepth{1}

\section{Version II: Classifying Maps} 

\label{versionII}

It is well known that for a Lie group $G$ the \emph{smooth} \v Cech-cohomology $ \h^1(M,G)$ and the \emph{continuous} \v Cech-cohomology $ \h_c^1(M,G)$  agree if $M$ is a smooth manifold (in particular paracompact).
This can e.g. be shown by locally approximating continuous cocycles by smooth ones without changing the cohomology class -- see \cite{wockel2}  (even for $G$ infinite-dimensional). Below we  generalize this fact to non-abelian cohomology for certain Lie 2-groups $\Gamma$.
Here the continuous \v Cech-cohomology $ \h^1_c(M,\Gamma)$ is defined in the same way as the smooth one (Definition \ref{def:cohomology}) but with all maps  continuous instead of smooth. A Lie groupoid $\Gamma$ is called \emph{smoothly separable}, if the set  $\upi_0\Gamma$ of isomorphism classes of objects is a smooth manifold for which the projection $\Gamma_0 \to \upi_0\Gamma$ is a submersion.  

\begin{proposition}
\label{smoothening}
Let $M$ be a  smooth manifold and let $\Gamma$ be a smoothly separable Lie 2-group. Then, the  inclusion
$$  \h^1(M,\Gamma) \to  \h_c^1(M,\Gamma) $$
of smooth into continuous \v Cech cohomology is a bijection.
\end{proposition}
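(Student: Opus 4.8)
The plan is to reduce the statement to the corresponding fact for ordinary smooth versus continuous $G$-valued \v Cech cohomology, using the crossed-module description of $\Gamma$ and a smoothing argument applied fibrewise over the cover. First I would fix a continuous $\Gamma$-$1$-cocycle $(f_{\alpha\beta},g_{\alpha\beta\gamma})$ with respect to some open cover $\mathscr U$ of $M$; after passing to a good refinement I may assume all intersections $U_{\alpha\beta\cdots}$ are either empty or diffeomorphic to convex (hence smoothly contractible) open subsets of $\R^n$, and that the cover is locally finite. Writing $\Gamma$ via a smooth crossed module $t\colon H\to G$ as in Example \ref{crossedmodule}, the datum $(f_{\alpha\beta},g_{\alpha\beta\gamma})$ corresponds, by the Remark following Definition \ref{def:cocycles}, to continuous maps $\tilde f_{\alpha\beta}\colon U_{\alpha\beta}\to G$ and $\tilde g_{\alpha\beta\gamma}\colon U_{\alpha\beta\gamma}\to H$ satisfying the usual non-abelian cocycle identity.

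The core of the argument is then an inductive smoothing over the (locally finite) poset of intersections, exactly in the spirit of Wockel \cite{wockel2}: I would first approximate each $\tilde f_{\alpha\beta}$ by a smooth map $f_{\alpha\beta}'$, uniformly close enough that the corrected quantities stay inside a fixed small neighbourhood of the identity; since $H$ and $G$ are Lie groups, close maps differ by a map into a contractible neighbourhood of $1$, which one can absorb using a smooth logarithm/exponential chart together with a partition of unity subordinate to $\mathscr U$. The key point where \emph{smooth separability} of $\Gamma$ enters is that the invariant $\upi_0\Gamma$ of the cocycle (the class obtained by projecting $f_{\alpha\beta}$, which by the cocycle condition glues to a global continuous section of a bundle over $M$ with fibre $\upi_0\Gamma$) is automatically smooth after a further small perturbation, since $\Gamma_0\to\upi_0\Gamma$ is a submersion; this lets one first make the ``$G$-part modulo $H$'' smooth, reducing to the situation where the remaining ambiguity lives in the Lie \emph{group} $H$ fibrewise, where the classical smoothing for group-valued cochains applies verbatim. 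Having made $f'$ smooth, the cocycle relation forces $\tilde g'$ to take values constrained by smooth equations; one then repeats the approximation one dimension higher to smooth $\tilde g_{\alpha\beta\gamma}$, the obstruction to doing so consistently vanishing because each $U_{\alpha\beta\gamma}$ is smoothly contractible. This produces a smooth cocycle together with a continuous coboundary (the partition-of-unity corrections assemble the $h_\alpha$ and $s_{\alpha\beta}$ of Definition \ref{def:cocycles}(2)) connecting it to the original one, proving surjectivity.

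For injectivity one runs the same machine one degree lower: given two smooth cocycles that are \emph{continuously} cohomologous via $(\tilde h_\alpha,\tilde s_{\alpha\beta})$, one smooths the coboundary itself — approximating $\tilde h_\alpha\colon U_\alpha\to G$ and then $\tilde s_{\alpha\beta}\colon U_{\alpha\beta}\to H$ by smooth maps, again using smooth separability to first straighten the $\upi_0\Gamma$-component and contractibility of the $U_\alpha$, $U_{\alpha\beta}$ to kill the resulting discrepancies — and checks that after a further smooth coboundary the two original smooth cocycles become equal, so they already defined the same class in $\h^1(M,\Gamma)$. Passing to the colimit over covers $\mathscr U$ then upgrades the bijection from the level of a fixed (good) cover to the level of $\h^1$ and $\h^1_c$, using that good covers are cofinal among all covers in both the smooth and the continuous directed systems.

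The main obstacle I anticipate is organizing the inductive smoothing so that the corrections made at the level of the $f_{\alpha\beta}$ (degree-one cochains) do not destroy the smoothness or the cocycle identity when one moves up to smooth the $g_{\alpha\beta\gamma}$: one must be careful that the errors introduced are controlled simultaneously on all the finitely many intersections meeting a given compact set, and that the chart-valued ``interpolation'' respects the non-abelian group operations of $G$ and $H$ and the crossed-module action $\lw g h$. This is precisely the place where local finiteness of the cover, smooth contractibility of the intersections, and the submersion hypothesis on $\Gamma_0\to\upi_0\Gamma$ must be combined; everything else is a routine, if lengthy, bookkeeping of partitions of unity, essentially identical to the abelian/group-valued case treated in \cite{wockel2}.
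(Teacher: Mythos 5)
Your route is genuinely different from the paper's. The paper does not smooth cocycles directly at all: it invokes the exact sequence of pointed sets from \cite{breen3},
$$\h^0(M,\idmorph{(\upi_0\Gamma)}) \to \h^1(M,\mathcal B \upi_1 \Gamma) \to \h^1(M,\Gamma) \to \h^1(M,\idmorph {(\upi_0\Gamma)}) \to \h^2(M,\mathcal B\upi_1\Gamma)\text{,}$$
exhibits $\h^1(M,\Gamma)$ as a torsor for a cokernel group $C$ over a kernel $K$, does the same in the continuous setting, and then quotes \cite{wockel2} (degree-one group-valued and abelian higher-degree smoothing) plus Lemma \ref{lem:images} to see that the comparison map is a morphism of torsors over a bijection of bases with a bijection of structure groups, hence a bijection. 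Smooth separability enters there to make $\upi_0\Gamma$ a Lie group so the outer terms of the sequence exist. Your plan instead attacks the cocycles head-on, which, if completed, would be more elementary and would make the role of each hypothesis more transparent.

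However, as written there is a genuine gap at the central step. Smoothing the $G$-valued $1$-cochain $\tilde f_{\alpha\beta}$ is indeed Wockel's argument, but the claim that one can then smooth the $H$-valued $2$-cochain $\tilde g_{\alpha\beta\gamma}$ ``one dimension higher'', with ``the obstruction to doing so consistently vanishing because each $U_{\alpha\beta\gamma}$ is smoothly contractible'', is an assertion, not an argument. Contractibility of the individual triple intersections controls nothing about the coherence of your corrections on the \emph{quadruple} intersections, which is where the cocycle identity \erf{cocyclecond} lives; and in the non-abelian setting the partition-of-unity interpolation cannot simply be performed in an exponential chart, because the identity to be preserved involves products, the twisting action $\lw{g}{h}$ of the crossed module, and a specific order of composition, none of which commute with convex combination of logarithms. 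This degree-two non-abelian smoothing is precisely the difficulty the paper's torsor argument is designed to avoid (the only degree-two smoothing it needs is the \emph{abelian} one for $\upi_1\Gamma$, where fine-sheaf/partition-of-unity methods do apply). To make your approach work you would need to actually construct the inductive correction scheme for the pair $(\tilde f,\tilde g)$ over the nerve of the cover and verify it closes up — a substantial piece of analysis that your proposal identifies as ``the main obstacle'' but does not carry out.
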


\begin{remark}
It is possible that the assumption of being smoothly separable is not necessary, but a proof not assuming this would certainly be more involved than ours. Anyway, \emph{all} Lie 2-groups we are interested in are smoothly separable. 
\end{remark}

\begin{proof}[Proof of Proposition \ref{smoothening}] We denote by $\upi_1\Gamma $ the Lie subgroup of $\Gamma_{1}$ consisting of automorphisms of $1 \in \Gamma_0$. Since it has two commuting group structures -- composition and multiplication -- it is abelian.
The idea of the proof is to reduce the statement via  long exact sequences to statements proved in \cite{wockel2}. The exact sequence we need can be found in \cite{breen3}:
$$\h^0(M,\idmorph {(\upi_0\Gamma)}) \to \h^1(M,\mathcal B \upi_1 \Gamma) \to \h^1(M,\Gamma) 
  \to \h^1(M,\idmorph {(\upi_0\Gamma)}) \to \h^2(M,\mathcal B\upi_1\Gamma). $$
Note that $\h^1(M,\Gamma)$ and $\h^1(M,\idmorph {(\upi_0\Gamma)})$ do not have group structures: hence, exactness is only meant as exactness of pointed sets. But we actually have more structure, namely an action of $\h^1(M,B\upi_1\Gamma)$ on $\h^1(M,\Gamma)$. This action factors to an action of
$$C := \mathrm{coker}\Big(\h^0(M,\idmorph {(\upi_0\Gamma)}) \to \h^1(M,\mathcal B\upi_1 \Gamma)\Big).$$
In fact on the non-empty fibres of the morphism $\h^1(M,\Gamma) \to \h^1(M,\idmorph {(\upi_0\Gamma)})$ this action is simply transitive. In other words: $\h^1(M,\Gamma)$ is a $C$-Torsor over 
$$ K := \mathrm{ker}\Big(\h^1(M,\idmorph {(\upi_0\Gamma)}) \to \h^2(M,\mathcal B\upi_1 \Gamma)\Big).$$
The same type of sequence also exists in continuous cohomology, namely
$$ \h^0_c(M,\idmorph {(\upi_0\Gamma)}) \to \h^1_c(M,\mathcal B\upi_1 \Gamma) \to \h^1_c(M,\Gamma) 
  \to \h^1_c(M,\idmorph {(\upi_0\Gamma)}) \to \h^2_c(M,\mathcal B\upi_1 \Gamma).$$ 
With 
\begin{eqnarray*}
C' &:=& \mathrm{coker}\Big(\h^0_c(M,\idmorph {(\upi_0\Gamma)}) \to \h^1_c(M,\mathcal B\upi_1 \Gamma)\Big)
\\
K' &:=& \mathrm{ker}\Big(\h^1_c(M,\idmorph {(\upi_0\Gamma)}) \to \h^2_c(M,\mathcal B\upi_1 \Gamma)\Big)\text{,}
\end{eqnarray*}
we exhibit $\h^1_c(M,\Gamma)$ as a $C'$-Torsor over $K'$. 
  
The natural inclusions of smooth into continuous cohomology form a chain map between the two sequences. From 
\cite{wockel2} we know that they are isomorphisms on the second, fourth and fifth factor. In particular we have an induced isomorphism $K \iso K'$. Lemma \ref{lem:images} below additionally shows that  the induced morphism $C \to C'$ is an isomorphism. Using these isomorphisms we see that $\h^1(M,\Gamma)$ and $\h^1_c(M,\Gamma)$ are both $C$-torsors over $K$ and that the natural map $$\h^1(M,\Gamma) \to \h^1_c(M,\Gamma)$$ is a morphism of torsors. But each morphism of group torsors is bijective, which concludes the proof.
\end{proof}

\begin{lemma}
\label{lem:images}
The images of 
$$f: \h^0(M,\idmorph {(\upi_0\Gamma)}) \to \h^1(M,\mathcal B\upi_1 \Gamma) \quad \text{and} \quad f':\h^0_c(M,\idmorph {(\upi_0\Gamma)}) \to \h^1_c(M,\mathcal B\upi_1 \Gamma)$$
are isomorphic.
\end{lemma}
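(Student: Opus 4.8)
The plan is to prove the sharper statement that the canonical map $\h^1(M,\mathcal B\upi_1\Gamma)\to\h^1_c(M,\mathcal B\upi_1\Gamma)$ — which we already know to be an isomorphism, it being the ``second factor'' of the chain map between the two long exact sequences in the proof of Proposition \ref{smoothening}, shown to be an isomorphism in \cite{wockel2} — restricts to a bijection $\bild(f)\to\bild(f')$; the lemma is then immediate. First I would record that $\h^0(M,\idmorph{(\upi_0\Gamma)})=C^\infty(M,\upi_0\Gamma)$ and $\h^0_c(M,\idmorph{(\upi_0\Gamma)})=C(M,\upi_0\Gamma)$, and that under these identifications the left vertical map of that chain map is simply the inclusion of smooth into continuous maps, so that the square
\begin{equation*}
\alxydim{@C=1.6cm}{C^\infty(M,\upi_0\Gamma)\ar[r]^-{f}\ar[d] & \h^1(M,\mathcal B\upi_1\Gamma)\ar[d]^{\cong}\\ C(M,\upi_0\Gamma)\ar[r]_-{f'} & \h^1_c(M,\mathcal B\upi_1\Gamma)}
\end{equation*}
commutes. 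Commutativity shows that the right vertical isomorphism already carries $\bild(f)$ into $\bild(f')$, so the whole content is the reverse inclusion: for every continuous $\psi\colon M\to\upi_0\Gamma$ there must be a \emph{smooth} $\phi\colon M\to\upi_0\Gamma$ with $f'(\psi)=f'(\phi)$.

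To get this I would use two properties of the maps in the exact sequence of \cite{breen3}. The first is naturality: that sequence is natural in $M$, so $f$ and $f'$ commute with pullback along smooth, respectively continuous, maps of manifolds. Applying this to the identity map of the manifold $\upi_0\Gamma$ (recall $\Gamma$ is smoothly separable, so $\upi_0\Gamma$ is a manifold; writing $\Gamma$ via a crossed module $t\colon H\to G$ one has $\upi_0\Gamma=G/t(H)$ and $\upi_1\Gamma=\ker t$) gives $f=(-)^{*}\kappa$ and $f'=(-)^{*}\kappa_c$ for the fixed classes $\kappa:=f(\id_{\upi_0\Gamma})\in\h^1(\upi_0\Gamma,\mathcal B\upi_1\Gamma)$ and $\kappa_c:=f'(\id_{\upi_0\Gamma})\in\h^1_c(\upi_0\Gamma,\mathcal B\upi_1\Gamma)$, since pulling back the identity along $\psi$ returns $\psi$. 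The second is homotopy invariance of the target: $\h^1_c(-,\mathcal B\upi_1\Gamma)=\h^2_c(-,\upi_1\Gamma)$ sends homotopic maps between paracompact spaces to the same map. For $(\upi_1\Gamma)_0\cong\R^k\times\mathbb T^l$ this follows from softness of the sheaf of continuous $\R$-valued functions together with the exponential sequence $0\to\Z\to\underline\R\to\underline{\mathbb T}\to 0$, and the general case follows from $0\to(\upi_1\Gamma)_0\to\upi_1\Gamma\to\pi_0(\upi_1\Gamma)\to 0$ by the five lemma.

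With these two facts the argument closes quickly. Since $M$ is a paracompact smooth manifold and $\upi_0\Gamma$ a smooth manifold, every continuous map $\psi\colon M\to\upi_0\Gamma$ is homotopic to a smooth one $\phi$ — for instance by embedding $\upi_0\Gamma$ as a closed submanifold of some $\R^N$, smoothly approximating $\psi$ inside a tubular neighbourhood, and composing with the retraction. Then $f'(\psi)=\psi^{*}\kappa_c=\phi^{*}\kappa_c=f'(\phi)$ by naturality and homotopy invariance, and $f'(\phi)$ is the image of $f(\phi)\in\bild(f)$ under the isomorphism of the square. This proves the reverse inclusion; the lemma follows, and the isomorphism $\h^1(M,\mathcal B\upi_1\Gamma)\cong\h^1_c(M,\mathcal B\upi_1\Gamma)$ descends to the isomorphism $C\cong C'$ of cokernels used in the proof of Proposition \ref{smoothening}.

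The step I expect to be the main obstacle is the homotopy-invariance input for $\h^2_c(-,\upi_1\Gamma)$: one must handle a coefficient group that is a priori neither discrete nor connected and verify that the short exact sequences of coefficient groups used above really do induce short exact sequences of sheaves of continuous functions; one also has to check that the connecting map of \cite{breen3} is natural in the strong sense needed here, so that it is genuinely the pullback of a single class over $\upi_0\Gamma$. If one prefers not to invoke homotopy invariance as a black box, a workable alternative is to imitate the local smoothing of \cite{wockel2} directly: choose $\phi$ so $C^0$-close to $\psi$ that, over a common open cover, local lifts of $\phi$ and of $\psi$ to $\Gamma_0$ can be chosen pairwise close; the two resulting $\upi_1\Gamma$-valued \v Cech $2$-cocycles then differ by an explicit coboundary, which again yields $f'(\psi)=f'(\phi)$.
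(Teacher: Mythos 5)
Your proposal is correct and follows essentially the same route as the paper: identify $f$ and $f'$ as pullback of the universal class $\xi_\Gamma=f(\id_{\upi_0\Gamma})$ via naturality of the connecting map, smooth any continuous $\psi\colon M\to\upi_0\Gamma$ up to homotopy, and conclude by homotopy invariance of the degree-two cohomology with coefficients in $\upi_1\Gamma$. The only divergence is in how that homotopy invariance is justified — you argue on the continuous side via soft sheaves and the exponential sequence, whereas the paper deduces it on the smooth side from the existence of connections on abelian $\mathcal{B}\upi_1\Gamma$-bundle gerbes — but this is a choice of justification within the same argument.
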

\begin{proof}
 $\h^0(M,\idmorph {(\upi_0\Gamma)})$ is the group of smooth maps $s: M \to \upi_0\Gamma$ and $\h^0_c(M,\idmorph {(\upi_0\Gamma)})$ is the group of continuous maps $t: M \to \upi_0\Gamma$. The groups $\h^1(M,\mathcal B\upi_1 \Gamma) = \h^2(M,\upi_1 \Gamma)$ and $\h^1_c(M,\mathcal B\upi_1 \Gamma) = \h^2_{c}(M,\upi_1 \Gamma)$ are isomorphic by the result of \cite{wockel2}. Under the connecting homomorphism
\begin{equation*}
\h^0(\upi_0\Gamma,\idmorph {(\upi_0\Gamma)}) \to \h^1(\upi_0\Gamma,\mathcal B\upi_1 \Gamma)
\end{equation*}
the identity $\id_{\upi_0\Gamma}$ is sent to a class $\xi_{\Gamma}$ with the property that  $f(s) = s^{*}\xi_{\Gamma}$ and $f'(t) = t^{*}\xi_{\Gamma}$. Hence it suffices to show that for each continuous map $t: M \to \upi_0\Gamma$ there is a smooth map $s:M \to  \upi_0\Gamma$ with $s^*\xi_\Gamma = t^*\xi_\Gamma$.
It is well known that for each continuous map $t$ between smooth manifolds a homotopic smooth map $s$ exists. It remains to show that the pullback $\h^1(\upi_0\Gamma,\mathcal{B}\upi_1 \Gamma) \to \h^1(M,\mathcal{B}\upi_1 \Gamma)$ along smooth maps is homotopy invariant. This can e.g. be seen by choosing smooth (abelian) $\mathcal{B}\upi_1 \Gamma$-bundle gerbes as representatives, in which case the homotopy invariance can be deduced from the existence of connections.  
\end{proof}

It is a standard result in topology that the continuous $G$-valued \v Cech cohomology of paracompact spaces is in bijection with homotopy classes of maps to the classifying space $\mathfrak BG$ of the group $G$. A model for the classifying space $\mathfrak BG$ is for example the geometric realization of the nerve of the groupoid $\mathcal B G$, or Milnor's join construction \cite{milnor}.

Now let $\Gamma$ be a Lie 2-group, and let $|\Gamma|$ denote the geometric realization of the nerve of $\Gamma$. Since the nerve is a simplicial topological group, $|\Gamma|$ is a topological group. Version II for smooth $\Gamma$-gerbes is:

\begin{definition}[{\cite{baez8}}]
\label{def:classmap}
A classifying map for a smooth $\Gamma$-gerbe is a continuous map
\begin{equation*}
f: M \to \mathfrak{B}|\Gamma|\text{.}
\end{equation*}
\end{definition}

We denote by $\big[ M , \mathfrak B |\Gamma|\big]$ the set of homotopy classes of classifying maps. 

\begin{proposition}[{{\cite[Theorem 1]{baez8}}}]
\label{bstheorem}
Let $\Gamma$ be a Lie 2-group. Then there is a bijection 
$$ \h_c^1(M,\Gamma) \cong \big[ M , \mathfrak B |\Gamma|\big]$$
where the topological group $|\Gamma|$ is the geometric realization of the nerve of $\Gamma$.
\end{proposition}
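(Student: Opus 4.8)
The plan is to follow Baez and Stevenson and reduce the statement to the classical fact that principal bundles for the topological group $|\Gamma|$ over a paracompact space are classified by its classifying space $\mathfrak B|\Gamma|$. The key observation is that a continuous $\Gamma$-1-cocycle with respect to an open cover $\mathscr U$ of $M$ is nothing but a descent datum along the \v Cech nerve of $\mathscr U$ for the simplicial classifying object of the 2-group $\Gamma$, and that, over a paracompact manifold, such \v Cech data taken up to refinement are represented by the geometric realization of that simplicial object.

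First I would set up the dictionary between cocycles and simplicial maps. The nerve of the underlying Lie groupoid of $\Gamma$ is a simplicial topological group $N\Gamma$ -- a simplicial \emph{group} precisely because $\Gamma$ carries a multiplication functor -- and the nerve of the one-object 2-groupoid $\mathcal B\Gamma$, after passing to diagonals of the evident bisimplicial space, is the classifying complex $\bar W(N\Gamma)$. On the other hand the nerve of the \v Cech groupoid $\vC(\mathscr U)$ is the ordinary \v Cech nerve $\vC(\mathscr U)_\bullet$ with $n$-simplices $\bigsqcup_{i_0,\dots,i_n} U_{i_0}\cap\dots\cap U_{i_n}$. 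Unwinding Definition \ref{def:cocycles} with all maps continuous, a $\Gamma$-1-cocycle with respect to $\mathscr U$ is exactly a map of simplicial spaces $\vC(\mathscr U)_\bullet \rightarrow \bar W(N\Gamma)$, and an equivalence of cocycles is exactly a simplicial homotopy between two such maps. Hence $\big[\vC(\mathscr U),\mathcal B\Gamma\big]$ is the set of homotopy classes of such simplicial maps, which after geometric realization is $\big[\,|\vC(\mathscr U)_\bullet|,\,|\bar W(N\Gamma)|\,\big]$.

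Next I would pass to the colimit over covers. Each realization $|\vC(\mathscr U)_\bullet|$ carries a canonical map to $M$, and here paracompactness of $M$ enters: a partition of unity subordinate to $\mathscr U$ yields a section up to homotopy, so this map is a weak equivalence, and the \v Cech nerves are moreover cofinal among all hypercovers of $M$. Consequently $\h^1_c(M,\Gamma) = \varinjlim_{\mathscr U}\big[\vC(\mathscr U),\mathcal B\Gamma\big] \cong \big[M,\,|\bar W(N\Gamma)|\,\big]$ -- the standard statement that \v Cech cohomology with coefficients in a (well-behaved) simplicial space is representable on paracompact spaces by the realization of that simplicial space.

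It remains to identify $|\bar W(N\Gamma)|$ with $\mathfrak B|\Gamma|$, and this is where the genuine homotopy-theoretic content sits. One needs that $N\Gamma$, and hence $\bar W(N\Gamma)$, is a \emph{proper} (``good'') simplicial space whose realization has the homotopy type of a CW-complex; this is what lets geometric realization commute with the bar construction up to homotopy, giving $|\bar W(N\Gamma)| \simeq \bar W(|N\Gamma|) = \mathfrak B|N\Gamma| = \mathfrak B|\Gamma|$, with $\mathfrak B$ taken for instance to be Milnor's classifying space. Properness holds because the degeneracies of $N\Gamma$ are built from unit inclusions of Lie groups and hence are closed cofibrations. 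I expect this last identification, together with the hypercover comparison and representability used in the colimit step, to be the main obstacle: both genuinely use that $\Gamma$ is a \emph{Lie} 2-group and that $M$ is a manifold. All of this is carried out in detail by Baez and Stevenson, so in practice I would simply invoke their Theorem~1 \cite{baez8}; the sketch above records the structure of that argument.
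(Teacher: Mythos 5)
Your proposal is correct and matches the paper's treatment: the paper gives no proof of this proposition beyond citing \cite[Theorem 1]{baez8}, adding only the remark that the well-pointedness hypothesis of that theorem holds automatically since Lie groups are well-pointed. Your sketch accurately records the structure of the cited argument, and your observation that the degeneracies of $N\Gamma$ are closed cofibrations because they come from unit inclusions of Lie groups is exactly the well-pointedness point the paper singles out.
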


Note that the assumption of \cite[Theorem 1]{baez8} that $\Gamma$ is well-pointed is automatically satisfied because  Lie groups are well-pointed. Propositions \ref{smoothening} and \ref{bstheorem}  imply the following equivalence theorem between Version I and Version II.

\begin{theorem}
\label{th:classifying}
For $M$ a smooth manifold and $\Gamma$ a smoothly separable Lie 2-group, there is a  bijection
\begin{equation*}
\h^1(M,\Gamma) \cong \big[ M , \mathfrak B |\Gamma|\big].
\end{equation*}
\end{theorem}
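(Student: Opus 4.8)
The plan is to obtain the bijection by composing the two results just established. The hypotheses of the theorem are precisely what is needed: $\Gamma$ is a smoothly separable Lie 2-group, so Proposition \ref{smoothening} applies and yields a bijection $\h^1(M,\Gamma) \cong \h_c^1(M,\Gamma)$ induced by the inclusion of smooth into continuous \v Cech $\Gamma$-cocycles; and $M$, being a smooth manifold, is in particular paracompact, while the well-pointedness hypothesis of \cite[Theorem 1]{baez8} is automatic for Lie groups (as noted above), so Proposition \ref{bstheorem} applies and yields a bijection $\h_c^1(M,\Gamma) \cong \big[ M , \mathfrak B |\Gamma|\big]$.

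Concretely, I would first record that the theorem's hypotheses ($M$ a smooth manifold, $\Gamma$ a smoothly separable Lie 2-group) imply all the hypotheses of both propositions, and then simply write down the chain of bijections
\[
\h^1(M,\Gamma) \;\cong\; \h_c^1(M,\Gamma) \;\cong\; \big[ M , \mathfrak B |\Gamma|\big],
\]
concluding that the composite is the asserted bijection.

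I do not expect any genuine obstacle: the mathematical content is entirely contained in Propositions \ref{smoothening} and \ref{bstheorem}, which have already been proved (the first by reduction via long exact sequences to the smoothing theorems of \cite{wockel2}, the second cited from Baez--Stevenson). The only point requiring minimal care is that the statement claims merely a bijection of sets, so no compatibility of the composite with the geometric descriptions of the three sides need be verified; should one wish to, one could additionally observe that each of the two bijections is natural in $M$ and hence so is the composite.
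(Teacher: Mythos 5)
Your proposal is correct and matches the paper exactly: the theorem is stated there as an immediate consequence of Propositions \ref{smoothening} and \ref{bstheorem}, with the same remark that well-pointedness is automatic for Lie groups, so the bijection is simply the composite $\h^1(M,\Gamma) \cong \h_c^1(M,\Gamma) \cong \big[ M , \mathfrak B |\Gamma|\big]$. Nothing further is needed.
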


\begin{remark}
\label{rem:baas}
 Baez and Stevenson argue in \cite[Section 5.2.]{baez8} that the space $\mathfrak B |\Gamma|$ is homotopy equivalent to a certain geometric realization of the Lie 2-groupoid  $|\mathcal B\Gamma|$ from Section \ref{versionI}. Baas, Böstedt and Kro have shown \cite{baas2} that $|\mathcal{B}\Gamma|$ classifies concordance classes of charted $\Gamma$-2-bundles. In particular, charted $\Gamma$-2-bundles are a further equivalent version of smooth $\Gamma$-gerbes. 
\end{remark}

\setsecnumdepth{2}  
 
\section{Version III: Groupoid Bundle Gerbes}

\label{versionIII}

Several definitions of non-abelian bundle gerbes have appeared in literature so far \cite{aschieri,jurco1,murray5}. The approach we give here not only shows a conceptually clear way to define  non-abelian bundle gerbes, but also produces systematically a whole bicategory. Moreover, these bicategories form a 2-stack over smooth manifolds (with the Grothendieck topology of surjective submersions).

\subsection{Definition via the Plus Construction}

Recall that the stack $\bun\Gamma-$ of principal $\Gamma$-bundles is monoidal if $\Gamma$ is a Lie 2-group (Proposition \ref{prop:monoidalstack}). Associated to the monoidal stack $\bun\Gamma-$ we have a pre-2-stack 
\begin{equation*}
\triv\grb \Gamma - := \mathcal{B}(\bun\Gamma-)
\end{equation*}
of \emph{trivial $\Gamma$-gerbes}. Explicitly, there is one trivial $\Gamma$-gerbe $\mathcal{I}$ over every smooth manifold $M$. The 1-morphisms from $\mathcal{I}$ to $\mathcal{I}$ are principal $\Gamma$-bundles over $M$, and the 2-morphisms between those are  morphisms of principal $\Gamma$-bundles.
Horizontal composition is given by the tensor product of principal $\Gamma$-bundles, and vertical composition is the ordinary composition of $\Gamma$-bundle morphisms. 

Now we apply the \emph{plus construction} of \cite{nikolaus2} in order to stackify this pre-2-stack. The resulting 2-stack is by definition the \emph{2-stack of $\Gamma$-bundle gerbes}, i.e.
\begin{equation*}
\grb\Gamma- := \left ( \triv\grb\Gamma- \right )^{+}\text{.}
\end{equation*}
Unwinding the details of the plus construction, we obtain the following definitions:

\begin{definition} \label{def:grb}
Let $M$ be a smooth manifold. A \emph{$\Gamma$-bundle gerbe over $M$} is a surjective submersion $\pi\maps Y \to M$, a principal $\Gamma$-bundle $P$ over $Y^{[2]}$ and an associative morphism
\begin{equation*}
\mu: \pi_{23}^{*}P \otimes \pi_{12}^{*}P \to \pi_{13}^{*}P
\end{equation*}
of $\Gamma$-bundles over $Y^{[3]}$.
\end{definition}

The morphism $\mu$ is called the \emph{bundle gerbe product}. 
Its associativity is the evident condition for bundle morphisms over $Y^{[4]}$.

In order to proceed with the 1-morphisms, we say that a \emph{common refinement} of two surjective submersions $\pi_1: Y_1 \to M$ and $\pi_2: Y_2 \to M$ is a smooth manifold $Z$ together with surjective submersions $Z \to Y_1$ and $Z \to Y_2$ such that the diagram
\begin{equation*}
\alxydim{@=0.6cm}{& Z \ar[dr]^{} \ar[dl]_{} & \\ Y_1 \ar[dr]_{\pi_1} && Y_2 \ar[dl]^{\pi_2} \\ & M&}
\end{equation*}
is commutative.

We fix the following convention: suppose $P_1$ and $P_2$ are $\Gamma$-bundles over surjective submersions $U_1$ and $U_2$, respectively, and $V$ is a common refinement of $U_1$ and $U_2$. Then, a bundle morphism $\varphi: P_1 \to P_2$ is understood to be a bundle morphism between the pullbacks of $P_{1}$ and $P_2$ to the common refinement $V$. For example, in the following definition
this convention applies to $U_1 = Y_1^{[2]}$, $U_2 = Y_2^{[2]}$ and $V =\ Z^{[2]}$.

\begin{definition}
\label{def1mor}
Let $\mathcal{G}_1$ and $\mathcal{G}_2$ be $\Gamma$-bundle gerbes over $M$. A \emph{1-morphism} $\mathcal{A}: \mathcal{G}_1 \to \mathcal{G}_2$ is a common refinement $Z$ of the surjective submersions of $\mathcal{G}_1$ and $\mathcal{G}_2$ together with a principal $\Gamma$-bundle $Q$ over $Z$ and a morphism
\begin{equation*}
\beta: P_2 \otimes \zeta_1^{*}Q  \to \zeta_2^{*}Q \otimes P_1
\end{equation*}
of $\Gamma$-bundles over $Z^{[2]}$,  where $\zeta_{1},\zeta_2: Z^{[2]} \to Z$ are the two projections, such that $\alpha$ is compatible with the bundle gerbe products $\mu_1$ and $\mu_2$.
\end{definition}

The compatibility of $\alpha$ with $\mu_1$ and $\mu_2$ means that the diagram
\begin{equation}
\label{compgerbemorph}
\alxydim{@C=2cm@R=1.3cm}{\pi_{23}^{*}P_2 \otimes \pi_{12}^{*}P_2 \otimes \zeta_1^{*}Q  \ar[d]_{\id \otimes \zeta_{12}^{*}\beta} \ar[r]^-{\mu_2 \otimes \id} & \pi_{13}^{*}P_2 \otimes \zeta_1^{*}Q \ar[dd] ^{\zeta_{13}^{*}\beta} \\ \pi_{23}^{*}P_2 \otimes \zeta_2^{*}Q \otimes \pi_{12}^{*}P_1 \ar[d]_{\zeta_{23}^{*}\beta \otimes \id} & \\ \zeta_3^{*}Q \otimes \pi_{23}^{*}P_1 \otimes \pi_{12}^{*}P_1 \ar[r]_-{ \id \otimes \mu_1} & \zeta_3^{*}Q \otimes \pi_{13}^{*}P_1}
\end{equation} 
of morphisms of $\Gamma$-bundles over $Z^{[3]}$ is commutative. 

If $\mathcal{A}_{12}: \mathcal{G}_1 \to \mathcal{G}_2$ and $\mathcal{A}_{23}: \mathcal{G}_2 \to \mathcal{G}_3$ are 1-morphisms between bundle gerbes over $M$, the composition $\mathcal{A}_{23} \circ \mathcal{A}_{12}: \mathcal{G}_1 \to \mathcal{G}_3$ is given by the fibre product $Z := Z_{23} \times_{Y_2} Z_{12}$, the principal $\Gamma$-bundle $Q := Q_{23} \otimes Q_{12}$ over $Z$, and the morphism
\begin{equation*}
\alxydim{@C=2cm}{P_3 \otimes \zeta_1^{*}Q   \ar[r]^-{\beta_{23} \otimes \id} & \zeta_2^{*}Q_{23} \otimes P_2 \otimes \zeta_1^{*}Q_{12} \ar[r]^-{\id \otimes \beta_{12}} &   \zeta_2^{*}Q \otimes P_1 \text{.}}
\end{equation*}
The identity 1-morphism $\id_{\mathcal{G}}$ associated to a $\Gamma$-bundle gerbe $\mathcal{G}$ is given by $Y$ regarded as a common refinement of $\pi:Y \to M$ with itself,  the trivial $\Gamma$-bundle $\trivlin_1$ (the tensor unit of $\bun\Gamma Y$), and the evident morphism $\trivlin_1 \otimes P \to P \otimes \trivlin_1$. 

In order to define 2-morphisms, suppose that $\pi_1: Y_1 \to M$ and $\pi_2:Y_2 \to M$ are surjective submersions, and that $Z$ and $Z'$ are common refinements of $\pi_1$ and $\pi_2$. Let $W$ be a  common refinement of $Z$ and $Z'$  with surjective submersions $r:W \to Z$ and $r':W \to Z'$.  We obtain  two maps
\begin{equation*}
\alxydim{}{s_1: W \ar[r]^-{r} & Z \ar[r] & Y_1 
\quad\quand\quad
t_1: W \ar[r]^-{r'} & Z' \ar[r] & Y_1\text{,}}
\end{equation*}
and analogously,  two maps $s_2,t_2: W \to Y_2$. These patch together to maps 
$$x_W:=(s_1,t_1)~:~ W \to Y_1 \times_M Y_1 \qquad\text{and}\qquad y_W:=(s_2,t_2)~:~ W \to Y_2 \times_M Y_2.$$

\begin{definition}
\label{def2mor}
Let $\mathcal{G}_1$ and $\mathcal{G}_2$ be $\Gamma$-bundle gerbes over $M$, and let $\mathcal{A},\mathcal{A}': \mathcal{G}_1 \to \mathcal{G}_2$ be 1-morphisms. A \emph{2-morphism} $\varphi: \mathcal{A} \Rightarrow \mathcal{A}'$ is a common refinement $W$ of the common refinements $Z$ and $Z'$, together with a morphism 
\begin{equation*}
\varphi: y_W^*P_2 \otimes r^{*}Q \to {r'}^{*}Q' \otimes x_W^* P_1
\end{equation*}
of $\Gamma$-bundles over $W$ that is compatible with the morphisms $\beta$ and $\beta'$.
\end{definition}

The compatibility means that a certain diagram over $W^{[2]}$ commutes. Fibrewise over a point $(w,w') \in W \times_M \nobr W$ this diagram looks as follows:
\begin{equation}
\label{eq:comp2morphcomplicated}
\alxydim{@R=1.3cm@C=0.35cm}{
{P_2}|_{s_2(w'),t_2(w')} \otimes {P_2}|_{s_2(w),s_2(w')} \otimes Q|_{r(w)}  \ar[rr]^-{\id \otimes \beta} \ar[d]_{\mu_2 \otimes \id} & &
{P_2}|_{s_2(w'),t_2(w')} \otimes Q|_{r(w')} \otimes {P_1}|_{s_1(w),s_1(w')} \ar[d]^{\varphi \otimes \id} \\ 
{P_2}|_{s_2(w),t_2(w')} \otimes Q|_{r(w)}\ar[d]_{\mu_2^{-1} \otimes \id} && 
{Q'}|_{r'(w')} \otimes {P_1}|_{s_1(w'),t_1(w')} \otimes {P_1}|_{s_1(w),s_1(w')} \ar[d]^{id \otimes \mu_1} \\
{P_2}|_{t_2(w),t_2(w')} \otimes {P_2}|_{s_2(w),t_2(w)}\otimes Q|_{r(w)}  \ar[d]_{\id \otimes \varphi} &&
 {Q'}|_{r'(w')} \otimes {P_1}|_{s_1(w),t_1(w')} \ar[d]^{id \otimes \mu_1^{-1}}\\
{P_2}|_{t_2(w),t_2(w')} \otimes {Q'}|_{r'(w)} \otimes {P_1}|_{s_1(w),t_1(w)} \ar[rr]_{\beta' \otimes \id}&&
{Q'}|_{r'(w')} \otimes {P_1}|_{t_1(w),t_1(w')} \otimes {P_1}|_{s_1(w),t_1(w)}
}
\end{equation} 
Finally we identify two 2-morphisms  $(W_1,r_1,r_1',\varphi_1)$ and $(W_2,r_2,r_2',\varphi_2)$ if the pullbacks of $\varphi_1$ 
and $\varphi_2$ to $W \times_{Z \times_{\times} Z'} \nobr W'$ agree. Explicitly, this condition means that for all $w_{1} \in W_1$ 
and $w_{2} \in W_{2}$ with $r_1(w_1)=r_2(w_2)$ and $r_1'(w_1) = r_2'(w_2)$, and for all
$p_2 \in y_{W_1}^{*}P_2 = y_{W_2}^{*}P_2$ and $q\in r_1^{*}Q = r_2^{*}Q$ we have $\varphi_1(p_2,q) = \varphi_2(p_2,q)$.

\begin{remark}
\label{rem:2morphisms}
\begin{itemize}
\item 
In the above situation of a common refinement $W$ of two common refinements $Z,Z'$ of surjective submersions $Y_1,Y_2$, the diagram 
\begin{equation}
\label{diag:refinement}
\alxydim{@=0.8cm}{
& Z \ar[dr]^{} \ar[dl]_{} & \\ 
Y_1  & W \ar[u]_-{r}\ar[d]^-{r'} & Y_2 \\
 & Z' \ar[lu]\ar[ru] &
}
\end{equation}
is \emph{not} necessarily commutative. In fact, diagram \erf{diag:refinement} commutes if and only if the two maps $x_W\maps W \to Y_1 \times_M Y_1$ and $y_W: W \to Y_2 \times_M Y_2$ factor through the diagonal maps $Y_1 \to Y_1 \times_M Y_1$ and $Y_2 \to Y_2 \times_M Y_2$, respectively.

\item
In the case that a 2-morphism $\varphi$ is defined on a common refinement $Z$ for which diagram \erf{diag:refinement} \emph{does} commute, Definition \ref{def2mor} can be simplified. As remarked before, the two maps $x_W$ and $y_W$ factor through the diagonals, over which  the bundles $P_1$ and $P_2$  have canonical trivializations (see Corollary \ref{co:groupoidstructure}). Under these trivializations, $\varphi$ can be identified with a bundle morphism
\begin{equation*}
\varphi: Q \to Q' \text{.}
\end{equation*}
Furthermore, the compatibility diagram \erf{eq:comp2morphcomplicated}  simplifies to the diagram
\begin{equation}
\label{eq:comp2morphsimple}
\alxydim{@=1.3cm}{P_2 \otimes \eta_1^{*}Q \ar[r]^{\beta} \ar[d]_{\id \otimes \eta_1^{*}\varphi}  & \eta_2^{*}Q \otimes P_1 \ar[d]^{\eta_2^{*}\varphi \otimes \id} \\ P_2 \otimes \eta_1^{*}Q' \ar[r]_{\beta'} & \eta_2^{*}Q' \otimes P_1\text{.}}
\end{equation} 
\end{itemize}
\end{remark}

Next we  define the vertical composition $\varphi_{23} \bullet \varphi_{12}: \mathcal{A}_1 \Rightarrow \mathcal{A}_3$ of 2-morphisms $\varphi_{12}: \mathcal{A}_1 \Rightarrow \mathcal{A}_2$ and $\varphi_{23} \maps \mathcal{A}_2 \Rightarrow \mathcal{A}_3$. The refinement is the fibre product $W :=  W_{12} \times_{Z_2} W_{23}$ of the covers of $\varphi_{12}$ and $\varphi_{23}$. The bundle gerbe products induce  isomorphisms
$$ x_W^* P_1 \cong x_{W_{23}}^*P_1 \otimes x_{W_{12}}^*P_1 \qquad \text{and} \qquad y_W^* P_2 \cong y_{W_{23}}^*P_2 \otimes y_{W_{12}}^*P_2$$
over $W$. Under these identifications, the morphism $y_W^*P_2 \otimes Q_1 \to Q_3 \otimes x_W^*P_1$ for the 2-morphism $\varphi_{23} \bullet \varphi_{12}$ is defined as
\begin{equation*}
\alxydim{@C=0.6cm}{
y_{W_{23}}^*P_2 \otimes y_{W_{12}}^*P_2 \otimes Q_1 \ar[rr]^-{\id \otimes \varphi_{12}} && 
y_{W_{23}}^*P_2 \otimes Q_2 \otimes x_{W_{12}}^*P_1 \ar[rr]^-{\varphi_{23} \otimes \id} &&
Q_3 \otimes x_{W_{23}}^*P_1  \otimes x_{W_{12}}^*P_1\text{.}
}
\end{equation*}
The identity for vertical composition is just the identity refinement and the identity morphism. Finally we come to the horizontal composition 
\begin{equation*}
\varphi_{23} \circ \varphi_{12}: \mathcal{A}_{23} \circ \mathcal{A}_{12} \Rightarrow \mathcal{A}_{23}' \circ \mathcal{A}_{12}'
\end{equation*}
of 2-morphisms $\varphi_{12}: \mathcal{A}_{12} \Rightarrow \mathcal{A}_{12}'$ and $\varphi_{23}: \mathcal{A}_{23} \Rightarrow \mathcal{A}_{23}'$: its refinement $W$ is given by $W_{12} \times_{(Y_2 \times Y_2)} W_{23}$. We look at the three relevant maps  $x_W: W \to Y_1 \times_M Y_1$, $y_W: W \to Y_2 \times_M Y_2$ and $z_W: W \to Y_3 \times_M Y_3$. The morphism $\varphi$ of the 2-morphism $\varphi_{23} \circ \varphi_{12}$ is defined as the composition
\begin{equation*}
\alxydim{}{
z_W^* P_3 \otimes Q_{23} \otimes Q_{12} \ar[rr]^-{\varphi_{23} \otimes \id} &&
Q'_{23} \otimes y_W^*P_2 \otimes Q_{12} \ar[rr]^-{\id \otimes \varphi_{12}} &&
Q'_{23} \otimes Q'_{12} \otimes x_W^*P_1 \text{.}
}
\end{equation*}

It follows from the properties of the plus construction \cite{nikolaus2} that (a) these definitions fit together into a bicategory $\grb\Gamma M$, and that (b) these form a pre-2-stack $\grb\Gamma-$ over smooth manifolds. That means, there are \emph{pullback 2-functors}
\begin{equation*}
f^{*}: \grb\Gamma N \to \grb\Gamma M
\end{equation*}
associated to  smooth maps $f:M \to N$, and that these are compatible with the composition of smooth maps. Pullbacks of $\Gamma$-bundle gerbes, 1-morphisms, and 2-morphisms are  obtained by just taking the pullbacks of all involved data.
Finally, the plus construction implies (c):

\begin{theorem}[{{\cite[Theorem 3.3]{nikolaus2}}}]
\label{th:stack}
The pre-2-stack $\grb\Gamma-$ of $\Gamma$-bundle gerbes is a 2-stack. 
\end{theorem}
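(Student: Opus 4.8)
The plan is to derive the statement as a formal consequence of the defining construction. Recall that by Definition, $\grb\Gamma-$ was \emph{not} built by hand but as the plus construction applied to the pre-2-stack of trivial $\Gamma$-gerbes, $\grb\Gamma- := \left(\triv\grb\Gamma-\right)^{+}$, where $\triv\grb\Gamma- = \mathcal{B}(\bun\Gamma-)$. Consequently the proof consists of checking that the hypotheses of the plus-construction theorem \cite[Theorem 3.3]{nikolaus2} are met and then quoting it: that theorem asserts that for any pre-2-stack $\mathcal{F}$ over a site, $\mathcal{F}^{+}$ is a 2-stack (equipped with a canonical morphism $\mathcal{F} \to \mathcal{F}^{+}$ that is locally an equivalence).

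First I would verify that $\triv\grb\Gamma-$ is indeed a pre-2-stack over the site of smooth manifolds with the Grothendieck topology of surjective submersions. This is where Proposition \ref{prop:monoidalstack} enters: the assignment $M \mapsto \bun\Gamma M$ is a monoidal stack, so applying the delooping $\mathcal{B}(-)$ objectwise yields, over each $M$, the one-object bicategory whose Hom-monoidal-groupoid is $\bun\Gamma M$, with the pullback 2-functors inherited from those of $\bun\Gamma-$. Descent for 1-morphisms and 2-morphisms holds because these are principal $\Gamma$-bundles and their morphisms, which already form a stack; descent for objects does \emph{not} hold (there is only the one object $\mathcal{I}$ locally), which is precisely what makes the stackification nontrivial. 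With $\triv\grb\Gamma-$ recognized as a pre-2-stack, \cite[Theorem 3.3]{nikolaus2} applies verbatim and gives that $\grb\Gamma-$ is a 2-stack.

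The only genuine work lies not in the present statement but in the material preceding it: namely, checking that unwinding the plus construction of \cite{nikolaus2} reproduces exactly Definition \ref{def:grb} ($\Gamma$-bundle gerbes), Definition \ref{def1mor} (1-morphisms via common refinements, a principal $\Gamma$-bundle $Q$, and a compatible morphism $\beta$), Definition \ref{def2mor} (2-morphisms via a further common refinement), together with the stated vertical, horizontal and 1-morphism compositions and the identification of 2-morphisms on a common sub-refinement. Once that bookkeeping identification is granted — as it is in the discussion above — there is no remaining obstacle: the content of the theorem has been front-loaded into the choice of definition, and the assertion is a formal corollary. I therefore expect no hard step; the proof is essentially the single sentence ``this is an instance of \cite[Theorem 3.3]{nikolaus2}, since $\grb\Gamma- = \left(\triv\grb\Gamma-\right)^{+}$ by definition.''
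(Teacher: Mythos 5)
Your proposal is correct and takes exactly the route the paper does: the theorem is quoted directly as an instance of \cite[Theorem 3.3]{nikolaus2}, applicable because $\grb\Gamma-$ is \emph{defined} as $\left(\triv\grb\Gamma-\right)^{+}$ and $\triv\grb\Gamma- = \mathcal{B}(\bun\Gamma-)$ is a pre-2-stack by Proposition \ref{prop:monoidalstack}. The bookkeeping you flag — that unwinding the plus construction reproduces Definitions \ref{def:grb}, \ref{def1mor} and \ref{def2mor} — is likewise where the paper locates the actual work, in the discussion preceding the theorem.
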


\begin{remark}
\label{rem:grbgrp}
Every 2-stack over smooth manifolds defines a 2-stack over Lie groupoids \cite[Proposition 2.8]{nikolaus2}. This way, our approach produces automatically bicategories $\grb \Gamma{\mathcal{X}}$ of $\Gamma$-bundle gerbes over a Lie groupoid $\mathcal{X}$. In particular, for an action groupoid $\mathcal{X}=\act MG$ we have a bicategory $\grb\Gamma{\act MG}$ of \emph{$G$-equivariant $\Gamma$-bundle gerbes} over $M$.
\end{remark}

In the remainder of this section we give some examples and describe relations between the definitions given here and existing ones.

\begin{example}
\label{ex:abelianbundlegerbes}
Let $A$ be an abelian Lie group, for instance $\ueins$. Then, $\mathcal{B}A$-bundle gerbes are  the same as the well-known $A$-bundle gerbes \cite{murray}. For more details see Remark \ref{re:1-morphisms} below.
\end{example}

\begin{example}
Let $(G,H,t,\alpha)$ be a smooth crossed module, and let $\act GH$ be the associated action groupoid. Then, a $(\act GH)$-bundle gerbe is the same as a \emph{crossed module bundle gerbe} in the sense of Jurco \cite{jurco1}. The equivalence relation \quot{stably isomorphic} of \cite{jurco1} is given by \quot{1-isomorphic} in terms of the bicategory constructed here. These coincidences come from the equivalence between $(\act GH)$-bundles and so-called $G$-$H$-bundles used in \cite{jurco1,aschieri} expressed by Lemma \ref{crossedbundles}. In particular, in case of the automorphism 2-group $\mathrm{AUT}(H)$ of a connected Lie group $H$, a $\mathrm{AUT}(H)$-bundle gerbe is the same as a $H$-bibundle gerbe in the sense of Aschieri, Cantini and Jurco \cite{aschieri}.
\end{example}

\begin{example}
\label{ex:idmorphgrb}
Let $G$ be a Lie group, so that $\idmorph{G}$ is a Lie 2-group. Then, there is an equivalence of 2-categories
\begin{equation*}
\grb{{\idmorph{G}}}M \cong \idmorph{\bun{\mathcal{B}G}M}\text{.}
\end{equation*}
Indeed, if $\mathcal{G}$ is a $\idmorph G$-bundle gerbe over $M$, its principal $\idmorph G$-bundle over $Y^{[2]}$ is by Example \ref{ex:idmorphbundle} just a smooth map $\alpha: Y^{[2]} \to G$, and its bundle gerbe product degenerates to an equality $\pi_{23}^{*}\alpha \cdot \pi_{12}^{*}\alpha = \pi_{13}^{*}\alpha$ for functions on $Y^{[3]}$. In other words, a $\idmorph{G}$-bundle gerbe is the same as a so-called \quot{$G$-bundle 0-gerbe}. These form a category that is equivalent to the one of ordinary principal $G$-bundles, as pointed out in  Section \ref{intro}.  
\end{example}

\begin{remark}
\label{re:1-morphisms}
There are two differences between the definitions given here (for $\Gamma = \mathcal{B}A$) and the ones of Murray et al. \cite{murray, murray2, stevenson1}. Firstly, we have a slightly  different  ordering of  tensor products of bundles. These orderings are not essential in the case of abelian groups because the tensor category of ordinary $A$-bundles is symmetric. In the non-abelian case, a consistent theory requires the conventions we have chosen here. Secondly, the definitions of 1-morphisms and 2-morphisms have been generalized step by step:
\begin{enumerate}
\item 
In \cite{murray}, 1-morphisms did not include a \emph{common} refinement, but rather required that the surjective submersion of one bundle gerbe refines
the other. This definition is too restrictive in the sense that e.g. $\ueins$-bundle gerbes are not classified by $\mathrm{H}^3(M,\Z)$, as intended.

\item
In \cite{murray2}, 1-morphisms were defined on the canonical refinement $Z := Y_1 \times_M Y_2$ of the surjective submersions of the bundle gerbes. This definition solves the previous problems concerning the classification of bundle gerbes, but makes the composition of 1-morphisms quite involved \cite{stevenson1}.

\item
In \cite{waldorf1}, 1-morphisms were defined on refinements $\zeta: Z \to Y_1 \times_M Y_2$. This generalization allows the same elegant definition of composition we have given here, and results in the same isomorphism classes of bundle gerbes. Moreover, 2-morphisms are defined with \emph{commutative} diagrams \erf{diag:refinement} -- this makes the structure of the bicategory outmost simple (see Remark \ref{rem:2morphisms}). 

\item
In the present article we have allowed for a yet more general refinement in the definition of 1-morphisms. Its achievement is that bundle gerbes come out as an example of a more general concept -- the plus construction -- and we get e.g. Theorem \ref{th:stack} for free.  

\end{enumerate}
Despite these different definitions of 1-morphisms and 2-morphisms, the resulting bicategories of $\mathcal{B}A$-bundle gerbes in 2., 3. and 4. are all equivalent (see \cite[Theorem 1]{waldorf1}, \cite[Remark 4.5]{nikolaus2} and Lemma \ref{lem:canonicalrefinement} below). 
\end{remark}

\subsection{Properties of Groupoid Bundle Gerbes}

\label{sec:gerbeprop}

We recall that a homomorphism
 $\Lambda: \Gamma \to \Omega$ between Lie 2-groups is an anafunctor together with a transformation \erf{eq:lie2grouphom} describing its compatibility with the multiplications. We recall further from Proposition \ref{extensionstackmorphism} that   extension along $\Lambda$ is a 1-morphism
\begin{equation*}
\Lambda: \bun{\Gamma}- \to \bun\Omega-
\end{equation*}
between monoidal stacks over smooth manifolds.
That is, extension along $\Lambda$ is compatible with pullbacks, tensor products, and morphisms between principal $\Gamma$-bundles. Applying it to the principal $\Gamma$-bundle $P$ of a $\Gamma$-bundle gerbe $\mathcal{G}$, and also to the bundle gerbe product $\mu$, we obtain immediately an $\Omega$-bundle gerbe $\Lambda\mathcal{G}$. The same is evidently true for morphisms and 2-morphisms. Summarizing,
we get:
\begin{proposition}
\label{grbextension}
Extension of bundle gerbes along a homomorphism $\Lambda: \Gamma \to \Omega$ between Lie 2-groups defines a 1-morphism
\begin{equation*}
\Lambda: \grb\Gamma- \to \grb\Omega-
\end{equation*}
of 2-stacks over smooth manifolds. 
\end{proposition}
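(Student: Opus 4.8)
The plan is to build $\Lambda:\grb\Gamma-\to\grb\Omega-$ out of the two functorial constructions from which $\grb\Gamma-$ was assembled. By Proposition \ref{extensionstackmorphism}, extension along the Lie $2$-group homomorphism $\Lambda$ is a monoidal morphism
\begin{equation*}
\Lambda:\bun\Gamma-\to\bun\Omega-
\end{equation*}
of monoidal stacks over smooth manifolds. Applying the one-object delooping $\mathcal{B}(-)$, which turns a monoidal (pre-)stack into the pre-$2$-stack with a single object over each manifold, the given monoidal stack as Hom-categories, and the tensor product as horizontal composition, sends this morphism to a morphism of pre-$2$-stacks
\begin{equation*}
\triv\grb\Gamma-=\mathcal{B}(\bun\Gamma-)\;\to\;\mathcal{B}(\bun\Omega-)=\triv\grb\Omega-\text{.}
\end{equation*}
The plus construction of \cite{nikolaus2} is functorial, so applying $(-)^{+}$ to this morphism yields the desired $1$-morphism $\Lambda:\grb\Gamma-\to\grb\Omega-$ of $2$-stacks. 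In this form the proof uses nothing beyond Proposition \ref{extensionstackmorphism} together with the cited formal properties of $\mathcal{B}(-)$ and of the plus construction.

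It is worthwhile to record what this $1$-morphism does in terms of the explicit data we have unwound. A $\Gamma$-bundle gerbe $(Y,P,\mu)$ is sent to the $\Omega$-bundle gerbe $(Y,\Lambda P,\Lambda\mu)$, where $\Lambda P$ is the extension of $P$ over $Y^{[2]}$ and $\Lambda\mu$ is defined by pre- and post-composing the extended morphism with the canonical isomorphisms $\Lambda\pi_{ij}^{*}P\cong\pi_{ij}^{*}\Lambda P$ and $\Lambda(\pi_{23}^{*}P\otimes\pi_{12}^{*}P)\cong\pi_{23}^{*}\Lambda P\otimes\pi_{12}^{*}\Lambda P$; these exist because extension commutes with pullbacks (Lemma \ref{extensionfunctor}) and is monoidal via the transformation $\eta$ of \eqref{eq:lie2grouphom}. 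Associativity of $\Lambda\mu$ over $Y^{[4]}$ follows from that of $\mu$ together with the coherence law for $\eta$. Likewise a $1$-morphism $(Z,Q,\beta)$ is sent to $(Z,\Lambda Q,\Lambda\beta)$ and a $2$-morphism $(W,\varphi)$ to $(W,\Lambda\varphi)$, and the compatibility diagrams \eqref{compgerbemorph} and \eqref{eq:comp2morphcomplicated} are preserved since extension is a functor respecting tensor products and pullbacks, hence also the canonical trivializations of $P$ and $\Lambda P$ over the diagonals. One must also check that the identification of $2$-morphisms defined on different refinements is respected, which is immediate since $\Lambda\varphi$ is built point-wise from $\varphi$.

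Finally, extension of principal bundles commutes with pullbacks (Lemma \ref{extensionfunctor}), so the induced operations on all the bundle data of $\Gamma$-bundle gerbes, $1$-morphisms and $2$-morphisms commute with the pullback $2$-functors $f^{*}$; thus $\Lambda$ is a morphism of pre-$2$-stacks. Since both $\grb\Gamma-$ and $\grb\Omega-$ are $2$-stacks (Theorem \ref{th:stack}), such a morphism is automatically a morphism of $2$-stacks, which is the assertion. The only genuine bookkeeping is the coherence verification in the second paragraph — matching the associator of $\mu$, the monoidal transformation $\eta$, and the diagonal trivializations — but in the abstract approach of the first paragraph this is entirely absorbed into the functoriality of $\mathcal{B}(-)$ and of the plus construction, so I do not expect it to present any real obstacle.
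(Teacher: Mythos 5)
Your proposal is correct and matches the paper's argument: the paper's own proof is exactly your second paragraph, namely that extension along $\Lambda$ is a monoidal stack morphism $\bun\Gamma-\to\bun\Omega-$ (Proposition \ref{extensionstackmorphism}), hence compatible with pullbacks, tensor products and bundle morphisms, so applying it to $P$, $\mu$, and all the data of 1- and 2-morphisms immediately yields the 1-morphism of 2-stacks. Your more abstract first paragraph (deloop to $\triv\grb\Gamma-$ and invoke functoriality of the plus construction) is the same idea packaged differently, and is in fact precisely the argument the paper uses for the subsequent Theorem \ref{extequiv}.
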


We recall that a weak equivalence between Lie 2-groups is a homomorphism $\Lambda: \Gamma \to \Omega$ that is a weak equivalence (see Definition \ref{defweak}). We have:

\begin{theorem}\label{extequiv}
Suppose $\Lambda: \Gamma \to \Omega$ is a weak equivalence between Lie 2-groups. Then, the 1-morphism $\Lambda: \grb\Gamma- \to \grb\Omega-$ of Proposition \ref{grbextension} is an equivalence of 2-stacks.
\end{theorem}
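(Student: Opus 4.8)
The plan is to deduce the theorem from the functoriality of the plus construction of \cite{nikolaus2}, together with the already-established statement about principal bundles. By definition the two 2-stacks in question are $\grb\Gamma- = (\triv\grb\Gamma-)^+$ and $\grb\Omega- = (\triv\grb\Omega-)^+$, where $\triv\grb\Gamma- = \mathcal{B}(\bun\Gamma-)$ and $\triv\grb\Omega- = \mathcal{B}(\bun\Omega-)$ are the deloopings of the monoidal stacks of principal bundles; and, upon unwinding Definitions \ref{def:grb}, \ref{def1mor} and \ref{def2mor}, the 1-morphism $\Lambda$ of Proposition \ref{grbextension} is the 1-morphism $(\mathcal{B}\Lambda)^{+}$ obtained by applying the plus construction to the delooping $\mathcal{B}\Lambda : \triv\grb\Gamma- \to \triv\grb\Omega-$ of the extension functor of Proposition \ref{extensionstackmorphism} — it acts by extending all the principal-bundle data (the bundles $P$, $Q$, the bundle gerbe products $\mu$, the morphisms $\beta$, $\varphi$) along $\Lambda$. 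So it suffices to show that $(\mathcal{B}\Lambda)^{+}$ is an equivalence of 2-stacks.

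First I would record the input at the level of principal bundles. Since $\Lambda$ is a weak equivalence of Lie 2-groups, Corollary \ref{co:extensionweakequivalence2groups} gives that extension along $\Lambda$ is a monoidal equivalence $\bun\Gamma- \to \bun\Omega-$ of monoidal stacks over smooth manifolds; in particular, for each smooth manifold $M$ the monoidal functor $\bun\Gamma M \to \bun\Omega M$ is an equivalence of categories, compatibly with pullbacks. Delooping is 2-functorial, and the one-object bicategory $\mathcal{B}\mathcal{C}$ attached to a monoidal category $\mathcal{C}$ has Hom-category $\mathcal{C}$ with horizontal composition the tensor product; hence a monoidal equivalence of monoidal categories deloops to a biequivalence of the associated one-object bicategories. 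Applied object-wise over smooth manifolds, this shows that $\mathcal{B}\Lambda : \triv\grb\Gamma- \to \triv\grb\Omega-$ is a weak equivalence of pre-2-stacks, i.e. an object-wise biequivalence of bicategories.

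Second, I would invoke that the plus construction carries weak equivalences of pre-2-stacks to equivalences of 2-stacks. This is the content needed from \cite{nikolaus2}: it reflects that $(-)^{+}$ is the stackification — in particular a localization at the object-wise equivalences — and that the canonical morphism $\mathcal{F} \to \mathcal{F}^{+}$ is an equivalence whenever $\mathcal{F}$ is already a 2-stack. Combining the two steps, $(\mathcal{B}\Lambda)^{+} = \Lambda : \grb\Gamma- \to \grb\Omega-$ is an equivalence of 2-stacks, which is the assertion.

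I expect the main obstacle to be precisely this second step, namely making rigorous that the plus construction sends weak equivalences of pre-2-stacks to equivalences of 2-stacks. If this is not available off the shelf from \cite{nikolaus2}, one must extract it from the construction: a weak equivalence $\mathcal{F} \to \mathcal{G}$ of pre-2-stacks induces, over every surjective submersion $Y \to M$, an equivalence of the corresponding descent bicategories, since these are built degreewise from the Hom-bicategories of $\mathcal{F}$ and $\mathcal{G}$ over the $Y^{[n]}$, on which the morphism is already a biequivalence, and $(-)^{+}$ is assembled from these descent bicategories. A more hands-on alternative, avoiding the general plus-construction machinery, would be: (i) since $\grb\Omega-$ is a 2-stack (Theorem \ref{th:stack}), it suffices to show that $\Lambda : \grb\Gamma M \to \grb\Omega M$ is a biequivalence for each $M$; (ii) on trivial gerbes $\Lambda$ is the delooped functor, a biequivalence by Corollary \ref{co:extensionweakequivalence2groups}; (iii) every $\Omega$-bundle gerbe, 1-morphism and 2-morphism is, over a suitable cover, isomorphic to a trivial one, so essential surjectivity and full faithfulness of $\Lambda$ on the Hom-bicategories follow from (ii) by descent, using Theorem \ref{th:stack} to glue the locally constructed preimages. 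This route is longer but essentially re-proves the abstract statement in the case at hand, which is why I would present the functorial argument as the primary one.
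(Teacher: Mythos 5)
Your proposal is correct and follows essentially the same route as the paper: the paper's proof likewise deloops the monoidal equivalence of Corollary \ref{co:extensionweakequivalence2groups} to an equivalence of the pre-2-stacks of trivial gerbes and then invokes functoriality of the plus construction. Your additional discussion of why the plus construction preserves (object-wise) equivalences, and the hands-on fallback via local trivializations, only elaborates on what the paper leaves implicit in its appeal to \cite{nikolaus2}.
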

\begin{proof}
The monoidal equivalence  $\Lambda:\bun\Gamma- \to \bun\Omega-$ between the monoidal stacks (Corollary \ref{co:extensionweakequivalence2groups})
induces an equivalence
$\triv\grb\Gamma M \to \triv\grb\Lambda M$ between pre-2-stacks. Since the plus construction is functorial, this induces in turn the claimed  equivalence of 2-stacks.
\end{proof}

Next we generalize a couple of well-known results from abelian to non-abelian bundle gerbes. We define a \emph{refinement} of a surjective submersion $\pi:Y \to M$ to be another surjective submersion $\omega: W \to M$ together with a smooth map $f:W \to Y$ such that $\zeta = \pi \circ f$.
Notice that such a refinement induces smooth maps $f_k: W^{[k]} \to Y^{[k]}$ that commute with the various projections $\omega_{i_1...i_k}$ and $\pi_{i_1...i_k}$.

\begin{lemma}
\label{lem:nonstablemorphisms}
Suppose $\mathcal{G}_1 = (Y_1,P_1,\mu_1)$ and $\mathcal{G}_2=(Y_2,P_2,\mu_2)$ are $\Gamma$-bundle gerbes over $M$, $f: Y_1 \to Y_2$ is a refinement of surjective submersions, and $\varphi: f_2^{*}P_2 \to P_1$ is an isomorphism of $\Gamma$-bundles over $Y_1^{[2]}$ that is compatible with the bundle gerbe products $\mu_1$ and $\mu_2$ in the sense that the diagram
\begin{equation*}
\alxydim{@=1.3cm}{\pi_{23}^{*}f_2^{*}P_2 \otimes \pi_{12}^{*}f_2^{*}P_2 \ar[d]_{\pi_{23}^{*}\varphi \otimes \pi_{12}^{*}\varphi} \ar[r]^-{f_3^{*}\mu} & \pi_{13}^{*}f_2^{*}P_2 \ar[d]^{\pi_{13}^{*}\varphi} \\ \pi_{23}^{*}P_1 \otimes \pi_{12}^{*}P_1 \ar[r]_-{\mu} & \pi_{13}^{*}P_1}
\end{equation*}
is commutative. Then, $\mathcal{G}_1$ and $\mathcal{G}_2$ are isomorphic.
\end{lemma}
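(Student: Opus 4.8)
The plan is to write down, by hand, a $1$-isomorphism $\mathcal A\colon\mathcal G_1\to\mathcal G_2$ in the bicategory $\grb\Gamma M$ and then observe that every $1$-morphism of $\Gamma$-bundle gerbes is invertible. Conceptually $\mathcal A$ is the composite of the \emph{strict} identification of $\mathcal G_1$ with the restricted bundle gerbe $\mathcal G':=(Y_1,f_2^{*}P_2,f_3^{*}\mu_2)$ — legitimate because the induced maps $f_k\colon Y_1^{[k]}\to Y_2^{[k]}$ commute with all face projections, so $f_3^{*}\mu_2$ is an associative bundle gerbe product over $Y_1^{[3]}$ — with the standard \quot{change of surjective submersion} equivalence $\mathcal G'\cong\mathcal G_2$ along $f$. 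Carrying this out in one step, take as common refinement the canonical one $Z:=Y_1\times_M Y_2$ with legs $\mathrm{pr}_1\colon Z\to Y_1$ and $\mathrm{pr}_2\colon Z\to Y_2$ (both surjective submersions — this is exactly the point where one does \emph{not} need $f$ itself to be a submersion), as principal $\Gamma$-bundle over $Z$ the bundle $Q:=g^{*}P_2$ with $g\colon Z\to Y_2^{[2]}$, $g(y_1,y_2):=(f(y_1),y_2)$, and as the morphism over $Z^{[2]}$ the composite
\begin{equation*}
\alxydim{@C=1.15cm}{P_2\otimes\zeta_1^{*}Q \ar[r]^-{\mu_2} & k^{*}P_2 \ar[r]^-{\mu_2^{-1}} & \zeta_2^{*}Q\otimes f_2^{*}P_2 \ar[r]^-{\id\otimes\varphi} & \zeta_2^{*}Q\otimes P_1}
\end{equation*}
where $k\colon Z^{[2]}\to Y_2^{[2]}$ sends $((y_1,y_2),(y_1',y_2'))$ to $(f(y_1),y_2')$, the first two arrows are the evident pull-backs of the bundle gerbe product $\mu_2$ of $\mathcal G_2$ (which is an isomorphism, $\bun\Gamma{Y_2^{[3]}}$ being a groupoid), and $P_1$, $P_2$, $f_2^{*}P_2$ over $Z^{[2]}$ denote the pull-backs along $Z^{[2]}\to Y_1^{[2]}$, $Z^{[2]}\to Y_2^{[2]}$, $Z^{[2]}\to Y_1^{[2]}$ respectively, in accordance with the paper's convention. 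This $\beta$ is an isomorphism of $\Gamma$-bundles over $Z^{[2]}$.

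The only thing to verify is that $\mathcal A=(Z,Q,\beta)$ really is a $1$-morphism, i.e. that the compatibility diagram \erf{compgerbemorph} commutes over $Z^{[3]}$. With $Q=g^{*}P_2$ every term there is a tensor product of pulled-back copies of $P_1$, $P_2$ and $\varphi$; upon unravelling, the subdiagram built solely out of the pulled-back $\mu_2$'s commutes by the associativity of $\mu_2$, while the remaining square — the one into which $\varphi$ enters — is exactly the $f_3^{*}$-pullback of the hypothesised compatibility of $\varphi$ with $\mu_1$ and $\mu_2$. Hence \erf{compgerbemorph} holds and $\mathcal A\colon\mathcal G_1\to\mathcal G_2$ is well defined. (Equivalently one may present this in two steps: $\varphi$ furnishes $\mathcal G_1\cong\mathcal G'$ on the common refinement $Z=Y_1$ with the trivial bundle $\trivlin_1$, where \erf{compgerbemorph} degenerates after cancelling the trivial tensor factors to precisely the hypothesis; and the associativity of $\mu_2$ alone furnishes $\mathcal G'\cong\mathcal G_2$, which is the classical statement that refining the surjective submersion does not change the isomorphism class of a bundle gerbe.)

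It remains to see that $\mathcal A$ is invertible, which is formal. As recalled in Section~\ref{versionIII}, $\grb\Gamma-$ is the plus construction \cite{nikolaus2} of the pre-$2$-stack $\triv\grb\Gamma-=\mathcal B(\bun\Gamma-)$, whose $1$-morphisms are principal $\Gamma$-bundles — each invertible for $\otimes$, with inverse its dual $P^{\vee}=i(P)$ — and whose $2$-morphisms are bundle morphisms, all invertible since $\bun\Gamma-$ is a groupoid; the plus construction therefore produces a $2$-stack in which every $1$-morphism and every $2$-morphism is invertible. (Alternatively, write the inverse $1$-morphism down by hand using $g'(y_1,y_2):=(y_2,f(y_1))$, $Q':={g'}^{*}P_2$ and $\varphi^{-1}$, the two witnessing $2$-isomorphisms being assembled from the canonical trivialisations of $P_1$ and $P_2$ over the diagonals.) Thus $\mathcal G_1\cong\mathcal G_2$.

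The main obstacle is purely organisational: it is the choice of the common refinement $Y_1\times_M Y_2$ — forced because $f$ need only be a smooth map — together with the index bookkeeping needed to recognise that the diagram \erf{compgerbemorph} for $\mathcal A$ splits into an \quot{associativity of $\mu_2$} part and a single square that is literally the pulled-back hypothesis. Everything else — the definition of $\beta$ and the invertibility of $\mathcal A$ — is routine.
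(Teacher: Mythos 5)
Your construction is essentially the paper's own proof: the paper builds the 1-isomorphism on the common refinement $Z=Y_1\times_M Y_2$ with $Q$ the pullback of $P_2$ along $(y_1,y_2)\mapsto(f(y_1),y_2)$ and $\beta$ the same composite $\mu_2$, then $\mu_2^{-1}$, then $\id\otimes\varphi$, with the compatibility diagram \erf{compgerbemorph} following from associativity of $\mu_2$ and the hypothesis on $\varphi$. Your concluding appeal to invertibility of 1-morphisms (Lemma \ref{lem:invertibility}) is consistent with the paper, which likewise treats the resulting 1-morphism as a 1-isomorphism.
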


The proof works just the same way as in the abelian case: one constructs the 1-isomorphism over the  common refinement $Z := Y_1 \times_M Y_2$ in a straightforward way. 
As a consequence of Lemma \ref{lem:nonstablemorphisms} we have

\begin{proposition}
\label{prop:pullback}
Let $\mathcal{G} = (Y,P,\mu)$ be a $\Gamma$-bundle gerbe over $M$, and let $f:W \to Y$ be a refinement of its surjective submersion $\pi:Y \to M$. Then, $(W,f_2^{*}P,f_3^{*}\mu)$
is a $\Gamma$-bundle gerbe over $M$, and is isomorphic to $\mathcal{G}$.
\end{proposition}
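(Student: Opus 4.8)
The plan is to deduce the Proposition directly from Lemma~\ref{lem:nonstablemorphisms}, applied with the identity morphism as the comparison isomorphism.

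First I would verify that $(W, f_2^{*}P, f_3^{*}\mu)$ is a well-defined $\Gamma$-bundle gerbe over $M$. Its surjective submersion is the composite $\omega := \pi \circ f : W \to M$, which is a surjective submersion by the definition of a refinement; $f_2^{*}P$ is a principal $\Gamma$-bundle over $W^{[2]}$ since pullbacks of principal $\Gamma$-bundles are again principal $\Gamma$-bundles; and $f_3^{*}\mu$ is a morphism $\pi_{23}^{*}f_2^{*}P \otimes \pi_{12}^{*}f_2^{*}P \to \pi_{13}^{*}f_2^{*}P$ of $\Gamma$-bundles over $W^{[3]}$, because the induced smooth maps $f_k : W^{[k]} \to Y^{[k]}$ commute with the simplicial projections $\omega_{i_1\ldots i_k}$ and $\pi_{i_1\ldots i_k}$, and pullback is a monoidal functor, so it commutes with $\otimes$. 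Its associativity over $W^{[4]}$ is obtained by pulling back the associativity diagram of $\mu$ along $f_4 : W^{[4]} \to Y^{[4]}$, again using that $f_4$ intertwines the projections.

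Then I would invoke Lemma~\ref{lem:nonstablemorphisms} with $\mathcal{G}_1 := (W, f_2^{*}P, f_3^{*}\mu)$ and $\mathcal{G}_2 := \mathcal{G} = (Y,P,\mu)$, taking the given refinement $f : W \to Y$ and the identity isomorphism $\varphi := \id : f_2^{*}P \to f_2^{*}P$ of $\Gamma$-bundles over $W^{[2]}$ (so in that Lemma $Y_1 = W$, $Y_2 = Y$, $P_1 = f_2^{*}P$, $P_2 = P$, and $\mu_1$ is exactly $f_3^{*}\mu_2$). The compatibility square required there then has $f_3^{*}\mu$ along both its top and its bottom edge and identity morphisms along its two vertical edges, hence it commutes tautologically. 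Lemma~\ref{lem:nonstablemorphisms} therefore produces an isomorphism $(W, f_2^{*}P, f_3^{*}\mu) \cong \mathcal{G}$, which is the assertion.

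There is essentially no genuine obstacle here; the only points that need a little care are bookkeeping ones — confirming that the induced maps $f_k$ really intertwine all the simplicial projections so that $f_3^{*}\mu$ and the pulled-back associativity diagram type-check, and that $\omega = \pi \circ f$ is the surjective submersion underlying $\mathcal{G}_1$ as demanded by the hypotheses of Lemma~\ref{lem:nonstablemorphisms}. Both are immediate from the definition of a refinement recalled just before the Proposition, so the argument is short.
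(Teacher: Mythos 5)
Your proposal is correct and is exactly the argument the paper intends: the paper states Proposition \ref{prop:pullback} as an immediate consequence of Lemma \ref{lem:nonstablemorphisms}, and your instantiation with $Y_1=W$, $Y_2=Y$, $P_1=f_2^{*}P$, $\mu_1=f_3^{*}\mu$ and $\varphi=\id$ (so that the compatibility square commutes tautologically) is the intended application. The preliminary bookkeeping you carry out — that the $f_k$ intertwine the simplicial projections and that pullback respects the tensor product — is precisely what makes $(W,f_2^{*}P,f_3^{*}\mu)$ a well-defined $\Gamma$-bundle gerbe, so nothing is missing.
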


\begin{lemma}
\label{lem:buntriv}
Let $\mathcal{G}=(Y,P,\mu)$ be a $\Gamma$-bundle gerbe over $M$. Then, there exist unique smooth maps $i: P \to P$ and $t: Y \to P$ such that
\begin{enumerate}[(i)]
\item 
the diagrams
\begin{equation*}
\alxydim{@=1.3cm}{P \ar[r]^{i} \ar[d]_{\chi} & P \ar[d]^{\chi} \\ Y^{[2]} \ar[r]_{\mathrm{flip}} & Y^{[2]}}
\quand
\alxydim{@=1.3cm}{& P \ar[d]^{\chi} \\ Y \ar[r]_-{\mathrm{diag}} \ar[ur]^{t} & Y^{[2]}}
\end{equation*}
are commutative.

\item
the map 
$t$ is neutral with respect to the bundle gerbe product $\mu$, i.e.
\begin{equation*}
\mu(t(y_{2}),p) = p = \mu(p,t(y_{1}))\text{.}
\end{equation*}
for all $p \in P$ with $\chi(p)=(y_1,y_2)$.

\item
the map $i$ provides inverses with respect to the bundle gerbe product $\mu$, i.e.
\begin{equation*}
\mu(i(p),p) = t(y_{1})
\quand
\mu(p,i(p)) = t(y_2)
\end{equation*}
for all $p\in P$ with $\chi(p)=(y_1,y_2)$.
\end{enumerate}
Moreover, 
$\alpha(t(y))=1$ and $\alpha(i(p)) = \alpha(p)^{-1}$
for all $p\in P$ and $y\in Y$.
\end{lemma}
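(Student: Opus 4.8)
The plan is to construct the maps $i$ and $t$ locally, using the canonical trivializations of the bundle $P$ over the diagonal, and then show they glue. First I would unwind what it means to work over the diagonal $\Delta\colon Y \to Y^{[2]}$: the bundle gerbe product $\mu$ over $Y^{[3]}$, restricted along the diagonal map $Y \to Y^{[3]}$ and specialized to triples of the form $(y,y,y)$, gives an isomorphism $\Delta^\ast P \otimes \Delta^\ast P \to \Delta^\ast P$, i.e. it makes the fibres of $\Delta^\ast P$ into a "group-like" object with respect to $\mu$. Since $\Gamma$-bundles over a point with such an associative unital product are rigid (this is the analogue of Corollary \ref{co:groupoidstructure}, which the paper refers to), there is a unique section $t\colon Y \to P$ lying over $\Delta$ that is neutral for $\mu$; uniqueness is forced fibrewise by the cancellation property of $\mu$ (which holds because $\mu$ is an isomorphism of $\Gamma$-bundles). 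Its smoothness follows because the neutrality condition $\mu(t(y),t(y))=t(y)$ together with $\chi\circ t=\Delta$ cuts out $t$ as the preimage of a diagonal under a submersion, or more directly: $t(y)$ can be described as $\mu(p,p')^{-1}$-type expressions in any local smooth section of $P$ over $\Delta$, which exists by local triviality.

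Next I would produce $i$. Given $p\in P$ with $\chi(p)=(y_1,y_2)$, I want $i(p)\in P$ with $\chi(i(p))=(y_2,y_1)$ and $\mu(i(p),p)=t(y_1)$. Over $Y^{[2]}$ consider the two pullbacks of $P$ along $\chi$ and along $\mathrm{flip}\circ\chi$; I claim the assignment $p\mapsto i(p)$ is characterized uniquely by the equation $\mu(i(p),p)=t(y_1)$, and that this equation admits a unique smooth solution. Existence: restrict $\mu$ to the locus of triples $(y_2,y_1,y_1)$; since $\mu$ there is an isomorphism onto $\pi_{13}^\ast P$ whose fibre over $(y_2,y_1)$ is the $\mu$-invertible fibre of $P$ over $(y_2,y_1)$, and since $t(y_1)$ is the $\mu$-unit, the element $i(p)$ is the unique $\mu$-left-inverse of $p$; smoothness comes from $\mu$ being a diffeomorphism onto its image together with $t$ being smooth. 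The identity $\mu(p,i(p))=t(y_2)$ is then a purely formal consequence of associativity of $\mu$ and uniqueness of inverses in each fibre, exactly as for groups.

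Finally, the statements about the anchor: $\alpha\colon P\to\Gamma_0$ intertwines $\mu$ with the multiplication of $\Gamma$ (this is part of $\mu$ being a morphism of principal $\Gamma$-bundles, together with the anchor of a tensor product being the product of anchors, as in Example \ref{ex:tensorproduct}\erf{ex:tensorproduct:general}). Hence $\alpha(t(y))$ is a $\mu$-unit mapped to $\Gamma_0$ and must equal the group unit $1$, and $\alpha(i(p))\cdot\alpha(p)=\alpha(\mu(i(p),p))=\alpha(t(y_1))=1$, giving $\alpha(i(p))=\alpha(p)^{-1}$.

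The main obstacle I anticipate is not the algebra — which is the standard "a bundle gerbe product makes the diagonal bundle group-like, and group-like objects have canonical units and inverses" argument — but rather pinning down smoothness of $t$ and $i$ cleanly, since a priori these are only defined fibrewise. The cleanest route is to invoke local triviality (Lemma \ref{lem:loctriv} and its corollary) to get smooth local sections of $P$ over $\Delta$, express $t$ and $i$ in terms of $\mu$, $\mu^{-1}$, and these sections on each patch, check the resulting local formulas agree on overlaps by the fibrewise uniqueness already established, and conclude by the sheaf property that they assemble into global smooth maps; uniqueness is then immediate from the fibrewise uniqueness.
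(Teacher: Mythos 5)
Your argument is correct, and its skeleton -- uniqueness of a two-sided unit by the standard $t'(y)=\mu(t(y),t'(y))=t(y)$ computation, inverses pinned down by cancellation, and the anchor identities from $\alpha(\mu(p_1,p_2))=\alpha(p_1)\alpha(p_2)$ -- is the same as the paper's. The genuine difference is in the existence of $t$: you build it locally from smooth sections of $\mathrm{diag}^{*}P$ (via Lemma \ref{lem:loctriv}) and glue by fibrewise uniqueness, whereas the paper writes down one global formula: with $Q:=\mathrm{diag}^{*}P$, the map $t$ is the composite of the canonical section $Y \to \trivlin_1$, the inverse death map $d^{-1}:\trivlin_1\to Q\otimes Q^{\vee}$, then $\mu^{-1}\otimes\id$, then $\id\otimes d$ and the identification $\trivlin_1\otimes Q\cong Q\to P$. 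The global formula buys smoothness and globality for free; your local route is more elementary but hides the real work in a step you only gesture at, namely that the element neutral against one chosen local section is automatically two-sidedly neutral against \emph{every} $p$ over $(y_1,y)$ and $(y,y_2)$ -- this needs associativity plus cancellation, and cancellation in a single tensor factor uses that the $\Gamma$-action on $P$ is principal, not merely that $\mu$ is an isomorphism of the (quotient) tensor product. Two minor slips, neither fatal: the locus of $Y^{[3]}$ carrying the equation $\mu(i(p),p)=t(y_1)$ is $(y_1,y_2,y_1)$ rather than $(y_2,y_1,y_1)$, given $\mu:\pi_{23}^{*}P\otimes\pi_{12}^{*}P\to\pi_{13}^{*}P$; and \quot{the unique left inverse} only makes sense after normalizing the representative of $\mu^{-1}(t(y_1))$ in the tensor product using principality. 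On the other hand your treatment of $i$ and of $\alpha(i(p))=\alpha(p)^{-1}$ is more explicit than the paper's, which constructs only $t$ and leaves the rest to the reader.
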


\begin{proof}
Concerning uniqueness, suppose $(t,i)$ and $(t',i')$ are pairs of maps satisfying (i), (ii) and (iii). Firstly, we have $t'(y) = \mu(t(y),t'(y)) = t(y)$ and so $t=t'$. Then, $\mu(i(p),p)= t(y_1)=t'(y_1)=\mu(i'(p),p)$ implies $i(p)=i'(p)$, and so $i=i'$. In order to see the existence of $t$ and $i$, denote by $Q := \mathrm{diag}^{*}P$ the pullback of $P$ to $Y$,  denote by $Q^{\vee}$ the dual bundle and by $d: Q \otimes Q^{\vee} \to \trivlin_1$ the death map. Consider the smooth map
\begin{equation*}
\alxydim{@C=1cm}{Y \ar[r]^{s} & \trivlin_1 \ar[r]^-{d^{-1}} & Q \otimes Q^{\vee} \ar[rr]^-{\mu^{-1} \otimes \id_{Q^{\vee}}} && Q \otimes Q \otimes Q^{\vee} \ar[r]^-{\id \otimes d} & \trivlin_1 \otimes Q \cong Q \ar[r]^-{\mathrm{diag}} & P }
\end{equation*}
where $s:Y \to \trivlin_1$ is the canonical section
(see the proof of Lemma \ref{lem:loctriv}). It is straightforward to see that this satisfies the properties of the map $t$. Since all maps in the above sequence are (anchor-preserving) bundle morphisms, it is clear that $t \circ \alpha = 1$.  
\end{proof}

\begin{corollary}
\label{co:groupoidstructure}
Let $\mathcal{G}=(Y,P,\mu)$ be a $\Gamma$-bundle gerbe over $M$, and let $t$ and $i$ be the unique maps of Lemma \ref{lem:buntriv}. Then,
\begin{enumerate}[(i)]
\item
\label{co:groupoidstructure:cantriv}
$t$ is a section of $\mathrm{diag}^{*}P$, and defines a trivialization $\mathrm{diag}^{*}P \cong \trivlin_1$.  

\item
$i$ is a bundle isomorphism $i: P^{\vee} \to \mathrm{flip}^{*}P$.

\item 
$\mathcal{C}_0 := Y$ and $\mathcal{C}_1 := P$ define a Lie groupoid with source and target maps $\pi_1 \circ \chi$ and $\pi_2 \circ \chi$, respectively, composition $\mu$, identity $t$ and inversion $i$. \end{enumerate}
\end{corollary}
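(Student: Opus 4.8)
The plan is to derive part (iii) first, since it carries the real content, and then read off (i) and (ii) from it together with Lemma \ref{lem:buntriv}. For (iii) I would first record the manifold and submersion data. Both $\mathcal{C}_0 = Y$ and $\mathcal{C}_1 = P$ are smooth manifolds; the bundle projection $\chi\colon P \to Y^{[2]}$ is a surjective submersion, and the projections $\pi_1,\pi_2\colon Y^{[2]} \to Y$ are surjective submersions because $\pi\colon Y \to M$ is, so the proposed source $\pi_1\circ\chi$ and target $\pi_2\circ\chi$ are surjective submersions. The map $t\colon Y \to P$ is smooth by Lemma \ref{lem:buntriv} and is an embedding, being a section of $\chi$ along the embedded diagonal $\mathrm{diag}\colon Y\to Y^{[2]}$; and $i$ is smooth by the same lemma (it is moreover an involution, since $\mu$ is an isomorphism of $\Gamma$-bundles and the two inverse laws of Lemma \ref{lem:buntriv} force $i(i(p))=p$ by cancellation).

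The heart of (iii) is to exhibit the composition. I would identify the space of composable pairs $P \lli{s}\times_{t} P$ with the fibre product $\pi_{23}^{*}P \times_{Y^{[3]}} \pi_{12}^{*}P$ of total spaces over $Y^{[3]}$ — the triple index of $Y^{[3]}$ being determined by the pair — then use the canonical smooth map from this fibre product into the tensor product $\pi_{23}^{*}P \otimes \pi_{12}^{*}P$ (send $(q,p)$ to the class of $(q,p,\mathrm{id})$ in the description of Example \ref{ex:tensorproduct}\,\erf{ex:tensorproduct:general}), and finally compose with $\mu$ and the bundle projection $\pi_{13}^{*}P \to P$. The resulting smooth map $P \lli{s}\times_{t} P \to P$ is the groupoid composition. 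With this identification in hand, the remaining groupoid axioms are immediate from data already present: associativity of composition is exactly the associativity of $\mu$ over $Y^{[4]}$ demanded in Definition \ref{def:grb}, while neutrality of the identities $t$ and the inverse law for $i$ are verbatim items (ii) and (iii) of Lemma \ref{lem:buntriv}. I expect the main obstacle to be precisely this identification step: one must check that the fibrewise expression for $\mu$ used in Lemma \ref{lem:buntriv} really comes from a well-defined smooth map on $P \lli{s}\times_{t} P$, i.e.\ that the monoidal $\Gamma$-equivariance built into $\mu$ does not obstruct descent through the quotient defining $\otimes$; everything after that is bookkeeping.

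For (i): since $\chi\circ t = \mathrm{diag}$ by Lemma \ref{lem:buntriv}, $t$ is a smooth section of the pullback $\mathrm{diag}^{*}P$, so $\mathrm{diag}^{*}P$ is trivializable by Lemma \ref{lem:loctriv}; and because $\alpha\circ t = 1$, the trivializing isomorphism constructed in the proof of that lemma is an isomorphism onto $\trivlin_{1}$, the tensor unit. For (ii): the relation $\chi\circ i = \mathrm{flip}\circ\chi$ makes $p\mapsto i(p)$ a bundle map $P \to \mathrm{flip}^{*}P$ covering the identity of $Y^{[2]}$, and it is a diffeomorphism because $i$ is an involution; to upgrade it to an isomorphism of $\Gamma$-bundles $P^{\vee}\to\mathrm{flip}^{*}P$ I would unwind the description of the dual $P^{\vee} = i(P)$ as an extension (Definition \ref{def:extension}) and verify that $i$ intertwines the two $\Gamma$-actions, which reduces to the anchor relation $\alpha(i(p)) = \alpha(p)^{-1}$ together with the equivariance of $\mu$. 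This equivariance check is the only mild subtlety in (i) and (ii); both statements are otherwise routine once (iii) is established.
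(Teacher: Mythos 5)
Your proposal is correct and matches the paper's (implicit) argument: the corollary is stated without proof precisely because everything is read off from Lemma \ref{lem:buntriv} in the way you describe, with associativity of $\mu$ supplying the groupoid axiom and the anchor relations $\alpha\circ t=1$, $\alpha(i(p))=\alpha(p)^{-1}$ supplying (i) and (ii). The one worry you flag is a non-issue: the composition is the composite $P\lli{s}\times_{t}P\to\pi_{23}^{*}P\otimes\pi_{12}^{*}P\xrightarrow{\;\mu\;}\pi_{13}^{*}P$, and since the first map lands \emph{in} the quotient rather than being induced \emph{from} it, no descent check is needed.
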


The following statement is well-known for abelian gerbes; the general version can be proved by a straightforward generalization of the constructions given in the proof of \cite[Proposition 3]{waldorf1}. 

\begin{lemma}
\label{lem:invertibility}
Every 1-morphism $\mathcal{A}: \mathcal{G} \to \mathcal{H}$ between $\Gamma$-bundle gerbes over $M$ is invertible. 
\end{lemma}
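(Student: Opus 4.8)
The plan is to construct an explicit inverse $\mathcal{A}^{-1}\colon \mathcal{H} \to \mathcal{G}$, mimicking the abelian construction of \cite[Proposition 3]{waldorf1}, using the duality structure on principal $\Gamma$-bundles developed in Section \ref{sec:gerbeprop} (the dual $P^\vee := i(P)$, the death map $d\colon P \otimes P^\vee \to \trivlin_1$, and the groupoid structure from Corollary \ref{co:groupoidstructure}). Write $\mathcal{G} = (Y,P,\mu)$, $\mathcal{H} = (Y',P',\mu')$, and let $\mathcal{A}$ be given by a common refinement $Z$ with projections to $Y$ and $Y'$, a principal $\Gamma$-bundle $Q$ over $Z$, and the morphism $\beta\colon P' \otimes \zeta_1^*Q \to \zeta_2^*Q \otimes P$ over $Z^{[2]}$ satisfying the compatibility \erf{compgerbemorph}.

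\textbf{Construction of the candidate inverse.} The natural guess is to keep the same common refinement $Z$, take the dual bundle $Q^\vee$ over $Z$, and turn $\beta$ around. Concretely, one obtains from $\beta$ a morphism $\beta^\vee$ by dualizing and using the death and (co)evaluation maps: from $\beta\colon P' \otimes \zeta_1^*Q \to \zeta_2^*Q \otimes P$ one passes to $P \otimes \zeta_1^*Q^\vee \to \zeta_2^*Q^\vee \otimes P'$ by tensoring with suitable copies of $Q^\vee$, $Q$, $P^\vee$, $P'^\vee$ and contracting — exactly the kind of zig-zag manipulation used to define $t$ and $i$ in the proof of Lemma \ref{lem:buntriv}. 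I would first verify that this $\beta^\vee$ is well-defined and is a morphism of $\Gamma$-bundles over $Z^{[2]}$, and then check that the compatibility diagram \erf{compgerbemorph} for $(Z, Q^\vee, \beta^\vee)$ as a $1$-morphism $\mathcal{H} \to \mathcal{G}$ follows formally from \erf{compgerbemorph} for $\mathcal{A}$ together with the coherence of duality and the associativity of $\mu,\mu'$. This gives a $1$-morphism $\mathcal{A}^\vee\colon \mathcal{H}\to \mathcal{G}$.

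\textbf{Construction of the unit and counit $2$-isomorphisms.} Next I would exhibit $2$-isomorphisms $\mathcal{A}^\vee \circ \mathcal{A} \cong \id_{\mathcal{G}}$ and $\mathcal{A} \circ \mathcal{A}^\vee \cong \id_{\mathcal{H}}$. By the composition rule for $1$-morphisms, $\mathcal{A}^\vee \circ \mathcal{A}$ lives on the refinement $Z \times_{Y'} Z$ with bundle $\zeta_1^*Q^\vee \otimes \zeta_2^* Q$-type data; the death map $d\colon Q \otimes Q^\vee \to \trivlin_1$ (pulled back appropriately, and using the groupoid structure on $Z \times_{Y'} Z$ encoded by $\mu'$ via Corollary \ref{co:groupoidstructure}) furnishes the required trivialization, and $\id_{\mathcal{G}}$ lives on $Y$ regarded as a refinement of itself with bundle $\trivlin_1$; the comparison is then a $2$-morphism defined on a further common refinement (the relevant fibre product of $Z\times_{Y'}Z$ with $Y$ over $Y \times_M Y$). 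I would use Remark \ref{rem:2morphisms}: since on this refinement the diagram \erf{diag:refinement} can be arranged to commute, the $2$-morphism reduces to a bundle isomorphism $Q^\vee \otimes Q \cong \trivlin_1$ satisfying the simplified square \erf{eq:comp2morphsimple}, which is precisely (a pullback of) the death map, and its compatibility with $\beta,\beta^\vee$ is exactly the zig-zag identity for the duality. The counit is entirely analogous with the roles of $\mathcal{G}$ and $\mathcal{H}$ exchanged.

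\textbf{Main obstacle.} The conceptual content is routine — it is the abelian argument of \cite{waldorf1} transported along the duality formalism — but the real work, and the step I expect to be most delicate, is bookkeeping the non-commuting refinement diagrams and the tensor-ordering conventions in the \emph{non-abelian} setting: the category of principal $\Gamma$-bundles is only monoidal, not symmetric, so one cannot freely reorder factors, and one must check that $\beta^\vee$ and the zig-zag $2$-morphisms are compatible with the $\Gamma$-action on all the bundles involved (anchor-preservation included, as in the last sentence of Lemma \ref{lem:buntriv}). Verifying \erf{compgerbemorph} for $\mathcal{A}^\vee$ and the two coherence conditions \erf{eq:comp2morphsimple} for the unit and counit amounts to a careful diagram chase combining the associativity of $\mu$ and $\mu'$, the compatibility \erf{compgerbemorph} of $\beta$, and the naturality and triangle identities of the duality $d$ — I would organize this as a sequence of commuting squares over $Z^{[3]}$ (resp. over the relevant refinements) rather than attempt it fibrewise, and I would invoke Lemma \ref{lem:nonstablemorphisms} wherever a refinement-change isomorphism is needed to identify two presentations of the same $1$-morphism.
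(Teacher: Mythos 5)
Your proposal is correct and follows exactly the route the paper takes: the paper's proof of this lemma is a one-line reference to the abelian construction of \cite[Proposition 3]{waldorf1}, asserting that it generalizes straightforwardly, and your dualization of $Q$ and $\beta$ via the death map, together with the care about tensor-ordering in the non-symmetric setting, is precisely that generalization spelled out. No discrepancy to report.
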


The last statement of this section shows a way to bring 1-morphisms and 2-morphisms into a simpler form (see Remark \ref{re:1-morphisms}). For bundle gerbes $\mathcal{G}_1$ and $\mathcal{G}_2$ with surjective submersions $\pi_1:Y_1 \to M$ and $\pi_2:Y_2 \to M$ we denote by $\hom {\mathcal{G}_1}{\mathcal{G}_2}$ the Hom-category in the bicategory $\grb\Gamma M$, and by $\hom {\mathcal{G}_1}{\mathcal{G}_2}^{FP}$ the category  whose objects are those 1-morphisms whose common refinement is $Z := Y_1 \times_M Y_2$, and whose 2-morphisms are those 2-morphisms whose refinement is $W := Y_1 \times_M Y_2$ with the maps $r,r': W \to Z$ the identity maps.

\begin{lemma}
\label{lem:canonicalrefinement}
The inclusion
$\hom {\mathcal{G}_1}{\mathcal{G}_2}^{FP} \to  \hom {\mathcal{G}_1}{\mathcal{G}_2}$
is an equivalence of categories.
\end{lemma}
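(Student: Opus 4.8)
The plan is to show that $\iota$ is essentially surjective and fully faithful. Write $Z_0 := Y_1 \times_M Y_2$ for the canonical common refinement. Every common refinement $Z$ of $\pi_1$ and $\pi_2$ carries a canonical map $q=(q_1,q_2):Z \to Z_0$ over $Y_1$ and $Y_2$; but -- this being the extra generality of Definition \ref{def1mor}, cf.\ item~4 of Remark \ref{re:1-morphisms} -- $q$ is in general neither surjective nor a submersion, so one cannot descend a $1$-morphism along $Z \to Z_0$ directly; this is the main obstacle. Three ingredients will be used: the evident analogue of Proposition \ref{prop:pullback} for $1$- and $2$-morphisms (pulling back the data of a $1$-morphism along a morphism of common refinements gives a canonically isomorphic $1$-morphism, and the pullback of a $2$-morphism along a surjective submersion represents the same $2$-morphism); the canonical trivialisations $\mathrm{diag}^{*}P_i \cong \trivlin_1$ and the Lie groupoid structure on $(Y_i,P_i)$ from Corollary \ref{co:groupoidstructure}; and the fact that principal $\Gamma$-bundles and their morphisms form a stack.

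\emph{Essential surjectivity.} Given a $1$-morphism $\mathcal{A}=(Z,Q,\beta)$, set $\bar{Z} := Z \times_M Y_1 \times_M Y_2$, regarded as a common refinement via the two projections to $Y_1$ and $Y_2$; then $\bar{Z} \to Z_0$ (forgetting the $Z$-factor) is a surjective submersion and a morphism of common refinements, whereas $\bar{Z} \to Z$ (forgetting the $Y_i$-factors) is merely a surjective submersion. On $\bar{Z}$ put
\[
Q' := \sigma_2^{*}P_2 \otimes \mathrm{pr}_Z^{*}Q \otimes \sigma_1^{*}P_1^{\vee},
\]
where $\sigma_i:\bar{Z} \to Y_i^{[2]}$ sends $(z,y_1,y_2)$ to $(q_i(z),y_i)$, and let $\beta'$ over $\bar{Z}^{[2]}$ be obtained from $\beta$ by pre- and post-composition with the relevant instances of $\mu_1^{\pm 1}$, $\mu_2^{\pm 1}$ and the death map. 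Using associativity of $\mu_1,\mu_2$ and the compatibility \erf{compgerbemorph} of $\beta$ with the bundle gerbe products, one checks that $\mathcal{A}'=(\bar{Z},Q',\beta')$ is again a $1$-morphism $\mathcal{G}_1 \to \mathcal{G}_2$; that the tautological identification $\sigma_2^{*}P_2 \otimes \mathrm{pr}_Z^{*}Q \cong Q' \otimes \sigma_1^{*}P_1$, coming from $\sigma_1^{*}P_1^{\vee}\otimes\sigma_1^{*}P_1 \cong \trivlin_1$, is a manifestly invertible $2$-morphism $\mathcal{A}\Rightarrow\mathcal{A}'$ with refinement $\bar{Z}$, $r=\mathrm{pr}_Z$, $r'=\id$; and that restricting $\beta'$ to $\bar{Z}\times_{Z_0}\bar{Z}$ yields a descent datum for $Q'$ along $\bar{Z} \to Z_0$ whose cocycle condition is \erf{compgerbemorph} specialised to the locus where the points of $\bar{Z}$ share their image in $Z_0$. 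Since $\bun\Gamma-$ is a stack, $Q'$ and $\beta'$ descend to a $1$-morphism $\mathcal{A}''\in\hom{\mathcal{G}_1}{\mathcal{G}_2}^{FP}$, and $\mathcal{A}'$ is the pullback of $\mathcal{A}''$ along the refinement $\bar{Z} \to Z_0$, so $\iota(\mathcal{A}'')\cong\mathcal{A}'\cong\mathcal{A}$.

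\emph{Fullness and faithfulness.} Fix $\mathcal{A},\mathcal{A}'\in\hom{\mathcal{G}_1}{\mathcal{G}_2}^{FP}$ with bundles $Q,Q'$ and morphisms $\beta,\beta'$ over $Z_0^{[2]}$. Faithfulness is clear: two $2$-morphisms based on $W=Z_0$ with $r=r'=\id$ are identified in $\hom{\mathcal{G}_1}{\mathcal{G}_2}$ only after pullback to $Z_0$ itself, on which they are unchanged. For fullness, let $\Psi:\mathcal{A}\Rightarrow\mathcal{A}'$ be represented by $(W,r,r',\psi)$ with $r,r':W\to Z_0$ surjective submersions. Pulling $\beta$ and $\beta'$ back along $(r,r'):W\to Z_0^{[2]}$ and composing appropriately with $\psi$ turns $\psi$ into a morphism $y_W^{*}P_2\otimes r^{*}Q \to y_W^{*}P_2\otimes r^{*}Q'$; since tensoring with the fixed principal $\Gamma$-bundle $y_W^{*}P_2$ is an equivalence of categories, this is the same as a morphism $\phi:r^{*}Q \to r^{*}Q'$ of $\Gamma$-bundles over $W$. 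Restricting the compatibility diagram \erf{eq:comp2morphcomplicated} for $\psi$ to the locus $r(w)=r(w')$ -- where every $P_i$-factor becomes diagonal and trivialises by Corollary \ref{co:groupoidstructure} -- shows that $\phi$ is a descent datum along $r:W\to Z_0$; as morphisms of principal $\Gamma$-bundles form a stack, $\phi$ descends to $\tilde\phi:Q\to Q'$ over $Z_0$, which \erf{eq:comp2morphcomplicated} forces to be compatible with $\beta,\beta'$ in the sense of \erf{eq:comp2morphsimple}. Then $(Z_0,\id,\id,\tilde\phi)$ is a $2$-morphism in $\hom{\mathcal{G}_1}{\mathcal{G}_2}^{FP}$, and comparing it with $\Psi$ over the refinement $W$ shows $\iota(\tilde\phi)=\Psi$.

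\emph{On the difficulty.} The one delicate point is the obstruction noted above: the abelian-case recipe of descending a $1$-morphism along its refinement fails because $Z\to Z_0$ is not a cover. The remedy is to enlarge $Z$ to $\bar{Z}=Z\times_M Y_1\times_M Y_2$ with a \emph{different} common-refinement structure and to transport $(Q,\beta)$ by conjugation with $P_1,P_2$ -- this is where the $2$-group structure of $\Gamma$ and the groupoid structure of Corollary \ref{co:groupoidstructure} are essential -- after which everything descends along the genuine cover $\bar{Z}\to Z_0$. The remaining verifications (the cocycle identity for the descent datum and the compatibility axioms for $\beta'$ and $\tilde\phi$) are routine, the only genuine subtlety being the ordering of tensor products flagged in Remark \ref{re:1-morphisms}.
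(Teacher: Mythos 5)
Your proof is correct and follows essentially the same route as the paper's: conjugate $Q$ by $P_2\otimes(-)\otimes P_1$ (your $P_1^{\vee}$ agrees with the paper's $\kappa_1^{*}P_1$ via Corollary \ref{co:groupoidstructure}(ii)) over $Y_1\times_M Z\times_M Y_2$, descend along the surjective submersion to $Y_1\times_M Y_2$ using that $\bun\Gamma-$ is a stack, and for fullness cancel the $P_2$-factor and descend the resulting morphism $r^{*}Q\to r^{*}Q'$ along $r$. The only differences are cosmetic (an intermediate 1-morphism $\mathcal{A}'$ before descending, and invoking that tensoring with a principal $\Gamma$-bundle is an equivalence rather than writing out the death map explicitly).
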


\begin{proof}
First we show that it is essentially surjective. We assume $\mathcal{A}: \mathcal{G}_1 \to \mathcal{G}_2$ is a general 1-morphism with a principal $\Gamma$-bundle $Q$ over a common refinement $Z$ of the surjective submersions $\pi_1:Y_1 \to M$ and $\pi_2:Y_2 \to M$ of the two bundle gerbes. We look at the principal $\Gamma$-bundle
\begin{equation*}
\tilde Q := \kappa_2^{*}P_2 \otimes \mathrm{pr}_2^{*}Q \otimes \kappa_1^{*}P_1
\end{equation*}
over $\tilde Z := Y_1 \times_M Z \times_M Y_2$, where 
\begin{equation*}
\kappa_1: \tilde Z \to Y_1^{[2]}: (y_1,z,y_2) \mapsto (y_1,y_1(z))
\;\text{ and }\;
\kappa_2: \tilde Z \to Y_2^{[2]}: (y_1,z,y_2) \mapsto (y_2(z),y_2)\text{.}
\end{equation*}
The projection $\mathrm{pr}_{13}: \tilde Z \to Y_1 \times_M Y_2$ is a surjective submersion, and over
 $\tilde Z \times_{Y_1 \times_M Y_2} \tilde Z$ we have a bundle morphism
$\alpha : \mathrm{pr}_1^{*}\tilde Q \to \mathrm{pr}_2^{*}\tilde Q$
defined over a point $(\tilde z,\tilde z')$ with $\tilde z = (y_1,z,y_2)$ and $\tilde z' = (y_1,z',y_2)$ by
\begin{equation*}
\alxydim{@R=0.6cm}{\tilde Q_{\tilde z} \ar@{=}[r] & P_2|_{y_2(z),y_2} \otimes Q_{z} \otimes P_1|_{y_1,y_1(z)} \ar[d]^-{\mu_2^{-1} \otimes \id \otimes \id} \\ & P_2|_{y_2(z'),y_2} \otimes P_2|_{y_2(z),y_2(z')} \otimes Q_{z} \otimes P_1|_{y_1,y_1(z)} \ar[d]^{\id \otimes \beta \otimes \id}  \\ & P_2|_{y_2(z'),y_2} \otimes Q_{z'} \otimes P_1|_{y_1(z),y_1(z')} \otimes P_1|_{y_1,y_1(z)} \ar[d]^{ \id \otimes \id \otimes \mu_1}  \\ &  P_2|_{y_2(z'),y_2} \otimes Q_{z'} \otimes P_1|_{y_1,y_1(z')} \ar@{=}[r] & \tilde Q_{\tilde z'}\text{.} }
\end{equation*}
The compatibility condition \erf{compgerbemorph} implies a cocycle condition for $\alpha$ over the three-fold fibre product of $\tilde Z$ over $Y_1 \times_M Y_2$, and since principal $\Gamma$-bundles form a stack, the pair $(\tilde Q,\alpha)$ defines a principal $\Gamma$-bundle $Q^{FP}$ over $Z^{FP} := Y_1 \times_M Y_2$. It is now straightforward to show that the bundle isomorphism $\beta$ itself descends to a bundle isomorphism $\beta^{FP}$ over $Z^{FP} \times_M Z^{FP}$ in such a way that the triple $(Z^{FP},Q^{FP},\beta^{FP})$ forms a 1-morphism $\mathcal{A}^{FP}: \mathcal{G}_1 \to \mathcal{G}_2$.

In order to show that $\mathcal{A}^{FP}$ is an essential preimage of $\mathcal{A}$, it remains to construct a 2-morphism $\varphi_{\mathcal{A}}^{FP}\maps  \mathcal{A} \Rightarrow \mathcal{A}^{FP}$. In the terminology of Definition \ref{def2mor}, we choose $W = \tilde Z$ with $r := \mathrm{pr}_2:W \to Z$   and $r' := \mathrm{pr}_{13} : W \to Z^{FP}$. Note that diagram \erf{diag:refinement} does not commute. The maps $x_W: W \to Y_1^{[2]}$ and $y_W: W \to Y_2^{[2]}$ are given by $x_W = s \circ \kappa_1$ and $y_W = \kappa_2$, where $s: Y_1^{[2]} \to Y_1^{[2]}$ switches the factors. Now, the bundle isomorphism of the 2-morphism $\varphi_{\mathcal{A}}^{FP}$ we want to construct is a bundle isomorphism
\begin{equation*}
\varphi : y_W^{*}P_2 \otimes r^{*}Q \to \tilde Q \otimes x_W^{*}P_1
\end{equation*}
over $W$, and is fibrewise over a point $w=(y_1,z,y_2)$ given by
\begin{equation*}
\alxydim{@R=0.6cm}{P_2|_{y_2(z),y_2} \otimes Q_z \ar[r]^-{\id \otimes \id \otimes t^{-1}} & P_2|_{y_2(z),y_2} \otimes Q_z \otimes P_{y_1(z),y_1(z)} \ar[d]^{\id \otimes \id \otimes \mu_1^{-1}}   \\ & P_2|_{y_2(z),y_2} \otimes Q_z \otimes P_1|_{y_1,y_1(z)} \otimes P_1|_{y_1(z),y_1} \ar@{=}[r] & \tilde Q_{w} \otimes P_1|_{s(y_1,y_1(z))}\text{,}}
\end{equation*}
where $t$ is the trivialization of $\mathrm{diag}^{*}P$ of Corollary \ref{co:groupoidstructure}. The compatibility condition \erf{eq:comp2morphcomplicated} is straightforward to check.

Now we show that the inclusion $\hom {\mathcal{G}_1}{\mathcal{G}_2}^{FP} \to  \hom {\mathcal{G}_1}{\mathcal{G}_2}$
is full and faithful. Since it is clearly faithful, it only remains to show that it is full. Given a morphism $\mathcal{A} \to \mathcal{A}' $ in $\hom {\mathcal{G}_1}{\mathcal{G}_2}$, i.e. a common refinement $W$ of $Y_1 \times_M Y_2$ with itself and a bundle morphism $\varphi$, we have to find a morphism in $\hom {\mathcal{G}_1}{\mathcal{G}_2}^{FP}$ such that the two morphisms are identified under the equivalence relation on bundle gerbe 2-morphisms. We denote the bundles over $Y_1 \times_M Y_2$ corresponding to $\mathcal{A}$ and $\mathcal{A}'$ by $Q$ and $Q'$. The refinement maps are denoted as before by $r=(s_1,s_2): W \to Y_1 \times_M Y_2$ and $r'= (t_1,t_2): W \to Y_1 \times_M Y_2$. Then we obtain an isomorphism
$r^*Q \to {r}^*Q'$ 
fibrewise over a point $w \in W$ by
\begin{equation}
\label{eq:morlong}
\alxydim{@=0.6cm}{
Q|_{s_1(w),s_2(w)} \ar[rr]^-{d^{-1} \otimes \id} & & 
P_2^\vee |_{s_2(w),t_2(w)} \otimes P_2|_{s_2(w),t_2(w)} \otimes Q|_{s_1(w),s_2(w)}\ar[d]^-{\id \otimes \varphi}&&  \\
&& P_2^\vee|_{s_2(w),t_2(w)} \otimes Q'|_{t_1(w),t_2(w)} \otimes P_1|_{s_1(w),t_1(w)} \ar[d]^-{\id \otimes \beta'^{-1}} &&\\
&& P_2^\vee|_{s_2(w),t_2(w)} \otimes P_2|_{s_2(w),t_2(w)} \otimes Q'|_{s_1(w),s_2(w)} \ar[rr]^-{d \otimes \id} && Q'|_{s_1(w),s_2(w)}
}\hspace{-1cm}
\end{equation}
where $d: P_2^\vee |_{s_2(w),t_2(w)} \otimes P_2|_{s_2(w),t_2(w)} \to \trivlin_1$ is the death map. One can use the compatibility condition for $\varphi$ to show that this morphism descends to a morphism $\psi: Q \to Q'$ which is a morphism in 
$\hom {\mathcal{G}_1}{\mathcal{G}_2}^{FP}$. The two morphisms $(W,\psi)$ and $(Y_1\times_MY_2,\varphi)$ are identified if their pullbacks to 
\begin{equation*}
W \times_{(Y_1 \times_M Y_2 \times_M Y_1 \times_M Y_2)} (Y_1 \times_M Y_2) = \{w \in W \mid r(w) = r'(w) \} =: W_0 
\end{equation*}
are equal. On the one side, the map $W_0 \to W$ is the inclusion and the map $W_0 \to Y_1 \times_M Y_2$ is equal to $r$. The pullback of $\psi$ along $r$ is by construction the map $r^*Q \to r^*Q'$ from \eqref{eq:morlong}. On the other side,  bundles $x^*_W P_1$ and $y^*_W P_2$ over $W_0$ have canonical trivializations (Lemma \ref{co:groupoidstructure} \erf{co:groupoidstructure:cantriv}) under which $\varphi$ becomes also equal to the morphism \eqref{eq:morlong}.
\end{proof}

\subsection{Classification by \v Cech Cohomology}

In this section we prove that Versions I (\v Cech $\Gamma$-1-cocycles) and III ($\Gamma$-bundle gerbes) are equivalent. For this purpose, 
we extract a \v Cech cocycle from a $\Gamma$-bundle gerbe $\mathcal{G}$ over $M$, and prove that this procedure defines a bijection on the level of equivalence classes (Theorem \ref{th:gerbesclass}). First we have to ensure the existence of appropriate open covers.

\begin{lemma}
\label{lem:goodcover}
For every $\Gamma$-bundle gerbe $\mathcal{G}=(Y,P,\mu)$ over $M$ there exists an open cover $\mathscr{U}=\left \lbrace U_i \right \rbrace_{i\in I}$ of $M $with sections $\sigma_i : U_i \to Y$, such that the principal $\Gamma$-bundles $(\sigma_{i} \times \sigma_j)^{*}P$ over $U_i \cap U_j$ are trivializable. \end{lemma}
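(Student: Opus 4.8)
The plan is to produce the open cover in two stages: first refine so that $Y \to M$ admits local sections, then refine further so that the bundle $P$ becomes trivializable over the relevant double intersections.

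First I would use that $\pi\colon Y\to M$ is a surjective submersion. By the local-section property of submersions, every point $x\in M$ has an open neighbourhood over which $\pi$ has a smooth section. Thus I can choose an open cover $\mathscr{V}=\{V_a\}_{a\in A}$ of $M$ together with smooth sections $\tau_a\colon V_a \to Y$ of $\pi$. At this stage, for each pair $a,b$ the map $(\tau_a\times\tau_b)\colon V_a\cap V_b \to Y^{[2]}$ is defined (using $\pi\circ\tau_a = \mathrm{id} = \pi\circ\tau_b$, so the image lands in the fibre product $Y\times_M Y$), and we may form the principal $\Gamma$-bundle $(\tau_a\times\tau_b)^{*}P$ over $V_a\cap V_b$.

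Next I would invoke local triviality of principal $\Gamma$-bundles, which is exactly the Corollary following Lemma \ref{lem:loctriv}: every point of a manifold carrying a principal $\Gamma$-bundle has an open neighbourhood over which the bundle has a trivialization (equivalently, a smooth section). The subtlety is that the triviality must hold simultaneously for all the bundles $(\tau_a\times\tau_b)^{*}P$ that involve a given index. I would handle this by a standard common-refinement argument: pass to a cover $\mathscr{U}=\{U_i\}_{i\in I}$ that refines $\mathscr{V}$ and is chosen fine enough that each $U_i$ is contained in some $V_{a(i)}$ and, moreover, so that over each $U_i\cap U_j$ the bundle $(\tau_{a(i)}\times\tau_{a(j)})^{*}P|_{U_i\cap U_j}$ is trivializable. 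Concretely, fix a locally finite refinement and shrink: since local triviality is an open, hereditary condition, one can first pick for each point $x$ a neighbourhood $U_x$ inside some $V_{a}$ over which the (finitely many, by local finiteness) bundles $(\tau_a\times\tau_{a'})^{*}P$ through $x$ admit sections, then extract a cover. Setting $\sigma_i := \tau_{a(i)}|_{U_i}$ gives sections $\sigma_i\colon U_i\to Y$ with $(\sigma_i\times\sigma_j)^{*}P$ trivializable over $U_i\cap U_j$ by Lemma \ref{lem:loctriv}, as required.

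The main obstacle is the bookkeeping in the simultaneous-shrinking step: one must ensure that shrinking the cover to trivialize $(\sigma_i\times\sigma_j)^{*}P$ for one pair does not destroy triviality for another, and that the sections $\sigma_i$ remain defined on the shrunk sets. Using a locally finite refinement of $\mathscr{V}$ reduces this to finitely many conditions near each point, and restriction of a section to a smaller open set is again a section, so the triviality is preserved under further shrinking; this makes the argument go through. (If one prefers, the whole statement can be phrased as: pull $\mathcal{G}$ back along a good open cover via Proposition \ref{prop:pullback}, where $Y$ is replaced by the disjoint union of the cover and the sections are the canonical inclusions — but the direct argument above is self-contained.)
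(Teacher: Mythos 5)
Your first stage (local sections of $\pi\colon Y\to M$ over a cover $\mathscr{V}$) is fine, but the second stage has a genuine gap. Trivializability of $(\tau_a\times\tau_{a'})^{*}P$ over an open set is a \emph{global} condition on that set, and your shrinking argument only arranges it on neighbourhoods of the chosen centre points. Concretely: if $U_i\subset V_{a(i)}$ is chosen around $x$ so that the bundles $(\tau_{a(i)}\times\tau_{a'})^{*}P$ are trivial over $U_i$ for all $a'$ with $x\in V_{a'}$, and similarly $U_j$ around $y$, it can happen that $U_i\cap U_j\neq\emptyset$ while $x\notin V_{a(j)}$ and $y\notin V_{a(i)}$ (take $V_1=(0,2)$, $V_2=(1,3)$, $x=\tfrac12$, $y=\tfrac52$, $U_i=(0,1.6)$, $U_j=(1.4,3)$). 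Then neither construction says anything about $(\tau_{a(i)}\times\tau_{a(j)})^{*}P$ over $U_i\cap U_j$, and local triviality at each point of that set does not yield a trivialization over all of it. Iterated refinement does not obviously terminate either, since each refinement produces new double intersections to control.

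The missing input is a reason why the bundle is trivial over the \emph{whole} double intersection, and the paper supplies it by choosing the cover so that the sets $U_i\cap U_j$ are contractible (a good cover refining $\mathscr{V}$). One then writes $\Gamma$ as a crossed module $\act GH$ (Remark \ref{rem:crossedmodule}), uses Lemma \ref{actionbundles} to identify $(\sigma_i\times\sigma_j)^{*}P$ with an ordinary principal $H$-bundle over $U_i\cap U_j$, invokes the standard fact that principal $H$-bundles over contractible manifolds admit global sections, and concludes by Lemma \ref{lem:loctriv}. Your closing parenthetical gestures at good covers, but contractibility (or some comparable global classification input) is doing essential work here that local triviality plus shrinking cannot replace.
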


\begin{proof}
One can choose an open cover such that the 2-fold intersections $U_i \cap U_j$ are contractible. Since every Lie 2-group is a crossed module $\act GH$ (Remark \ref{rem:crossedmodule}), and $\act GH$-bundles are ordinary $H$-bundles (Lemma \ref{actionbundles}), these  admit sections over contractible smooth manifolds. But a section is enough to trivialize the original $\Gamma$-bundle (Lemma \ref{lem:loctriv}).
\end{proof}

Let $\mathcal{G}$ be a $\Gamma$-bundle gerbe over $M$, and let $\mathscr{U} = \left \lbrace U_i \right \rbrace_{i\in I}$ be an open cover with the properties of Lemma \ref{lem:goodcover}. We denote by $M_\mathscr{U}$ the disjoint union of all the open sets $U_i$, and by $\sigma:M_{\mathscr{U}} \to Y$ the union of the sections $\sigma_i$. Then, $\sigma$ is a refinement of $\pi:Y \to M$, and we have a $\Gamma$-bundle gerbe $\mathcal{G}_{\mathscr{U},\sigma}$ that is isomorphic to $\mathcal{G}$ (Proposition \ref{prop:pullback}). 

The principal $\Gamma$-bundle $P_{ij}$ of $\mathcal{G}_{\mathscr{U},\sigma}$ over the component $U_i \cap U_j$ is by assumption trivializable. Thus there exists a trivialization
$t_{ij}: P_{ij} \to \trivlin_{f_{ij}}$
for  smooth functions $f_{ij}:U_i \cap U_j \to \Gamma_0$. We define an isomorphism $\mu_{ijk}$ between trivial bundles such that the diagram
\begin{equation*}
\alxydim{@=1.3cm}{P_{jk} \otimes P_{ij} \ar[r]^-{\mu} \ar[d]_{t_{jk} \otimes t_{ij}} & P_{ik} \ar[d]^{t_{ik}} \\ \trivlin_{f_{jk}} \otimes \trivlin_{f_{ij}} \ar[r]_-{\mu_{ijk}} & \trivlin_{f_{ik}}}
\end{equation*}
is commutative. Now we are in the situation of Lemma \ref{lem:nonstablemorphisms}, which implies that the $\Gamma$-bundle gerbe $\mathcal{G}_{\mathscr{U},\sigma,t} \df (M_{\mathscr{U}},\trivlin_{f_{ij}},\mu_{ijk})$ is still isomorphic to $\mathcal{G}$.

Combining Lemma \ref{homsets} with Example \ref{ex:tensorproduct} \erf{ex:tensorproduct:trivial}, we see that the  isomorphisms $\mu_{ijk}$ correspond to smooth maps $g_{ijk}:U_i \cap U_j \cap U_k \to \Gamma_1$ such that $s(g_{ijk})=f_{jk} \cdot f_{ij}$ and $t(g_{ijk})=f_{ik}$. The associativity condition for $\mu_{ijk}$ implies moreover that
\begin{equation*}
g_{\alpha\gamma\delta} \circ (g_{\alpha\beta\gamma} \cdot \id_{f_{\gamma \delta}}) = 
g_{\alpha \beta \delta} \circ (\id_{f_{\alpha \beta}} \cdot g_{\beta \gamma \delta})\text{.}
\end{equation*}
Hence, the collection $\left \lbrace f_{ij},g_{ijk} \right \rbrace$ is a $\Gamma$-1-cocycle on $M$ with respect to the open cover $\mathscr{U}$. 

\begin{theorem}
\label{th:gerbesclass}
Let $M$ be a smooth manifold and let $\Gamma$ be a Lie 2-group. The above construction defines a bijection
\begin{equation*}
\bigset{3.7cm}{Isomorphism classes of $\Gamma$-bundle gerbes over $M$} \cong \h^1(M,\Gamma)\text{.}
\end{equation*}
\end{theorem}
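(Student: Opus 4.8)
The plan is to show that the assignment $\mathcal{G} \mapsto \{f_{ij},g_{ijk}\}$ constructed above is well-defined on isomorphism classes, is injective, and is surjective, so that it descends to a bijection on isomorphism classes. The key structural input is that $\Gamma$-bundle gerbes form the $2$-stackification of the pre-$2$-stack $\triv\grb\Gamma-$ of trivial $\Gamma$-gerbes (Theorem \ref{th:stack}), together with the explicit ``reduction to a cover'' steps already carried out: Proposition \ref{prop:pullback} (pullback along a refinement gives an isomorphic gerbe), Lemma \ref{lem:nonstablemorphisms} (a compatible bundle isomorphism upstairs forces the gerbes to be isomorphic), and Lemma \ref{lem:goodcover} (existence of a cover $\mathscr U$ with sections over which the bundles $(\sigma_i\times\sigma_j)^*P$ are trivializable). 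Surjectivity is essentially free: given a $\Gamma$-$1$-cocycle $(f_{ij},g_{ijk})$ with respect to a cover $\mathscr U$, set $Y := M_{\mathscr U}$ with the obvious submersion, take $P := \trivlin_{f_{ij}}$ over $Y^{[2]}$ (using $s(g_{ijk})=f_{jk}\cdot f_{ij}$, $t(g_{ijk})=f_{ik}$ to see this is well-typed), and let $\mu$ be the bundle morphism between trivial bundles corresponding to $g_{ijk}$ under Lemma \ref{homsets} and Example \ref{ex:tensorproduct}(\ref{ex:tensorproduct:trivial}); the cocycle condition \erf{cocyclecond} is exactly the associativity of $\mu$. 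Tracing the construction back shows this gerbe maps to the class of $(f_{ij},g_{ijk})$.

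The substantive work is to check that the class in $\h^1(M,\Gamma)$ does not depend on the choices made (the cover $\mathscr U$, the sections $\sigma_i$, the trivializations $t_{ij}$), and that isomorphic gerbes give the same class. First I would deal with the choices of trivializations $t_{ij}$: replacing $t_{ij}$ by another trivialization amounts, via Lemma \ref{homsets}, to a smooth map $h_{ij}: U_i\cap U_j\to\Gamma_1$, and unwinding how $\mu_{ijk}$ transforms produces precisely a coboundary as in Definition \ref{def:cocycles}(2) — here one has to be a little careful with the left/right placement of tensor factors, but the conventions were fixed precisely so this works. Next, passing to a common refinement of two covers (or refining the sections): two gerbes $\mathcal{G}_{\mathscr U,\sigma,t}$ and $\mathcal{G}_{\mathscr V,\tau,u}$ associated to $\mathcal{G}$ via different choices are both isomorphic to $\mathcal{G}$, hence isomorphic to each other by a $1$-morphism $\mathcal A$; restricting $\mathcal A$ to a common refinement of the two index sets (using Lemma \ref{lem:canonicalrefinement} to bring it to the fibre-product form, and Lemma \ref{lem:invertibility}) gives the bundle $Q$ whose trivialization — which exists after passing to a possibly finer cover, again by the argument of Lemma \ref{lem:goodcover} — yields exactly the data $h_\alpha, s_{\alpha\beta}$ of a cocycle equivalence. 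The independence from the choice of $\mathscr U$ is then absorbed into the colimit defining $\h^1(M,\Gamma)$ (Definition \ref{def:cohomology}).

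For injectivity I would argue contrapositively: suppose $\mathcal{G}$ and $\mathcal{G}'$ yield cohomologous cocycles. After passing to a common refinement of the two covers (legitimate in the colimit), we may assume they yield cocycles $(f_{ij},g_{ijk})$ and $(f'_{ij},g'_{ijk})$ over the \emph{same} cover $\mathscr U$ that differ by a coboundary $(h_\alpha,s_{\alpha\beta})$. I then reverse the trivialization step: $\mathcal{G}\cong\mathcal{G}_{\mathscr U,\sigma,t}=(M_{\mathscr U},\trivlin_{f_{ij}},\mu_{ijk})$ and $\mathcal{G}'\cong (M_{\mathscr U},\trivlin_{f'_{ij}},\mu'_{ijk})$, and the coboundary data $(h_\alpha,s_{\alpha\beta})$ assemble — via Lemma \ref{homsets} turning $h_\alpha$ into a trivial bundle $\trivlin_{h_\alpha}=:Q$ over $Z:=M_{\mathscr U}$ regarded as a common refinement of $M_{\mathscr U}$ with itself, and $s_{\alpha\beta}$ into the morphism $\beta$ — into a $1$-morphism $\mathcal{A}: \mathcal{G}_{\mathscr U,\sigma,t}\to\mathcal{G}'_{\mathscr U,\tau,u}$, where the third displayed equation in Definition \ref{def:cocycles}(2) is precisely the compatibility diagram \erf{compgerbemorph}. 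Hence $\mathcal{G}\cong\mathcal{G}'$.

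\textbf{Main obstacle.} The genuinely delicate point is the bookkeeping of tensor-factor orderings and of which pullback ($\pi_{12}$ vs.\ $\pi_{23}$, $\zeta_1$ vs.\ $\zeta_2$) appears where: the bicategory $\grb\Gamma M$ is \emph{non-symmetric}, so the translation between the algebraic cocycle equations \erf{cocyclecond} and the cocycle-equivalence condition on one side, and the geometric associativity/compatibility diagrams \erf{compgerbemorph}, \erf{eq:comp2morphsimple} on the other, must be done with the exact conventions chosen in Definitions \ref{def:cocycles}, \ref{def:grb}, \ref{def1mor} and in Example \ref{ex:tensorproduct:general}. I expect the verification that a coboundary is the same thing as a $1$-morphism between the trivialized gerbes — matching $(\id_{h_\gamma}\cdot g_{\alpha\beta\gamma})\circ(s_{\beta\gamma}\cdot\id_{f_{\alpha\beta}})\circ(\id_{f_{\beta\gamma}}\cdot s_{\alpha\beta}) = s_{\alpha\gamma}\circ(g'_{\alpha\beta\gamma}\cdot\id_{h_\alpha})$ against diagram \erf{compgerbemorph} over $Z^{[3]}$ — to be the step that requires the most care, though no conceptual difficulty beyond what is already packaged in Lemmas \ref{homsets} and \ref{lem:nonstablemorphisms} and in the plus-construction formalism of \cite{nikolaus2}.
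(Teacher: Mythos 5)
Your proposal is correct and follows essentially the same route as the paper: the paper also reduces everything to the single claim that two gerbes of the form $(M_{\mathscr U},\trivlin_{f_{ij}},\mu_{ijk})$ are isomorphic if and only if the corresponding cocycles are equivalent (which simultaneously yields well-definedness, independence of choices, and injectivity), obtains surjectivity by building the gerbe from a cocycle via Lemma \ref{homsets}, and proves the forward direction exactly as you do — by refining the common refinement $Z$ of a 1-isomorphism $\mathcal A=(Z,Q,\beta)$ until $Q$ admits trivializations $\trivlin_{h_i}$, whereupon $\beta$ becomes the $s_{ij}$ and diagram \erf{compgerbemorph} gives the coboundary identity.
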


\begin{proof}
We claim  that $\Gamma$-bundle gerbes $(M_{\mathscr{U}},\trivlin_{f_{ij}},\mu_{ijk})$ and $(M_{\mathscr{V}},\trivlin_{h_{ij}},\nu_{ijk})$ are isomorphic if and only if the corresponding $\Gamma$-1-cocycles are equivalent. This proves at the same time that the choices of open covers and sections we have made during the construction do not matter,  that the resulting map is well-defined on isomorphism classes, and that this map is injective. Surjectivity follows by assigning to a $\Gamma$-1-cocycle $(f_{ij},g_{ijk})$ with respect to some cover $\mathscr{U}$ the $\Gamma$-bundle gerbe $(M_{\mathscr{U}},\trivlin_{f_{ij}},\mu_{ijk})$ with $\mu_{ijk}$ determined by Lemma \ref{homsets}.

It remains to prove that claim. We assume $\mathcal{A}=(Z,Q,\alpha)$ is a 1-isomorphism between the $\Gamma$-bundle gerbes $(M_{\mathscr{U}},\trivlin_{f_{ij}},\mu_{ijk})$ and $(M_{\mathscr{V}},\trivlin_{h_{ij}},\nu_{ijk})$. Similarly to Lemma \ref{lem:goodcover} one can show that there exists a cover $\mathscr{W}$ of $M$ by open sets $W_i$ that refines both $\mathscr{U}$ and $\mathscr{V}$, and that allows smooth sections $\omega_i:W_i \to Z$ for which the $\Gamma$-bundle $\omega_i^{*}Q$ is trivializable. In the terminology of the above construction, choosing a trivialization $t: \omega^{*}Q \to \trivlin_{h_i}$ with smooth maps $h_i: W_i \to \Gamma_0$ over $M_{\mathscr{W}}$ converts the isomorphism $\alpha$ into smooth functions $s_{ij}: W_i \cap W_j \to \Gamma_1$ satisfying $s(s_{ij}) = g_{ij}' \cdot h_{i}$ and $t(s_{ij}) = h_{j} \cdot g_{ij}$. The compatibility diagram \erf{compgerbemorph} implies the remaining condition that makes $(h_{i},s_{ij})$ an equivalence between the $\Gamma$-2-cocycles $(f_{ij},g_{ijk})$ and $(f_{ij}',g_{ijk}')$.
\end{proof}

\section{Version IV: Principal 2-Bundles}
\label{sec:2bundle}

The basic idea of a smooth 2-bundle is that it gives for every point $x$ in the base manifold $M$ a Lie groupoid $\mathcal{P}_x$ varying smoothly with $x$. Numerous different versions have appeared so far in the literature, e.g  \cite{bartels,baez2,wockel1,pries2}. 
The main objective of \emph{our} version of principal $2$-bundles is to make the definition of the objects (i.e. the 2-bundles) \emph{as simple as possible}, while keeping their isomorphism classes in bijection with non-abelian cohomology.   Thus, our principal 2-bundles will be defined using \emph{strict} actions of Lie 2-groups on Lie groupoids, and \emph{not} using anafunctors. The necessary \quot{weakness} will be pushed into the definition of 1-morphisms.

\subsection{Definition of Principal 2-Bundles}

\label{sec:definitionzwoabundle}

As an important prerequisite for principal 2-bundles we have to discuss actions of Lie 2-groups on Lie groupoids, and equivariant anafunctors.

\begin{definition}
\label{def:rightaction}
Let $\mathcal{P}$ be a Lie groupoid, and let $\Gamma$ be a Lie 2-group. A \emph{smooth right action of $\Gamma$ on $\mathcal{P}$} is a smooth functor
$R: \mathcal{P} \times \Gamma \to \mathcal{P}$
such that $R(p,1)=p$ and $R(\rho,\id_1)=\rho$ for all $p\in \mathcal{P}_0$ and $\rho\in \mathcal{P}_1$,  and such that the diagram
\begin{equation*}
\alxydim{@=1.3cm}{\mathcal{P} \times \Gamma \times \Gamma \ar[r]^-{\id \times m} \ar[d]_{R \times \id} & \mathcal{P} \times \Gamma  \ar[d]^{R} \\ \mathcal{P} \times \Gamma \ar[r]_{m} & \mathcal{P}}
\end{equation*}
of smooth functors is commutative (strictly, on the nose).
\end{definition}

For example, every Lie 2-group acts on itself via multiplication. Note that due to strict commutativity, one has $R(R(p,g),g^{-1})=p$ and $R(R(\rho,\gamma),i(\gamma))=\rho$ for all $g\in \Gamma_0$, $p\in \mathcal{P}_0$, $\gamma\in \Gamma_1$ and $\rho\in \mathcal{P}_1$. 

\begin{remark}
This definition could be weakened  in two steps. First, one could allow a natural transformation in the above diagram instead of commutativity. Secondly, one could allow $R$ to be an anafunctor instead of an ordinary functor.  It turns out that for our purposes the above definition is sufficient. 
\end{remark}

\begin{definition}
\label{def:equivanafunctor}
Let $\mathcal{X}$ and $\mathcal{Y}$ be Lie groupoids with smooth actions $(R_1,\rho_1)$, $(R_2,\rho_2)$ of a Lie 2-group $\Gamma$. An \emph{equivariant structure} on an anafunctor $F: \mathcal{X} \to \mathcal{Y}$ is a transformation
\begin{equation*}
\alxydim{@=1.3cm}{\mathcal{X} \times \Gamma \ar[d]_{F \times\id} \ar[r]^-{R_1} & \mathcal{X} \ar@{=>}[dl]|*+{\lambda} \ar[d]^{F} \\ \mathcal{Y} \times \Gamma \ar[r]_-{R_2} & \mathcal{Y}}
\end{equation*}
satisfying the following condition:
\begin{equation*}
\alxydim{@R=0.6cm@C=0.2cm}{\mathcal{X} \times \Gamma \times \Gamma \ar[rr]^{\id \times m} \ar[dr]|{R_1 \times \id} \ar[dd]_{F \times \id\times\id} && \mathcal{X} \times \Gamma  \ar[dr]^{R_1} & \\ & \mathcal{X} \times \Gamma \ar@{=>}[dl]|{\lambda \times \id} \ar[rr]|{R_1} \ar[dd]|{F \times \id} && \mathcal{X} \ar@{=>}[ddll]|*+{\lambda} \ar[dd]^{F} \\ \mathcal{Y} \times \Gamma \times \Gamma \ar[dr]_{R_2 \times \id} &&& \\ & \mathcal{Y} \times \Gamma  \ar[rr]_{R_2} && \mathcal{Y}}
\qquad=\qquad
\alxydim{@R=0.6cm@C=0.2cm}{\mathcal{X} \times \Gamma \times \Gamma \ar[rr]^{\id \times m} \ar[dd]_{F \times \id \times \id} && \mathcal{X} \times \Gamma  \ar[dd]|{F \times \id} \ar[dr]^{R_1} \\ &&&\mathcal{X}\ar@{=>}[dl]|{\lambda} \ar[dd]^{F} \\ \mathcal{Y} \times \Gamma \times \Gamma \ar[dr]_{R_2 \times \id} \ar[rr]|{\id \times m} && \mathcal{Y} \times \Gamma \ar[dr]|{R_2} & \\ & \mathcal{Y} \times \Gamma \ar[rr]_{R_2} && \mathcal{Y}}
\end{equation*}
An anafunctor together with a $\Gamma$-equivariant structure is called \emph{$\Gamma$-equivariant anafunctor}. 
\end{definition}

In Appendix \ref{sec:equivariantanafunctorsandgroupactions} we translate this abstract (but evidently correct) definition of equivariance into more concrete terms involving a $\Gamma_1$-action on the total space of the anafunctor.

\begin{definition}
\label{def:equivtransformation}
If $(F,\lambda):\mathcal{X} \to \mathcal{Y}$ and $(G,\gamma): \mathcal{X} \to \mathcal{Y}$ are $\Gamma$-equivariant anafunctors, a transformation $\eta: F \Rightarrow G$ is called \emph{$\Gamma$-equivariant}, if the following equality of transformation holds:
\begin{equation*}
\alxydim{@R=1.3cm@C=2cm}{\mathcal{X} \ar@/_1.5pc/[d]_<<<<<<{G \times \id}="1" \times \Gamma \ar@/^1.5pc/[d]^<<<<<<{F \times\id}="2" \ar@{=>}"2";"1"|{\eta \times \id} \ar[r]^-{R_1} & \mathcal{X} \ar@{=>}[dl]|>>>>>>>>>>*+{\lambda} \ar[d]^{F} \\ \mathcal{Y} \times \Gamma \ar[r]_-{R_2} & \mathcal{Y}}
\quad=\quad 
\alxydim{@R=1.5cm@C=2cm}{\mathcal{X} \times \Gamma \ar[d]_{G \times\id} \ar[r]^-{R_1} & \mathcal{X} \ar@{=>}[dl]|*+{\gamma} \ar@/_1.5pc/[d]_{G}="1" \ar@/^1.5pc/[d]^{F}="2" \ar@{=>}"2";"1"|*+{\eta} \\ \mathcal{Y} \times \Gamma \ar[r]_-{R_2} & \mathcal{Y}}
\end{equation*}
\end{definition}

It follows from abstract nonsense in the bicategory of Lie groupoids, anafunctors and transformations that we have another bicategory with
\begin{itemize}
\item 
objects: Lie groupoids with smooth right $\Gamma$-actions.

\item
1-morphisms: $\Gamma$-equivariant anafunctors.

\item
2-morphisms: $\Gamma$-equivariant transformations.
\end{itemize}

We need three further notions for the definition of a principal 2-bundle. Let $M$ be a smooth manifold, and let
 $\mathcal{P}$ be a Lie groupoid. We say that a smooth functor $\pi: \mathcal{P} \to \idmorph M$ is a \emph{surjective submersion functor}, if $\pi: \mathcal{P}_0 \to M$ is a surjective submersion. 
Let $\pi:\mathcal{P} \to \idmorph M$ be a surjective submersion functor, and let $\mathcal{Q}$ be a Lie groupoid with some smooth functor $\chi:\mathcal{Q} \to \idmorph M$. Then, the fibre product $\mathcal{P} \times_M \mathcal{Q}$ is defined to be the full subcategory of $\mathcal{P} \times \mathcal{Q}$ over the submanifold $\mathcal{P}_0 \times_M \mathcal{Q}_0 \subset \mathcal{P}_0 \times \mathcal{Q}_0$.

\begin{definition}
\label{def:zwoabun}
Let $M$ be a smooth manifold and let $\Gamma$ be a Lie 2-group.
\begin{enumerate}[(a)]

\item 
A \emph{principal $\Gamma$-2-bundle over $M$} is a Lie groupoid $\mathcal{P}$, a surjective submersion  functor $\pi: \mathcal{P} \to \idmorph M$,  and a smooth right action $R$ of $\Gamma$ on $\mathcal{P}$ that preserves  $\pi$,
such that the smooth functor
\begin{equation*}
\tau :=(\mathrm{pr}_1, R) : \mathcal{P} \times \Gamma \to \mathcal{P} \times_M \mathcal{P} \end{equation*}
is a weak equivalence.

\item
A \emph{1-morphism} between principal $\Gamma$-2-bundles is a  $\Gamma$-equivariant anafunctor 
\begin{equation*}
F: \mathcal{P}_1 \to \mathcal{P}_2
\end{equation*}
that respects the surjective submersion functors to $M$.
 
\item 
A \emph{2-morphism} between 1-morphisms is a $\Gamma$-equivariant transformation between these.
\end{enumerate}
\end{definition}

\begin{remark}
\label{rem:preserving}
\begin{enumerate}[(a)]
\item 
\label{rem:preserving:a}
The condition in (a) that the action $R$ preserves
the surjective submersion functor $\pi$ means that the diagram of functors
\begin{equation*}
\alxydim{@R=1.3cm}{\mathcal{P} \times \Gamma \ar[r]^{R} \ar[d]_{\mathrm{pr}_1} & \mathcal{P} \ar[d]^{\pi} \\ \mathcal{P} \ar[r]_-{\pi} &\idmorph M&}
\end{equation*} 
is commutative. 

\item
\label{rem:preserving:b}
The condition in (b) that the anafunctor $F$ respects the surjective submersion functors means in the first place
that there exists a transformation
\begin{equation*}
\alxydim{@R=1.3cm}{\mathcal{P}_1 \ar[rr]^{F} \ar[dr]_{\pi_1}="1" && \mathcal{P}_2 \ar@{=>}"1"\ar[dl]^{\pi_2} \\ &\idmorph M\text{.}&}
\end{equation*} 
However, since the target of the anafunctors $\pi_1$ and $\pi_2 \circ F$ is the \emph{discrete} groupoid $\idmorph{M}$, the equivalence of Example \ref{anafunctors:c} applies, and implies that if such a transformation exists, it is unique. Indeed, it is easy to see that an  anafunctor $F: \mathcal{P} \to \mathcal{Q}$ with anchors  $\alpha_l: F \to \mathcal{P}_0$ and $\alpha_r: F \to \mathcal{Q}_0$ 
respects smooth functors $\pi: \mathcal{P} \to \idmorph{M}$ and $\chi: \mathcal{Q} \to \idmorph{M}$ if and only if
$\pi \circ \alpha_l = \chi \circ \alpha_r$.
\end{enumerate}
\end{remark}

\begin{example}
\label{ex:trivzwoabun}
The \emph{trivial  $\Gamma$-2-bundle} over $M$ is defined by
\begin{equation*}
\mathcal{P} := \idmorph M \times \Gamma
\quomma
\pi:= \mathrm{pr}_1 
\quomma 
R := \id_M \times  m
\text{.}
\end{equation*}
Here, the smooth functor $\tau$  even has a smooth inverse \emph{functor}. In the following we denote the trivial $\Gamma$-2-bundle by $\mathcal{I}$.
\end{example}

\begin{remark}
\label{rem:ginot}
The principal $\Gamma$-2-bundles of Definition \ref{def:zwoabun} are very similar to those of Bartels \cite{bartels} and Wockel \cite{wockel1}, in the sense that their fibres are groupoids with a $\Gamma$-action. They only differ in the strictness assumptions for the action, and in the formulation of principality. Opposed to that, the \emph{principal 2-group bundles} introduced in \cite{ginot1} are quite different: their fibres are Lie 2-groupoids equipped with a certain Lie 2-groupoid morphism to $B\Gamma$. 
\end{remark}

\subsection{Properties of Principal 2-Bundles}

Principal $\Gamma$-2-bundles over $M$ form a bicategory denoted $\zwoabun\Gamma M$. There is an evident pullback 2-functor 
\begin{equation*}
f^{*}: \zwoabun\Gamma N \to \zwoabun\Gamma M
\end{equation*}
associated to smooth maps $f:M \to N$,
and these make $\zwoabun\Gamma -$ a pre-2-stack over smooth manifolds. We deduce the following important two theorems about this pre-2-stack. The first asserts that it actually is a 2-stack:

\begin{theorem}
\label{th:2stack}
Principal 
$\Gamma$-2-bundles form a  2-stack $\zwoabun\Gamma-$ over smooth manifolds. \end{theorem}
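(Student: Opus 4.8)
The plan is to deduce the 2-stack property from the corresponding fact for $\Gamma$-bundle gerbes (Theorem \ref{th:stack}), transporting it along the equivalence of Theorem \ref{th:equivalence}. Recall that the pre-2-stack $\zwoabun\Gamma-$ is a 2-stack precisely when, for every surjective submersion $\pi:Y \to M$, the canonical comparison 2-functor from $\zwoabun\Gamma M$ to the descent bicategory $\Desc'(\pi)$ built from the cover $\pi$ is an equivalence of bicategories; the analogous comparison 2-functor from $\grb\Gamma M$ to the descent bicategory $\Desc(\pi)$ of $\grb\Gamma-$ is an equivalence by Theorem \ref{th:stack}. So it suffices to relate the two comparison 2-functors.

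First I would invoke Theorem \ref{th:equivalence}: it provides, for each smooth manifold $M$, an equivalence of bicategories $\grb\Gamma M \to \zwoabun\Gamma M$, and -- since the 2-functors realizing it are given by explicit formulas in the defining data of bundle gerbes and $2$-bundles and therefore commute with pullback along smooth maps -- these assemble, together with the requisite coherence $2$-transformations, into an equivalence of pre-2-stacks $\grb\Gamma- \to \zwoabun\Gamma-$. Applying this equivalence levelwise to descent data, to morphisms of descent data, and to $2$-morphisms yields, for each surjective submersion $\pi:Y \to M$, an equivalence of descent bicategories $\Desc(\pi) \to \Desc'(\pi)$ making the square
\[
\alxydim{@C=1.7cm@R=1.3cm}{
\grb\Gamma M \ar[r] \ar[d] & \Desc(\pi) \ar[d] \\
\zwoabun\Gamma M \ar[r] & \Desc'(\pi)
}
\]
commute up to coherent invertible $2$-transformation. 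In this square both vertical $2$-functors are equivalences and the top horizontal one is an equivalence by Theorem \ref{th:stack}; by two-out-of-three for equivalences of bicategories the bottom $2$-functor is an equivalence as well. As $\pi$ was arbitrary, $\zwoabun\Gamma-$ satisfies descent, i.e.\ is a 2-stack.

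The only genuinely non-formal point -- and the one I expect to be the main obstacle -- is checking that the equivalence of Theorem \ref{th:equivalence} is a morphism of pre-2-stacks (natural in $M$ up to coherent $2$-transformation), so that it passes to the descent bicategories: one must track the coherence $2$-transformations of the two $2$-functors through the bicategorical descent construction and verify the relevant coherence identities. This is pure bookkeeping with no geometric content; all the analytic input has already been used in proving Theorems \ref{th:stack} and \ref{th:equivalence}. If one prefers to avoid the forward reference to Theorem \ref{th:equivalence}, the same argument can be run using the explicit $2$-functors constructed in Section \ref{sec:equivalences}, checking their compatibility with pullback directly -- but this only re-does the same bookkeeping.
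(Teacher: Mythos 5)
Your proposal is correct and follows exactly the paper's route: the paper's proof of Theorem \ref{th:2stack} is precisely to combine Theorem \ref{th:stack} with the equivalence of pre-2-stacks from Theorem \ref{th:equivalence}, and the "non-formal point" you flag (naturality in $M$) is exactly what the paper isolates as Proposition \ref{prop:estack} together with Lemma \ref{lem:stackequivalence}. Your spelled-out descent-bicategory argument is just an expansion of the paper's one-line deduction.
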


\begin{proof}
This follows from Theorem \ref{th:stack} ($\Gamma$-bundle gerbes form a 2-stack) and Theorem \ref{th:equivalence} (the equivalence $\grb\Gamma- \cong \zwoabun \Gamma-$)  we prove in Section \ref{sec:equivalences}.
\end{proof}

\begin{remark}
\label{rem:2bungrp}
Similar to Remark \ref{rem:grbgrp}, we obtain automatically bicategories $\zwoabun\Gamma{\mathcal{X}}$ of principal $\Gamma$-2-bundles over Lie groupoids $\mathcal{X}$, including bicategories of \emph{equivariant principal $\Gamma$-2-bundles}.
\end{remark}

The second concerns a homomorphism  $\Lambda: \Gamma \to \Omega$ of Lie 2-groups, which induces the extension  $\Lambda \maps \grb\Gamma- \to \grb\Omega-$ between 2-stacks of bundle gerbes (Proposition \ref{grbextension}). Combined with the equivalence $\grb\Gamma- \cong \zwoabun \Gamma-$ of Theorem \ref{th:equivalence}, it defines a 1-morphism
\begin{equation*}
\Lambda : \zwoabun\Gamma- \to \zwoabun\Omega-
\end{equation*}
between 2-stacks of principal 2-bundles. Now we get as a direct consequence of Theorem \ref{extequiv}:

\begin{theorem}
\label{extequivbun}
If $\Lambda: \Gamma \to \Omega$ is a weak equivalence between Lie 2-groups, then the 1-morphism $\Lambda \maps \zwoabun\Gamma- \to \zwoabun\Omega-$ is an equivalence of 2-stacks.
\end{theorem}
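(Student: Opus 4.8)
The plan is to deduce this purely formally from the two ingredients out of which the $1$-morphism $\Lambda:\zwoabun\Gamma-\to\zwoabun\Omega-$ was constructed. Recall from the discussion preceding the statement that this $1$-morphism is, by definition, obtained by transporting the bundle-gerbe extension $\Lambda_{\mathrm{BG}}:\grb\Gamma-\to\grb\Omega-$ of Proposition \ref{grbextension} across the equivalences of $2$-stacks $\grb\Gamma-\cong\zwoabun\Gamma-$ and $\grb\Omega-\cong\zwoabun\Omega-$ supplied by Theorem \ref{th:equivalence}. Writing $\Phi_\Gamma$ and $\Phi_\Omega$ for these equivalences and $\Phi_\Gamma^{-1}$ for a chosen weak inverse of $\Phi_\Gamma$, what I must show is that the composite $\Phi_\Omega\circ\Lambda_{\mathrm{BG}}\circ\Phi_\Gamma^{-1}$ is an equivalence of $2$-stacks.

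First I would invoke Theorem \ref{extequiv}: because $\Lambda$ is by hypothesis a weak equivalence of Lie $2$-groups, the extension $\Lambda_{\mathrm{BG}}$ of $\Gamma$-bundle gerbes to $\Omega$-bundle gerbes is already an equivalence of $2$-stacks. Since $\Phi_\Gamma^{-1}$ and $\Phi_\Omega$ are equivalences by Theorem \ref{th:equivalence}, I would then conclude with the elementary observation that a composite of equivalences of $2$-stacks is again an equivalence of $2$-stacks --- which is all that is needed.

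I do not expect any genuine obstacle here: the substantial work has already been absorbed into the earlier results, above all into the equivalence $\grb\Gamma-\cong\zwoabun\Gamma-$ of Theorem \ref{th:equivalence} and, behind Theorem \ref{extequiv}, into the monoidal equivalence of stacks $\bun\Gamma-\to\bun\Omega-$ from Corollary \ref{co:extensionweakequivalence2groups}. The single point deserving a line of care is that the square relating $\Lambda_{\mathrm{BG}}$, the $1$-morphism $\Lambda$ on $2$-bundles, $\Phi_\Gamma$ and $\Phi_\Omega$ commutes up to a coherent $2$-isomorphism of $2$-stack morphisms; but this commutation is exactly how $\Lambda$ on principal $2$-bundles was defined, so nothing further must be checked. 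A self-contained alternative would be to define the extension of a principal $\Gamma$-$2$-bundle directly, by extending the $\Gamma$-action on its total Lie groupoid along $\Lambda$ and verifying by hand both the weak-equivalence condition on $\tau$ from Definition \ref{def:zwoabun} and $\Gamma$-equivariance; that route is noticeably more laborious, and passing through bundle gerbes makes it unnecessary.
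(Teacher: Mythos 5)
Your proposal is correct and is exactly the paper's argument: the 1-morphism on principal 2-bundles is defined by conjugating the bundle-gerbe extension of Proposition \ref{grbextension} with the equivalences of Theorem \ref{th:equivalence}, and since that extension is an equivalence of 2-stacks by Theorem \ref{extequiv}, the composite of equivalences is an equivalence. The paper states this as a "direct consequence" without further elaboration, so no additional verification is needed.
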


A third consequence of the equivalence of Theorem \ref{th:equivalence} in combination with Lemma \ref{lem:invertibility} is
\begin{corollary}
\label{co:bunmorphinvertible}
Every 1-morphism $F: \mathcal{P}_1 \to \mathcal{P}_2$ between principal $\Gamma$-2-bundles over $M$ is invertible. 
\end{corollary}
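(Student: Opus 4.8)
The plan is to reduce the statement to Theorem \ref{th:equivalence} together with Lemma \ref{lem:invertibility}. For each smooth manifold $M$, Theorem \ref{th:equivalence} provides an equivalence of bicategories between $\grb\Gamma M$ and $\zwoabun\Gamma M$, realised by explicit $2$-functors in both directions; fix one of them, say $\mathcal{F}\colon \zwoabun\Gamma M \to \grb\Gamma M$, a quasi-inverse $\mathcal{G}\colon \grb\Gamma M \to \zwoabun\Gamma M$, and invertible pseudonatural transformations $\eta\colon \id \Rightarrow \mathcal{G}\circ\mathcal{F}$ and $\varepsilon\colon \mathcal{F}\circ\mathcal{G}\Rightarrow\id$. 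Given a $1$-morphism $F\colon \mathcal{P}_1 \to \mathcal{P}_2$ of principal $\Gamma$-$2$-bundles, the first step is to push it through $\mathcal{F}$: the image $\mathcal{F}(F)\colon \mathcal{F}(\mathcal{P}_1)\to\mathcal{F}(\mathcal{P}_2)$ is a $1$-morphism of $\Gamma$-bundle gerbes over $M$, hence invertible by Lemma \ref{lem:invertibility}; choose an inverse $\mathcal{B}$ with $2$-isomorphisms $\mathcal{B}\circ\mathcal{F}(F)\cong\id$ and $\mathcal{F}(F)\circ\mathcal{B}\cong\id$.

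The second step is to transport this invertibility back through the equivalence, i.e. to use the general fact that a $2$-functor which is an equivalence of bicategories reflects invertibility of $1$-morphisms. Concretely: since $\mathcal{G}$ is a pseudofunctor it respects composition and identities up to coherent $2$-isomorphism, so $\mathcal{G}\mathcal{F}(F)$ is invertible in $\zwoabun\Gamma M$ with inverse $\mathcal{G}(\mathcal{B})$; the components $\eta_{\mathcal{P}_1},\eta_{\mathcal{P}_2}$ are invertible $1$-morphisms, and pseudonaturality of $\eta$ gives a $2$-isomorphism $\eta_{\mathcal{P}_2}\circ F \cong \mathcal{G}\mathcal{F}(F)\circ\eta_{\mathcal{P}_1}$; therefore $F$ is $2$-isomorphic to the composite $\eta_{\mathcal{P}_2}^{-1}\circ\mathcal{G}\mathcal{F}(F)\circ\eta_{\mathcal{P}_1}$ of invertible $1$-morphisms, an explicit inverse being $\eta_{\mathcal{P}_1}^{-1}\circ\mathcal{G}(\mathcal{B})\circ\eta_{\mathcal{P}_2}$.

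The only step requiring genuine care is checking that the $2$-isomorphisms assembled above compose to the identities demanded by the definition of invertibility; this is a routine chase through the associators and unitors of the two bicategories and the modification data in $\eta$ and $\varepsilon$, and I would suppress most of it. All the substance is already in Theorem \ref{th:equivalence} and Lemma \ref{lem:invertibility}. A self-contained alternative would be to imitate the proof of Lemma \ref{lem:invertibility} and build an inverse anafunctor to $F$ directly, but with Theorem \ref{th:equivalence} available the transport argument is shorter, and is the one I would write up.
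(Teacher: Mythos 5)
Your argument is exactly the paper's: Corollary \ref{co:bunmorphinvertible} is stated there as a direct consequence of Theorem \ref{th:equivalence} combined with Lemma \ref{lem:invertibility}, and your write-up simply makes explicit the standard transport of invertibility across a biequivalence that the paper leaves implicit. Correct, and same route.
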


The following discussion centers around \emph{local trivializability} that is implicitly contained in Definition \ref{def:zwoabun}.
A principal  $\Gamma$-2-bundle that is isomorphic to the trivial $\Gamma$-2-bundle $\mathcal{I}$ introduced in Example  \ref{ex:trivzwoabun} is called \emph{trivializable}. A \emph{section} of a principal $\Gamma$-2-bundle $\mathcal{P}$ over $M$ is an anafunctor $S: \idmorph M \to \mathcal{P}$ such that $\pi \nobr\circ\nobr S \eq \id_{\idmorph M}$ (recall that an anafunctor $\pi \circ S:M \to M$ is the same as a smooth map). One can show that every point $x\in M$  has an open neighborhood $U$ together with a section $s: \idmorph U \to \mathcal{P}|_U$. Such sections  can even be chosen to be smooth functors, rather than anafunctors, namely simply as ordinary sections of the surjective submersion $\pi: (\mathcal{P}|_U)_0 \to \idmorph U$.

\begin{lemma}
\label{lem:loctriv2}
A principal $\Gamma$-2-bundle over $M$ is trivializable if and only if it has a smooth section.
\end{lemma}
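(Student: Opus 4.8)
The plan is to mimic the proof of Lemma \ref{lem:loctriv} for ordinary principal groupoid bundles, upgraded one categorical level. There are two implications.

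\textbf{Trivializable implies a smooth section.} Suppose $\mathcal{P}$ is isomorphic to the trivial bundle $\mathcal{I} = \idmorph M \times \Gamma$. The trivial bundle carries an obvious smooth section: the functor $s_0 : \idmorph M \to \idmorph M \times \Gamma$ given on objects by $x \mapsto (x,1)$ and on morphisms by the identity, which indeed satisfies $\pi \circ s_0 = \id_{\idmorph M}$. Transporting $s_0$ along the $\Gamma$-equivariant anafunctor $F : \mathcal{I} \to \mathcal{P}$ (more precisely, composing the anafunctor associated to $s_0$ with $F$, using Example \ref{ex:anafunctor}) gives a section $S := F \circ s_0 : \idmorph M \to \mathcal{P}$, a priori only an anafunctor. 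But $F$ is a morphism of principal $2$-bundles, hence in particular respects the surjective submersion functors, so $\pi \circ S = \id_{\idmorph M}$. Since $F$ is invertible (Corollary \ref{co:bunmorphinvertible}), $S$ is as good a section as $s_0$; and one observes as in the one-categorical case that composing an anafunctor with discrete source $\idmorph M$ with the honest functor $s_0$ and an anafunctor $F$ yields again something that has a smooth section on its total space, i.e. comes from a smooth functor. Thus $\mathcal{P}$ has a smooth-functor section.

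\textbf{A smooth section implies trivializable.} Conversely, suppose $S : \idmorph M \to \mathcal{P}$ is a smooth section, which by the remark preceding the lemma we may take to be an ordinary smooth functor, i.e. a smooth map $s : M \to \mathcal{P}_0$ with $\pi \circ s = \id_M$, together with its action on morphisms forced by $\idmorph M$ being discrete (so $s$ is essentially just the object-level section of the surjective submersion $\pi: \mathcal{P}_0 \to M$). Using $s$ and the $\Gamma$-action $R$ on $\mathcal{P}$, I would define a smooth functor
\begin{equation*}
\varphi : \idmorph M \times \Gamma \to \mathcal{P}
\end{equation*}
by $\varphi(x,g) := R(s(x),g)$ on objects and $\varphi(\id_x,\gamma) := R(\id_{s(x)},\gamma)$ on morphisms; functoriality is immediate from $R$ being a functor and from $\id_{s(x)}$ being the identity at $s(x)$. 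This $\varphi$ manifestly respects the projections to $M$ (because $R$ preserves $\pi$, Remark \ref{rem:preserving}\erf{rem:preserving:a}) and is $\Gamma$-equivariant by the associativity axiom of the action in Definition \ref{def:rightaction} (the equivariant structure $\lambda$ being trivial/identity, with its coherence following directly from strict commutativity of the action square). It remains to check $\varphi$ is a weak equivalence; once that is known, it is a $1$-isomorphism $\mathcal{I} \to \mathcal{P}$ and $\mathcal{P}$ is trivializable.

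\textbf{The main obstacle — verifying the weak equivalence.} Here is where I expect the real work. By Theorem \ref{weak} it suffices to check $\varphi$ is smoothly essentially surjective and smoothly fully faithful. Essential surjectivity: the relevant map $\mathcal{P}_0 \times_{M} \Gamma_0 \to \mathcal{P}_0$ should be shown to be a surjective submersion using that $\tau = (\mathrm{pr}_1, R) : \mathcal{P} \times \Gamma \to \mathcal{P} \times_M \mathcal{P}$ is a weak equivalence together with the section $s$; concretely, given $p \in \mathcal{P}_0$ over $x$, weak equivalence of $\tau$ produces (smoothly, locally) a morphism in $\mathcal{P}$ between $s(x)$ and $p$, hence a $g$ with $R(s(x),g)$ isomorphic to $p$. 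Full faithfulness: I would pull the pullback-diagram condition for $\varphi$ back to the corresponding condition for $\tau$, exploiting that $s$ is a section of $\pi$ so that $\mathcal{P}_0 \times_M \mathcal{P}_0$ is related to $M \times \mathcal{P}_0$ via $s$, and that the hom-sets of $\idmorph M \times \Gamma$ are $M \times \Gamma_1$. The bookkeeping of which fibre products and which smooth maps are involved — translating "the section $s$ plus $\tau$ is a weak equivalence" into "conditions (a) and (b) of Theorem \ref{weak} hold for $\varphi$" — is the fiddly part; it is essentially the $2$-categorical analogue of the computation of $\tau^{-1}$ in Example \ref{trivialbundle}, and I would present it by writing the inverse anafunctor of $\varphi$ explicitly from the inverse anafunctor of $\tau$ (which exists canonically by Remark \ref{rem:inverses}), rather than re-deriving the submersion conditions by hand.
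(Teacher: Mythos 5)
Your construction is the same as the paper's: compose the section with the action to get a $\Gamma$-equivariant anafunctor $R\circ(S\times\id):\mathcal{I}\to\mathcal{P}$ respecting the projections, and observe that the trivial bundle's canonical section pushes forward along a trivialization for the easy direction. Where you diverge is in what you identify as ``the main obstacle'': the paper never verifies that this anafunctor is a weak equivalence. It simply invokes Corollary \ref{co:bunmorphinvertible} -- every 1-morphism between principal $\Gamma$-2-bundles is automatically invertible (a consequence of the equivalence with bundle gerbes, Theorem \ref{th:equivalence}, together with Lemma \ref{lem:invertibility}) -- so once you have checked equivariance and compatibility with the projections, you are done. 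Your plan to check conditions (a) and (b) of Theorem \ref{weak} directly is feasible (essential surjectivity and full faithfulness of $\varphi$ do follow from those of $\tau$ by restricting along the section), but it is exactly the fiddly fibre-product bookkeeping you flag, it is left here as a sketch rather than carried out, and Theorem \ref{weak} only applies if the section is an honest smooth functor. What the paper's route buys is that none of this is needed; what your route would buy, if completed, is an explicit inverse anafunctor without appealing to the bundle-gerbe comparison.

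Two smaller caveats. First, in the paper a section is by definition an anafunctor $S:\idmorph M\to\mathcal{P}$ with $\pi\circ S=\id$; your converse silently reduces to the case where $S$ is a smooth functor, which the text only justifies for \emph{local} sections. The composite $R\circ(S\times\id)$ makes sense for an arbitrary anafunctor section, and with Corollary \ref{co:bunmorphinvertible} the argument goes through verbatim in that generality. Second, your claim in the forward direction that $F\circ S$ ``comes from a smooth functor'' is both unnecessary and not obviously true: an anafunctor out of $\idmorph M$ is a principal groupoid bundle, and its having a global smooth section is not automatic. The lemma only requires a section in the anafunctor sense, which $F\circ S$ manifestly is.
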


\begin{proof}
The trivial $\Gamma$-2-bundle $\mathcal{I}$ has the section $S(m) := (m,1)$, where $1$ denotes the unit of $\Gamma_0$. If $\mathcal{P}$ is trivializable, and $F: \mathcal{I} \to \mathcal{P}$ is an isomorphism, then, $F \circ S$ is a section of $\mathcal{P}$. Conversely, suppose $\mathcal{P}$ has a section $S: \idmorph M \to \mathcal{P}$. Then, we get the anafunctor
\begin{equation}
\label{seccomp}
\alxydim{@C=1.5cm}{\mathcal{I}=\idmorph M \times \Gamma \ar[r]^-{S\times \id} & \mathcal{P}\times \Gamma \ar[r]^-{R} & \mathcal{P}\text{.}}
\end{equation}
It has an evident  $\Gamma$-equivariant structure and respects the projections to $M$. According to Corollary \ref{co:bunmorphinvertible}, this is sufficient to have a 1-isomorphism.
\end{proof}

\begin{corollary}
Every principal $\Gamma$-2-bundle is locally trivializable, i.e. every point $x\in M$ has an open neighborhood $U$ and a 1-morphism $T: \mathcal{I} \to \mathcal{P}|_U$.
\end{corollary}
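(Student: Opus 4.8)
The plan is to obtain this as an immediate consequence of Lemma \ref{lem:loctriv2}, once a local smooth section has been produced. First I would fix $x\in M$ and use that $\pi: \mathcal{P}_0 \to M$ is a surjective submersion: there is then an open neighborhood $U$ of $x$ and a smooth map $\sigma: U \to \mathcal{P}_0$ with $\pi\circ\sigma=\id_U$, by the standard fact that surjective submersions admit local sections.

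Next I would promote $\sigma$ to a section of the restricted $2$-bundle $\mathcal{P}|_U$. Define a smooth functor $S: \idmorph U \to \mathcal{P}|_U$ with $S_0:=\sigma$ on objects and $S_1(\id_m):=\id_{\sigma(m)}$ on morphisms; via Example \ref{ex:anafunctor} this determines an anafunctor, still written $S$, and the identity $\pi\circ\sigma=\id_U$ says exactly that $\pi\circ S=\id_{\idmorph U}$. Thus $S$ is a smooth section of $\mathcal{P}|_U$ in the sense introduced just before Lemma \ref{lem:loctriv2}.

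Finally, I would apply Lemma \ref{lem:loctriv2} to the principal $\Gamma$-$2$-bundle $\mathcal{P}|_U$ over $U$: the existence of the smooth section $S$ implies that $\mathcal{P}|_U$ is trivializable, i.e.\ there is a $1$-isomorphism $\mathcal{I}\cong\mathcal{P}|_U$ --- concretely the composite \erf{seccomp} assembled from $S$ and the action $R$ --- whose underlying $1$-morphism is the desired $T: \mathcal{I} \to \mathcal{P}|_U$. There is no genuine obstacle here: the only inputs are the local-section property of surjective submersions and Lemma \ref{lem:loctriv2} itself, and the compatibility of $S$ with the surjective submersion functors is immediate from $\pi\circ\sigma=\id_U$.
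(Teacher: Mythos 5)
Your proof is correct and follows exactly the paper's route: the discussion preceding Lemma \ref{lem:loctriv2} already notes that local sections can be taken to be ordinary smooth sections of the surjective submersion $\pi: (\mathcal{P}|_U)_0 \to U$, regarded as smooth functors, and the corollary is then an immediate application of that lemma. You have merely made the standard local-section argument and the promotion of $\sigma$ to a functor explicit, which matches the intended proof.
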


\begin{remark}
In Wockel's version \cite{wockel1} of principal 2-bundles, local trivializations are required to be smooth functors and to be invertible as smooth functors, rather than allowing anafunctors. This version turns out to be too restrictive in the sense that the resulting bicategory receives no 2-functor from the bicategory $\grb\Gamma M$ of $\Gamma$-bundle gerbes that would establish an equivalence. 
\end{remark}

It is also possible to reformulate our definition of principal 2-bundles in terms of local trivializations. This reformulation gives us criteria which might be easier to check than the actual definition, similar to the case of ordinary principal bundles.

\begin{proposition}
Let $\mathcal{P}$ be a Lie groupoid,  $\pi: \mathcal{P} \to \idmorph M$ be a surjective submersion  functor,  and  $R$ be a smooth right action of $\Gamma$ on $\mathcal{P}$ that preserves $\pi$. Suppose every point $x\in M$ has an open neighborhood $U$ together with a $\Gamma$-equivariant anafunctor $T: \mathcal{I} \to \mathcal{P}|_U$ that respects the projections. Then, $\pi:\mathcal{P} \to \idmorph M$ is a principal $\Gamma$-2-bundle over $M$.
\end{proposition}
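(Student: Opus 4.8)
The plan is to verify the one condition in Definition \ref{def:zwoabun}(a) that is not assumed, namely that the smooth functor $\tau = (\mathrm{pr}_1,R) : \mathcal{P}\times\Gamma \to \mathcal{P}\times_M\mathcal{P}$ is a weak equivalence, and for this we use the characterization of Theorem \ref{weak}: it suffices to check that $\tau$ is smoothly essentially surjective and smoothly fully faithful. Both are local conditions over $M$ in an appropriate sense, so the strategy is to pull everything back to the open sets $U$ of the given cover and use the local trivializations $T:\mathcal{I}\to\mathcal{P}|_U$ together with Corollary \ref{co:bunmorphinvertible}, which tells us $T$ is invertible, hence a weak equivalence. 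First I would observe that for the \emph{trivial} $\Gamma$-$2$-bundle $\mathcal{I}=\idmorph M\times\Gamma$ the functor $\tau$ is literally an isomorphism of Lie groupoids (it has a smooth inverse functor, as noted in Example \ref{ex:trivzwoabun}), so the assertion holds for $\mathcal{I}$ and, by pullback, for $\mathcal{I}|_U$.

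The key step is to transport this along the equivalence $T:\mathcal{I}|_U\to\mathcal{P}|_U$. Since $T$ is a $\Gamma$-equivariant anafunctor respecting the projections to $U$, one has a commuting (up to transformation) square relating $\tau_{\mathcal{P}|_U}$ and $\tau_{\mathcal{I}|_U}$: concretely, $T\times\id_\Gamma : \mathcal{I}|_U\times\Gamma \to \mathcal{P}|_U\times\Gamma$ and $T\times_U T : \mathcal{I}|_U\times_U\mathcal{I}|_U \to \mathcal{P}|_U\times_U\mathcal{P}|_U$ are weak equivalences (products and fibre products of weak equivalences over $\idmorph U$ are weak equivalences), and the $\Gamma$-equivariance of $T$ furnishes a transformation filling the square $\tau_{\mathcal{P}|_U}\circ(T\times\id) \cong (T\times_U T)\circ\tau_{\mathcal{I}|_U}$. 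By the two-out-of-three property for weak equivalences in the bicategory of Lie groupoids and anafunctors, and since $\tau_{\mathcal{I}|_U}$, $T\times\id$, $T\times_U T$ are all weak equivalences, it follows that $\tau_{\mathcal{P}|_U}$ is a weak equivalence for every $U$ in the cover.

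It then remains to pass from "$\tau$ restricted over each $U$ is a weak equivalence" to "$\tau$ is a weak equivalence", i.e. that both conditions (a) and (b) of Theorem \ref{weak} are local over $M$. Condition (a), that $s\circ\mathrm{pr}_2 : (\mathcal{P}\times\Gamma)_0 \;{}_{\tau_0}\!\times_t (\mathcal{P}\times_M\mathcal{P})_1 \to (\mathcal{P}\times_M\mathcal{P})_0$ is a surjective submersion, is local on the target because being a surjective submersion is; and the target $\mathcal{P}_0\times_M\mathcal{P}_0$ is covered by the preimages of the $U$'s, over which the map is identified with the corresponding map for $\mathcal{P}|_U$. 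Condition (b), that the square of source-target maps is a pullback, is likewise local over $M$: a square of smooth manifolds is a pullback iff it becomes one after restricting along an open cover of the terminal corner (here $\mathcal{P}_0\times\mathcal{P}_0$, or more precisely one uses that $\mathcal{P}_1,\mathcal{P}_0$ fibre over $M$), and the restrictions are exactly the squares for the $\mathcal{P}|_U$. Hence $\tau$ satisfies (a) and (b) globally, so it is a weak equivalence, and $\pi:\mathcal{P}\to\idmorph M$ is a principal $\Gamma$-$2$-bundle.

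The main obstacle I anticipate is the bookkeeping in the two-out-of-three argument: one must be careful that "respects the projections" for the equivariant anafunctor $T$ really does make the relevant square of functors/anafunctors commute up to a coherent transformation, and that products and fibre products over $\idmorph U$ of weak equivalences are again weak equivalences (the latter is where one would invoke Theorem \ref{weak} directly, checking essential surjectivity and full faithfulness are stable under these operations). Everything else — the reduction to the trivial bundle and the locality of conditions (a) and (b) — is routine.
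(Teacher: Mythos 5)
Your proposal is correct and follows essentially the same route as the paper: reduce to checking the two conditions of Theorem \ref{weak} locally over an open cover (which is legitimate because all morphisms of $\mathcal{P}$ have source and target in the same fibre of $\pi$), and then use the local trivializations to identify $\tau_{\mathcal{P}|_U}$ with the corresponding functor for $\mathcal{I}|_U$, which is an isomorphism of Lie groupoids. The only difference is that you spell out, via the two-out-of-three property for weak equivalences of anafunctors, what the paper compresses into the phrase that $\tau$ \emph{translates into} $(\id,\mathrm{pr}_1,m)$ under the trivializations; this is a useful precision but not a different argument.
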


\begin{proof}
We only have to prove that the functor $\tau$ is a weak equivalence, and we use  Theorem \ref{weak}. Since all morphisms of $\mathcal{P}$ have source and target in the same fibre of $\pi: \mathcal{P}_0 \to \idmorph M$, we may check the two conditions of Theorem \ref{weak} locally, i.e. for $\mathcal{P}|_{U_i}$ where $U_i$ is an open cover of $M$. Using local trivializations $\mathcal{T}_i: \mathcal{I} \to \mathcal{P}|_{U_i}$, the smooth functor $\tau$ translates into the smooth functor $(\id,\mathrm{pr}_1,m) :\idmorph M \times \Gamma \times \Gamma \to (\idmorph M \times \Gamma) \times_{M} (\idmorph M \times \Gamma)$. This functor is an isomorphism of Lie groupoids, and hence essentially surjective and fully faithful.  
\end{proof}

\setsecnumdepth{1}

\section{Equivalence between Bundle Gerbes and 2-Bundles}

\label{sec:equivalences}

In this section we show that Versions III and IV of smooth $\Gamma$-gerbes are equivalent in the strongest possible sense:

\begin{theorem}
\label{th:equivalence}
For $M$ a smooth manifold and $\Gamma$ a Lie 2-group, there is an equivalence of bicategories
\begin{equation*}
\grb\Gamma M \cong \zwoabun \Gamma M
\end{equation*}
between the bicategories of $\Gamma$-bundle gerbes and principal $\Gamma$-2-bundles over $M$.
This equivalence is natural in $M$, i.e. it is an  equivalence between pre-2-stacks.
\end{theorem}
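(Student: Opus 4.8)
The plan is to construct explicit 2-functors in both directions and then exhibit pseudonatural equivalences relating their composites to the identities. The key observation is that by Corollary \ref{co:groupoidstructure}, a $\Gamma$-bundle gerbe $\mathcal{G} = (Y,P,\mu)$ already \emph{is} a Lie groupoid $\mathcal{C}$ with $\mathcal{C}_0 = Y$, $\mathcal{C}_1 = P$, composition $\mu$, and inversion $i$; and this groupoid comes equipped with a surjective submersion functor to $\idmorph M$ (namely $\pi$ on objects), together with a right $\Gamma$-action induced by the $\Gamma$-action on the principal bundle $P$ and on the base points (using the anchor $\alpha\colon P \to \Gamma_1$ and $\alpha\circ t$, $\alpha\circ i$ from Lemma \ref{lem:buntriv}). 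So the first step is to verify that this assignment $\mathcal{G} \mapsto \mathcal{C}$ lands in principal $\Gamma$-2-bundles: one must check that $\tau = (\mathrm{pr}_1,R)\colon \mathcal{C}\times\Gamma \to \mathcal{C}\times_M\mathcal{C}$ is a weak equivalence, which by Theorem \ref{weak} amounts to smooth essential surjectivity and smooth full faithfulness — both of which should follow directly from the defining diffeomorphism $\tau\colon P\ttimes{\alpha}\Gamma_1 \to P\times_M P$ of the principal $\Gamma$-bundle $P$ over $Y^{[2]}$.

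\textbf{The 2-functors.} Next I would extend this to 1-morphisms and 2-morphisms. A 1-morphism $\mathcal{A} = (Z,Q,\beta)\colon \mathcal{G}_1 \to \mathcal{G}_2$ should be sent to a $\Gamma$-equivariant anafunctor $\mathcal{C}_1 \to \mathcal{C}_2$: the total space is built from $Q$ together with the bundles $P_1,P_2$ pulled back appropriately along the refinement maps $Z \to Y_1$, $Z \to Y_2$, with the left/right $\mathcal{C}_i$-actions given by the bundle gerbe products $\mu_i$ glued via $\beta$, and the $\Gamma$-equivariant structure read off from the $\Gamma$-action on $Q$ (using the concrete description of equivariant anafunctors promised in the appendix). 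The compatibility square \erf{compgerbemorph} for $\beta$ is exactly what makes the two $\mathcal{C}_i$-actions commute, so that $\alpha_l$ becomes a principal $\mathcal{C}_2$-bundle over $(\mathcal{C}_1)_0 = Y_1$. Similarly 2-morphisms go to $\Gamma$-equivariant transformations, with diagram \erf{eq:comp2morphcomplicated} translating into the transformation axiom. For the reverse 2-functor, given a principal $\Gamma$-2-bundle $\mathcal{P}$ over $M$, I would take $Y := \mathcal{P}_0$ with $\pi\colon Y\to M$ the surjective submersion, and define the principal $\Gamma$-bundle over $Y^{[2]}$ by restricting the morphism manifold $\mathcal{P}_1$ suitably — more precisely, using that $\tau\colon \mathcal{P}\times\Gamma \to \mathcal{P}\times_M\mathcal{P}$ is a weak equivalence, the inverse anafunctor produces, over $Y^{[2]} = \mathcal{P}_0\times_M\mathcal{P}_0$, a principal $\Gamma$-bundle $P$; the bundle gerbe product $\mu$ comes from the composition in $\mathcal{P}$. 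A cleaner route is to invoke Lemma \ref{lem:canonicalrefinement} to work throughout with 1-morphisms on the canonical fibre-product refinement, keeping the bookkeeping manageable.

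\textbf{The equivalence.} Finally, I would show the two 2-functors are mutually inverse up to pseudonatural equivalence. In the direction $\mathcal{P} \mapsto \mathcal{G} \mapsto \mathcal{C}(\mathcal{G})$ one recovers a Lie groupoid with objects $\mathcal{P}_0$ and morphisms the total space of $P$; the comparison 1-morphism to $\mathcal{P}$ itself is essentially the canonical anafunctor attached to the weak equivalence $\tau$, and Corollary \ref{co:bunmorphinvertible} (or rather its gerbe-side analogue Lemma \ref{lem:invertibility}) guarantees it is invertible once one checks it is $\Gamma$-equivariant and respects the projections. In the other direction $\mathcal{G} \mapsto \mathcal{C}(\mathcal{G}) \mapsto \mathcal{G}'$ one gets a bundle gerbe with the same surjective submersion $Y$ and a principal $\Gamma$-bundle canonically isomorphic to $P$ compatibly with $\mu$, so Lemma \ref{lem:nonstablemorphisms} (with $f = \id_Y$) yields the required isomorphism $\mathcal{G} \cong \mathcal{G}'$. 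Naturality in $M$ holds because every construction above is built from pullbacks of principal bundles and hence commutes with $f^*$ on the nose up to the coherence already packaged in the pre-2-stack structure.

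\textbf{Main obstacle.} The genuine difficulty is not the object-level correspondence — which is transparent once Corollary \ref{co:groupoidstructure} is in hand — but the careful matching of the \emph{weak} structure: checking that the 2-functor on 1-morphisms is compatible with composition up to coherent natural 2-isomorphism (the $Q_{23}\otimes Q_{12}$ versus composed-anafunctor comparison), that equivariant structures compose correctly, and that all the pentagon/triangle coherence data survive. Managing the refinements $Z$, $W$ and the non-commuting diagram \erf{diag:refinement} on the bundle-gerbe side, while translating them into the anafunctor-composition quotients on the 2-bundle side, is where the bulk of the technical work lies, and reducing to the fibre-product refinements via Lemma \ref{lem:canonicalrefinement} is the tool I would lean on most heavily to keep this tractable.
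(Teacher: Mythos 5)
Your overall architecture --- an explicit 2-functor in one direction, comparison 1-isomorphisms assembled into an inverse, reduction to fibre-product refinements via Lemma \ref{lem:canonicalrefinement}, and invertibility of the comparison morphisms via Lemma \ref{lem:invertibility} --- is essentially the paper's, which builds $\mathscr{E}_M:\zwoabun\Gamma M\to\grb\Gamma M$ explicitly and then feeds chosen preimages of objects and 1-morphisms into the abstract criterion of Lemma \ref{lem:equivalence}. However, your object-level construction in the direction $\grb\Gamma M\to\zwoabun\Gamma M$ has a genuine gap. You propose to take the Lie groupoid $\mathcal{C}$ of Corollary \ref{co:groupoidstructure}, with $\mathcal{C}_0=Y$ and $\mathcal{C}_1=P$, and assert that it carries a right $\Gamma$-action ``induced by the $\Gamma$-action on the principal bundle $P$ and on the base points.'' It does not. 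A right action in the sense of Definition \ref{def:rightaction} is a smooth functor $R:\mathcal{C}\times\Gamma\to\mathcal{C}$, so on objects it must be a genuine smooth action of the group $\Gamma_0$ on $Y$ --- and a bundle gerbe supplies no such action. On morphisms the situation is the same: the principal-bundle action $\rho$ is only defined on the fibre product $P\lli{\alpha}\times_{t}\Gamma_1$, not on all of $P\times\Gamma_1$, so it cannot serve as the morphism component of a functor out of $\mathcal{C}\times\Gamma$ (note also that the anchor is a map $\alpha:P\to\Gamma_0$, not $P\to\Gamma_1$). Without this action, the functor $\tau=(\mathrm{pr}_1,R)$ that you want to test against Theorem \ref{weak} is not even defined.

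The paper repairs exactly this point by freely adjoining the missing action: $\mathscr{R}_{\mathcal{G}}$ has $\mathcal{P}_0:=Y\times\Gamma_0$ and $\mathcal{P}_1:=P\times\Gamma_0$, with $R_0((y,g),g'):=(y,gg')$ and the twisted formula \erf{defaction} on morphisms; source, target and composition are likewise twisted by the $\Gamma_0$-coordinate, cf.\ \erf{defbundlefromgerbe}. Your identification $\mathcal{C}_0=Y$, $\mathcal{C}_1=P$ is correct only when $\Gamma_0$ is a point, i.e.\ for $\Gamma=\mathcal{B}A$ with $A$ abelian (Example \ref{trans_abelian}), which is presumably what made the construction look transparent. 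Once the $\Gamma_0$-factors are inserted, the weak-equivalence check for $\tau$, the comparison 1-isomorphism $\mathcal{G}\cong\mathscr{E}_M(\mathscr{R}_{\mathcal{G}})$, and the treatment of 1-morphisms (where the $\Gamma_1$-action on the total space of the anafunctor must be built from the restrictions $\beta_l$, $\beta_r$ of $\beta$) all have to be carried out for the enlarged groupoid; that is where the bulk of the paper's proof actually lies, and it is not recovered by the argument as you have outlined it.
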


Since the definitions of the bicategories  $\grb\Gamma M$ and $\zwoabun \Gamma M$, \emph{and} the above equivalence are all natural in $M$, we obtain automatically an induced equivalence for the induced bicategories over Lie groupoids (see Remarks \ref{rem:grbgrp} and \ref{rem:2bungrp}).

\begin{corollary}
For $\mathcal{X}$ a Lie groupoid and $\Gamma$ a Lie 2-group, there is an equivalence
\begin{equation*}
\grb\Gamma{\mathcal{X}} \cong \zwoabun\Gamma{\mathcal{X}}\text{.}
\end{equation*}
\end{corollary}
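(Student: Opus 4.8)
The plan is to deduce this from Theorem~\ref{th:equivalence} together with the observation recorded in Remarks~\ref{rem:grbgrp} and~\ref{rem:2bungrp}: both bicategories $\grb\Gamma{\mathcal{X}}$ and $\zwoabun\Gamma{\mathcal{X}}$ arise by applying \emph{one and the same} construction --- the passage from a (pre-)2-stack over smooth manifolds to a (pre-)2-stack over Lie groupoids of \cite[Proposition 2.8]{nikolaus2} --- to the pre-2-stacks $\grb\Gamma-$ and $\zwoabun\Gamma-$ respectively. Concretely, for a Lie groupoid $\mathcal{X}$ this construction produces the bicategory of descent data along the nerve of $\mathcal{X}$: an object of $\grb\Gamma{\mathcal{X}}$ is a $\Gamma$-bundle gerbe over $\mathcal{X}_0$, a $1$-isomorphism over $\mathcal{X}_1$ relating the two pullbacks, a $2$-isomorphism over $\mathcal{X}_2$, and a coherence condition over $\mathcal{X}_3$, with $1$- and $2$-morphisms described analogously; the same recipe with $\grb\Gamma-$ replaced by $\zwoabun\Gamma-$ describes $\zwoabun\Gamma{\mathcal{X}}$.

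First I would recall that Theorem~\ref{th:equivalence} provides not merely an equivalence of bicategories for each fixed $M$, but an equivalence of pre-2-stacks over smooth manifolds: the $2$-functors $\grb\Gamma M \to \zwoabun\Gamma M$ and an inverse are natural in $M$, hence commute with pullbacks along smooth maps up to coherent $2$-isomorphisms, and the unit and counit of the equivalence are likewise natural. Next I would invoke that the extension-to-Lie-groupoids construction of \cite{nikolaus2} is itself pseudo-$2$-functorial on pre-2-stacks over smooth manifolds: a morphism of pre-2-stacks induces, levelwise on descent data, a morphism of the associated pre-2-stacks over Lie groupoids, and a $2$-morphism of pre-2-stacks induces one between these. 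Since any $2$-functor carries equivalences to equivalences, the equivalence of pre-2-stacks of Theorem~\ref{th:equivalence} is sent to an equivalence of the pre-2-stacks $\grb\Gamma-$ and $\zwoabun\Gamma-$ over Lie groupoids; evaluating at $\mathcal{X}$ yields the asserted equivalence $\grb\Gamma{\mathcal{X}} \cong \zwoabun\Gamma{\mathcal{X}}$, and it is natural in $\mathcal{X}$ by construction.

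If one prefers an explicit description, the $2$-functor $\grb\Gamma{\mathcal{X}} \to \zwoabun\Gamma{\mathcal{X}}$ is obtained by applying the $2$-functor $\grb\Gamma{\mathcal{X}_n} \to \zwoabun\Gamma{\mathcal{X}_n}$ of Theorem~\ref{th:equivalence} to each piece of a descent datum and transporting the gluing and coherence data through the naturality isomorphisms; the inverse is built the same way from the inverse $2$-functor, and the unit and counit are assembled from those over $\mathcal{X}_0$ and $\mathcal{X}_1$. The only point requiring care --- and the step I expect to be the main, though purely bookkeeping, obstacle --- is verifying that these assembled data satisfy the coherence conditions defining morphisms and $2$-morphisms of descent data; this is precisely what the pseudo-$2$-functoriality of the construction in \cite{nikolaus2} guarantees, so no genuinely new argument is needed, and I would simply cite it.
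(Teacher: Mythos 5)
Your proposal is correct and follows exactly the paper's route: the corollary is deduced from the fact that the equivalence of Theorem \ref{th:equivalence} is an equivalence of pre-2-stacks (natural in $M$), combined with the functoriality of the passage from pre-2-stacks over manifolds to pre-2-stacks over Lie groupoids from \cite[Proposition 2.8]{nikolaus2}, as indicated in Remarks \ref{rem:grbgrp} and \ref{rem:2bungrp}. Your explicit unpacking in terms of descent data along the nerve is a harmless elaboration of the same argument.
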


The following outlines the proof of Theorem \ref{th:equivalence}.  
In Section \ref{sec:extraction} we construct explicitly a 2-functor 
\begin{equation*}
\mathscr{E}_M: \zwoabun \Gamma M \to \grb\Gamma M\text{.}
\end{equation*}
Then we use a general criterion  assuring that $\mathscr{E}_M$ is an equivalence of bicategories. This criterion is stated in Lemma \ref{lem:equivalence}: it requires (A) that $\mathscr{E}_M$ is fully faithful on Hom-categories, and (B) to choose certain preimages of objects and 1-morphisms under $\mathscr{E}_M$. Under these circumstances, Lemma \ref{lem:equivalence} constructs an inverse 2-functor $\mathscr{R}_M$ together with the required pseudonatural transformations assuring that $\mathscr{E}_M$ and $\mathscr{R}_M$  form an equivalence of bicategories. Condition (A) is proved as Lemma \ref{lem:efullyfaithful} in Section \ref{sec:extraction}. The choices (B) are constructed in Section \ref{sec:reconstruction}.

In order to prove that the 2-functors $\mathscr{E}_M$ extend to the claimed equivalence between pre-2-stacks, we use another criterion stated in Lemma \ref{lem:stackequivalence}. The only additional assumption of Lemma \ref{lem:stackequivalence} is that the given 2-functors $\mathscr{E}_M$ form a 1-morphism of pre-2-stacks; this is proved in Proposition \ref{prop:estack}. Then, the inverse 2-functors $\mathscr{R}_M$ obtained before automatically form an inverse 1-morphism between pre-2-stacks.

\setsecnumdepth{2}

\subsection{From Principal 2-Bundles to Bundle Gerbes}

\label{sec:extraction}

In this section we define the 2-functor
$\mathscr{E}_M: \zwoabun \Gamma M \to \grb\Gamma M$.

\subsubsection*{Definition of $\mathscr{E}_M$ on objects}

Let $\mathcal{P}$ be a principal $\Gamma$-2-bundle over $M$, with projection $\pi: \mathcal{P} \to M$ and right action $R$ of $\Gamma$ on $\mathcal{P}$. The first ingredient of the $\Gamma$-bundle gerbe $\mathscr{E}_M(\mathcal{P})$ is the surjective submersion $\pi: \mathcal{P}_0 \to M$. The second ingredient is a principal $\Gamma$-bundle $P$ over $\mathcal{P}_0^{[2]}$. We put
\begin{equation*}
P := \mathcal{P}_1 \times \Gamma_0\text{.}
\end{equation*}
Bundle projection, anchor and $\Gamma$-action are given, respectively, by \begin{multline}
\label{defgerbefrombundle}
\chi(\rho,g) := (t(\rho),R(s(\rho),g^{-1}))
\quomma
\alpha(\rho,g) := g
\quand
(\rho,g) \circ \gamma := (R(\rho,\id_{g^{-1}} \cdot \gamma),s(\gamma))\text{.}
\end{multline}
These definitions are motivated by Remark \ref{rem:motivation} below. 

\begin{lemma}
\label{lem:bundleforgerbe}
This defines a principal $\Gamma$-bundle over $\mathcal{P}_0^{[2]}$.
\end{lemma}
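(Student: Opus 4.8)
The plan is to verify the three axioms of Definition~\ref{def:pgb} for the data $(P,\chi,\alpha,\circ)$ with $P = \mathcal{P}_1 \times \Gamma_0$. First I would check that $\chi$ is a surjective submersion onto $\mathcal{P}_0^{[2]}$. Surjectivity is immediate: given $(p_1,p_2)$ in the same $\pi$-fibre, the weak equivalence $\tau = (\mathrm{pr}_1,R):\mathcal{P}\times\Gamma \to \mathcal{P}\times_M\mathcal{P}$ is in particular smoothly essentially surjective, so there is a morphism $\rho$ and an element $g$ with $s(\rho)=p_1$ and $R(t(\rho),g)=p_2$ --- actually one sees directly that $\chi(\rho,g) = (t(\rho), R(s(\rho),g^{-1}))$ hits every pair once the source/target structure of $\mathcal{P}$ and the $\Gamma$-action are unwound. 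That it is a submersion follows because $t:\mathcal{P}_1 \to \mathcal{P}_0$ is a surjective submersion (source and target maps of a Lie groupoid) and the action map $R$ on objects is a submersion in the $\mathcal{P}_0$-slot. Then I would check that $\alpha = \mathrm{pr}_{\Gamma_0}\circ\,(\cdot)$ together with the stated formula $(\rho,g)\circ\gamma := (R(\rho,\id_{g^{-1}}\cdot\gamma), s(\gamma))$ defines a smooth right $\Gamma$-action respecting $\chi$; this is a routine computation using strict commutativity of the action (Definition~\ref{def:rightaction}), the fact that $\id$ is a group homomorphism $\Gamma_0 \to \Gamma_1$, and that $R$ is a functor, so $R(\rho,\id_{g^{-1}})$ has source $R(s(\rho),g^{-1})$ and target $R(t(\rho),g^{-1})$. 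In particular $\alpha((\rho,g)\circ\gamma) = s(\gamma)$ and $\chi$ is preserved because $s$ of $\id_{g^{-1}}\cdot\gamma$ is $g^{-1}s(\gamma)$, so the anchor of the new point is $s(\gamma)$ and its source in $\mathcal{P}_0$ is $R(R(s(\rho),g^{-1}),s(\gamma)) = R(s(\rho), g^{-1}s(\gamma))$, matching the formula.

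The heart of the argument is the third axiom: that
\begin{equation*}
\tau_P : P \lli{\alpha}\times_t \Gamma_1 \to P \times_{\mathcal{P}_0^{[2]}} P : \big((\rho,g),\gamma\big) \mapsto \big((\rho,g),(\rho,g)\circ\gamma\big)
\end{equation*}
is a diffeomorphism. I would produce an explicit smooth inverse. Given two points $(\rho_1,g_1)$ and $(\rho_2,g_2)$ with $\chi(\rho_1,g_1)=\chi(\rho_2,g_2)$, i.e. $t(\rho_1)=t(\rho_2)=:p$ and $R(s(\rho_1),g_1^{-1}) = R(s(\rho_2),g_2^{-1})$, I need the unique $\gamma\in\Gamma_1$ with $(\rho_1,g_1)\circ\gamma = (\rho_2,g_2)$. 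Comparing the $\Gamma_0$-components forces $s(\gamma)=g_2$; comparing the $\mathcal{P}_1$-components forces $R(\rho_1,\id_{g_1^{-1}}\cdot\gamma) = \rho_2$, equivalently $\id_{g_1^{-1}}\cdot\gamma = $ (the unique morphism $\delta$ in $\Gamma_1$ with $R(\rho_1,\delta)=\rho_2$), which exists and is unique precisely because $\tau$ is \emph{smoothly fully faithful} (condition (b) of Theorem~\ref{weak}) applied to the pair $(\rho_1,\rho_2)$ lying over the same point $p$. Then $\gamma = \id_{g_1}\cdot\delta$, and one reads off $t(\gamma) = g_1\, t(\delta)$; I must check $t(\delta) = g_1^{-1} g_2$ so that $\alpha(\rho_1,g_1)=g_1 = t(\gamma)$ is consistent, which follows from the compatibility $R(s(\rho_1),t(\delta)) = R(s(\rho_2),\id) $ derived from $\chi(\rho_1,g_1)=\chi(\rho_2,g_2)$ together with freeness of the $\Gamma_0$-action on $\mathcal{P}_0$ implicit in $\tau$ being an equivalence. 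Smoothness of $\gamma$ as a function of $(\rho_1,g_1,\rho_2,g_2)$ comes from the smoothness of the map $\sigma$ furnished by the pullback property in Theorem~\ref{weak}(b).

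The step I expect to be the main obstacle is exactly this last one: extracting the smooth, globally-defined inverse from the weak-equivalence hypothesis on $\tau$. One has to phrase the fibre product $P \times_{\mathcal{P}_0^{[2]}} P$ as a test manifold $N$ mapping into $\mathcal{P}_1\times\mathcal{P}_1$ (via sources) and into $\Gamma_1$ (the candidate $\delta$), check the square in Theorem~\ref{weak}(b) commutes, and invoke the universal property to get $\sigma:N \to \mathcal{P}_1$ smooth --- then translate $\sigma$ back through the semidirect-product bookkeeping ($\gamma = \id_{g_1}\cdot\delta$, $s(\gamma)=g_2$) to obtain $\tau_P^{-1}$. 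The remaining verifications --- that $\tau_P\circ\tau_P^{-1}$ and $\tau_P^{-1}\circ\tau_P$ are identities --- are then formal consequences of the uniqueness clause in Theorem~\ref{weak}(b), and I would state them as such rather than expand the calculation. Everything else (smoothness of $\chi$, $\alpha$, the action, the action axioms) is a direct unwinding of the definitions and I would dispatch it in a sentence or two.
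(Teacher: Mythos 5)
Your overall strategy coincides with the paper's: both parts of the lemma are extracted from the two conditions of Theorem \ref{weak} characterizing $\tau=(\mathrm{pr}_1,R)$ as a weak equivalence. However, your justification that $\chi$ is a \emph{submersion} does not work as stated. That $t:\mathcal{P}_1\to\mathcal{P}_0$ is a surjective submersion and that $R(\cdot,g)$ is a diffeomorphism for each $g$ does not imply that $(\rho,g)\mapsto(t(\rho),R(s(\rho),g^{-1}))$ is submersive onto the fibre product $\mathcal{P}_0\times_M\mathcal{P}_0$: already for $\Gamma$ trivial this would assert that $(t,s):\mathcal{P}_1\to\mathcal{P}_0\times_M\mathcal{P}_0$ is a submersion for an arbitrary Lie groupoid with $\pi$ a surjective submersion, which is false. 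The correct input is precisely condition (a) of Theorem \ref{weak} — which you invoke, but only for set-theoretic surjectivity. Since that condition is itself the statement that a certain map $f$ onto $\mathcal{P}_0^{[2]}$ is a \emph{surjective submersion}, the repair is immediate and is what the paper does: it exhibits a smooth surjection $g$ from the domain of $f$ onto $P$ with $\chi\circ g=f$, whence $\chi$ is a surjective submersion. You should replace your tangent-space hand-wave by this factorization (or an equivalent use of the submersion part of (a)).

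For principality your argument is essentially the paper's in a different packaging. The paper shows that both $P\lli{\alpha}\times_t\Gamma_1$ and $P\lli{\chi}\times_\chi P$ are pullbacks of the square \erf{eq:pullback} (using that Theorem \ref{weak}(b) realizes $\mathcal{P}_1\times\Gamma_1$ as such a pullback) and identifies the induced comparison map with $\tilde\tau$; you instead build the inverse explicitly by feeding $P\lli{\chi}\times_\chi P$ into the universal property as a test object. These are interchangeable; the paper's version saves the explicit verification that the two composites are identities, which you correctly defer to the uniqueness clause. One small slip: the morphism $\delta$ with $R(\rho_1,\delta)=\rho_2$ must satisfy $s(\delta)=g_1^{-1}g_2$ and $t(\delta)=1$ (so that $\gamma=\id_{g_1}\cdot\delta$ has $t(\gamma)=g_1$ and $s(\gamma)=g_2$); you wrote $t(\delta)=g_1^{-1}g_2$, which would give $t(\gamma)=g_2$ and contradict the consistency you are checking. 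This is a source/target bookkeeping error, not a structural one.
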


\begin{proof}
First we check that $\chi: P \to \mathcal{P}_0^{[2]}$ is a surjective submersion. Since the functor $\tau = (\id,R)$ is a weak equivalence, we know from Theorem \ref{weak} that
\begin{equation*}
f:(\mathcal{P}_0 \times \Gamma_0) \lli{\tau} \times_{t \times t} \mathcal{P}_1^{[2]} \to \mathcal{P}_0^{[2]} : (p,g,\rho_1,\rho_2) \mapsto (s(\rho_1),s(\rho_2))
\end{equation*} 
is a surjective submersion. Now consider the smooth surjective map
\begin{equation*}
g: (\mathcal{P}_0 \times \Gamma_0) \lli{\tau} \times_{t \times t} \mathcal{P}_1^{[2]} \to \mathcal{P}_1 \times \Gamma_0  : (p,g,\rho_1,\rho_2) \mapsto (\rho_1^{-1} \circ R(\rho_2,\id_{g^{-1}}),g^{-1})\text{.}
\end{equation*} 
We have $\chi \circ g = f$; thus, $\chi$ is a surjective submersion.
Next we check that we have defined an action. Suppose $(\rho,g) \in P$ and $\gamma\in \Gamma_1$ such that $\alpha(\rho,g) = g = t(\gamma)$. Then, $\alpha((\rho,g) \circ \gamma) =  s(\gamma)$. Moreover, suppose $\gamma_1,\gamma_2\in \Gamma_1$ with $t(\gamma_1)= g$ and $t(\gamma_2)=s(\gamma_1)$. Then,
\begin{multline*}
((\rho,g) \circ \gamma_1) \circ \gamma_2 
=
(R(\rho,\id_{g^{-1}} \cdot \gamma_1),s(\gamma_1)) \circ \gamma_2
=
(R(\rho,\id_{g^{-1}} \cdot \gamma_1 \cdot \id_{s(\gamma_1)^{-1}} \cdot \gamma_2),s(\gamma_2))
= (\rho,g) \circ (\gamma_1 \circ \gamma_2)\text{,}
\end{multline*}
where we have used that $\gamma_1 \circ \gamma_2 = \gamma_1 \cdot \id_{s(\gamma_1)^{-1}} \cdot \gamma_2$ in any 2-group. 
It remains to check that the smooth map
\begin{equation*}
\tilde\tau: P \lli{\alpha}\times_{t} \Gamma_1 \to P \lli{\chi}\times_{\chi} P : ((\rho,g),\gamma) \mapsto ((\rho,g),(\rho,g)\circ \gamma)
\end{equation*} 
is a diffeomorphism. For this purpose, we consider the diagram
\begin{equation}
\label{eq:pullback}
\alxydim{@C=1.5cm@R=1.3cm}{ & \mathcal{P}_1^{[2]} \ar[d]^{s \times t} \\ (\mathcal{P}_0 \times \Gamma_0) \times (\mathcal{P}_0 \times \Gamma_0) \ar[r]_-{\tau \times \tau} & \mathcal{P}_0^{[2]} \times \mathcal{P}_0^{[2]}}
\end{equation}
and claim that (a)  $N_1 :=P \lli{\alpha} \times_{t}\Gamma_1$ is a pullback of \erf{eq:pullback}, (b) $N_2 :=P \lli{\chi} \times_{\chi} P$ is a pullback of \erf{eq:pullback}, and (c)  the unique map $N_1 \to N_2$ is $\tilde\tau$. Thus, $\tilde \tau$ is a diffeomorphism.

In order to prove  claim (a)
we use again that the functor $\tau = (\id, R)$ is  a weak equivalence, so that by Theorem \ref{weak} the triple $(\mathcal{P}_1 \times \Gamma_1,\tau,s \times t)$ is a pullback of \erf{eq:pullback}. 
We consider the smooth map 
\begin{equation*}
\xi: N_{1} \to \mathcal{P}_1 \times \Gamma_1:((\rho,g),\gamma) \mapsto (R(\rho,\id_{g^{-1}}),\gamma)
\end{equation*}
which is a diffeomorphism because  $(\rho,\gamma) \mapsto ((R(\rho,\id_{t(\gamma)}),t(\gamma)),\gamma)$ is a smooth map which is  inverse to $\xi$. 
Thus, putting  $f_1 := \tau \circ \xi$ and $g_1 := (s \times t) \circ \xi$ we see that $(N_1,f_1,g_1)$ is a pullback of \erf{eq:pullback}. In order to prove claim (b), we put
\begin{eqnarray*}
f_2((\rho_1,g_1),(\rho_2,g_2)) &:=& (R(\rho_{1},\id_{g_1^{-1}}),\rho_2)
\\
g_2((\rho_1,g_1),(\rho_2,g_2)) &:=& (R(s(\rho),g_1^{-1}),g_2,R(t(\rho_1),g_1^{-1}),g_1)\text{,}
\end{eqnarray*}
and it is straightforward to check that the cone $(N_2,f_2,g_2)$ makes \erf{eq:pullback} commutative. 
The triple $(N_2,f_2,g_2)$ is also universal: in order to see this suppose $N'$ is any smooth manifold with smooth maps $f': N' \to \mathcal{P}_1^{[2]}$ and $g': N' \to (\mathcal{P}_0 \times \Gamma_0) \times (\mathcal{P}_0 \times \Gamma_0)$ so that \erf{eq:pullback} is commutative. For $n\in N'$, we write $f'(n) = (\rho_1,\rho_2)$ and $g'(n)=(p_1,g_1,p_2,g_2)$. Then, $\sigma(n) := ((R(\rho_1,\id_{g_2^{-1}}),g_2),(\rho_2,g_1))$ defines a smooth map $\sigma:N' \to P \lli{\chi} \times_{\chi} P$. One checks that $f_2 \circ \sigma = f'$ and $g_2 \circ \sigma = g'$, and that $\sigma$ is the only smooth map satisfying these equations. This proves that $(N_2,f_2,g_2)$ is a pullback. We are left with claim (c). Here one only has to check that $\tau: N_1 \to N_2$ satisfies $f_2=f_1 \circ \tau$ and $g_2 = g_1 \circ \tau$. 
\end{proof}

\begin{remark}
\label{rem:motivation}
The smooth functor $\tau=(\id,R): \mathcal{P} \times \Gamma \to \mathcal{P} \times_M \mathcal{P}$ is a weak equivalence, and so has a canonical inverse anafunctor $\tau^{-1}$ (Remark \ref{rem:inverses}). The anafunctor
\begin{equation*}
\alxydim{}{\mathcal{P}_0^{[2]}  \ar[r]^-{\iota} & \mathcal{P} \times_M \mathcal{P} \ar[r]^-{c} & \mathcal{P} \times_M \mathcal{P}\ar[r]^-{\tau^{-1}} & \mathcal{P} \times \Gamma \ar[r]^-{\mathrm{pr}_2} &  \Gamma\text{,}}
\end{equation*}
where $c$ is the functor that switches the factors,
corresponds to a principal $\Gamma$-bundle over $\mathcal{P}_0^{[2]}$ that is canonically isomorphic to the bundle $P$ defined above. 
\end{remark}

It remains to  provide the bundle gerbe  product 
\begin{equation*}
\mu: \pi_{23}^{*}P \otimes \pi_{12}^{*}P \to \pi_{13}^{*}P\text{,}
\end{equation*}
which we define by the formula
\begin{equation}
\label{defmu}
\mu((\rho_{23},g_{23}),(\rho_{12},g_{12})) := (\rho_{12} \circ R(\rho_{23},\id_{g_{12}}) ,g_{23}g_{12})\text{.}
\end{equation}

\begin{lemma}
Formula \erf{defmu} defines an associative isomorphism $\mu: \pi_{23}^{*}P \otimes \pi_{12}^{*}P \to \pi_{13}^{*}P$ of principal $\Gamma$-bundles over $\mathcal{P}_0^{[3]}$. \end{lemma}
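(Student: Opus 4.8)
The plan is to check four things in turn: that formula \erf{defmu} descends to a well-defined map out of the tensor product, that this map is a morphism of principal $\Gamma$-bundles over $\mathcal{P}_0^{[3]}$, that it is consequently an isomorphism for free, and that it is associative over $\mathcal{P}_0^{[4]}$. Recall that over a point $(p_1,p_2,p_3)\in\mathcal{P}_0^{[3]}$ an element of $\pi_{13}^*P$ is a pair $(\rho,g)$ with $t(\rho)=p_1$ and $R(s(\rho),g^{-1})=p_3$, whereas $\pi_{23}^*P\otimes\pi_{12}^*P$ is described, via Example \ref{ex:tensorproduct}\,\erf{ex:tensorproduct:general}, by triples $\big((\rho_{23},g_{23}),(\rho_{12},g_{12}),\gamma\big)$ modulo the relation \erf{deftensorproductrel}, with every class having a representative in which $\gamma=\id$. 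First I would verify that the composition in \erf{defmu} is legitimate, which amounts to $t\big(R(\rho_{23},\id_{g_{12}})\big)=R(p_2,g_{12})=s(\rho_{12})$, the last equality being $\chi(\rho_{12},g_{12})=(p_1,p_2)$. Then I would extend \erf{defmu} to arbitrary triples by $\tilde\mu\big((\rho_{23},g_{23}),(\rho_{12},g_{12}),\gamma\big):=\mu\big((\rho_{23},g_{23}),(\rho_{12},g_{12})\big)\circ\gamma$, which makes sense because $t(\gamma)=g_{23}g_{12}$ is exactly the anchor of $\mu(\ldots)$, and check that $\tilde\mu$ is constant on the equivalence classes of \erf{deftensorproductrel}.

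By the compatibility of the $\Gamma$-action on $\pi_{13}^*P$ with composition in $\Gamma$, this well-definedness reduces to the single identity
\[
\mu\big(\rho_1(p_1,\gamma_1),\rho_2(p_2,\gamma_2)\big)=\mu\big(p_1,p_2\big)\circ m(\gamma_1,\gamma_2),
\]
which I would derive from the following ingredients: $R$ is a smooth functor $\mathcal{P}\times\Gamma\to\mathcal{P}$, so $R(\rho\circ\rho',\delta\circ\delta')=R(\rho,\delta)\circ R(\rho',\delta')$; the strictness axiom of the action, $R(R(\rho,\delta),\delta')=R(\rho,\delta\cdot\delta')$; the definition \erf{defgerbefrombundle} of the $\Gamma$-action on $P$; and the 2-group identity $\delta\circ\delta'=\delta\cdot\id_{s(\delta)^{-1}}\cdot\delta'$ already used in the proof of Lemma \ref{lem:bundleforgerbe}. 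This is a direct but somewhat lengthy manipulation of 2-group data.

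Next, to see that the resulting map $\mu$ is a morphism of principal $\Gamma$-bundles over $\mathcal{P}_0^{[3]}$: one computes $\chi\big(\mu((\rho_{23},g_{23}),(\rho_{12},g_{12}))\big)=(p_1,p_3)$ — the target of the composite is $t(\rho_{12})=p_1$, its source is $R(p_3,g_{23}g_{12})$, and applying $R(-,(g_{23}g_{12})^{-1})$ gives back $p_3$ — so $\mu$ covers the identity; anchor preservation is immediate because the anchor of a tensor-product element is $\alpha_1\cdot\alpha_2=g_{23}g_{12}$, the second component of \erf{defmu}; and $\Gamma$-equivariance, $\mu((\ldots)\circ\gamma')=\mu(\ldots)\circ\gamma'$, is another short calculation with functoriality of $R$ and \erf{defgerbefrombundle}. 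Smoothness is clear, since $\mu$ is built from the smooth structure maps of $\mathcal{P}$ and $\Gamma$. Since every morphism of principal $\Gamma$-bundles over a fixed base is invertible (the category $\bun\Gamma{\mathcal{P}_0^{[3]}}$ is a groupoid, as noted after Definition \ref{def:pgb}), $\mu$ is automatically an isomorphism and no inverse need be written down. Finally, for associativity over $\mathcal{P}_0^{[4]}$ I would apply both composites in the square of Definition \ref{def:grb} to a representative $\big((\rho_{34},g_{34}),(\rho_{23},g_{23}),(\rho_{12},g_{12})\big)$ and check that, using functoriality of $R$, the special case $R(R(-,\id_{g'}),\id_g)=R(-,\id_{gg'})$ of the strictness axiom, associativity of composition in $\mathcal{P}$, and associativity of multiplication in $\Gamma_0$, both equal $\big(\rho_{12}\circ R(\rho_{23},\id_{g_{12}})\circ R(\rho_{34},\id_{g_{23}g_{12}}),\,g_{34}g_{23}g_{12}\big)$.

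The only genuinely nonformal point is the well-definedness on the tensor product, i.e. the displayed identity, since it requires matching the quotient relation \erf{deftensorproductrel} against the structure of \erf{defmu}, which is bilinear in the $\Gamma_1$-variables only after a careful reshuffling via the 2-group axioms; everything else is either a one-line computation or a formal invocation. An alternative, more conceptual route would be to exploit Remark \ref{rem:motivation}, where $P$ is identified with the $\Gamma$-bundle represented by the composite anafunctor $\mathrm{pr}_2\circ\tau^{-1}\circ c\circ\iota$, and to obtain $\mu$ from the functoriality of the canonical inverse $\tau^{-1}$ of the weak equivalence $\tau$ together with the simplicial identities among the inclusions of $\mathcal{P}_0^{[n]}$ into iterated fibre products of $\mathcal{P}$; but the elementary verification sketched above is self-contained, so that is the route I would take.
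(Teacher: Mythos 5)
Your proposal is correct and follows essentially the same route as the paper: extend \erf{defmu} from the $\gamma=\id$ representatives to general triples, verify compatibility with the tensor-product relation \erf{deftensorproductrel}, check fibre-, anchor- and $\Gamma$-equivariance, and observe that associativity and invertibility (the latter because $\bun\Gamma{\mathcal{P}_0^{[3]}}$ is a groupoid) come out of the definitions. Your packaging of well-definedness and equivariance into the single identity $\mu(p_1\circ\gamma_1,p_2\circ\gamma_2)=\mu(p_1,p_2)\circ m(\gamma_1,\gamma_2)$ is a mild reorganization of the same computation the paper carries out with its explicit extended formula $\mu((\rho_{23},g_{23}),(\rho_{12},g_{12}),\gamma)=(\rho_{12}\circ R(\rho_{23},\id_{g_{23}^{-1}}\cdot\gamma),s(\gamma))$, and your final associative expression agrees with the paper's.
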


\begin{proof}
First of all, we recall from Example \ref{ex:tensorproduct} \erf{ex:tensorproduct:general} that an element in the tensor product $\pi_{23}^{*}P \otimes \pi_{12}^{*}P$ is  represented by a triple $(p_{23},p_{12},\gamma)$ where $p_{23},p_{12} \in P$ with $\pi_1(\chi(p_{23})) = \pi_2(\chi(p_{12}))$, and $\alpha(p_{23}) \cdot \alpha(p_{12})=t(\gamma)$.
In \erf{defmu} we refer to triples where $\gamma = \id_{g_{23}g_{12}}$, and this definition extends to triples with general $\gamma \in \Gamma_1$ by employing the equivalence relation
\begin{equation}
\label{equivreltp}
(p_1,p_2,\gamma) \sim  (p_1 \circ (\gamma \cdot \id_{\alpha(p_2)^{-1}}),p_2,\id_{s(\gamma)})\text{.} \end{equation}
The complete formula for $\mu$ is then
\begin{equation}
\label{defmufull}
\mu((\rho_{23},g_{23}),(\rho_{12},g_{12}),\gamma) = (\rho_{12} \circ R(\rho_{23},\id_{g_{23}^{-1}} \cdot \gamma)  ,s(\gamma))\text{.}
\end{equation}
Next we check that \erf{defmufull} is well-defined under the equivalence relation \erf{equivreltp}: 
\begin{eqnarray*}
\mu(((\rho_{23},g_{23}),(\rho_{12},g_{12}),\gamma)) 
&=& (\rho_{12} \circ R(\rho_{23},\id_{g_{23}^{-1}} \cdot \gamma)  ,s(\gamma))
\\&=&   (\rho_{12} \circ R(\rho_{23} \circ R(\id_{R(s(\rho_{23}),g_{23}^{-1})},\gamma \cdot \id_{g_{12}^{-1}}),\id_{g_{12}} )  ,s(\gamma))
\\&=& \mu((\rho_{23} \circ R(\id_{R(s(\rho_{23}),g_{23}^{-1})},\gamma \cdot \id_{g_{12}^{-1}}),s(\gamma)g_{12}^{-1}),(\rho_{12},g_{12}),\id_{s(\gamma)}))
\\&=& \mu(((\rho_{23},g_{23}) \circ (\gamma \cdot \id_{g_{12}^{-1}}),(\rho_{12},g_{12}),\id_{s(\gamma)}))\text{.}
\end{eqnarray*}
Now we have shown that $\mu$ is a well-defined map from $\pi_{23}^{*}P \otimes \pi_{12}^{*}P$ to $\pi_{13}^{*}P$, and it remains to prove that it is a bundle morphism. Checking that it preserves fibres and anchors is straightforward.
It remains to check that \erf{defmufull} preserves the  $\Gamma$-action. We calculate
\begin{eqnarray*}
\mu(((\rho_{23},g_{23}),(\rho_{12},g_{12}),\gamma) \circ \tilde\gamma)
&=& \mu((\rho_{23},g_{23}),(\rho_{12},g_{12}),\gamma \circ \tilde\gamma)
\\&=& (\rho_{23} \circ R(\rho_{12},  \id_{g_{12}} \cdot i(\gamma \circ \tilde\gamma)) ,s( \tilde\gamma) )
\\&=& (\rho_{23} \circ R(R(\rho_{12},  \id_{g_{12}}), i(\gamma)\circ i(\tilde\gamma)),s(\tilde\gamma))
\\&=&(\rho_{23} \circ R(R(\rho_{12},  \id_{g_{12}}), i(\gamma)) \circ R(\id_{R(s(\rho_{12}),  g)},i(\tilde\gamma)),s(\tilde\gamma))
\\&=& (\rho_{23} \circ R(\rho_{12},  \id_{g_{12}} \cdot i(\gamma)) \circ R(\id_{R(s(\rho_{12}),  g)},i(\tilde\gamma)),s(\tilde\gamma))
\\&=& (\rho_{23} \circ R(\rho_{12},  \id_{g_{12}} \cdot i(\gamma)) ,s(\gamma) )\circ \tilde\gamma
\\&=& \mu((\rho_{23},g_{23}),(\rho_{12},g_{12}),\gamma)  \circ \tilde\gamma\text{.}
\end{eqnarray*}
Summarizing, $\mu$ is a morphism of $\Gamma$-bundles over $\mathcal{P}_0^{[3]}$. The associativity of $\mu$ follows directly from the definitions.
\end{proof}

\subsubsection*{Definition of $\mathscr{E}_M$ on 1-morphisms}

We define a 1-morphism $\mathscr{E}_M(F): \mathscr{E}_M(\mathcal{P}) \to \mathscr{E}_M(\mathcal{P}')$ between $\Gamma$-bundle gerbes from a 1-morphism $F: \mathcal{P} \to \mathcal{P}'$ between principal $\Gamma$-2-bundles. The refinement of the surjective submersions $\pi: \mathcal{P} \to M$ and $\pi': \mathcal{P}' \to M$ is the fibre product $Z := \mathcal{P}_0 \times_M \mathcal{P}_0'$. Its principal $\Gamma$-bundle has the total space
\begin{equation*}
Q := F \times \Gamma_0\text{,}
\end{equation*}
and its projection, anchor and $\Gamma$-action are given, respectively, by
\begin{multline}
\label{eq:emorph}
\chi(f,g) := (\alpha_l(f), R(\alpha_r(f),g^{-1}))
\quomma
\alpha(f,g) := g
\quand
(f,g) \circ \gamma := (\rho(f,\id_{g^{-1}}\cdot \gamma),s(\gamma))\text{,}
\end{multline}
where $\rho: F \times \Gamma_1 \to F$ denotes the $\Gamma_1$-action on $F$ that comes from the given $\Gamma$-equivariant structure on $F$ (see Appendix \ref{sec:equivariantanafunctorsandgroupactions}).

\begin{lemma}
This defines a principal $\Gamma$-bundle $Q$ over $Z$. 
\end{lemma}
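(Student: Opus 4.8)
The plan is to verify the axioms of Definition \ref{def:pgb} for $(Q,\chi)$ with $Q = F \times \Gamma_0$ directly, following closely the pattern already set up in the proof of Lemma \ref{lem:bundleforgerbe}: there the total space was $\mathcal{P}_1 \times \Gamma_0$ and everything was controlled by the functor $\tau = (\id,R)$ being a weak equivalence; here the role of $\mathcal{P}_1$ with its $\Gamma$-action is taken over by the anafunctor total space $F$ together with the $\Gamma_1$-action $\rho$ coming from the $\Gamma$-equivariant structure of $F$ (made explicit in Appendix \ref{sec:equivariantanafunctorsandgroupactions}).

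First I would check that $\chi$ actually lands in $Z = \mathcal{P}_0 \times_M \mathcal{P}_0'$: since $R$ preserves $\pi'$ and since $F$ respects the surjective submersion functors --- which by Remark \ref{rem:preserving} \erf{rem:preserving:b} just means $\pi \circ \alpha_l = \pi' \circ \alpha_r$ --- we get $\pi(\alpha_l(f)) = \pi'(\alpha_r(f)) = \pi'(R(\alpha_r(f),g^{-1}))$. The action axioms, together with the relation $((f,g)\circ\gamma_1)\circ\gamma_2 = (f,g)\circ(\gamma_1\circ\gamma_2)$, come out exactly as in Lemma \ref{lem:bundleforgerbe}, using that $\rho$ is a $\Gamma_1$-action on $F$ compatible with the anchors $\alpha_l,\alpha_r$ and the identity $\gamma_1 \circ \gamma_2 = \gamma_1 \cdot \id_{s(\gamma_1)^{-1}} \cdot \gamma_2$ valid in any Lie 2-group. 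The remaining point, that
\begin{equation*}
\tilde\tau : Q \lli{\alpha}\times_t \Gamma_1 \to Q \lli{\chi}\times_\chi Q : ((f,g),\gamma) \mapsto \big((f,g),(f,g)\circ\gamma\big)
\end{equation*}
is a diffeomorphism, I would again settle with the pullback argument of Lemma \ref{lem:bundleforgerbe}: one exhibits both $Q \lli{\alpha}\times_t\Gamma_1$ and $Q \lli{\chi}\times_\chi Q$, after transporting $F$ along its anchors, as pullbacks of the square
\begin{equation*}
\alxydim{@C=1.5cm@R=1.3cm}{ & (\mathcal{P}' \times_M \mathcal{P}')_1 \ar[d]^{s \times t} \\ (\mathcal{P}_0' \times \Gamma_0) \times (\mathcal{P}_0' \times \Gamma_0) \ar[r]_-{\tau'_0 \times \tau'_0} & (\mathcal{P}_0' \times_M \mathcal{P}_0') \times (\mathcal{P}_0' \times_M \mathcal{P}_0')}
\end{equation*}
which is a pullback square of manifolds precisely because $\tau' = (\id,R')$ is a weak equivalence (Theorem \ref{weak}), and then checks that $\tilde\tau$ is the induced comparison map between these pullbacks.

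The step I expect to be the real obstacle is showing that $\chi \colon Q \to Z$ is a \emph{surjective submersion}. Unlike in Lemma \ref{lem:bundleforgerbe}, this genuinely mixes two inputs: that $\alpha_l \colon F \to \mathcal{P}_0$ is a surjective submersion (it is the projection of the principal $\mathcal{P}'$-bundle underlying the anafunctor $F$), and that $\tau'$ is smoothly essentially surjective by Theorem \ref{weak}, which is where the principal-$2$-bundle property of $\mathcal{P}'$ enters. Concretely, given $(p,p') \in Z$ one picks a local section of $\alpha_l$ through $p$, say with value $f_0$, lands at $q_0 := \alpha_r(f_0)$ with $\pi'(q_0) = \pi'(p')$, uses essential surjectivity of $\tau'$ to produce morphisms of $\mathcal{P}'$ relating $q_0$ and $p'$ to a $\tau'$-translate, and transports along them using the right $\mathcal{P}'$-action on $F$; this assembles to a surjective submersion onto $Z$ that factors through $\chi$, forcing $\chi$ to be a surjective submersion.

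A cleaner but less hands-on alternative, in the spirit of Remark \ref{rem:motivation}, would be to \emph{define} $Q$ as the principal $\Gamma$-bundle over $Z$ corresponding to the composite anafunctor
\begin{equation*}
\alxydim{}{\idmorph Z \ar[r] & \mathcal{P}' \times_M \mathcal{P}' \ar[r]^-{c} & \mathcal{P}' \times_M \mathcal{P}' \ar[r]^-{(\tau')^{-1}} & \mathcal{P}' \times \Gamma \ar[r]^-{\mathrm{pr}_2} & \Gamma\text{,}}
\end{equation*}
where the first arrow is built from $F$ (viewed via Example \ref{anafunctors} as a principal $\mathcal{P}'$-bundle over $\mathcal{P}_0$) on the first factor and the identity on the second; principality is then automatic from Definition \ref{def:extension} and Lemma \ref{extensionfunctor}, and one is left only with the routine identification of this composite with the explicit data \erf{eq:emorph}.
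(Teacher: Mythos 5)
Your overall strategy matches the paper's: the projection landing in $Z$ via $\pi\circ\alpha_l=\pi'\circ\alpha_r$, the action axioms verified exactly as in Lemma \ref{lem:bundleforgerbe}, and the surjective submersion property obtained by combining the fact that $\alpha_l:F\to\mathcal{P}_0$ is a surjective submersion with the smooth essential surjectivity of $\tau'$ from Theorem \ref{weak} — this last sketch is precisely what the paper does, by forming the pullback of $s\circ\mathrm{pr}_2$ along $(f,p')\mapsto(\alpha_r(f),p')$ over $F\times_{\pi'}\mathcal{P}_0'$ and producing a surjection onto $Q$ that covers $Z$. The one place where you diverge, and where your sketch is too thin, is the principality of the map $\tilde\tau$. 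Your proposed pullback square involves only $\mathcal{P}'$-data and does not see $F$ at all; the needed reduction is exactly the principality of $F$ as a $\mathcal{P}'$-bundle over $\mathcal{P}_0$, which you bury in the phrase \emph{transporting $F$ along its anchors}. The paper instead builds an explicit inverse in two steps that make both inputs visible: given $(f_1,g_1,f_2,g_2)\in Q\times_Z Q$, the equality $\alpha_l(f_1)=\alpha_l(f_2)$ and principality of $F$ yield a \emph{unique} $\rho'\in\mathcal{P}_1'$ with $f_1\circ\rho'=f_2$; then $(\rho',g_2)$ and $(\id_{\alpha_r(f_1)},g_1)$ are two points in the same fibre of the principal $\Gamma$-bundle $\mathcal{P}_1'\times\Gamma_0$ of Lemma \ref{lem:bundleforgerbe}, whence a unique $\gamma\in\Gamma_1$ relating them, and one checks via condition (ii) of Definition \ref{def:actionanafunctor} that $(f_1,g_1,\gamma)$ is the desired preimage. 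Either route works, but if you keep the pullback formulation you must first establish the diffeomorphism $F\lli{\alpha_l}\times_{\alpha_l}F\cong F\lli{\alpha_r}\times_t\mathcal{P}_1'$ explicitly and only then compare with the $\mathcal{P}'$-square; the paper's two-step inverse buys you this for free and also localizes where the compatibility axiom for the $\Gamma_1$-action $\rho$ is used. Your alternative construction of $Q$ as a composite anafunctor in the spirit of Remark \ref{rem:motivation} is legitimate and would make principality automatic, but the identification with the explicit formulas \erf{eq:emorph} — which is needed later when $\beta$ and the compositors are written out fibrewise — is not shorter than the direct verification.
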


\begin{proof}
We show first the the projection $\chi:Q \to Z$ is a surjective submersion. Since the  functor $\tau'\maps \mathcal{P}' \nobr\times\nobr \Gamma \to \mathcal{P} \times_M \mathcal{P}$ is a weak equivalence, we have by Theorem \ref{weak} a pullback 
\begin{equation*}
\alxydim{@R=1.3cm}{X \ar[r] \ar[d]_{\xi} & (\mathcal{P}_0' \times \Gamma_0) \lli{R}\times_{t} (\mathcal{P}_1' \times_M \mathcal{P}_1') \ar[d]^{s \circ \mathrm{pr}_2} \\ F \lli{\pi' \circ \alpha_l(f)}\times_{\pi'} \mathcal{P}_0' \ar[r] & \mathcal{P}_0' \times_M \mathcal{P}_0' }
\end{equation*}
along the bottom map $(f,p') \mapsto (\alpha_r(f),p')$, which is well-defined because the anafunctor $F$ preserves the projections to $M$ (see Remark \ref{rem:preserving} \erf{rem:preserving:b}). In particular, the map $\xi$ is a surjective submersion. It is easy to see that the smooth map
\begin{equation*}
k: X \to F \times \Gamma_0: ((f,p'),(p_0',g,\rho,\tilde\rho)) \mapsto (f \circ \rho^{-1} \circ R(\tilde\rho,\id_{g^{-1}}),g^{-1})
\end{equation*}
is surjective.
Now we consider the commutative diagram
\begin{equation*}
\alxydim{@C=1.5cm@R=1.3cm}{X \ar[d]_{\xi} \ar[r]^-{k} & F \times \Gamma_0 \ar[d]^{\chi} \\  F \lli{\pi' \circ \alpha_l(f)}\times_{\pi'} \mathcal{P}_0 \ar[r]_-{\alpha_l \times \id} & \mathcal{P}_0 \times_M \mathcal{P}_0'\text{.}}
\end{equation*}
The surjectivity of $k$ and the fact that $\xi$ and $\alpha_l \times \id$ are surjective submersions shows that $\chi$ is one, too.

Next, one checks (as in the proof of Lemma \ref{lem:bundleforgerbe}) that the $\Gamma$-action on $Q$ defined above is well-defined and preserves the projection.
Then it remains to check that the smooth map
\begin{equation*}
\xi \;:\;\; Q \lli{\alpha}\times_t \Gamma_1 \to Q \times_{\mathcal{P}_0 \times_M \mathcal{P}_0'} Q:(f,g,\gamma) \mapsto (f,g,\rho(f,\id_{g^{-1}}\cdot \gamma),s(\gamma))
\end{equation*}
is a diffeomorphism. An inverse map is given as follows. Given an element $(f_1,g_1,f_2,g_2)$ on the right hand side, we have $\alpha_l(f_1) = \alpha_l(f_2)$, so that there exists a unique element $\rho' \in \mathcal{P}_1'$ such that $f_1 \circ \rho' = f_2$. One calculates that $(\rho',g_2)$ and $(\id_{\alpha_r(f_1)},g_1)$ are elements of the principal $\Gamma$-bundle $\mathcal{P}' \times \Gamma_0$ over $\mathcal{P}_0'^{[2]}$   of Lemma \ref{lem:bundleforgerbe}. Thus, there exists a unique element $\gamma \in \Gamma_1$ such that $(\rho',g_2)=(\id_{\alpha_r(f_1)},g_1) \circ \gamma$. Clearly, $t(\gamma)=g_1$ and $s(\gamma)=g_2$, and we have $\rho' = R(\id_{\alpha_r(f_1)},\id_{g_1^{-1}} \cdot \gamma)$. We define $\xi^{-1}(f_1,g_1,f_2,g_2) := (f_1,g_1,\gamma)$. The calculation that $\xi^{-1}$ is an inverse for $\xi$ uses property (ii) of Definition \ref{def:actionanafunctor} for the action $\rho$, and is left to the reader.
\end{proof}

The next step in the definition of the 1-morphism $\mathscr{E}(F)$ is to define the bundle morphism
\begin{equation*}
\beta: P' \otimes \zeta_1^{*}Q  \to \zeta_2^{*}Q \otimes P
\end{equation*}
over $Z \times_M Z$. We use the notation of Example \ref{ex:tensorproduct} \erf{ex:tensorproduct:general} for elements of tensor pro\-ducts of principal $\Gamma$-bundles; in this notation, the morphism $\beta$ in the fibre over a point $((p_1,p_1'),(p_2,p_2')) \in Z \times_M Z$ is given by
\begin{equation*}
\beta:((\rho',g'),(f,g),\gamma) \mapsto ((\tilde f,g'gh),(\tilde \rho, h^{-1}),\gamma)\text{,}
\end{equation*}
where $h\in \Gamma_0$ and $\tilde\rho\in \mathcal{P}_1'$ are chosen such that $s(\tilde\rho) = R(p_2,h^{-1})$ and $t(\tilde\rho) = p_1$, and \begin{equation}
\label{eq:ftilde}
\tilde f := \rho(\tilde\rho^{-1} \circ f \circ R(\rho',\id_g),\id_h)\text{.}
\end{equation} 

\begin{lemma}
\label{lem:eonemorphisms}
This defines an isomorphism between principal $\Gamma$-bundles. 
\end{lemma}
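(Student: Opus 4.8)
The plan is to check, one after another, that the displayed formula for $\beta$ is independent of the auxiliary data $(h,\tilde\rho)$, that it descends to the equivalence relation defining the tensor product $P'\otimes\zeta_1^{*}Q$, that it is smooth, and that it respects the bundle projections, anchors and $\Gamma$-actions of the two principal $\Gamma$-bundles over $Z\times_M Z$; invertibility then comes for free, since $\bun\Gamma{Z\times_M Z}$ is a groupoid in which every morphism of principal $\Gamma$-bundles is an isomorphism (as noted after Definition \ref{def:pgb}). As a preliminary remark: over a point $((p_1,p_1'),(p_2,p_2'))\in Z\times_M Z$ the pair $(\tilde\rho,h^{-1})$ is required to be an element of the fibre over $(p_1,p_2)\in\mathcal{P}_0^{[2]}$ of the principal $\Gamma$-bundle $P=\mathcal{P}_1\times\Gamma_0$ of $\mathscr{E}_M(\mathcal{P})$, and such an element exists precisely because $\chi\colon P\to\mathcal{P}_0^{[2]}$ is a surjective submersion, which was established in the proof of Lemma \ref{lem:bundleforgerbe}.

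First I would prove independence of the choice of $(h,\tilde\rho)$. Two admissible choices furnish elements $(\tilde\rho,h^{-1})$ and $(\tilde\rho_2,h_2^{-1})$ in the same fibre $P_{(p_1,p_2)}$, which is a $\Gamma$-torsor; hence there is a unique $\delta\in\Gamma_1$ with $(\tilde\rho_2,h_2^{-1})=(\tilde\rho,h^{-1})\circ\delta$, so that by \erf{defgerbefrombundle} one has $\tilde\rho_2=R(\tilde\rho,\id_{h}\cdot\delta)$ and $h_2^{-1}=s(\delta)$. Substituting this into \erf{eq:ftilde} and using property (ii) of Definition \ref{def:actionanafunctor} for the $\Gamma_1$-action $\rho$ on $F$, together with the elementary $2$-group identities used repeatedly in this section (in particular $\gamma_1\circ\gamma_2=\gamma_1\cdot\id_{s(\gamma_1)^{-1}}\cdot\gamma_2$), one computes that the output triple $((\tilde f_2,g'gh_2),(\tilde\rho_2,h_2^{-1}),\gamma)$ differs from $((\tilde f,g'gh),(\tilde\rho,h^{-1}),\gamma)$ exactly by the tensor-product relation \erf{deftensorproductrel} of $\zeta_2^{*}Q\otimes P$, so both represent the same element.

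Next I would verify that $\beta$ is well defined on the quotient describing $P'\otimes\zeta_1^{*}Q$: replacing the representative $((\rho',g'),(f,g),\gamma)$ by an equivalent one according to \erf{deftensorproductrel} changes $\tilde f$ and the $\Gamma_0$-labels only up to the equivalence relation on $\zeta_2^{*}Q\otimes P$, by a computation of the same flavour. For smoothness I would work locally: since $\chi\colon P\to\mathcal{P}_0^{[2]}$ is a surjective submersion it admits smooth local sections, so around any point of $Z\times_M Z$ the data $(h,\tilde\rho)$ can be chosen smoothly; plugging such a smooth choice into the formula yields a smooth map on that open set, and by the independence just proved these local descriptions of $\beta$ agree on overlaps and glue to a globally smooth map. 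Finally, $\beta$ preserves the projection to $Z\times_M Z$ (this is the fibre computation $\chi(\tilde f,g'gh)=(p_2,p_2')$ and $\chi(\tilde\rho,h^{-1})=(p_1,p_2)$), preserves the anchor (both triples have anchor $s(\gamma)$ by \erf{eq:emorph} and \erf{defgerbefrombundle}), and is $\Gamma$-equivariant because on both sides the $\Gamma$-action is post-composition $\gamma\mapsto\gamma\circ\tilde\gamma$ in the last slot, which $\beta$ leaves untouched. Hence $\beta$ is a morphism, and therefore an isomorphism, of principal $\Gamma$-bundles over $Z\times_M Z$; alternatively the inverse can be written down explicitly by the symmetric formula, exchanging the two halves of $Z\times_M Z$ and the roles of $P$ and $P'$.

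The hard part will be the independence-of-choice computation in the second step: it is a bookkeeping exercise combining the defining formula \erf{eq:ftilde}, the structure equations \erf{defgerbefrombundle}--\erf{eq:emorph}, the tensor-product relation \erf{deftensorproductrel}, and property (ii) of Definition \ref{def:actionanafunctor} for the equivariant anafunctor $F$. Once that identity is secured, the descent statement is a variant of it, and smoothness (via local sections of the surjective submersion $\chi$) together with the remaining structural compatibilities are routine.
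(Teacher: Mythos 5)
Your proposal is correct and follows essentially the same route as the paper: existence and local smoothness of $(\tilde\rho,h)$ from the surjectivity established via the weak equivalence $\tau$, independence of the choice by relating two choices through the unique $\tilde\gamma\in\Gamma_1$ in the fibre of $P$ and absorbing the difference into the tensor-product relation via Definition \ref{def:actionanafunctor}~(ii), descent to the quotient by reusing the same $(\tilde\rho,h)$ for equivalent representatives, and the immediate anchor-preservation and $\Gamma$-equivariance. The only cosmetic difference is that you make explicit the appeal to the groupoid property of $\bun\Gamma{Z\times_M Z}$ for invertibility, which the paper leaves implicit.
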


\begin{proof}
The existence of  choices of $\tilde \rho,h$ follows because the  functor $\tau': \mathcal{P}' \times\Gamma \to \mathcal{P}' \times_M \mathcal{P}'$ is smoothly essentially surjective (Theorem \ref{weak}); in particular, one can choose them locally in a smooth way. We claim that the equivalence relation on $\zeta_2^{*}Q \otimes P$ identifies different choices; thus, we have a well-defined smooth map. In order to prove this claim, we assume other choices $\tilde\rho',h'$. The pairs $(\tilde\rho,h^{-1})$ and $(\tilde\rho',h'^{-1})$ are elements in the principal $\Gamma$-bundle $P'$ over $\mathcal{P}_0' \times_M \mathcal{P}_0'$ and sit over the same fibre; thus, there exists a unique $\tilde\gamma\in \Gamma_1$ such that $(\tilde\rho,h^{-1}) \circ \tilde\gamma = (\tilde\rho',h'^{-1})$, in particular, $R(\tilde\rho,\id_{h}\cdot \tilde\gamma) = \tilde\rho'$. Now we have
\begin{multline*}
((\tilde f,g'gh),(\tilde \rho, h^{-1}),\gamma) = ((\tilde f,g'gh),(\tilde \rho, h^{-1}),(\id_{t(\gamma)} \cdot i(\tilde\gamma)\cdot \tilde \gamma) \circ \gamma) \sim((\tilde f,g'gh) \circ (\id_{t(\gamma)} \cdot i(\tilde\gamma)),(\tilde \rho, h^{-1}) \circ \tilde \gamma,\gamma) 
\end{multline*}
so that it suffices to calculate
\begin{eqnarray*}
(\tilde f,g'gh) \circ (\id_{t(\gamma)} \cdot i(\tilde\gamma))
&=& (\rho(\tilde f,\id_{h^{-1}}\cdot i(\tilde \gamma)),g'gh')
\\&=& (\rho(\tilde\rho^{-1} \circ f \circ R(\rho',\id_g),i(\tilde\gamma)), g'gh')
\\&=& (\rho(R(\tilde\rho^{-1},i(\tilde\gamma)\cdot\id_{h'^{-1}}) \circ f \circ R(\rho',\id_g),\id_{h'}), g'gh')\text{,}
\end{eqnarray*} 
where the last step uses the compatibility condition for $\rho$ from Definition \ref{def:actionanafunctor} (ii). In any 2-group, we have $i(\tilde\gamma)\cdot\id_{s(\tilde\gamma)} = (\id_{t(\tilde\gamma)^{-1}} \cdot \tilde\gamma)^{-1}$, in which case the last line is exactly the formula \erf{eq:ftilde} for the pair $(\tilde\rho',h')$. 

Next we check that $\beta$ is well-defined under the equivalence relation on the tensor product $P' \otimes \zeta_1^{*}Q$. We have
\begin{equation*}
x:=((\rho',g'),(f,g),(\gamma_1\cdot\gamma_2) \circ \gamma) \sim ((\rho',g') \circ \gamma_1,(f,g) \circ \gamma_2,\gamma) =: x'
\end{equation*}
for $\gamma_1,\gamma_2 \in \Gamma_1$ such that $t(\gamma_1)=g'$, $t(\gamma_2) = g$ and $s(\gamma_1)s(\gamma_2) = t(\gamma)$. Taking advantage of the fact that we can make the same choice of $(\tilde\rho,h)$ for both representatives $x$ and $x'$, it is straightforward to show that $\beta(x) = \beta(x')$. 
Finally, it is obvious from the definition of $\beta$ that it is anchor-preserving and $\Gamma$-equivariant.
\end{proof}

In order to show that the triple $(Z,Q,\beta)$ defines a 1-morphism between bundle gerbes, it remains to verify that the bundle isomorphism $\beta$ is compatible the with the bundle gerbe products $\mu_1$ and $\mu_2$ in the sense of diagram \erf{compgerbemorph}. This is straightforward to do and left for the reader.

\subsubsection*{Definition of $\mathscr{E}_M$ on 2-morphisms, compositors and unitors}

Let $F_1,F_2: \mathcal{P} \to \mathcal{P}'$ be 1-morphisms between principal $\Gamma$-bundles over $M$, and let $\eta: F \Rightarrow G$ be a 2-morphism. Between the $\Gamma$-bundles $Q_1$ and $Q_2$, which live over the same common refinement $Z = \mathcal{P}_0 \times_M \mathcal{P}_0'$, we find immediately the smooth map
\begin{equation*}
\eta: Q_1 \to Q_2: (f_1,g) \mapsto (\eta(f_1),g)
\end{equation*}
which is easily verified to be a bundle morphism. 
Its compatibility with the bundle morphisms $\beta_1$ and $\beta_2$ in the sense of the simplified diagram \erf{eq:comp2morphsimple} is also easy to check.
Thus, we have defined a 2-morphism $\mathscr{E}_M(\eta)\maps \mathscr{E}_M(F_1) \Rightarrow \mathscr{E}_M(F_2)$.

The compositor for  1-morphisms $F_1: \mathcal{P} \to \mathcal{P}'$ and $F_2: \mathcal{P}' \to \mathcal{P}''$ is a bundle gerbe 2-morphism 
\begin{equation*}
c_{F_1,F_2} : \mathscr{E}_M(F_2 \circ F_1) \to \mathscr{E}_M(F_2) \circ \mathscr{E}_M(F_1)\text{.}
\end{equation*}
Employing the above constructions, the 1-morphism $\mathscr{E}_M(F_2 \circ F_1)$ is defined on the common refinement $Z_{12} \df \mathcal{P}_0 \times_M \mathcal{P}_0''$ and has the $\Gamma$-bundle $Q_{12} = (F_1 \times_{\mathcal{P}_0'} F_2)/\mathcal{P}_1' \times \Gamma_0$, whereas the 1-morphism $\mathscr{E}_M(F_2) \circ \mathscr{E}_M(F_1)$ is defined on the common refinement $Z :=\mathcal{P}_0 \times_M \mathcal{P}_0' \times_M \mathcal{P}_0''$ and has the $\Gamma$-bundle $Q_2 \otimes Q_1$ with $Q_k = F_k \times \Gamma_0$. The compositor $c_{F_1,F_2}$ is defined over the refinement $Z$ with the obvious refinement maps  $\mathrm{pr}_{13}:Z \to Z_{12}$ and $\id:Z \to Z$ making diagram \erf{diag:refinement} commutative. It is thus a  bundle morphism $c_{F_1,F_2}: \mathrm{pr}_{13}^{*}Q_{12} \to Q_2 \otimes Q_1$. For elements in a tensor product of $\Gamma$-bundles we use the notation of Example \ref{ex:tensorproduct} \erf{ex:tensorproduct:general}. Then, we define $c_{F_1,F_2}$ by
\begin{equation}
\label{eq:assignmentcompositor}
((p,p',p''),(f_1,f_2,g)) \mapsto ((\rho_2(\tilde\rho^{-1} \circ f_2,\id_h),gh),(f_1 \circ \tilde \rho,h^{-1}),\id_g)\text{,}
\end{equation}
where $h\in \Gamma_0$  and $\tilde\rho: R(p',h^{-1}) \to \alpha_r(f_1)=\alpha_l(f_2)$ are chosen in the same way as in the proof of Lemma \ref{lem:eonemorphisms}. 
The assignment \erf{eq:assignmentcompositor} does not depend on the choices of $h$ and $\tilde\rho$, nor on the choice of the representative $(f_1,f_2)$ in $(F_1 \times_{\mathcal{P}_0'} F_2)/\mathcal{P}_1' $. 
It is obvious that \erf{eq:assignmentcompositor} is anchor-preserving, and its $\Gamma$-equivariance can be seen by choosing $(\tilde\rho,h)$ in order to compute $c_{F_1,F_2}((p,p',p''),(f_1,f_2,g))$  and $(\tilde\rho',h)$ with $\tilde\rho' := R(\tilde\rho,\id_{g^{-1}} \cdot \gamma^{-1})$ in order to compute $c_{F_1,F_2}(((p,p',p''),(f_1,f_2,g)) \circ \gamma)$.
In order to complete the construction of the bundle gerbe 2-morphism $c_{F_1,F_2}$ we have to prove that the bundle morphism $c_{F_1,F_2}$ is compatible with the isomorphisms $\beta_{12}$ of $\mathscr{E}_M(F_2 \circ F_1)$ and $(\id \otimes \beta_{1}) \circ (\beta_2 \otimes \id)$ of $\mathscr{E}_M(F_2) \circ \mathscr{E}_M(F_1)$ in the sense of diagram \erf{eq:comp2morphsimple}. We start with an element $((\rho'',g''),(f_{12},g)) \in \mathscr{E}_M(\mathcal{P}'') \otimes \zeta_1^{*}Q_{12}$, where $f_{12} = (f_1,f_2)$. We have
\begin{equation*}
\beta_{12}((\rho'',g''),(f_{12},g)) = (\widetilde{f_{12}},g''gh,\tilde \rho,h^{-1})
\end{equation*}
upon choosing $(\tilde \rho,h)$ as required in the definition of $\mathscr{E}_M(F_2 \circ F_1)$. Writing $\widetilde{f_{12}}=(\tilde f_1,\tilde f_2)$ further we have
\begin{equation}
\label{eq:prop719eq1}
(\zeta_2^{*}c_{F_1,F_2} \otimes \id)(\widetilde{f_{12}},g''gh,\tilde \rho,h^{-1}) = (\rho_2(\tilde\rho_2^{-1} \circ \tilde f_2,\id_{h_2}),g''ghh_2,\tilde f_1 \circ \tilde \rho_2,h_2^{-1},\tilde\rho,h^{-1})
\end{equation}
upon choosing appropriate $(\tilde\rho_2,h_2)$ as required in the definition of $c_{F_1,F_2}$. This is the result of the clockwise composition of diagram \erf{eq:comp2morphsimple}. Counter-clockwise, we first get
\begin{equation*}
(\id \otimes \zeta_1^{*}c_{F_1,F_2})((\rho'',g''),(f_{12},g)) = (\rho'',g'',f'',gh_1,f',h_1^{-1})
\end{equation*}
for choices $(\tilde\rho_1,h_1)$, where $f'' := \rho_2(\tilde\rho_1^{-1} \circ f_2 , \id_{h_1})$ and $f' := f_1 \circ \tilde \rho_1$. Next we apply the isomorphism $\beta_2$ of $\mathscr{E}_M(F_2)$ and get 
\begin{equation*}
(\beta_2 \otimes \id)(\rho'',g'',f'',gh_1,f'_1,h_1^{-1}) = (\widetilde {f''},g''ghh_2,\hat\rho,\hat h^{-1},f_1',h_1^{-1})
\end{equation*}
where we have used the choices $(\hat\rho,\hat h)$ defined by $\hat\rho := R(\tilde\rho_1^{-1},h_1) \circ R(\tilde\rho_2,h^{-1}h_1)$ and $\hat h := h_1^{-1}hh_2$. The last step is to apply the isomorphism $\beta_1$ of  $\mathscr{E}_M(F_2)$  which gives
\begin{equation}
\label{eq:prop719eq2}
(\id \otimes \beta_1)(\widetilde {f''},g''ghh_2,\hat\rho,\hat h^{-1},f_1',h_1^{-1}) = (\widetilde{f''},g''ghh_2,\widetilde{f'},h_2^{-1},\tilde\rho,h^{-1})\text{,}
\end{equation}
where we have used the choices $(\tilde\rho,h)$ from above. Comparing \erf{eq:prop719eq1} and \erf{eq:prop719eq2}, we have obvious coincidence in all but the first and the third components. For these remaining factors, coincidence follows from the definitions of the various variables.

Finally, we have to construct unitors. The unitor for a principal $\Gamma$-2-bundle $\mathcal{P}$ over $M$ is a bundle gerbe 2-morphism
\begin{equation*}
u_{\mathcal{P}}: \mathscr{E}_M(\id_{\mathcal{P}}) \Rightarrow \id_{\mathscr{E}_M(\mathcal{P})}\text{.}
\end{equation*}
Abstractly, one can associate to $\id_{\mathscr{E}_M(\mathcal{P})}$ the 1-morphism $\id_{\mathscr{E}_M(\mathcal{P})}^{FP}$ constructed in the proof of Lemma  \ref{lem:canonicalrefinement}, and then notice that $\id_{\mathscr{E}_M(\mathcal{P})}^{FP}$ and $\mathscr{E}_M(\id_{\mathcal{P}})$ are canonically 2-isomorphic. In more concrete terms, the unitor $u_{\mathcal{P}}$ has the refinement $W := \mathcal{P}_0^{[3]}$ with the surjective submersions $r := \mathrm{pr}_{12}$ and $r' := \mathrm{pr}_{3}$ to the refinements $Z=\mathcal{P}_0^{[2]}$ and $Z'= \mathcal{P}_0$ of the 1-morphisms $\mathscr{E}_M(\id_{\mathcal{P}})$ and $\id_{\mathscr{E}_M(\mathcal{P})}$, respectively. The relevant maps $x_W$ and $y_W$ are $\mathrm{pr}_{13}$ and $\mathrm{pr}_{23}$, respectively. The principal $\Gamma$-bundle of the 1-morphism $\id_{\mathscr{E}_M(\mathcal{P})}$ is the trivial bundle $Q' = \trivlin_1$. We claim that the principal $\Gamma$-bundle $Q$ of $\mathscr{E}_M(\id_{\mathcal{P}})$ is the bundle $P$ of the bundle gerbe $\mathscr{E}_M(\mathcal{P})$. Indeed, the formulae \erf{eq:emorph} reduce for the identity anafunctor $\id_{\mathcal{P}}$ to those of \erf{defgerbefrombundle}. Now, the bundle isomorphism of the unitor $u_{\mathcal{P}}$ is
\begin{equation*}
\alxydim{}{y_W^{*}P \otimes r^{*}Q = \mathrm{pr}_{23}^{*}P \otimes \mathrm{pr}_{12}^{*}P \ar[r]^-{\mu} & \mathrm{pr}_{13}^{*}P \cong {r'}^{*}Q' \otimes x_W^{*}P\text{,} }
\end{equation*}
where $\mu$ is the bundle gerbe product of $\mathscr{E}_M(\mathcal{P})$. The commutativity of diagram \erf{eq:comp2morphcomplicated} follows from the associativity of $\mu$.

\begin{proposition}
\label{prop:efunctor}
The assignments $\mathscr{E}_M$ for objects, 1-morphisms  and 2-morphisms, together with the compositors and unitors defined above, define a 2-functor
\begin{equation*}
\mathscr{E}_M: \zwoabun \Gamma M \to \grb\Gamma M\text{.}
\end{equation*}
\end{proposition}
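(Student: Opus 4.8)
The plan is to verify the four coherence axioms of a 2-functor --- functoriality on 2-morphisms (both vertical and horizontal composition), compatibility of the compositors with associativity (the pentagon-type axiom), and compatibility of the unitors with the compositors (the triangle-type axioms) --- using the explicit formulas constructed above. Since $\mathscr{E}_M$ has already been defined on objects, $1$-morphisms, $2$-morphisms, together with compositors $c_{F_1,F_2}$ and unitors $u_{\mathcal P}$, and since each of these data has been shown to be well-defined (a $\Gamma$-bundle gerbe, a $1$-morphism, a $2$-morphism, a bundle gerbe $2$-morphism, respectively), what remains is purely the bookkeeping of the coherence conditions. I would organize the verification exactly along these lines.

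First I would check that $\mathscr{E}_M$ preserves identity $2$-morphisms and vertical composition: given $\eta: F_1 \Rightarrow F_2$ and $\eta': F_2 \Rightarrow F_3$ between $1$-morphisms $\mathcal P \to \mathcal P'$, all three bundles $Q_1, Q_2, Q_3$ live over the \emph{same} common refinement $Z = \mathcal{P}_0 \times_M \mathcal{P}_0'$, and the associated bundle gerbe $2$-morphisms have refinement $W = Z$ with identity refinement maps; hence the vertical composition formula for bundle gerbe $2$-morphisms degenerates to ordinary composition of bundle maps $Q_1 \to Q_2 \to Q_3$. Since $\mathscr{E}_M(\eta)(f,g) = (\eta(f),g)$, functoriality here is immediate from functoriality of $F \mapsto$ (its total space) on the underlying $\Gamma$-equivariant transformations. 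Next I would check horizontal composition: for $\eta_{12}: F_1 \Rightarrow F_1'$ and $\eta_{23}: F_2 \Rightarrow F_2'$ one must compare $\mathscr{E}_M(\eta_{23} \circ \eta_{12})$ composed appropriately with the compositors $c_{F_1,F_2}$ and $c_{F_1',F_2'}$ against the bundle gerbe horizontal composition of $\mathscr{E}_M(\eta_{23})$ and $\mathscr{E}_M(\eta_{12})$; this reduces to the definition of horizontal composition of $\Gamma$-equivariant transformations and the explicit formula \erf{eq:assignmentcompositor}, and the check is done fibrewise by choosing a single auxiliary pair $(\tilde\rho,h)$ compatibly on both sides, exactly in the style of the calculations already carried out for Lemma \ref{lem:eonemorphisms} and for the $\Gamma$-equivariance of the compositor.

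Then I would treat the associativity (pentagon) coherence for the compositors: for composable $F_1: \mathcal{P} \to \mathcal{P}'$, $F_2: \mathcal{P}' \to \mathcal{P}''$, $F_3: \mathcal{P}'' \to \mathcal{P}'''$ one must show that the two ways of comparing $\mathscr{E}_M(F_3 \circ F_2 \circ F_1)$ with $\mathscr{E}_M(F_3) \circ \mathscr{E}_M(F_2) \circ \mathscr{E}_M(F_1)$ agree. Both sides are defined over the refinement $\mathcal{P}_0 \times_M \mathcal{P}_0' \times_M \mathcal{P}_0'' \times_M \mathcal{P}_0'''$ with commuting refinement diagrams, so diagram \erf{diag:refinement} commutes and I may use the simplified form \erf{eq:comp2morphsimple} of bundle gerbe $2$-morphisms. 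Fibrewise, the compositor formula \erf{eq:assignmentcompositor} involves choosing pairs $(\tilde\rho,h)$ that conjugate fibres of $\mathcal{P}'$-morphisms; the pentagon boils down to the associativity of composition of morphisms in the Lie groupoids $\mathcal{P}', \mathcal{P}''$, the functoriality of the right action $R$, and property (ii) of Definition \ref{def:actionanafunctor} --- precisely the ingredients already used in the proof that $c_{F_1,F_2}$ is compatible with the $\beta$-morphisms. Finally, the unitor coherence (left and right triangle): using that $\mathscr{E}_M(\id_{\mathcal{P}})$ has underlying bundle $P$ and that its $\beta$-morphism is induced by $\mu$ (as observed in the construction of $u_{\mathcal P}$), both triangles reduce to the neutrality axiom (ii) of Lemma \ref{lem:buntriv} for the canonical section $t: Y \to P$ together with the associativity of $\mu$, after transporting along the trivialization $\operatorname{diag}^* P \cong \trivlin_1$ of Corollary \ref{co:groupoidstructure}.

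The main obstacle is entirely notational rather than conceptual: every coherence check is a fibrewise identity obtained by carefully choosing the various auxiliary data $(\tilde\rho, h, \tilde\rho_i, h_i, \ldots)$ compatibly on the two sides and then repeatedly invoking the $2$-group identity $\gamma_1 \circ \gamma_2 = \gamma_1 \cdot \id_{s(\gamma_1)^{-1}} \cdot \gamma_2$, the functoriality and strict associativity of the action $R$, property (ii) of Definition \ref{def:actionanafunctor}, and the axioms of Lemma \ref{lem:buntriv}. Since all the genuinely new structural facts have been established in the preceding lemmas, and since the hardest such fibrewise manipulation --- the compatibility of $c_{F_1,F_2}$ with the $\beta$-morphisms --- has already been carried out in detail, I would present the remaining coherence verifications as straightforward (if lengthy) computations of the same type, omitting the routine algebra; this is exactly what the paper does by asserting the proposition and leaving the coherence diagrams to the reader.
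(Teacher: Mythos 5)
Your proposal follows essentially the same route as the paper: it enumerates the same four coherence axioms (vertical composition, horizontal composition via the compositors, compatibility of compositors with composition of 1-morphisms, and the unitor triangles), observes that the first is immediate from the definition, and verifies the rest fibrewise by making the auxiliary choices $(\tilde\rho,h)$ compatibly on both sides and invoking Definition \ref{def:actionanafunctor} (ii), the strictness of the action $R$, and Lemma \ref{lem:buntriv}. This matches the paper's proof in both structure and the key technical ingredients.
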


\begin{proof}
A list of axioms for a 2-functor with the same conventions as we use here can be found in \cite[Appendix A]{schreiber2}. The first axiom  requires that the 2-functor $\mathscr{E}_M$ respects the vertical composition of 2-morphisms -- this follows immediately from the definition. 

The second axiom requires that the compositors respect the horizontal composition of 2-morphisms. To see this, let $F_1,F_1': \mathcal{P} \to \mathcal{P}'$  and $F_2,F_2': \mathcal{P}' \to \mathcal{P}''$ be 1-morphisms between principal $\Gamma$-2-bundles, and let $\eta_1: F_1 \Rightarrow F_1'$ and $\eta_2: F_2 \Rightarrow F_2'$ be 2-morphisms. Then, the diagram
\begin{equation*}
\alxydim{@C=3cm@R=1.3cm}{\mathscr{E}_M(F_2 \circ F_1) \ar@{=>}[d]_{c_{F_1,F_2}} \ar@{=>}[r]^{\mathscr{E}_M(\eta_1 \circ \eta_2)} & \mathscr{E}_M(F_2' \circ F_1') \ar@{=>}[d]^{c_{F_1',F_2'}} \\ \mathscr{E}_M(F_2) \circ \mathscr{E}_M(F_1) \ar@{=>}[r]_{\mathscr{E}_M(\eta_1) \circ \mathscr{E}_M(\eta_2)} & \mathscr{E}_M(F_2') \circ \mathscr{E}_M(F_1')}
\end{equation*}
has to commute. Indeed, in order to compute $c_{F_1,F_2}$ and $c_{F_1',F_2'}$ one can make the same choice of $(\tilde\rho,h)$, because the transformations $\eta$ and $\eta_2$ preserve the anchors. Then, commutativity follows  from the fact that  $\eta_1$ and $\eta_2$ commute with the groupoid actions and the $\Gamma_1$-action according to Definition \ref{def:actionanafunctor}. 

The third axiom describes the compatibility of the compositors with the composition of 1-morphisms in the sense that the diagram
\begin{equation*}
\alxydim{@C=1.8cm@R=1.3cm}{\mathscr{E}_M(F_3 \circ F_2 \circ F_1) \ar@{=>}[r]^-{c_{F_2 \circ F_1,F_3}}  \ar@{=>}[d]_{c_{F_3 \circ F_2, F_1}} & \mathscr{E}_M(F_3) \circ \mathscr{E}_M(F_2 \circ F_1) \ar@{=>}[d]^{\id \circ c_{F_2,F_1}} \\ \mathscr{E}_M(F_3 \circ F_2) \circ \mathscr{E}_M(F_1) \ar@{=>}[r]_-{c_{F_3,F_2} \circ \id} & \mathscr{E}_M(F_3) \circ \mathscr{E}_M(F_2) \circ \mathscr{E}_M(F_1)\text{.}}
\end{equation*}
is commutative. In order to verify this, one starts with an element $(f_1,f_2,f_3,g)$ in $\mathscr{E}_M(F_3 \circ F_2 \circ F_1)$. In order to go clockwise, one chooses pairs $(\tilde \rho_{12,3},h_{12,3})$ and $(\tilde\rho_{1,2},h_{1,2})$ and gets from the definitions
\begin{equation*}
\mathrm{CW} = ((\rho_3(\tilde\rho_{12,3}^{-1} \circ f_3,\id_{h_{12,3}}),gh_{12,3}),(\rho_2(\tilde\rho_{1,2}^{-1} \circ f_2 \circ \tilde\rho_{12,3},\id_{h_{1,2}}),h_{12,3}^{-1}h_{1,2}),(f_1 \circ \tilde\rho_{1,2},h^{-1}_{1,2}))\text{.}
\end{equation*}
Counter-clockwise, one can choose firstly again the pair $(\tilde \rho_{1,2},h_{1,2})$ and then the pair $(\tilde\rho_{2,3},h_{2,3})$ with $\tilde\rho_{2,3} \eq R(\tilde\rho_{12,3},\id_{h_{1,2}})$ and $h_{2,3} = h^{-1}_{1,2}h_{12,3}$. Then, one gets
\begin{multline*}
\mathrm{CCW} = ((\rho_3(\tilde\rho_{2,3}^{-1} \circ \rho_3(f_3,\id_{h_{1,2}}),\id_{h_{2,3}}),gh_{1,2}h_{2,3}),(\rho_2(\tilde\rho_{1,2}^{-1} \circ f_2,\id_{h_{1,2}}) \circ \tilde\rho_{2,3},h_{2,3}^{-1}),(f_1 \circ \tilde\rho_{1,2},h^{-1}_{1,2}))\text{,}
\end{multline*}
where one has to use formula \erf{eq:companafunctorsaction} for the $\Gamma_1$-action on the composition of equivariant anafunctors.
Using the definitions of $h_{2,3}$ and $\tilde\rho_{2,3}$ as well as the axiom of Definition \ref{def:actionanafunctor} (ii) one can show that $\mathrm{CW}=\mathrm{CCW}$.

The fourth and last axiom requires that compositors and unitors are  compatible with each other in the sense that for each 1-morphism $F: \mathcal{P} \to \mathcal{P}'$ the 2-morphisms
\begin{equation*}
\alxydim{@C=1.5cm}{\mathscr{E}_M(F) \cong \mathscr{E}_M(F \circ \id_{\mathcal{P}}) \ar@{=>}[r]^-{c_{\id_{\mathcal{P}},F}} & \mathscr{E}_M(F) \circ \mathscr{E}_M(\id_{\mathcal{P}}) \ar@{=>}[r]^-{\id \circ u_{\mathcal{P}}} & \mathscr{E}_M(F) \circ \id_{\mathscr{E}_M(\mathcal{P})} \cong \mathscr{E}_M(F)}
\end{equation*}
and   
\begin{equation*}
\alxydim{@C=1.3cm}{\mathscr{E}_M(F) \cong \mathscr{E}_M(\id_{\mathcal{P'}} \circ F) \ar@{=>}[r]^-{c_{F,\id_{\mathcal{P'}}}} & \mathscr{E}_M(\id_{\mathcal{P'}}) \circ \mathscr{E}_M(F) \ar@{=>}[r]^-{ u_{\mathcal{P}'} \circ \id} & \id_{\mathscr{E}_M(\mathcal{P}')} \circ \mathscr{E}_M(F) \cong \mathscr{E}_M(F)}
\end{equation*}
are the identity 2-morphisms. We prove this for the first one and leave the second as an exercise. Using the definitions, we see that the 2-morphism has the refinement $W := \mathcal{P}_0 \times_M \mathcal{P}_0 \times_M \mathcal{P}_0'$ with $r = \mathrm{pr}_{13}$ and $r' = \mathrm{pr}_{23}$. The maps $x_W: W \to \mathcal{P}_0 \times_M \mathcal{P}_0$ and $y_W: W \to \mathcal{P}_0' \times_M \mathcal{P}_0'$ are $\mathrm{pr}_{12}$ and $\Delta \circ \mathrm{pr}_3$, respectively, where $\Delta$ is the diagonal map. Its bundle morphism is
a morphism
\begin{equation*}
\varphi : \mathrm{pr}_{13}^{*}Q \to \mathrm{pr}_{23}^{*}Q \otimes \mathrm{pr}_{12}^{*}P\text{,}
\end{equation*}
where $Q = F \times \Gamma_0$ is the principal $\Gamma$-bundle of $\mathscr{E}_M(F)$, and  $P=\mathcal{P}_1 \times \Gamma_0$ is the principal $\Gamma$-bundle of $\mathscr{E}_M(\mathcal{P})$. Over a point $(p_1,p_2,p')$ and $(f,g)\in \mathrm{pr}_{13}^{*}Q$, i.e. $\alpha_l(f)=p_1$ and $R(\alpha_r(f),g^{-1})=p'$, the bundle morphism $\varphi$ is given by
\begin{equation*}
 (f,g) \mapsto (\rho(\tilde\rho^{-1} \circ f,\id_h),gh,\tilde\rho,h^{-1})\text{,}
\end{equation*}
where $h \in \Gamma_0$, and $\tilde\rho \in \mathcal{P}_1$ with $s(\tilde\rho) = R(p_2,h^{-1})$ and $t(\tilde\rho) = \alpha_l(f)$. We have to compare $(W,\varphi)$ with the identity 2-morphism of $\mathscr{E}_M(F)$, which has the refinement $Z$ with $r=r' = \id$ and the identity bundle morphism.  According to the equivalence relation on bundle gerbe 2-morphisms we have to evaluate $\varphi$ over a point $w \in W$ with $r(w) = r'(w)$, i.e. $w$ is of the form $w=(p,p,p')$. Here we can choose $h=1$ and $\tilde\rho = \id_{p}$, in which case we have $\varphi(f,g) = ((f,g),(\id_p,1))$. This is indeed the identity on $Q$. 
\end{proof}

\subsubsection*{Properties of the 2-functor $\mathscr{E}_M$}

For the proof of Theorem \ref{th:equivalence} we provide the following two statements.

\begin{lemma}
\label{lem:efullyfaithful}
The 2-functor $\mathscr{E}_M$ is fully faithful on Hom-categories. 
\end{lemma}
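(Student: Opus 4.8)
The plan is to show that the 2-functor $\mathscr{E}_M$ induces, for each pair of principal $\Gamma$-2-bundles $\mathcal{P},\mathcal{P}'$ over $M$, an equivalence of categories
\begin{equation*}
\mathscr{E}_M : \hom{\mathcal{P}}{\mathcal{P}'} \to \hom{\mathscr{E}_M(\mathcal{P})}{\mathscr{E}_M(\mathcal{P}')}\text{.}
\end{equation*}
By Lemma \ref{lem:canonicalrefinement} it suffices to hit the subcategory $\hom{\mathscr{E}_M(\mathcal{P})}{\mathscr{E}_M(\mathcal{P}')}^{FP}$ on the canonical refinement $Z := \mathcal{P}_0 \times_M \mathcal{P}_0'$, since the inclusion of the latter is already an equivalence. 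So I would first observe that the 1-morphism $\mathscr{E}_M(F)$ associated to a $\Gamma$-equivariant anafunctor $F:\mathcal{P} \to \mathcal{P}'$ is \emph{already} defined on $Z$, and that $\mathscr{E}_M(\eta)$ for a $\Gamma$-equivariant transformation $\eta$ is a 2-morphism with refinement $Z$ and identity refinement maps; hence $\mathscr{E}_M$ maps into the full subcategory $\hom{\mathscr{E}_M(\mathcal{P})}{\mathscr{E}_M(\mathcal{P}')}^{FP}$, and it is enough to prove $\mathscr{E}_M$ is fully faithful onto this subcategory.

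\textbf{Faithfulness.} Given two $\Gamma$-equivariant transformations $\eta,\eta': F \Rightarrow G$ with $\mathscr{E}_M(\eta) = \mathscr{E}_M(\eta')$, the bundle morphism $Q_F \to Q_G$, $(f,g) \mapsto (\eta(f),g)$, equals $(f,g) \mapsto (\eta'(f),g)$; evaluating at $g = \id_{\alpha_r(f)}$ (more precisely at any $g$, since $\eta,\eta'$ act only on the $F$-coordinate) gives $\eta(f) = \eta'(f)$ for all $f \in F$, so $\eta = \eta'$.

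\textbf{Fullness.} Given a 2-morphism $\psi : \mathscr{E}_M(F) \Rightarrow \mathscr{E}_M(G)$ in $\hom{\mathscr{E}_M(\mathcal{P})}{\mathscr{E}_M(\mathcal{P}')}^{FP}$, i.e. a bundle morphism $\psi : Q_F \to Q_G$ over $Z$ compatible with $\beta_F,\beta_G$ in the sense of \erf{eq:comp2morphsimple}, I would define $\eta : F \to F'$ by $\eta(f) := \mathrm{pr}_1(\psi(f,\id_{\alpha_r(f)}))$, using that $Q_F = F \times \Gamma_0$ and $Q_G = G \times \Gamma_0$ with anchor the projection to $\Gamma_0$; anchor-preservation of $\psi$ forces $\psi(f,\id) \in G \times \{\id_{\alpha_r(f)}\}$ so this lands in $G$, and $\Gamma$-equivariance of $\psi$ recovers $\psi(f,g) = (\eta(f),g)$ for all $g$. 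One then checks that $\eta$ is a transformation of anafunctors ($\mathcal{P}$- and $\mathcal{P}'$-equivariance and compatibility with the anchors $\alpha_l,\alpha_r$) — this comes from the fact that $\psi$ is a bundle morphism over $Z = \mathcal{P}_0 \times_M \mathcal{P}_0'$ respecting the $\Gamma$-action on $Q_F,Q_G$, via the formulas \erf{eq:emorph}. Finally one verifies that $\eta$ is \emph{$\Gamma$-equivariant} (Definition \ref{def:equivtransformation}), which is exactly where the compatibility of $\psi$ with $\beta_F$ and $\beta_G$ via diagram \erf{eq:comp2morphsimple} gets used: unwinding \erf{eq:comp2morphsimple} through the explicit formula for $\beta$ translates into the identity of transformations required in Definition \ref{def:equivtransformation}. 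By construction $\mathscr{E}_M(\eta) = \psi$.

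The main obstacle is the last bookkeeping step in fullness: extracting $\Gamma$-equivariance of $\eta$ from the commuting diagram \erf{eq:comp2morphsimple} requires carefully substituting the fibrewise formula for $\beta$ (involving the auxiliary choices $\tilde\rho,h$) and matching it, after cancelling the $\Gamma_1$-action bookkeeping, with the abstract equivariance condition; this is a routine but somewhat lengthy diagram chase in the 2-group $\Gamma$. Everything else is straightforward once one notes that the $\Gamma$-bundles $Q_F = F \times \Gamma_0$ have their $\Gamma_0$-coordinate as anchor, so that a $\Gamma$-equivariant, anchor-preserving bundle morphism between them is literally the same data as a $\Gamma_1$-equivariant map $F \to G$ over the two anchors.
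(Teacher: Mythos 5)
Your proposal is correct and follows essentially the same route as the paper: reduce via Lemma \ref{lem:canonicalrefinement} to 2-morphisms defined on the canonical refinement $\mathcal{P}_0 \times_M \mathcal{P}_0'$, read off a map $F \to G$ from the bundle morphism $Q_F \to Q_G$, and check that it is a $\Gamma$-equivariant transformation, the procedure being inverse to $\mathscr{E}_M$ on 2-morphisms. One bookkeeping remark: the $\Gamma$-equivariance of $\eta$ (in the sense of Definition \ref{def:equivtransformation}, via Lemma \ref{lem:equivarianceidentification}) is already encoded in $\psi$ respecting the $\Gamma$-action \erf{eq:emorph} on $Q_F$ and $Q_G$, whereas the compatibility with $\beta_F,\beta_G$ in diagram \erf{eq:comp2morphsimple} is what yields the compatibility of $\eta$ with the left $\mathcal{P}$- and right $\mathcal{P}'$-actions, i.e.\ that $\eta$ is a transformation of anafunctors at all --- you have these two attributions swapped.
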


\begin{proof}
Let $\mathcal{P}, \mathcal{P}'$ be principal $\Gamma$-2-bundles over $M$, and let $F_1,F_2: \mathcal{P} \to \mathcal{P}'$ be 1-morphisms. By Lemma \ref{lem:canonicalrefinement} every 2-morphism $\eta: \mathscr{E}_M(F_1) \Rightarrow \mathscr{E}_M(F_2)$ can be represented by one whose refinement is $\mathcal{P}_0 \times_M \mathcal{P}_0'$, so that its bundle isomorphism is $\eta: Q_1 \to Q_2$, where  $Q_k := F_k \times \Gamma$ for $k=1,2$. We can read off a map $\eta: F_1 \to F_2$, and it is easy to see that this is a 2-morphism $\eta: F_1 \Rightarrow F_2$. This procedure is clearly inverse to the 2-functor $\mathscr{E}_M$ on 2-morphisms. \end{proof}

\begin{proposition}
\label{prop:estack}
The 2-functors $\mathscr{E}_M$ form a 1-morphism between pre-2-stacks. 
\end{proposition}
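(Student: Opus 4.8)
The plan is to unwind what it means for the family of 2-functors $\mathscr{E}_M$ to constitute a 1-morphism of pre-2-stacks, and then verify the required compatibilities one at a time. Recall that $\zwoabun\Gamma-$ and $\grb\Gamma-$ are pre-2-stacks over smooth manifolds; a 1-morphism between them consists of a 2-functor $\mathscr{E}_M \colon \zwoabun\Gamma M \to \grb\Gamma M$ for each $M$, together with, for each smooth map $f \colon M \to N$, a pseudonatural equivalence filling the square
\begin{equation*}
\alxydim{@C=2cm@R=1.3cm}{\zwoabun\Gamma N \ar[r]^-{\mathscr{E}_N} \ar[d]_{f^{*}} & \grb\Gamma N \ar[d]^{f^{*}} \\ \zwoabun\Gamma M \ar[r]_-{\mathscr{E}_M} & \grb\Gamma M}
\end{equation*}
subject to the usual coherence with respect to composition of smooth maps. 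So the first step is to produce, for each $f \colon M \to N$, a pseudonatural isomorphism $\theta_f \colon \mathscr{E}_M \circ f^{*} \Rightarrow f^{*} \circ \mathscr{E}_N$.

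Second, I would exhibit $\theta_f$ concretely by inspecting the formulas. Given a principal $\Gamma$-2-bundle $\mathcal{P}$ over $N$, the pullback $f^{*}\mathcal{P}$ has object manifold $M \times_N \mathcal{P}_0$ and morphism manifold $M \times_N \mathcal{P}_1$, and its surjective submersion is the evident projection. The bundle gerbe $\mathscr{E}_M(f^{*}\mathcal{P})$ then has surjective submersion $(f^{*}\mathcal{P})_0 = M \times_N \mathcal{P}_0 \to M$ and principal $\Gamma$-bundle $(f^{*}\mathcal{P})_1 \times \Gamma_0 = (M \times_N \mathcal{P}_1) \times \Gamma_0$ over the relevant double fibre product. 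On the other hand, $f^{*}(\mathscr{E}_N(\mathcal{P}))$ has surjective submersion $M \times_N \mathcal{P}_0 \to M$ and principal $\Gamma$-bundle given by pulling back $\mathcal{P}_1 \times \Gamma_0$ along $f_2 \colon (M \times_N \mathcal{P}_0)^{[2]} \to \mathcal{P}_0^{[2]}$, which is again $(M \times_N \mathcal{P}_1) \times \Gamma_0$ via the canonical identification $M \times_N \mathcal{P}_1 \cong (M \times_N \mathcal{P}_0) \times_{\mathcal{P}_0} \mathcal{P}_1 \times_{\mathcal{P}_0} (M \times_N \mathcal{P}_0)$ coming from $f$. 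Under this identification both the bundle data \erf{defgerbefrombundle} and the bundle gerbe product \erf{defmu} match on the nose, so on objects $\theta_f$ is essentially the identity refinement with the canonical bundle isomorphism; I would check the analogous (routine) matching for the 1-morphism data \erf{eq:emorph}, the morphism $\beta$, and the 2-morphism, compositor and unitor data. This shows $\theta_f$ is a (strictly natural, in fact) pseudonatural isomorphism.

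Third, I would verify the two coherence conditions: that $\theta_{\id_M}$ is the identity transformation (immediate, since pulling back along $\id_M$ is the identity 2-functor and all the canonical identifications above are identities), and that for $g \colon M \to N$ and $f \colon N \to K$ the transformation $\theta_{f \circ g}$ agrees with the pasting of $\theta_g$ and $\theta_f$ (i.e. $(f\circ g)^{*}\mathscr{E}_K \cong g^{*}f^{*}\mathscr{E}_K \Rightarrow g^{*}\mathscr{E}_N f^{*} \Rightarrow \mathscr{E}_M g^{*} f^{*}$), using the coherence of the pullback 2-functors of the two pre-2-stacks $\zwoabun\Gamma-$ and $\grb\Gamma-$ that was recorded when those were established (Theorem \ref{th:stack} and the remarks around Definition \ref{def:zwoabun}). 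Since the canonical fibre-product identifications satisfy the obvious triple-composite compatibility, this reduces to a diagram chase at the level of the underlying fibre products.

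The main obstacle is essentially bookkeeping rather than conceptual: the pre-2-stack structures on both sides are built from iterated fibre products and quotients, so one must track the canonical isomorphisms $M \times_N (\mathcal{P}_0 \times_N \mathcal{P}_0) \cong (M \times_N \mathcal{P}_0) \times_M (M \times_N \mathcal{P}_0)$ and similar, and confirm that every structure map of $\mathscr{E}$—including the more involved compositor \erf{eq:assignmentcompositor} with its auxiliary choices of $(\tilde\rho,h)$—is strictly natural under these identifications. Because $\mathscr{E}_M$ was defined by explicit formulas in which $f^{*}$ acts simply by precomposing with $f$ on objects and morphisms, no genuine obstruction arises; the pseudonatural transformations $\theta_f$ are in fact strict, and the verification, while lengthy, is mechanical. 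I would therefore state the construction of $\theta_f$, note that the two coherence axioms follow from the corresponding coherences of $\zwoabun\Gamma-$ and $\grb\Gamma-$, and leave the componentwise checks—exactly parallel to those already carried out in the construction of $\mathscr{E}_M$—to the reader.
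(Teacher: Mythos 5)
Your proposal follows essentially the same route as the paper's proof: both observe that $\mathscr{E}_M(f^{*}\mathcal{P})$ and $f^{*}\mathscr{E}_N(\mathcal{P})$ have the same surjective submersion and, up to canonical identifications of fibre products, the same principal $\Gamma$-bundle and bundle gerbe product, so that these canonical identifications assemble into the required pseudonatural equivalences. Your additional remarks on the coherence conditions for composites and identities are correct and merely make explicit what the paper leaves implicit.
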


\begin{proof}
For a smooth map $f:M \to N$, we have to look at the diagram
\begin{equation*}
\alxydim{@R=1.3cm}{\zwoabun \Gamma N \ar[d]_{\mathscr{E}_N} \ar[r]^{f^{*}} & \zwoabun \Gamma M \ar[d]^{\mathscr{E}_M} \\ \grb\Gamma N \ar[r]_{f^{*}} & \grb\Gamma M}
\end{equation*}
of 2-functors. For $\mathcal{P}$ a principal $\Gamma$-2-bundle over $N$, the $\Gamma$-bundle gerbe $\mathscr{E}_M(f^{*}\mathcal{P})$ has the surjective submersion $\mathrm{pr}_1: Y:=M \times_N \mathcal{P}_0 \to M$, the principal $\Gamma$-bundle $P :=M \times_N \mathcal{P}_1 \times \Gamma_0$ over $Y^{[2]}$, and a bundle gerbe product $\mu$ defined as in \erf{defmu} that ignores the $M$-factor.  On the other hand, the $\Gamma$-bundle gerbe $f^{*}\mathscr{E}_N(\mathcal{P})$ has the same surjective submersion, and  -- up to canonical identifications between fibre products --  the same $\Gamma$-bundle and the same bundle gerbe product. These canonical identifications make up a pseudonatural transformation that renders the above diagram commutative.  
\end{proof}

\subsection{From Bundle Gerbes to Principal 2-Bundles}

\label{sec:reconstruction}

We now provide the data we will feed into Lemma \ref{lem:equivalence} in order to produce a 2-functor
$\mathscr{R}_M\maps \grb \Gamma M \to \zwoabun \Gamma M$
that is inverse to the 2-functor $\mathscr{E}_M$ constructed in the previous section. These data are:
\begin{enumerate}
\item 
A principal $\Gamma$-2-bundle $\mathscr{R}_\mathcal{G}$ for each $\Gamma$-bundle gerbe $\mathcal{G}$ over $M$.

\item
A 1-isomorphism $\mathcal{A}_{\mathcal{G}}: \mathcal{G} \to \mathscr{E}_M(\mathscr{R}_{\mathcal{G}})$ for each  $\Gamma$-bundle gerbe $\mathcal{G}$ over $M$.

\item
A 1-isomorphism $\mathscr{R}_{\mathcal{A}}: \mathcal{P} \to \mathcal{P}'$ and a 2-isomorphism $\eta_{\mathcal{A}}: \mathcal{A} \Rightarrow \mathscr{E}_M(\mathscr{R}_{\mathcal{A}})$ for all principal $\Gamma$-2-bundles $\mathcal{P},\mathcal{P}'$ over $M$ and all bundle gerbe 1-morphisms $\mathcal{A}: \mathscr{E}_M(\mathcal{P}) \to \mathscr{E}_M(\mathcal{P}')$.

\end{enumerate}

\subsubsection*{Construction of the principal $\Gamma$-2-bundle $\mathscr{R}_{\mathcal{G}}$}

 We assume that $\mathcal{G}$ consists of a surjective submersion $\pi:Y \to M$, a principal $\Gamma$-bundle $P$ over $Y^{[2]}$ and a bundle gerbe product $\mu$. Let $\alpha: P \to \Gamma_0$ be the anchor of $P$, and let $\chi: P \to Y^{[2]}$ be the bundle projection.

The Lie groupoid $\mathcal{P}$ of the principal 2-bundle $\mathscr{R}_\mathcal{G}$ is defined by
\begin{equation*}
\mathcal{P}_0 := Y \times \Gamma_0
\quand
\mathcal{P}_1 := P \times \Gamma_0\text{;}
\end{equation*}
source map, target maps, and composition are given by, respectively,
\begin{multline}
\label{defbundlefromgerbe}
s(p,g):= (\pi_2(\chi(p)),g)
\quomma
t(p,g):=  (\pi_1(\chi(p)),\alpha(p)^{-1} \cdot g)
\quand 
(p_2,g_2) \circ (p_1,g_1) := (\mu(p_1 , p_2), g_1)\text{.}
\end{multline}
The identity morphism of an object $(y,g) \in \mathcal{P}_0$ is $(t_y,g) \in \mathcal{P}_1$, where $t_{y}$ denotes the unit element in $P$ over the point $(y,y)$, see Lemma \ref{lem:buntriv}. The inverse of a morphism $(p,g) \in \mathcal{P}_1$ is $(i(p),\alpha(p)^{-1}g)$, where $i: P \to P$  is the map from Lemma \ref{lem:buntriv}.
The bundle projection is $\pi(y,g) := \pi(y)$. The action is given on objects and morphisms by
\begin{equation}
\label{defaction}
R_0((y,g),g') := (y,gg')
\quand
R_1((p,g),\gamma) := \Big(p \circ \big(\id_{g} \cdot \gamma \cdot \id_{t(\gamma)^{-1}g^{-1}\alpha(p)}\big),g\cdot s(\gamma)\Big)\text{.}
\end{equation}

\begin{lemma}
\label{lem:action}
This defines a functor 
$R: \mathcal{P} \times \Gamma \to \mathcal{P}$, and $R$ is an action of $\Gamma$ on $\mathcal{P}$.
\end{lemma}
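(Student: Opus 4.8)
The plan is to verify, in sequence, that $R$ is a well-defined smooth functor and then that it satisfies the two axioms of Definition~\ref{def:rightaction}. First I would check well-definedness of $R_1$ on morphisms: given $(p,g)\in\mathcal P_1$ and $\gamma\in\Gamma_1$, the element $\id_g\cdot\gamma\cdot\id_{t(\gamma)^{-1}g^{-1}\alpha(p)}$ is a morphism of $\Gamma$ whose target is $g\cdot t(\gamma)\cdot t(\gamma)^{-1}g^{-1}\alpha(p)=\alpha(p)$, so it can indeed be composed with $p$ via the bundle gerbe product $\mu$ (which, by Corollary~\ref{co:groupoidstructure}, makes $\mathcal C_1=P$ into a Lie groupoid with the $\Gamma$-action as right action); one reads off that $\alpha\big(p\circ(\id_g\cdot\gamma\cdot\id_{t(\gamma)^{-1}g^{-1}\alpha(p)})\big)=s(\gamma)^{-1}g^{-1}\alpha(p)\cdot g = s(\gamma)^{-1}g^{-1}\alpha(p)g$\,---\,here one must be slightly careful, but the point is that the second slot $g\cdot s(\gamma)$ is chosen precisely so that $t$ and $s$ of $R_1((p,g),\gamma)$ land back in $\mathcal P_0=Y\times\Gamma_0$ consistently. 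Concretely I would compute $s(R_1((p,g),\gamma))=(\pi_2(\chi(p)),g\cdot s(\gamma))=R_0(s(p,g),s(\gamma))$ and similarly $t(R_1((p,g),\gamma))=R_0(t(p,g),t(\gamma))$, which shows $R$ is compatible with source and target, i.e. it is a morphism of the underlying manifolds commuting with the simplicial structure.

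Next I would check that $R$ respects identities and composition, so that it is a genuine functor. For identities: $R_1((t_y,g),\id_{g'})$ should equal $(t_y,gg')=\id_{R_0((y,g),g')}$; this reduces to $t_y\circ(\id_g\cdot\id_{g'}\cdot\id_{g'^{-1}g^{-1}\cdot 1})=t_y\circ\id_1=t_y$, using $\alpha(t_y)=1$ from Lemma~\ref{lem:buntriv} and the neutrality of $t_y$ for $\mu$. For composition, given composable $(p_2,g_2)\circ(p_1,g_1)=(\mu(p_1,p_2),g_1)$ (with $g_2=\alpha(p_1)^{-1}g_1$) and composable $\gamma_2\circ\gamma_1$ in $\Gamma$, I would expand both $R_1\big((\mu(p_1,p_2),g_1),\gamma_2\circ\gamma_1\big)$ and $R_1((p_2,g_2),\gamma_2)\circ R_1((p_1,g_1),\gamma_1)$ using the associativity of $\mu$, the identity $\gamma_2\circ\gamma_1=\gamma_2\cdot\id_{s(\gamma_2)^{-1}}\cdot\gamma_1$ valid in any $2$-group, the interchange law relating $\circ$ and $\cdot$, and the fact (from Example~\ref{ex:tensorproduct}) that $\mu$ is a $\Gamma$-bundle morphism hence intertwines the $\Gamma$-actions. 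This is the calculational heart of the proof; I expect some care is needed in tracking the various $\id_{(\cdots)}$ insertions, but no conceptual difficulty arises since all the needed algebraic identities are already available.

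Finally I would verify the two conditions of Definition~\ref{def:rightaction}. The unit conditions $R(p,1)=p$ and $R(\rho,\id_1)=\rho$: for objects, $R_0((y,g),1)=(y,g)$ is immediate; for morphisms, $R_1((p,g),\id_1)=\big(p\circ(\id_g\cdot\id_1\cdot\id_{g^{-1}\alpha(p)}),g\big)=\big(p\circ\id_{\alpha(p)},g\big)=(p,g)$, using that $\id_{\alpha(p)}=t_{\chi(p)}$ acts as the identity for $\mu$. For the associativity square, I would check on objects that $R_0(R_0((y,g),g'),g'')=(y,gg'g'')=R_0((y,g),g'g'')$, and on morphisms that $R_1(R_1((p,g),\gamma'),\gamma'')=R_1((p,g),\gamma'\cdot\gamma'')$ (note the multiplication $m(\gamma',\gamma'')$, not composition); expanding the left side gives $p$ composed with $\id_g\cdot\gamma'\cdot\id_{t(\gamma')^{-1}g^{-1}\alpha(p)}$ then with $\id_{gs(\gamma')}\cdot\gamma''\cdot\id_{t(\gamma'')^{-1}s(\gamma')^{-1}g^{-1}\alpha(p')}$ where $\alpha(p')=s(\gamma')^{-1}g^{-1}\alpha(p)g\cdot$(correction as above), and one collapses this single product of $\Gamma$-morphisms using $\gamma_1\circ\gamma_2=\gamma_1\cdot\id_{s(\gamma_1)^{-1}}\cdot\gamma_2$ to recognize $\id_g\cdot(\gamma'\cdot\gamma'')\cdot\id_{(\cdots)}$, while the second slot becomes $g\cdot s(\gamma')s(\gamma'')=g\cdot s(\gamma'\cdot\gamma'')$. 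The main obstacle throughout is purely bookkeeping\,---\,keeping the insertions $\id_{(\cdots)}$ aligned so that the product of morphisms in $\Gamma$ telescopes correctly\,---\,and I would present the identity-preservation and the associativity square in full, leaving the composition-preservation check as the one genuinely lengthy computation, possibly abbreviated with a remark that it follows from associativity of $\mu$ together with the interchange law.
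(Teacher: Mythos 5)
Your proof is essentially correct, but it takes a genuinely different route from the paper. The paper does not verify the functor axioms intrinsically in the 2-group $\Gamma$; instead it first invokes Remark \ref{rem:crossedmodule} to present $\Gamma$ as a crossed module $t:H\to G$ and Lemma \ref{actionbundles} to convert $P$ into a principal $H$-bundle with the action $p\star h$. Writing $\gamma=(h,g')$, the defining formula then collapses to $R_1((p,g),\gamma)=(p\star \lw{g}{h},\,gg')$, after which compatibility with source, target, identities, multiplication and composition are each one-line computations in the crossed module (using $\mu(p_1\star h,p_2)=\mu(p_1,p_2\star(\lw{\alpha(p_1)^{-1}}{h}))$ from Lemma \ref{lem:actiontensorbundles}). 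Your intrinsic approach — working directly with $\id_g\cdot\gamma\cdot\id_{t(\gamma)^{-1}g^{-1}\alpha(p)}$, the identity $\gamma_1\circ\gamma_2=\gamma_1\cdot\id_{s(\gamma_1)^{-1}}\cdot\gamma_2$, the interchange law, and the $\Gamma$-equivariance of $\mu$ — is viable and avoids the reduction, at the cost of considerably heavier bookkeeping in exactly the composition-preservation step that you yourself identify as the one genuinely lengthy computation; the crossed-module reduction is precisely the device the paper uses to make that step trivial. One small slip to fix: the anchor of the acted-on element is $\alpha\bigl(p\circ(\id_g\cdot\gamma\cdot\id_{t(\gamma)^{-1}g^{-1}\alpha(p)})\bigr)=g\,s(\gamma)\,t(\gamma)^{-1}g^{-1}\alpha(p)$ (apply the group homomorphism $s$ to the product), not $s(\gamma)^{-1}g^{-1}\alpha(p)g$; with the correct value the target computation gives $\alpha(\cdot)^{-1}gs(\gamma)=\alpha(p)^{-1}g\,t(\gamma)$, confirming $t\circ R_1=R_0\circ(t\times t)$ as you claim.
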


\begin{proof}
We assume that $t:H \to G$ is a smooth crossed module, and that $\Gamma$ is the Lie 2-group associated to it, see Example \ref{crossedmodule} and Remark \ref{rem:crossedmodule}. Then we use the correspondence between principal $\Gamma$-bundles and principal $H$-bundles with $H$-anti-equivariant maps to $G$ of Lemma \ref{actionbundles}.
Writing $\gamma = (h,g')$, we have
\begin{equation*}
R_1((p,g),\gamma) = (p \star \lw{g}{h},gg')\text{.}
\end{equation*}
With this simple formula at hand it is straightforward to show that $R$ respects source and target maps and satisfies the axiom of an action.  
For the composition, we assume composable  $(p_2,g_2),(p_1,g_1) \in \mathcal{P}_1$, i.e. $g_2 = \alpha(p_1)^{-1}g_1$, and composable $(h_2,g_2'),(h_1,g_1') \in \Gamma_1$, i.e. $g_2' = t(h_1)g_1'$. Then we have
\begin{eqnarray*}
R((p_2,g_2) \circ (p_1,g_1),(h_2,g_2') \circ (h_1,g_1')) 
&=& R((\mu(p_1,p_2),g_1),(h_2h_1,g_1'))\\
&=& (\mu(p_1,p_2) \star \lw{g_1}{(h_2h_1)},g_1g_1')\\
&=& (\mu(p_1 \star \lw{g_1}{h_2},p_2) \star \lw{g_1}{h_1},g_1g_1')\\
&=& (\mu(p_1,p_2 \star \lw{g_2}{h_2}) \star \lw{g_1}{h_1},g_1g_1')\\
&=& (\mu(p_1\star \lw{g_1}{h_1},p_2 \star \lw{g_2}{h_2}) ,g_1g_1')\\
&=& (p_2 \star \lw{g_2}h_2,g_{2}g_{2}') \circ (p_1 \star \lw{g_1}h_1,g_1g_1')\\
&=& R((p_2,g_2),(h_2,g_2')) \circ R((p_1,g_1),(h_1,g_1'))\text{,}
\end{eqnarray*} 
finishing the proof.
\end{proof}

It is obvious that the action $R$ preserves the projection $\pi$. Thus,  in order to complete the construction of the principal 2-bundle  $\mathscr{R}_\mathcal{G}$  it  remains to show that the functor $\tau = (\mathrm{pr}_1,R)$ is a weak equivalence.
This is the content of the following two lemmata in connection with Theorem \ref{weak}.

\begin{lemma}
$\tau$ is smoothly essentially surjective. 
\end{lemma}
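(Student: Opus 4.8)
The plan is to verify the smooth essential surjectivity of $\tau = (\mathrm{pr}_1, R): \mathcal{P} \times \Gamma \to \mathcal{P} \times_M \mathcal{P}$ by unwinding the criterion of Theorem \ref{weak}(a). Concretely, I need to show that the map
$$
s \circ \mathrm{pr}_2 \;:\; (\mathcal{P}_0 \times \Gamma_0) \lli{\tau_0}\times_{t} \mathcal{P}_1 \longrightarrow (\mathcal{P} \times_M \mathcal{P})_0 = \mathcal{P}_0 \times_M \mathcal{P}_0
$$
is a surjective submersion, where $\tau_0 = (\mathrm{pr}_1, R_0): \mathcal{P}_0 \times \Gamma_0 \to \mathcal{P}_0 \times_M \mathcal{P}_0$ is the object component of $\tau$. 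Since $\mathcal{P}_0 = Y \times \Gamma_0$, $\mathcal{P}_1 = P \times \Gamma_0$, and source/target are given by \erf{defbundlefromgerbe}, I would write out what this fibre product and map look like in terms of $Y$, $P$, $\Gamma_0$ and $\Gamma_1$: an element of $\mathcal{P}_0 \times_M \mathcal{P}_0$ is a pair $\big((y_1,g_1),(y_2,g_2)\big)$ with $\pi(y_1) = \pi(y_2)$, i.e. $(y_1,y_2) \in Y^{[2]}$ together with $g_1,g_2 \in \Gamma_0$.

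The key step is to exhibit an explicit smooth section, or at least a smooth surjection from an auxiliary manifold that factors through $s \circ \mathrm{pr}_2$, exactly in the style of the proof of Lemma \ref{lem:bundleforgerbe}. Given $(y_1,y_2) \in Y^{[2]}$, the bundle $P$ over $Y^{[2]}$ is locally trivializable with a section (Lemma \ref{lem:loctriv}, Lemma \ref{lem:goodcover}), but more cleanly: for any $(y_1,y_2)\in Y^{[2]}$ and any $g_1,g_2 \in \Gamma_0$, I want a morphism in $\mathcal{P}_1$ and an object of $\mathcal{P}_0 \times \Gamma_0$ mapping to the prescribed pair. The natural candidate: pick $p \in P$ over $(y_2,y_1)$ (note the order — using that $\chi: P \to Y^{[2]}$ is a surjective submersion, such $p$ exists locally smoothly), set the $\Gamma$-object to be $(y_1, g_1) \in \mathcal{P}_0$ with $\Gamma_0$-component $g \in \Gamma_0$ chosen so that the target-matching works out, and take the morphism $(p, g_1) \in \mathcal{P}_1$ whose source is $(\pi_2(\chi(p)), g_1) = (y_1, g_1)$ — wait, I must be careful with which projection of $\chi$ is source versus target. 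From \erf{defbundlefromgerbe}, $s(p,g) = (\pi_2(\chi(p)), g)$ and $t(p,g) = (\pi_1(\chi(p)), \alpha(p)^{-1}g)$. So to have source $(y_2, g_2)$ and so that $R_0$ of the object equals $(y_1, g_1)$, I would take $p$ over $(y_1, y_2)$, the morphism $(p, g_2)$, and verify $s(p,g_2) = (y_2, g_2)$ and $t(p, g_2) = (y_1, \alpha(p)^{-1}g_2)$; then I need the $\Gamma$-object $((y_1, \alpha(p)^{-1}g_2), g')$ with $R_0((y_1, \alpha(p)^{-1}g_2), g') = (y_1, \alpha(p)^{-1}g_2 g') = (y_1, g_1)$, forcing $g' = g_2^{-1}\alpha(p)g_1$. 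This gives an explicit smooth local lift, and since $\chi$ admits local sections and all other ingredients are smooth, one concludes $s\circ\mathrm{pr}_2$ is a surjective submersion — surjectivity is clear from the construction, and the submersion property follows because the auxiliary map built from a local section of $\chi$ is a smooth surjection with smooth local right inverses, hence $s\circ\mathrm{pr}_2$ itself has this property.

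The main obstacle I anticipate is purely bookkeeping: getting the order of the factors in $\chi(p) \in Y^{[2]}$ straight against the conventions in \erf{defbundlefromgerbe} and \erf{defaction}, and making sure the chosen $g'$, $g$ values are globally smooth (which they are, being algebraic expressions in $\Gamma_0$ once $p$ is chosen locally smoothly via a section of $\chi$). There is no conceptual difficulty: the statement is essentially the assertion that, fibrewise over each point of $M$, the action groupoid picture gives a transitive action, and the surjective-submersion upgrade of transitivity is exactly what $\chi$ being a surjective submersion provides. So the proof I would write is short: introduce the auxiliary manifold $(\mathcal{P}_0\times\Gamma_0)\lli{\tau_0}\times_t\mathcal{P}_1$, produce from a local section $\sigma$ of $\chi: P \to Y^{[2]}$ a smooth local map $Y^{[2]}\times\Gamma_0\times\Gamma_0 \to (\mathcal{P}_0\times\Gamma_0)\lli{\tau_0}\times_t\mathcal{P}_1$ whose composite with $s\circ\mathrm{pr}_2$ is the identity on $\mathcal{P}_0\times_M\mathcal{P}_0 \cong Y^{[2]}\times\Gamma_0\times\Gamma_0$, and conclude.
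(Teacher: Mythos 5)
Your overall strategy --- producing explicit smooth local lifts from a local section of $\chi\colon P \to Y^{[2]}$ --- is a legitimate alternative to the paper's argument, which never constructs preimages but instead identifies the relevant fibre product with $(P\times\Gamma_0)\lli{\pi_1\circ\chi}\times_{\pi_1\circ\chi}(P\times\Gamma_0)$ via $\mathrm{pr}_2$ and then deduces the surjective-submersion property by pullback stability, using only that $\chi$ is a surjective submersion. However, your setup of the criterion contains a genuine error: the target groupoid of $\tau$ is $\mathcal{P}\times_M\mathcal{P}$, whose manifold of morphisms is $\mathcal{P}_1\times_M\mathcal{P}_1$, not $\mathcal{P}_1$. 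The domain of the map to be checked is therefore $(\mathcal{P}_0\times\Gamma_0)\lli{\tau_0}\times_{t\times t}(\mathcal{P}_1\times_M\mathcal{P}_1)$, and a lift must supply a \emph{pair} of morphisms $(\rho_1,\rho_2)$ with $t(\rho_1)=z$, $t(\rho_2)=R_0(z,g)$ and $(s(\rho_1),s(\rho_2))=((y_1,g_1),(y_2,g_2))$.

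This is not merely a notational slip, because the single morphism and the object you construct do not match up. Your $(p,g_2)$ (with $\chi(p)=(y_1,y_2)$) is a morphism $(y_2,g_2)\to(y_1,\alpha(p)^{-1}g_2)$, while your chosen object $\big((y_1,\alpha(p)^{-1}g_2),\,g_2^{-1}\alpha(p)g_1\big)$ has $\tau_0$-image $\big((y_1,\alpha(p)^{-1}g_2),(y_1,g_1)\big)$; the two required morphisms would have to run $(y_1,g_1)\to(y_1,\alpha(p)^{-1}g_2)$ and $(y_2,g_2)\to(y_1,g_1)$, and $(p,g_2)$ is neither. The repair is easy: take the object $\big((y_1,g_1),\,g_1^{-1}\alpha(p)^{-1}g_2\big)\in\mathcal{P}_0\times\Gamma_0$, whose $\tau_0$-image is $\big((y_1,g_1),(y_1,\alpha(p)^{-1}g_2)\big)$, together with the pair $\big(\id_{(y_1,g_1)},\,(p,g_2)\big)\in\mathcal{P}_1\times_M\mathcal{P}_1$; this has the correct targets and source $((y_1,g_1),(y_2,g_2))$, and it depends smoothly on $(y_1,y_2,g_1,g_2)$ once $p=\sigma(y_1,y_2)$ for a local section $\sigma$ of $\chi$. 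With this correction your argument goes through and is equivalent in content to the paper's.
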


\begin{proof}
The condition we have to check is whether or not the map
\begin{equation*}
\alxydim{@C=2cm}{(Y \times \Gamma_0 \times \Gamma_0) \lli{\tau}\times_t ((P \times \Gamma_0) \times_M (P \times \Gamma_0)) \ar[r]^-{(s \times s) \circ \mathrm{pr}_2} & (Y \times \Gamma_0) \times_M (Y \times \Gamma_0)}
\end{equation*}
is a surjective submersion. The left hand side is diffeomorphic to $(P \times \Gamma_0) \lli{\pi_1} \times_{\pi_1} (P \times \Gamma_0)$ via $\mathrm{pr}_2$, so that this is equivalent to checking that
\begin{equation*}
s \times s: (P \times \Gamma_0) \lli{\pi_1 \circ \chi} \times_{\pi_1 \circ \chi} (P \times \Gamma_0) \to  (Y \times \Gamma_0) \times_M (Y \times \Gamma_0)
\end{equation*}
is a surjective submersion. Since the $\Gamma_0$-factors are just spectators, this is in turn equivalent to checking that
\begin{equation*}
(\pi_2 \times \pi_2) \circ (\chi \times \chi): P \lli{\pi_1 \circ \chi}\times_{\pi_1 \circ \chi} P \to Y^{[2]}
\end{equation*} 
is a surjective submersion. It fits into the pullback diagram
\begin{equation*}
\alxydim{@R=1.3cm}{P \lli{\pi_1 \circ \chi}\times_{\pi_1 \circ \chi} P \ar@{^(->}[r] \ar[d]_{\chi \times \chi} & P \times P \ar[d]^{\chi \times \chi} \\ Y^{[2]} \lli{\pi_1}\times_{\pi_1} Y^{[2]} \ar[d]_{\pi_2 \times \pi_2} \ar@{^(->}[r] & Y^{[2]} \times Y^{[2]} \ar[d]^{\pi_2 \times \pi_2} \\ Y^{[2]} \ar@{^(->}[r] & Y \times Y}
\end{equation*}
which has  a surjective submersion on the right hand side; hence, also the map on the left hand side must be a surjective submersion. 
\end{proof}

\begin{lemma}
$\tau$ is smoothly fully faithful. 
\end{lemma}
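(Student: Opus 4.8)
The plan is to verify condition (b) of Theorem \ref{weak} for the functor $\tau = (\mathrm{pr}_1, R): \mathcal{P}\times\Gamma \to \mathcal{P}\times_M\mathcal{P}$, i.e. that the square displayed there is a pullback. First I would unwind the four corners: $(\mathcal{P}\times\Gamma)_1 = (P\times\Gamma_0)\times\Gamma_1$ and $(\mathcal{P}\times\Gamma)_0 = Y\times\Gamma_0\times\Gamma_0$. Since source and target of any morphism $(p,g)\in\mathcal{P}_1$ are built from $\chi(p)\in Y^{[2]}$, they automatically lie in the same fibre of $\pi$, so the fibre product groupoid has $(\mathcal{P}\times_M\mathcal{P})_1 = (P\times\Gamma_0)\times_M(P\times\Gamma_0)$ (fibre product over $M$). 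For objects $a=((y_1,g_1),k_1)$, $b=((y_2,g_2),k_2)$ one has $\tau(a) = ((y_1,g_1),(y_1,g_1k_1))$ and $\tau(b) = ((y_2,g_2),(y_2,g_2k_2))$, so a point of the pullback over $(a,b)$ is a morphism $(\rho,\sigma)$ of $\mathcal{P}\times_M\mathcal{P}$ from $\tau(a)$ to $\tau(b)$. Writing $\rho = (p,g_1)$ and $\sigma = (q,g_1k_1)$, the source/target matching in $\mathcal{P}$ forces $\chi(p) = \chi(q) = (y_2,y_1)$, $\alpha(p) = g_1g_2^{-1}$ and $\alpha(q) = g_1k_1k_2^{-1}g_2^{-1}$.

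The key step is to construct the inverse of the canonical comparison map. Given such data, principality of the $\Gamma$-bundle $P$ (Definition \ref{def:pgb}) yields a unique $\delta\in\Gamma_1$ with $t(\delta) = \alpha(p)$ and $p\circ\delta = q$, and $\delta$ depends smoothly on $(p,q)$ since it is read off from the smooth map $\tau^{-1}$ of $P$. I then put
\[
\gamma := \id_{g_1^{-1}}\cdot\delta\cdot\id_{g_2k_2}\in\Gamma_1 .
\]
From $t(\delta) = \alpha(p) = g_1g_2^{-1}$ one gets $t(\gamma) = k_2$, and from $s(\delta) = \alpha(p\circ\delta) = \alpha(q) = g_1k_1k_2^{-1}g_2^{-1}$ one gets $s(\gamma) = k_1$. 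Hence $\gamma$ is a morphism $k_1\to k_2$ in $\Gamma$, and $((p,g_1),\gamma)$ is a morphism $a\to b$ in $\mathcal{P}\times\Gamma$; this assignment is smooth.

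It remains to check that this is a two-sided inverse to the comparison map induced by $s\times t$ and $\tau$. In one direction, $\mathrm{pr}_1$ returns $(p,g_1) = \rho$, while by \erf{defaction}
\[
R_1((p,g_1),\gamma) = \big(p\circ(\id_{g_1}\cdot\gamma\cdot\id_{t(\gamma)^{-1}g_1^{-1}\alpha(p)}),\,g_1 s(\gamma)\big);
\]
substituting $t(\gamma) = k_2$, $\alpha(p) = g_1g_2^{-1}$ and the definition of $\gamma$, the inserted $2$-group element collapses to $\delta$ because $\id_{(-)}$ is multiplicative, so $R_1((p,g_1),\gamma) = (p\circ\delta, g_1k_1) = \sigma$. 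Conversely, starting from $((p',g'),\gamma')$, uniqueness in the principality of $P$ forces the reconstructed $\delta$ to equal $\id_{g'}\cdot\gamma'\cdot\id_{t(\gamma')^{-1}g'^{-1}\alpha(p')}$, and then the defining formula for $\gamma$ returns $\gamma'$ once the $\id_{(-)}$'s cancel. Together with the preceding lemma and Theorem \ref{weak}, this shows that $\tau$ is a weak equivalence, completing the construction of the principal $\Gamma$-2-bundle $\mathscr{R}_\mathcal{G}$. I expect the only obstacle to be the bookkeeping of the spectator $\Gamma_0$-factors and of the identity insertions in \erf{defaction}; the substance is entirely the principality of $P$.
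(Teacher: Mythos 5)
Your proof is correct and follows essentially the same route as the paper: both reduce the claim to the pullback criterion of Theorem \ref{weak}(b), extract from the source--target constraints that the two $P$-components lie in the same fibre of $\chi$, invoke principality of the $\Gamma$-bundle $P$ to obtain a unique $\delta\in\Gamma_1$ smoothly, and then dress $\delta$ with identity morphisms to produce the required $\gamma$ (your formula $\id_{g_1^{-1}}\cdot\delta\cdot\id_{g_2k_2}$ is literally the paper's $\id_{g_1^{-1}}\cdot\gamma'\cdot\id_{\alpha(p_2)^{-1}g_2}$ after translating notation). The only cosmetic difference is that you invert the comparison map into the fibre product directly while the paper checks the universal property against an arbitrary test manifold $N$; since your inverse is given by manifestly smooth formulas in the input data, the two formulations are interchangeable.
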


\begin{proof}
We assume a smooth manifold $N$ with two smooth maps 
\begin{equation*}
f: N \to (\mathcal{P}_0 \times \Gamma_0) \times (\mathcal{P}_0 \times \Gamma_0)
\quand
g: N \to \mathcal{P}_1 \times_M \mathcal{P}_1
\end{equation*}
such that the diagram
\begin{equation*}
\alxydim{@R=1.3cm}{N \ar[d]_{f} \ar[r]^-{g} & \mathcal{P}_1 \times_M \mathcal{P}_1 \ar[d]^{s \times t} \\  (\mathcal{P}_0 \times \Gamma_0) \times (\mathcal{P}_0 \times \Gamma_0) \ar[r]_-{\tau \times \tau} & (\mathcal{P}_0 \times_M \mathcal{P}_0) \times (\mathcal{P}_0 \times_M \mathcal{P}_0) }
\end{equation*}
is commutative. For a fixed point $n \in N$ we put 
\begin{equation*}
((p_1,g_1),(p_2,g_2)):= g(n)\in (P \times \Gamma_0) \times_M (P \times \Gamma_0)
\end{equation*}
and
\begin{equation*}
((y,g,\tilde g),(y',g',\tilde g')) := f(n) \in (Y \times \Gamma_0 \times \Gamma_0) \times (Y \times \Gamma_0 \times \Gamma_0)\text{.}
\end{equation*}
The commutativity of the diagram implies $\chi(p_1)=\chi(p_2)=(y',y)$, so that there exists  $\gamma' \in \Gamma_1$ with $p_2 = p_1 \circ \gamma'$. We define 
$\gamma := \id_{g_1^{-1}} \cdot \gamma'\cdot \id_{\alpha(p_2)^{-1}g_2}$, 
which yields a morphism $\gamma\in \Gamma_1$ satisfying $\tau(p_1,g_1,\gamma) = (p_1,g_1,p_2,g_2) = g(n)$. On the other hand, we check that 
\begin{equation*}
(s(p_1,g_1,\gamma), t(p_1,g_1,\gamma)) = (\pi_2(p_1),g_1,s(\gamma),\pi_1(p_1),\alpha(p_1)^{-1}g_1,t(\gamma))  = f(n)\text{,}
\end{equation*} 
using that $s(\gamma) = g_1^{-1}g_2$ and $t(\gamma) = g_1^{-1}\alpha(p_1)\alpha(p_2)^{-1}g_2$. Summarizing, we have defined a smooth map
\begin{equation*}
\sigma: N \to \mathcal{P}_1 \times \Gamma_1:n \mapsto (p_1,g_1,\gamma)
\end{equation*}
such that $\tau \circ \sigma = g$ and $(s \times t) \circ \sigma=f$. Now let $\sigma':N \to \mathcal{P}_1 \times \Gamma_1$ be another such map, and let $\sigma'(n) =: (p_1',g_1',\gamma')$. The condition that $\tau(\sigma(n)) =g(n) = \tau(\sigma'(n))$  shows immediately that $p_1=p_1'$ and $g_1=g_1'$, and then that $p_1 \circ \gamma = p_1 \circ \gamma'$. But since the $\Gamma$-action on $P$ is principal, we have $\gamma=\gamma'$. This shows $\sigma=\sigma'$.
Summarizing, $\mathcal{P}_1 \times \Gamma_1$ is a pullback. 
\end{proof}

\begin{example}
\label{trans_abelian}
Suppose $\Gamma = \mathcal{B}\ueins$ (see Example \ref{example_groupoids} \erf{exBG}) and suppose $\mathcal{G}$ is a $\Gamma$-bundle gerbe over $M$, also known as a $\ueins$-bundle gerbe, see Example \ref{ex:abelianbundlegerbes}. Then, the associated principal $\mathcal{B}\ueins$-2-bundle $\mathscr{R}_\mathcal{G}$ has the  groupoid $\mathcal{P}$ with $\mathcal{P}_0=Y$ and $\mathcal{P}_1=P$, source and target maps $s = \pi_2 \circ \chi$ and $t=\pi_1 \circ \chi$, and composition $p_2 \circ p_1 = \mu(p_1,p_2)$. The action of $\mathcal{B}\ueins$ on $\mathcal{P}$ is trivial on the level of objects and the given $\ueins$-action on $P$ on the level of morphisms. The same applies for general abelian Lie groups $A$ instead of $\ueins.$
\end{example}

\subsubsection*{Construction of the 1-isomorphism $\mathcal{A}_{\mathcal{G}}: \mathcal{G} \to \mathscr{E}_M(\mathscr{R}_{\mathcal{G}})$}

The $\Gamma$-bundle gerbe $\mathscr{E}_M(\mathscr{R}_\mathcal{G})$ has the surjective submersion $\tilde Y := Y \times \Gamma_0$ with $\tilde\pi(y,g) := \pi(y)$. The total space of its $\Gamma$-bundle $\tilde P$ is $\tilde P := P \times \Gamma_0 \times \Gamma_0$; it has the anchor $\alpha(p,g,h)= h$, the bundle projection
\begin{equation*}
\tilde\chi: \tilde P \to \tilde Y^{[2]}: (p,g,h) \mapsto (  (\pi_1(\chi(p)),\alpha(p)^{-1}  g),(\pi_2(\chi(p)),gh^{-1}))\text{,}
\end{equation*}
 the $\Gamma$-action is
\begin{eqnarray*}
(p,g,h) \circ \gamma 
&\stackrel{\text{\erf{defgerbefrombundle}}}{=}&  ((p,g) \circ R((t_{\pi_2(\chi(p))},gh^{-1}),\gamma),s(\gamma))
 \\&\stackrel{\text{\erf{defaction}}}{=}& ((p,g) \circ (t_{\pi_2(\chi(p))} \circ (\id_{gh^{-1}} \cdot \gamma \cdot \id_{g^{-1}}),gh^{-1} s(\gamma)),s(\gamma))
 \\&\stackrel{\text{\erf{defbundlefromgerbe}}}{=}& (\mu (t_{\pi_2(\chi(p))} \circ \big(\id_{gh^{-1}} \cdot \gamma \cdot \id_{g^{-1}}\big),p),gh^{-1} s(\gamma),s(\gamma))
 \\&\stackrel{\text{\erf{deftensorproductrel}}}{=}& (p \circ \big(\id_{gh^{-1}} \cdot \gamma \cdot \id_{g^{-1}\alpha(p)}\big),gh^{-1} s(\gamma),s(\gamma))\text{,}
\end{eqnarray*}
 and its bundle gerbe product $\tilde\mu$ is given by
\begin{eqnarray*}
\tilde\mu((p_{23},g_{23},h_{23}),(p_{12},g_{12},h_{12})) 
&\stackrel{\text{\erf{defmu}}}{=}&   ((p_{12},g_{12}) \circ R((p_{23},g_{23}),\id_{h_{12}}) ,h_{23}h_{12})
\\&\stackrel{\text{\erf{defaction}}}{=}& ((p_{12},g_{12}) \circ (p_{23},g_{23}h_{12}) ,h_{23}h_{12})
\\&\stackrel{\text{\erf{defbundlefromgerbe}}}{=}&   (\mu(p_{23},p_{12}),g_{23}h_{12} ,h_{23}h_{12})\text{.}
\end{eqnarray*}
In order to compare the bundle gerbes $\mathcal{G}$ and $\mathscr{E}_M(\mathscr{R}_\mathcal{G})$ we consider the smooth maps  $\sigma: Y \to Y \times \Gamma_0$ and $\tilde\sigma: P \to \tilde P$  that are defined  by $\sigma(y):=(y,1)$ and $\tilde\sigma(p) := (p,\alpha(p),\alpha(p))$. 

\begin{lemma}
$\tilde\sigma$ defines an isomorphism $\tilde\sigma: P \to (\sigma \times \sigma)^{*}\tilde P$ of $\Gamma$-bundles over $Y^{[2]}$. Moreover, the diagram
\begin{equation*}
\alxydim{@C=1.4cm@R=1.3cm}{\pi_{23}^{*}P \otimes \pi_{12}^{*}P \ar[d]_{\mu} \ar[r]^{\tilde\sigma \otimes \tilde\sigma} & \tilde\pi_{23}^{*}\tilde P  \otimes \tilde\pi_{12}^{*}\tilde P \ar[d]^{\tilde\mu} \\ \pi_{13}^{*}P \ar[r]_{\tilde\sigma} & \tilde\pi_{13}^{*}\tilde P}
\end{equation*}
is commutative.
\end{lemma}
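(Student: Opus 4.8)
The plan is to verify the three claims in the lemma directly by unwinding the definitions, since all the relevant data—the maps $\sigma$, $\tilde\sigma$, the anchors, projections, actions, and the bundle gerbe products $\mu$ and $\tilde\mu$—have been written out explicitly just above the statement. First I would check that $\tilde\sigma: P \to (\sigma\times\sigma)^*\tilde P$ is a morphism of $\Gamma$-bundles over $Y^{[2]}$. This means checking three things: that $\tilde\sigma$ covers the identity on $Y^{[2]}$ via the projections (i.e. $\tilde\chi\circ\tilde\sigma = (\sigma\times\sigma)$ after the evident identification), that it is anchor-preserving ($\alpha(\tilde\sigma(p)) = \alpha(p)$), and that it is $\Gamma$-equivariant ($\tilde\sigma(p\circ\gamma) = \tilde\sigma(p)\circ\gamma$). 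The anchor condition is immediate since $\alpha(p,\alpha(p),\alpha(p)) = \alpha(p)$. For the projection, plugging $(p,\alpha(p),\alpha(p))$ into the formula for $\tilde\chi$ gives $((\pi_1(\chi(p)),\alpha(p)^{-1}\alpha(p)),(\pi_2(\chi(p)),\alpha(p)\alpha(p)^{-1})) = ((\pi_1(\chi(p)),1),(\pi_2(\chi(p)),1)) = (\sigma\times\sigma)(\chi(p))$, as required. For equivariance, using the displayed formula for the $\Gamma$-action on $\tilde P$ with $g = h = \alpha(p)$, one gets $(p,\alpha(p),\alpha(p))\circ\gamma = (p\circ(\id_1\cdot\gamma\cdot\id_{\alpha(p)^{-1}\alpha(p)}), s(\gamma),s(\gamma)) = (p\circ\gamma, s(\gamma), s(\gamma))$, which equals $\tilde\sigma(p\circ\gamma)$ because $\alpha(p\circ\gamma) = s(\gamma)$. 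Once $\tilde\sigma$ is a bundle morphism it is automatically an isomorphism, since every morphism of principal $\Gamma$-bundles is invertible (this is the general fact recorded after Definition \ref{def:pgb}).

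Next I would check commutativity of the square relating $\mu$ and $\tilde\mu$. Starting from a representative element, say a pair $(p_{23},p_{12})$ with matching fibres over a point of $Y^{[3]}$, the clockwise route applies $\mu$ first, yielding $\mu(p_{23},p_{12}) = \mu(p_{23}\otimes p_{12})$ in $\pi_{13}^*P$, and then $\tilde\sigma$, yielding $(\mu(p_{23},p_{12}),\alpha(\mu(p_{23},p_{12})),\alpha(\mu(p_{23},p_{12})))$. Since the anchor is multiplicative on tensor products, $\alpha(\mu(p_{23},p_{12})) = \alpha(p_{23})\alpha(p_{12})$. The counter-clockwise route first applies $\tilde\sigma\otimes\tilde\sigma$, giving $(p_{23},\alpha(p_{23}),\alpha(p_{23}))\otimes(p_{12},\alpha(p_{12}),\alpha(p_{12}))$, and then $\tilde\mu$. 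Using the displayed formula for $\tilde\mu$ with $g_{23} = h_{23} = \alpha(p_{23})$ and $g_{12} = h_{12} = \alpha(p_{12})$, this produces $(\mu(p_{23},p_{12}), \alpha(p_{23})\alpha(p_{12}), \alpha(p_{23})\alpha(p_{12}))$, which matches. One has to be a little careful to track the equivalence-relation representatives in the tensor products (the triples $(p_1,p_2,\gamma)$ of Example \ref{ex:tensorproduct}), but the computation goes through by taking $\gamma$ to be an identity and using \eqref{deftensorproductrel}.

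I do not anticipate a genuine obstacle here—this is a routine bookkeeping verification of the kind the paper repeatedly defers to the reader. The only mild subtlety is keeping the conventions straight: the ordering of factors in the tensor product, the direction of the composition $\mu$ (which takes $\pi_{23}^*P\otimes\pi_{12}^*P\to\pi_{13}^*P$ but is defined by the formula $\mu((\rho_{23},g_{23}),(\rho_{12},g_{12})) = (\rho_{12}\circ R(\rho_{23},\id_{g_{12}}),g_{23}g_{12})$, reversing the order inside the groupoid composition), and the fact that $\tilde\mu$ has "extra" $\Gamma_0$-components that must be seen to collapse correctly when restricted along $\sigma$. The cleanest write-up is simply to display the two composites side by side on a general element and observe that they agree termwise, citing the definitions of $\tilde\chi$, the $\Gamma$-action on $\tilde P$, $\tilde\mu$, and the multiplicativity of $\alpha$; I would present it in precisely that compressed form rather than grinding through every intermediate equality.
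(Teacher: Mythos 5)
Your proposal is correct and follows essentially the same route as the paper: verify that $\tilde\sigma$ preserves projections, anchors, and the $\Gamma$-action (the key computation being $(p,\alpha(p),\alpha(p))\circ\gamma = (p\circ\gamma,s(\gamma),s(\gamma)) = \tilde\sigma(p\circ\gamma)$), invoke the fact that all morphisms of principal $\Gamma$-bundles are invertible, and then evaluate $\tilde\mu$ on $\tilde\sigma(p_{23})\otimes\tilde\sigma(p_{12})$ to get $(\mu(p_{23},p_{12}),\alpha(p_{23})\alpha(p_{12}),\alpha(p_{23})\alpha(p_{12})) = \tilde\sigma(\mu(p_{23},p_{12}))$ using multiplicativity of the anchor. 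This matches the paper's proof step for step.
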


\begin{proof}
For the first part it suffices to prove that $\tilde\sigma$ is $\Gamma$-equivariant, preserves the anchors, and that the diagram
\begin{equation*}
\alxydim{@R=1.3cm}{P \ar[d]_{\chi} \ar[r]^{\tilde\sigma} & \tilde P \ar[d]^{\tilde\chi} \\ Y^{[2]} \ar[r]_{\sigma \times \sigma} & \tilde Y^{[2]}}
\end{equation*}
is commutative. Indeed, the commutativity of the diagram is obvious, and also that the anchors are preserved. For the $\Gamma$-equivariance, we have
\begin{equation*}
\tilde\sigma(p \circ \gamma) = (p \circ \gamma, s(\gamma), s(\gamma)) = (p,\alpha(p),\alpha(p)) \circ \gamma= \tilde\sigma(p) \circ \gamma\text{.}
\end{equation*}  
Finally, we calculate
\begin{eqnarray*}
\tilde\mu((p_{23},\alpha(p_{23}),\alpha(p_{23})),(p_{12},\alpha(p_{12}),\alpha(p_{12}))) &=& (\mu(p_{23},p_{12}),\alpha(p_{23})\alpha(p_{12}) ,\alpha(p_{23})\alpha(p_{12}))
\\&=& (\mu(p_{23},p_{12}),\alpha(\mu(p_{23},p_{12})),\alpha(\mu(p_{23},p_{12})))
\end{eqnarray*}
which shows the commutativity of the diagram.
\end{proof}

Via Lemma \ref{lem:invertibility}  the bundle morphism $\tilde\sigma$ defines the required 1-morphism $\mathcal{A}_{\mathcal{G}}$, and Lemma \ref{lem:nonstablemorphisms} guarantees that $\mathcal{A}_{\mathcal{G}}$ is  a 1-\emph{iso}morphism.

\subsubsection*{Construction of the 1-morphism $\mathscr{R}_{\mathcal{A}}: \mathcal{P} \to \mathcal{P}'$}

Let $\mathcal{A}: \mathscr{E}_M(\mathcal{P}) \to \mathscr{E}_M(\mathcal{P}')$ be a 1-morphism between $\Gamma$-bundle gerbes obtained from principal $\Gamma$-2-bundles $\mathcal{P}$ and $\mathcal{P}'$ over $M$.
By Lemma \ref{lem:canonicalrefinement} we can assume that $\mathcal{A}$ consists of a principal $\Gamma$-bundle $\chi:Q\to Z$ with $Z=\mathcal{P}_0 \times_M \mathcal{P}_0'$, and some isomorphism $\beta$ over $Z^{[2]}$. For preparation,
we consider the fibre products $Z_r:= \mathcal{P}_0 \times_M \mathcal{P}_0'^{[2]}$ and $Z_l := \mathcal{P}_0^{[2]} \times_M \mathcal{P}_0'$ with the obvious embeddings $\iota_l: Z_l \to Z$ and $\iota_r: Z_r \to Z$ obtained by doubling elements. Together with the trivialization of Corollary \ref{co:groupoidstructure}, the pullbacks of $\beta$ along $\iota_l$ and $\iota_r$ yield bundle morphisms
\begin{equation*}
\beta_l := \iota_l^{*}\beta : \mathrm{pr}_{13}^{*}Q \to \mathrm{pr}_{23}^{*}Q \otimes \mathrm{pr}_{12}^{*}P
\quand
 \beta_r := \iota_r^{*}\beta : \mathrm{pr}_{23}^{*}P' \otimes \mathrm{pr}_{12}^{*}Q \to \mathrm{pr}_{13}^{*}Q\text{,}
\end{equation*}
where $P := \mathcal{P}_1 \times \Gamma_0$ and $P' := \mathcal{P}' \times \Gamma_0$ are the principal $\Gamma$-bundles of the $\Gamma$-bundle gerbes $\mathscr{E}_M(\mathcal{P})$ and $\mathscr{E}_M(\mathcal{P}')$, respectively. 

\begin{lemma}
\label{lem:betaprop}
The bundle morphisms $\beta_l$ and $\beta_r$ have the following properties:
\begin{enumerate}[(i)]
\item 
They commute with each other in these sense that the diagram
\begin{equation*}
\alxydim{@C=1.7cm@R=1.3cm}{P'_{p_1',p_2'} \otimes Q_{p_1,p_1'} \ar[dr]|{\beta}  \ar[r]^-{\beta_r } \ar[d]_{\id \otimes \beta_l} & Q_{p_1,p_2'} \ar[d]^{\beta_l} \\ P'_{p_1',p_2'} \otimes Q_{p_2,p_1'} \otimes P_{p_1,p_2} \ar[r]_-{\beta_r \otimes \id} & Q_{p_2,p_2'} \otimes P_{p_1,p_2}}
\end{equation*}
is commutative for all $((p_1,p_1'),(p_2,p_2')) \in Z^{[2]}$. 

\item
$\beta_l$ is compatible with the bundle gerbe product $\mu$ in the sense that
\begin{equation*}
\beta_l|_{p_1,p_3,p'} = (\id \otimes \mu_{p_1,p_2,p_3}) \circ (\beta_l|_{p_2,p_3,p'} \otimes \id) \circ \beta_l|_{p_1,p_2,p'}
\end{equation*}
for all $(p_1,p_2,p_3,p')\in \mathcal{P}_0^{[3]} \times_M \mathcal{P}_0'$.

\item
$\beta_r$ is compatible with the bundle gerbe product $\mu'$ in the sense that
\begin{equation*}
\beta_r|_{p,p_1',p_3'} \circ (\mu'_{p_1',p_2',p_3'} \otimes \id)  = \beta_r|_{p,p_2',p_3'} \circ (\id \otimes \beta_r|_{p,p_1',p_2'})
\end{equation*}
for all $(p,p_1',p_2',p_3')\in \mathcal{P}_0 \times_M \mathcal{P}_0'^{[3]}$.

\end{enumerate}
\end{lemma}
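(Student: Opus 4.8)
The statement is that the two halves $\beta_l,\beta_r$ of the gluing isomorphism $\beta$ (obtained by restricting $\beta$ along the two "doubling" embeddings $\iota_l,\iota_r$) satisfy a commutation law (i) and two compatibilities (ii), (iii) with the bundle gerbe products $\mu$ of $\mathscr{E}_M(\mathcal{P})$ and $\mu'$ of $\mathscr{E}_M(\mathcal{P}')$. All three identities are instances of the single compatibility condition \erf{compgerbemorph} that $\beta$ satisfies as part of the 1-morphism $\mathcal{A}$, pulled back along suitable smooth maps $\mathcal{P}_0^{[\bullet]} \times_M \mathcal{P}_0'^{[\bullet]} \to Z^{[3]}$, together with the fact that the bundles $P$ and $P'$ have canonical trivializations $t$ over the diagonals of $Y_1$ and $Y_2$ (Corollary \ref{co:groupoidstructure}), under which one identifies $\mu|_{\text{diagonal edge}}$ with the identity.

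\textbf{Key steps.} First I would set up the three relevant maps into $Z^{[3]}=(\mathcal{P}_0 \times_M \mathcal{P}_0')^{[3]}$. For (ii), take the map $\mathcal{P}_0^{[3]} \times_M \mathcal{P}_0' \to Z^{[3]}$, $(p_1,p_2,p_3,p') \mapsto ((p_1,p'),(p_2,p'),(p_3,p'))$: on this locus all three $\mathcal{P}_0'$-slots agree, so the three pullbacks of $P'$ in \erf{compgerbemorph} restrict to the diagonal of $\mathcal{P}_0'$ and are canonically trivial, hence $\mu_2$ (here the gerbe product of $\mathscr{E}_M(\mathcal{P}')$) becomes the identity; what survives of \erf{compgerbemorph} is exactly the cocycle identity (ii) for $\beta_l$ with $\mu_1 = \mu$ the gerbe product of $\mathscr{E}_M(\mathcal{P})$. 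Symmetrically, for (iii) take $\mathcal{P}_0 \times_M \mathcal{P}_0'^{[3]} \to Z^{[3]}$ with the $\mathcal{P}_0$-slot constant; now the pullbacks of $P$ restrict to the diagonal of $\mathcal{P}_0$ and are trivial, $\mu_1$ becomes the identity, and \erf{compgerbemorph} collapses to (iii) for $\beta_r$ with $\mu_2 = \mu'$. For (i), take the map $\mathcal{P}_0^{[2]} \times_M \mathcal{P}_0'^{[2]} \to Z^{[3]}$, $((p_1,p_1'),(p_2,p_2')) \mapsto ((p_1,p_1'),(p_2,p_1'),(p_2,p_2'))$: on this locus the edge from the first to the second vertex lies over the diagonal of $\mathcal{P}_0'$ and the edge from the second to the third over the diagonal of $\mathcal{P}_0$, so \emph{both} $\mu_1$ and $\mu_2$ in \erf{compgerbemorph} trivialize along one of the two composable edges, and the pentagon \erf{compgerbemorph} degenerates precisely to the square (i) — with the diagonal arrow $\beta$ in (i) being the restriction of $\beta$ to the "long" edge $(p_1,p_1') \to (p_2,p_2')$. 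In each case I would also record, as a preliminary remark, that $\iota_l^{*}\beta$ and $\iota_r^{*}\beta$ do land in the asserted bundles: restricting $\beta: P_2 \otimes \zeta_1^{*}Q \to \zeta_2^{*}Q \otimes P_1$ along $\iota_l$ makes the $P_2$-factor (i.e. $P'$) trivial so one gets $\mathrm{pr}_{13}^{*}Q \to \mathrm{pr}_{23}^{*}Q \otimes \mathrm{pr}_{12}^{*}P$, and along $\iota_r$ the $P_1$-factor (i.e. $P$) trivializes giving $\mathrm{pr}_{23}^{*}P' \otimes \mathrm{pr}_{12}^{*}Q \to \mathrm{pr}_{13}^{*}Q$.

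\textbf{Main obstacle.} The conceptual content is routine — everything is a specialization of \erf{compgerbemorph} — so the only real work is bookkeeping: in \erf{compgerbemorph} the tensor factors $\pi_{ij}^{*}P_k$ appear in a fixed order, and after trivializing the appropriate factors via $t$ one must check that the surviving arrows compose in exactly the order displayed in (i)–(iii), paying attention to the conventional ordering of tensor products (cf.\ Remark \ref{re:1-morphisms}) and to the direction of $\beta$ versus $\beta^{-1}$. I expect the trickiest of the three to be (i), because there both gerbe products degenerate and one must verify that the resulting degenerate pentagon is the square as drawn (with the diagonal being $\beta$ on the long edge) rather than its transpose; this is a direct diagram chase using associativity of $\mu$ and $\mu'$ and the naturality of the canonical trivializations, and I would present it by pulling back \erf{compgerbemorph} along the stated map and cancelling the identity morphisms.
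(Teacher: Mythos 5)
Your overall strategy is exactly the paper's: all three identities are obtained by restricting the compatibility diagram \erf{compgerbemorph} along suitable embeddings into $Z^{[3]}$ and cancelling the gerbe products that land over a diagonal via the canonical trivializations of Corollary \ref{co:groupoidstructure}. Your treatment of (ii) and (iii), and your preliminary check that $\iota_l^{*}\beta$ and $\iota_r^{*}\beta$ have the asserted sources and targets, are correct and match the paper's proof.

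There is, however, a gap in your argument for (i). The commutativity of \erf{compgerbemorph} is a \emph{single} equation between two composites, so its restriction along one embedding $Z^{[2]} \to Z^{[3]}$ yields a single identity — it cannot "degenerate precisely to the square (i)", because the square together with its diagonal encodes \emph{two} independent identities (the two triangular subdiagrams). Your chosen embedding $((p_1,p_1'),(p_2,p_2')) \mapsto ((p_1,p_1'),(p_2,p_1'),(p_2,p_2'))$ trivializes $\pi_{12}^{*}P'$ and $\pi_{23}^{*}P$ and therefore produces only the lower-left triangle, $(\beta_r \otimes \id) \circ (\id \otimes \beta_l) = \beta$. To obtain the upper-right triangle, $\beta_l \circ \beta_r = \beta$, you need to restrict \erf{compgerbemorph} along the \emph{other} embedding $((p_1,p_1'),(p_2,p_2')) \mapsto ((p_1,p_1'),(p_1,p_2'),(p_2,p_2'))$, which instead trivializes $\pi_{12}^{*}P$ and $\pi_{23}^{*}P'$. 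This is precisely why the paper speaks of the "two triangular subdiagrams" following from restriction along (plural) "appropriate embeddings". The repair is immediate, but as written your proof of (i) establishes only half of the claim.
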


\begin{proof}
The identities (ii) and (iii) follow by restricting the commutative diagram \erf{compgerbemorph} to the submanifolds $\mathcal{P}_0^{[3]} \times_M \mathcal{P}_0'$ and $\mathcal{P}_0 \times_M \mathcal{P}_0'^{[3]}$  of $Z^{[3]}$, respectively. Similarly, the commutativity of the two triangular subdiagrams in (i) follows by restricting \erf{compgerbemorph} along appropriate embeddings $Z^{[2]} \to Z^{[3]}$.
\end{proof}

Now we are in position to define the anafunctor $\mathscr{R}_{\mathcal{A}}$. First, we consider the left action
\begin{equation*}
\beta_0: \Gamma_0 \times Q \to Q : (g,q) \mapsto \beta_r((\id,g),q)
\end{equation*}
that satisfies $\alpha(\beta_0(g,q)) = g\alpha(q)$. 
The action $\beta_0$ is properly discontinuous and free because $\beta_r$ is a bundle isomorphism. 
The quotient $F := Q/\Gamma_0$ is the total space of the anafunctor $\mathscr{R}_{\mathcal{A}}$ we want to construct. Left and right anchors of an element $q\in F$ with $\chi(q)=(p,p')$ are given by 
\begin{equation*}
\alpha_l(q) := p
\quand
\alpha_r(q) := R(p',\alpha(q))\text{.}
\end{equation*}
The actions are  defined by
\begin{equation*}
\rho_l(\rho, q) := \beta_l^{-1}(q,(\rho,1))
\quand
\rho_r(q , \rho') := \beta_r((R(\rho',\id_{\alpha(q)^{-1}}),1),q)\text{.}
\end{equation*}
The left action is invariant under the action $\beta_0$ because of Lemma \ref{lem:betaprop} (i). For the right action, invariance follows from Lemma \ref{lem:betaprop} (ii) and the identity
\begin{equation*}
\mu'((R(\rho',\id_{\alpha(q)^{-1}g^{-1}}),1),(\id,g)) \stackrel{\text{\erf{defmu}}}{=}  \mu'((\id,g),(R(\rho',\id_{\alpha(q)^{-1}}),1))\text{.}
\end{equation*}

\begin{lemma}
The above formulas  define an anafunctor $F : \mathcal{P} \to \mathcal{P}'$.
\end{lemma}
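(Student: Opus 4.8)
The plan is to verify the defining axioms of an anafunctor from Definition~\ref{sec:ana} in order: that the two actions $\rho_l$ and $\rho_r$ are well-defined, that each respects its anchor, that the two actions commute, and finally that $\alpha_l\colon F\to\mathcal{P}_0$ is a principal $\mathcal{P}'$-bundle. Well-definedness under $\beta_0$ has already been checked in the text for the anchors and the two actions, so what remains is to check that $\rho_l$ and $\rho_r$ are genuine (left, resp.\ right) groupoid actions and that they commute.

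First I would check that $\rho_l$ is a left action of $\mathcal{P}$: that $\rho_l(\id_{\alpha_l(q)},q)=q$, which follows from $\beta_l^{-1}(q,(t_{\cdot},1))=q$ using the unit property of $t$ from Lemma~\ref{lem:buntriv}(ii) together with Lemma~\ref{lem:betaprop}(ii); and that $\rho_l(\rho_2\circ\rho_1,q)=\rho_l(\rho_2,\rho_l(\rho_1,q))$, which is precisely the content of Lemma~\ref{lem:betaprop}(ii) after inverting $\beta_l$ and using that $\mu$ is the composition in the Lie groupoid $\mathcal{P}$ (Corollary~\ref{co:groupoidstructure}). I would also confirm the anchor compatibility $\alpha_l(\rho_l(\rho,q))=t(\rho)$ and $\alpha_l(\rho_r(q,\rho'))=\alpha_l(q)$, both of which are immediate from the fibrewise descriptions of $\beta_l$ and $\beta_r$ recorded in the comment after Lemma~\ref{lem:betaprop}. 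Dually, $\rho_r$ being a right action of $\mathcal{P}'$ follows from Lemma~\ref{lem:betaprop}(iii): the unit axiom uses $\mu'$ applied to identity morphisms, and associativity $\rho_r(\rho_r(q,\rho_1'),\rho_2')=\rho_r(q,\rho_1'\circ\rho_2')$ uses (iii) together with the formula $\gamma_1\circ\gamma_2=\gamma_1\cdot\id_{s(\gamma_1)^{-1}}\cdot\gamma_2$ valid in any $2$-group (already invoked several times in the paper) to match up the $\Gamma_0$-bookkeeping hidden in $R(\rho',\id_{\alpha(q)^{-1}})$.

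Next I would check that the two actions commute, i.e.\ $\rho_r(\rho_l(\rho,q),\rho')=\rho_l(\rho,\rho_r(q,\rho'))$; this is exactly the commuting square of Lemma~\ref{lem:betaprop}(i), applied after rewriting $\rho_l$ as $\beta_l^{-1}$ of something and $\rho_r$ as $\beta_r$ of something, and keeping track of the anchor element $\alpha(q)$ which is unchanged by $\rho_l$. Finally, the principality of $\alpha_l\colon F\to\mathcal{P}_0$ as a $\mathcal{P}'$-bundle: $\alpha_l$ is a surjective submersion because $\chi\colon Q\to Z$ is one and $\mathrm{pr}_1\colon Z=\mathcal{P}_0\times_M\mathcal{P}_0'\to\mathcal{P}_0$ is one and $F=Q/\Gamma_0$; and the shear map $F\lli{\alpha_r}\times_t\mathcal{P}_1'\to F\lli{\alpha_l}\times_{\alpha_l}F$ is a diffeomorphism because $Q$ was already a principal $\Gamma$-bundle over $Z$ and, fibrewise over a fixed $p\in\mathcal{P}_0$, quotienting $Q$ by $\Gamma_0$ trades the $\Gamma$-fibre over $(p,p')$ for a $\mathcal{P}'_1$-orbit through the points with $\alpha_l=p$, using that $\tau'=(\mathrm{pr}_1,R')\colon\mathcal{P}'\times\Gamma\to\mathcal{P}'\times_M\mathcal{P}'$ is a weak equivalence (part~(a) of the definition of a principal $\Gamma$-$2$-bundle).

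I expect the main obstacle to be the principality statement, specifically proving that the shear map is a diffeomorphism with smooth inverse: one must produce the inverse by combining the principal-bundle structure of $Q\to Z$ (giving, for two points of $Q$ in the same $\alpha_l$-fibre, a canonical $\gamma\in\Gamma_1$ relating them) with the smooth essential surjectivity and smooth full faithfulness of $\tau'$ (Theorem~\ref{weak}), which together convert that $\gamma$ into a unique morphism $\rho'\in\mathcal{P}_1'$ carrying out the required translation. This is structurally the same computation as in the proof that $\chi\colon Q\to Z$ is a surjective submersion in the $\mathscr{E}_M$-on-$1$-morphisms section, so it is routine but bookkeeping-heavy; all the pieces — principality of $Q$, the two conditions of Theorem~\ref{weak} for $\tau'$, and Lemma~\ref{lem:betaprop} — are already available, and the calculation is of the type the authors consistently delegate to the reader.
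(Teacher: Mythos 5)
Your proposal is correct and follows the paper's proof: the action axioms and their commutativity are reduced to Lemma \ref{lem:betaprop} (ii), (iii) and (i) respectively, the anchor compatibilities are routine, and the only substantive work is inverting the shear map $(q,\rho')\mapsto(q,\rho_r(q,\rho'))$, exactly as you identify. The one point worth sharpening is the mechanism for that inverse: since $q_1$ and $q_2$ lie over different points $(p,p')$ and $(p,\tilde p')$ of $Z$, principality of $Q\to Z$ and the weak equivalence of $\tau'$ do not by themselves relate them -- the paper first applies $\beta_r|_{p,p',\tilde p'}^{-1}$ to $q_2$ to obtain a representative $((\tilde\rho',g'),\tilde q)$ with $\tilde q$ in the \emph{same} $Q$-fibre as $q_1$, then uses principality of $Q$ to find the unique $\gamma\in\Gamma_1$ with $q_1=\tilde q\circ\gamma$, and outputs $R(\tilde\rho',\gamma^{-1})$, checking independence of the local choice of representative.
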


\begin{proof}
The compatibility between anchors and actions is easy to check. The axiom for the actions $\rho_l$ and $\rho_r$ follows from Lemma \ref{lem:betaprop} (ii) and (iii). Lemma \ref{lem:betaprop} (i) shows that the actions commute.
It remains to prove that $\alpha_l: F \to \mathcal{P}_0$ is a principal $\mathcal{P}'$-bundle. Since $\alpha_l$ is a composition of surjective submersions, we only have to show that the map
\begin{equation*}
\tau: F \lli{\alpha_r} \times_{t} \mathcal{P}' \to F \lli{\alpha_l} \times_{\alpha_l} F : (q,\rho') \mapsto (q, \rho_r(q ,\rho'))
\end{equation*} 
is a diffeomorphism. We construct an inverse map $\tau^{-1}$ as follows. For $(q_1,q_2)$ with $\chi(q_1) = (p,p')$ and $\chi(q_2)\eq (p,\tilde p')$, choose a representative \begin{equation*}
((\tilde\rho',g'),\tilde q) := \beta_r|_{p,p',\tilde p'}^{-1}(q_2)\text{.}
\end{equation*}
Such choices can be made locally in a smooth way, and the result will not depend on them. We have $\chi(\tilde q) = (p,p')$ that that there exists a unique $\gamma \in \Gamma_1$ such that $q_1 = \tilde q \circ \gamma$. Now we put
\begin{equation*}
\tau^{-1}(q_1,q_2) := (q_1, R(\tilde\rho',\gamma^{-1}))\text{.}
\end{equation*}
The calculation of $\tau^{-1} \circ \tau$ is straightforward.
For the calculation of $(\tau \circ \tau^{-1})(q_1,q_2)$ we have to compute in the second component
\begin{eqnarray*}
\beta_r((R(\tilde\rho',\gamma^{-1} \cdot \id_{\alpha(q_1)^{-1}}),1),q_1) &=& \beta_r((R(\tilde\rho',\gamma^{-1}\cdot \id_{\alpha(q_1)^{-1}}),1) \circ (\gamma \cdot \id_{\alpha(\tilde q)^{-1}}),\tilde q)
\\&=& \beta_r((\tilde\rho',\alpha(q_1)\alpha(\tilde q)^{-1}),\tilde q)
\\&=& \beta_0(\alpha(q_1)\alpha(\tilde q)^{-1}g'^{-1},\beta_r((\tilde\rho',g'),\tilde q))
\\&=& \beta_0(\alpha(q_1)\alpha(\tilde q)^{-1}g'^{-1},q_{2})\text{,}
\end{eqnarray*}
and this is equivalent to $q_2$. 
\end{proof}

In order to promote the anafunctor $F$ to a 1-morphism between principal 2-bundles, we have to do two things: we have to check that $F$ commutes with the projections of the bundle $\mathcal{P}_1$ and $\mathcal{P}_2$, and we have to construct a $\Gamma$-equivariant structure on $F$. For the first point we use Remark \ref{rem:preserving} \erf{rem:preserving:b}, whose criterion $\pi \circ \alpha_l = \pi \circ \alpha_r$ is clearly satisfied. For the second point we provide a smooth action $\rho: F \times \Gamma_1 \to F$ in the sense of Definition \ref{def:actionanafunctor} and  use Lemma \ref{lem:equivarianceidentification}, which provides a construction of a $\Gamma$-equivariant structure. The action is defined by
\begin{equation}
\label{gammaactionanafunctorconstruction}
\rho(q,\gamma) := \beta_l^{-1}(q \circ (\id_{\alpha(q)} \cdot \gamma \cdot \id_{t(\gamma)^{-1}}),(\id_{R(\alpha_l(q),t(\gamma))},t(\gamma))) \text{.}
\end{equation}

\begin{lemma}
This defines a smooth action of $\Gamma_1$ on $F$ in the sense of Definition \ref{def:actionanafunctor}.
\end{lemma}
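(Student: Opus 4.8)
The goal is to verify that the formula \erf{gammaactionanafunctorconstruction} defines a smooth right action of the group $\Gamma_1$ on the total space $F = Q/\Gamma_0$ that satisfies the three axioms of an action of a group object on an anafunctor (Definition \ref{def:actionanafunctor}): compatibility with the anchors, compatibility with the groupoid actions $\rho_l,\rho_r$, and the associativity/unit law for the $\Gamma_1$-action. First I would check that the right-hand side of \erf{gammaactionanafunctorconstruction} is well-defined: one must see that $q \circ (\id_{\alpha(q)} \cdot \gamma \cdot \id_{t(\gamma)^{-1}})$ is a legitimate element of $Q$ — its anchor is $\alpha(q)\cdot\alpha(q)^{-1}t(\gamma)t(\gamma)^{-1} = 1 \cdot (\text{something})$, so its anchor works out, and the pair $(\id_{R(\alpha_l(q),t(\gamma))},t(\gamma))$ sits in the right fibre of $P$ over $\mathcal{P}_0^{[2]}$ — and then that $\beta_l^{-1}$ can be applied to the resulting element of $\mathrm{pr}_{23}^{*}Q \otimes \mathrm{pr}_{12}^{*}P$. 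I would also check invariance under the $\beta_0$-action so that the formula descends to $F$; this uses Lemma \ref{lem:betaprop}(i) exactly as in the verification for $\rho_l$.

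Next I would verify the three axioms in order. The anchor compatibility is a direct computation: $\alpha_l(\rho(q,\gamma)) = R(\alpha_l(q),t(\gamma))$ and $\alpha_r(\rho(q,\gamma)) = \alpha_r(q)$, both of which fall out of reading off the anchors on the two sides of $\beta_l$ together with the formula $\alpha(\rho(q,\gamma)) = t(\gamma)^{-1}\alpha(q)s(\gamma)$ — wait, more precisely one tracks how $\alpha$ transforms and inserts it into the definitions of $\alpha_l,\alpha_r$. The compatibility with $\rho_l$ and $\rho_r$ is where Lemma \ref{lem:betaprop}(i)--(iii) does the real work: since $\rho_l$ is also defined via $\beta_l^{-1}$ and $\rho_r$ via $\beta_r$, the commutativity of the square in Lemma \ref{lem:betaprop}(i) and the $\mu$- and $\mu'$-compatibilities (ii),(iii) let one slide the $\Gamma_1$-action past the groupoid actions, exactly mirroring the invariance arguments already given for $\rho_l,\rho_r$ under $\beta_0$. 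Finally, the associativity $\rho(\rho(q,\gamma_1),\gamma_2) = \rho(q,\gamma_1 \circ \gamma_2)$ is proved by expanding both sides with \erf{gammaactionanafunctorconstruction}, applying $\beta_l$ to get rid of one layer of $\beta_l^{-1}$, using Lemma \ref{lem:betaprop}(ii) to recombine the two $P$-factors via the bundle gerbe product $\mu$ of $\mathscr{E}_M(\mathcal{P})$, and then using the 2-group identity $\gamma_1 \circ \gamma_2 = \gamma_1 \cdot \id_{s(\gamma_1)^{-1}} \cdot \gamma_2$ (already invoked several times in this section) together with the functoriality of $R$ to simplify the composite of conjugation-type morphisms $\id_{\alpha(q)} \cdot \gamma_i \cdot \id_{t(\gamma_i)^{-1}}$ into the single morphism for $\gamma_1 \circ \gamma_2$. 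The unit law $\rho(q,\id_1) = q$ is immediate, since $\id_{\alpha(q)}\cdot\id_1\cdot\id_{\alpha(q)^{-1}} = \id_1$ and $\beta_l^{-1}(q,(\id,1)) = \rho_l$-neutral by the unit normalization of $\beta_l$ built into Lemma \ref{lem:betaprop}(ii).

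The main obstacle I anticipate is purely bookkeeping: the morphisms $\id_{\alpha(q)} \cdot \gamma \cdot \id_{t(\gamma)^{-1}}$ are "conjugations" in $\Gamma$ whose source and target objects must be matched carefully against the fibre data of $Q$ and $P$, and when one composes two such morphisms in the associativity check the intermediate objects $R(\alpha_l(q),t(\gamma_i))$ and the anchors $\alpha(\rho(q,\gamma_1))$ must be computed precisely, or the expressions will not typecheck. As with the other lemmas in this subsection, once the diagrammatic identities of Lemma \ref{lem:betaprop} are in hand the computation is mechanical, so I would state the well-definedness and anchor compatibility explicitly, indicate that $\rho_l,\rho_r$-compatibility follows from Lemma \ref{lem:betaprop}(i)--(iii) just as the invariance statements above, and reduce the associativity to Lemma \ref{lem:betaprop}(ii) plus the standard 2-group identity, leaving the remaining algebra to the reader in the style already adopted throughout the paper.
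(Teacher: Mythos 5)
Your overall skeleton — well-definedness of \erf{gammaactionanafunctorconstruction} and its descent to $F=Q/\Gamma_0$, anchor compatibility by direct computation, the compatibility axiom (ii) of Definition \ref{def:actionanafunctor} split into a $\rho_l$-half and a $\rho_r$-half handled with Lemma \ref{lem:betaprop}, and the action law reduced to 2-group algebra — is the same route the paper takes. But two of your concrete claims are wrong in ways that would derail the verification. First, the action law you propose to prove, $\rho(\rho(q,\gamma_1),\gamma_2)=\rho(q,\gamma_1\circ\gamma_2)$, is not the right statement: Definition \ref{def:actionanafunctor} asks for an ordinary action of the \emph{group} $\Gamma_1$, whose group law is the 2-group multiplication $\cdot$, not the groupoid composition $\circ$ (which is only partially defined and so cannot be a group law on all of $\Gamma_1$). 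The identity to establish is $\rho(\rho(q,\gamma_1),\gamma_2)=\rho(q,\gamma_1\cdot\gamma_2)$, and the 2-group identity that makes it work is not $\gamma_1\circ\gamma_2=\gamma_1\cdot\id_{s(\gamma_1)^{-1}}\cdot\gamma_2$ but rather the multiplicative recombination
\begin{equation*}
(\gamma_{1} \cdot \id_{t(\gamma_{1})^{-1}}) \cdot ( \id_{s(\gamma_1)} \cdot \gamma_{2} \cdot \id_{t(\gamma_{2})^{-1}t(\gamma_1)^{-1}}) =\gamma_1 \cdot \gamma_2 \cdot \id_{t(\gamma_2)^{-1}t(\gamma_1)^{-1}}\text{,}
\end{equation*}
used together with the anchor formula $\alpha(\rho(q,\gamma))=\alpha(q)s(\gamma)$ to match up the fibres before the two layers of $\beta_l^{-1}$ are merged (that merging step is indeed Lemma \ref{lem:betaprop}(ii), as you say). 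Second, your right-anchor claim $\alpha_r(\rho(q,\gamma))=\alpha_r(q)$ is false and would make your check of condition (i) fail: condition (i) demands exactly $\alpha_r(\rho(q,\gamma))=R(\alpha_r(q),s(\gamma))$, and this is what the definition of $\rho$ delivers, since $\alpha_r(q)=R(p',\alpha(q))$ and the $\Gamma$-action shifts $\alpha$ by $s(\gamma)$.

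For the compatibility axiom (ii), "sliding the $\Gamma_1$-action past the groupoid actions" via Lemma \ref{lem:betaprop}(i)--(iii) is the right instinct, but the actual work is again 2-group bookkeeping: one needs the decompositions $\id_{\alpha(q)} \cdot (\gamma_l \circ \gamma) \cdot \id_{t(\gamma_l)^{-1}} = (\id_{\alpha(q)} \cdot \gamma \cdot \id_{t(\gamma)^{-1}}) \circ (\id_{\alpha(q)s(\gamma)t(\gamma)^{-1}} \cdot \gamma_l \cdot \id_{t(\gamma_l)^{-1}})$ and its analogue for $\gamma\circ\gamma_r$, and for the $\rho_r$-half one must additionally pass through the $\beta_0$-equivalence relation on $F$. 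So your proposal is structurally aligned with the paper's proof, but as written it would prove the wrong action axiom and would fail the anchor check; fixing both requires replacing $\circ$ by $\cdot$ in the group law and tracking $\alpha$ through $\rho$ correctly.
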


\begin{proof}
Smoothness is clear from the definition. The identity
\begin{equation*}
\rho(\rho(q,\gamma_1),\gamma_2) = \beta_l^{-1}(q \circ (\id_{\alpha(q)} \cdot  \gamma_1 \cdot \gamma_2 \cdot \id_{t(\gamma_2)^{-1}t(\gamma_1)^{-1}}),(\id,t(\gamma_{1}\cdot\gamma_{2}))) = \rho(q,\gamma_1 \cdot \gamma_2)
\end{equation*}
follows from the definition and the two identities 
\begin{multline}
\label{eq:lem729id1}
\alpha(\rho(q,\gamma)) = \alpha(q)s(\gamma)
\quand
(\gamma_{1} \cdot \id_{t(\gamma_{1})^{-1}}) \cdot ( \id_{s(\gamma_1)} \cdot \gamma_{2} \cdot \id_{t(\gamma_{2})^{-1}t(\gamma_1)^{-1}}) =\gamma_1 \cdot \gamma_2 \cdot \id_{t(\gamma_2)^{-1}t(\gamma_1)^{-1}} \text{.}
\end{multline}
The latter can easily be verified upon substituting a crossed module for $\Gamma$. 
Checking  condition (i) of Definition \ref{def:actionanafunctor} just uses the definitions.
We check condition (ii) in two steps. First we prove the identity
\begin{equation*}
\rho(\rho_l(\rho,q),\gamma_l \circ \gamma) = \rho_l(R(\rho,\gamma_l),\rho(q,\gamma))\text{.}
\end{equation*}
The main ingredient is the decomposition
\begin{equation}
\label{eq:lem729id2}
\id_{\alpha(q)} \cdot (\gamma_l \circ \gamma) \cdot \id_{t(\gamma_l)^{-1}} = (\id_{\alpha(q)} \cdot \gamma \cdot \id_{t(\gamma)^{-1}}) \circ (\id_{\alpha(q)s(\gamma)t(\gamma)^{-1}} \cdot \gamma_l \cdot \id_{t(\gamma_l)^{-1}})
\end{equation}
that can e.g. be verified in the crossed module language. 
Now we compute
\begin{eqnarray*}
\rho(\rho_l(\rho,q),\gamma_l \circ \gamma) &=& \beta_l^{-1}(q \circ (\id_{\alpha(q)} \cdot (\gamma_l \circ \gamma) \cdot \id_{t(\gamma_l)^{-1}}), (R(\rho,t(\gamma_l)),t(\gamma_l)))
\\&\stackrel{\text{\erf{eq:lem729id2}}}{=}&  \beta_l^{-1}(q \circ  (\id_{\alpha(q)} \cdot \gamma \cdot \id_{t(\gamma)^{-1}}) ,(R(\rho,\gamma_l),t(\gamma_l)))
\\&=& \rho_l(R(\rho,\gamma_l),\rho(q,\gamma))\text{.}
\end{eqnarray*}
The second step is to show the identity
\begin{equation*}
\rho(\rho_r(q,\rho'),\gamma \circ \gamma_r) = \rho_r(\rho(q,\gamma),R(\rho',\gamma_r))\text{.}
\end{equation*}
Here we use the decomposition
\begin{equation}
\label{eq:lem729id3}
\id_{\alpha(q)} \cdot (\gamma \circ \gamma_r) \cdot \id_{t(\gamma)^{-1}} = (\id_{\alpha(q)} \cdot \gamma \cdot \id_{t(\gamma)^{-1}}) \circ (\id_{\alpha(q)}\cdot \gamma_r \cdot \id_{t(\gamma)^{-1}})\text{.}
\end{equation}
Then we compute
\begin{eqnarray*}
&&\hspace{-1.5cm}\rho(\rho_r(q,\rho'),\gamma \circ \gamma_r) \\&=& \beta_l^{-1}(\beta_r((R(\rho',\id_{\alpha(q)^{-1}}),1),q \circ (\id_{\alpha(q)} \cdot (\gamma \circ \gamma_r) \cdot \id_{t(\gamma)^{-1}})),(\id,t(\gamma)))
\\&\stackrel{\text{\erf{eq:lem729id3}}}{=}&
\beta_l^{-1}(\beta_r((R(\rho',\gamma_r\cdot \id_{s(\gamma)^{-1}\alpha(q)^{-1}}),1),\\&&\hspace{2cm}\beta_0(\alpha(q)s(\gamma_r)s(\gamma)^{-1}\alpha(q)^{-1},q \circ (\id_{\alpha(q)} \cdot \gamma \cdot \id_{t(\gamma)^{-1}}))),(\id,t(\gamma)))
\\&\stackrel{\text{\erf{eq:lem729id1}}}{=}& \beta_l^{-1}(\beta_r((R(\rho',\gamma_r \cdot \id_{\alpha(\rho(q,\gamma))^{-1}})),q \circ (\id_{\alpha(q)} \cdot \gamma \cdot \id_{t(\gamma)^{-1}})),(\id,t(\gamma)))
\\ &=& \rho_r(\rho(q,\gamma),R(\rho',\gamma_r))\text{,}
\end{eqnarray*}
where we have employed the equivalence relation on $F$ that was generated by the action of $\beta_0$. 
\end{proof}

\subsubsection*{Construction of a 2-isomorphism $\eta_{\mathcal{A}}: \mathcal{A} \Rightarrow \mathscr{E}_M(\mathscr{R}_{\mathcal{A}})$}

We may again assume that the common refinement of $\mathcal{A}$ is the fibre product $\mathcal{P}_0 \times_M \mathcal{P}_0'$; otherwise, the proof of Lemma \ref{lem:canonicalrefinement} provides a 2-isomorphism between $\mathcal{A}$ and one of these.  
Now, $\mathcal{A}$ and $\mathscr{E}_M(\mathscr{R}_{\mathcal{A}})$ have the same common refinement, and $\eta_{\mathcal{A}}$ is given by the map
\begin{equation*}
\eta: Q \to F \times \Gamma_0 : q \mapsto (q,\alpha(q))\text{.}
\end{equation*}
This is obviously smooth and respects the projections to the base: if $\chi(q)=(p,p')$, then 
\begin{equation*}
\chi(q,\alpha(q)) \stackrel{\text{\erf{eq:emorph}}}{=} (\alpha_l(q),R(\alpha_r(q),\alpha(q)^{-1}))=(p,p')\text{.}
\end{equation*}
Further, it respects the $\Gamma$-actions:
\begin{equation*}
\eta(q \circ \gamma) = (q \circ \gamma, s(\gamma)) = \beta_l^{-1}(q \circ \gamma,(\id,1))  \stackrel{\text{\erf{gammaactionanafunctorconstruction}}}{=} (\rho(q,\id_{\alpha(q)^{-1}}\cdot \gamma),s(\gamma))\stackrel{\text{\erf{eq:emorph}}}{=} \eta(q) \circ \gamma\text{,}
\end{equation*}
so that $\eta$ is a bundle morphism. It remains to verify the commutativity of the compatibility diagram \erf{eq:comp2morphsimple}. Let $((\rho',g'),q') \in P' \otimes \zeta_1^{*}Q$, and let $(q,(\rho,g)) \in \zeta_2^{*}Q \otimes P$ be a representative for $\beta((\rho',g'),q')$. In particular, we have $\alpha(q)g=g'\alpha(q')$, since $\beta_r$ is anchor-preserving. Then, we get clockwise
\begin{equation}
\label{eq:clockwiseeta}
(\eta \otimes \id)(\beta((\rho',g'),q')) = ((q,\alpha(q)),(\rho,g))\text{.}
\end{equation}
Counter-clockwise, we have to use the isomorphism  of Lemma \ref{lem:eonemorphisms} that we call $\tilde\beta$ here. Then,
\begin{equation}
\label{eq:counterclockwiseeta}
\tilde\beta((\id \otimes \eta)((\rho',g'),q')) = \tilde\beta((\rho',g'),(q',\alpha(q')))
= ((\tilde q,g'\alpha(q')g^{-1}),(\rho, g))
\end{equation}
where the choices $(\tilde\rho,h)$ we have to make for the definition of $\tilde\beta$ are here $(\rho,g^{-1})$, and $\tilde q$ is defined in  \erf{eq:ftilde}, which gives  here
\begin{equation*}
\tilde q = \beta_l^{-1}(\beta_r((\rho',1),q'),(R(\rho^{-1},\id_{g^{-1}}),g^{-1}))\text{.}
\end{equation*}
Comparing \erf{eq:clockwiseeta} and \erf{eq:counterclockwiseeta} it remains to prove $q = \tilde q$ in $F$. As $F$ was the quotient of $Q$ by the action $\beta_0$, it suffices to have
\begin{eqnarray*}
\beta_0(g',\tilde q) &\stackrel{\text{(i)}}{=}& \beta_l^{-1}(\beta_r((\id,g'),\beta_r((\rho',1),q')), (R(\rho^{-1},\id_{g^{-1}}),g^{-1}))
\\ &\stackrel{\text{(iii)}}{=}& \beta_l^{-1}(\beta_r((\rho',g'),q'), (R(\rho^{-1},\id_{g^{-1}}),g^{-1}))
\\ &=& \beta_l^{-1}(\beta_l^{-1}(q,(\rho,g)), (R(\rho^{-1},\id_{g^{-1}}),g^{-1}))
\\ &\stackrel{\text{(ii)}}{=}& \beta_l^{-1}(q,(\id,1))
\\ &=& q\text{.}
\end{eqnarray*}
This finishes the construction of the 2-isomorphism $\eta_{\mathcal{A}}$.

\begin{appendix}

\setsecnumdepth{1}

\section{Equivariant Anafunctors and Group Actions}

\label{sec:equivariantanafunctorsandgroupactions}

In this section we are concerned with a Lie 2-group $\Gamma$ and Lie groupoids   $\mathcal{X}$ and $\mathcal{Y}$  with actions $R_1: \mathcal{X} \times \Gamma \to \mathcal{X}$ and $R_2: \mathcal{Y} \times \Gamma \to \mathcal{Y}$.

\begin{definition}
\label{def:actionanafunctor}
An \emph{action} of the 2-group $\Gamma$ on an anafunctor $F:\mathcal{X} \to \mathcal{Y}$ is an ordinary smooth action
$\rho: F \times \Gamma_1 \to F$
of the group $\Gamma_1$ on the total space $F$ that
\begin{enumerate}[(i)]

\item
preserves the anchors in the sense that the diagrams
\begin{equation*}
\alxydim{@R=1.3cm}{F \times \Gamma_1 \ar[d]_{\alpha_l \times t} \ar[r]^-{\rho} & F \ar[d]^{\alpha_l} \\ \mathcal{X}_0 \times \Gamma_0 \ar[r]_-{R_1} & \mathcal{X}_0}
\quand
\alxydim{@R=1.3cm}{F \times \Gamma_1 \ar[r]^-{\rho} \ar[d]_{\alpha_r \times s} & F \ar[d]^{\alpha_r} \\ \mathcal{Y}_0 \times \Gamma_0 \ar[r]_-{R_2} & \mathcal{Y}_0}
\end{equation*}
are commutative.

\item
is compatible with the $\Gamma$-actions in the sense that the identity
\begin{equation*}
\rho(\chi \circ f \circ \eta,\gamma_l \circ \gamma \circ \gamma_r) =R_1(\chi,\gamma_l) \circ  \rho(f,\gamma) \circ R_2(\eta,\gamma_r)
\end{equation*}
holds for all appropriately composable $\chi\in \mathcal{X}_1$, $\eta\in \mathcal{Y}_1$, $f\in F$, and $\gamma_l,\gamma,\gamma_r\in \Gamma_1$.
\end{enumerate} 
If $F_1,F_2:\mathcal{X} \to \mathcal{Y}$ are anafunctors with $\Gamma$-action, a transformation $\eta: F_1 \Rightarrow F_2$ is called $\Gamma$-equivariant if the map $\eta:F_1 \to F_2$ between total spaces is $\Gamma_1$-equivariant in the ordinary sense. 
\end{definition}

Anafunctors $\mathcal{X} \to \mathcal{Y}$ with $\Gamma$-actions together with $\Gamma$-equivariant transformations form a groupoid $\af_{\Gamma}(\mathcal{X},\mathcal{Y})$. On the other hand, there is another groupoid $\Gamma\text{-}\af(\mathcal{X},\mathcal{Y})$ consisting of  $\Gamma$-equivariant anafunctors (Definition \ref{def:equivanafunctor}) and $\Gamma$-equivariant transformations (Definition \ref{def:equivtransformation}). 
\begin{lemma}
\label{lem:equivarianceidentification}
The categories $\af_{\Gamma}(\mathcal{X},\mathcal{Y})$ and $\Gamma\text{-}\af(\mathcal{X},\mathcal{Y})$ are canonically isomorphic.
\end{lemma}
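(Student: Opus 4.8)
The plan is to produce the isomorphism of categories by unwinding both definitions into data living on the total space $F$ and checking that they match. First I would recall that, given an anafunctor $F:\mathcal{X}\to\mathcal{Y}$, a $\Gamma$-equivariant structure in the sense of Definition \ref{def:equivanafunctor} is a transformation $\lambda$ between the anafunctors $F\circ R_1$ and $R_2\circ(F\times\id)$ from $\mathcal{X}\times\Gamma$ to $\mathcal{Y}$. The key observation is that the total space of the anafunctor $F\circ R_1$ is canonically $(\mathcal{X}_0\times\Gamma_0)\lli{R_1}\times_{\alpha_l}F$ modulo the $\mathcal{X}_1$-action, while the total space of $R_2\circ(F\times\id)$ is, after a similar simplification, again built from $F\times\Gamma_0$; in both cases, because the $\Gamma$-actions $R_1,R_2$ are strict functors, these composite total spaces are canonically identified with $F\times\Gamma_0$ via $(f,g)\mapsto$ the class of $(\alpha_l(f),g,f)$. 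Under these identifications the transformation $\lambda$ becomes a smooth map $F\times\Gamma_0\to F\times\Gamma_1$ over the appropriate anchors; evaluating at $g=1$ (using $R_i(-,1)=\id$) shows $\lambda$ is determined by a smooth map which I will show is exactly a $\Gamma_1$-action $\rho:F\times\Gamma_1\to F$, and conversely every such $\rho$ assembles into a transformation $\lambda$.

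The concrete dictionary is: given $\rho$, set $\lambda_{(x,g)}$ on a point $f\in F$ with $\alpha_l(f)=x$ to be the arrow in $\mathcal{Y}$ with source $\rho(f,\id_g)$ going to $R_2(f,\id_g)$ — more precisely $\lambda$ is represented fibrewise by $\rho(f,\id_g)$ together with the identity 2-cell, and the condition that $\lambda$ is a well-defined $\mathcal{X}$- and $\mathcal{Y}$-equivariant transformation between the two composite anafunctors is equivalent to condition (i) of Definition \ref{def:actionanafunctor} (anchor-preservation: $\alpha_l\circ\rho=R_1\circ(\alpha_l\times t)$ and $\alpha_r\circ\rho=R_2\circ(\alpha_r\times s)$). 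Then the coherence hexagon in Definition \ref{def:equivanafunctor} — the pentagon-like compatibility of $\lambda$ with the multiplication functor $m:\Gamma\times\Gamma\to\Gamma$ — translates, after tracing both composites through the identification above and using that composition in a 2-group satisfies $\gamma_1\circ\gamma_2=\gamma_1\cdot\id_{s(\gamma_1)^{-1}}\cdot\gamma_2$, precisely into the compatibility identity (ii) of Definition \ref{def:actionanafunctor}, namely $\rho(\chi\circ f\circ\eta,\gamma_l\circ\gamma\circ\gamma_r)=R_1(\chi,\gamma_l)\circ\rho(f,\gamma)\circ R_2(\eta,\gamma_r)$. In particular the fact that $\rho$ is an honest \emph{action} of the group $\Gamma_1$ (associativity and unitality) is extracted from the special case $\chi,\eta$ identities and $\gamma_l,\gamma_r$ varying, combined with $\lambda$ being invertible (every transformation of anafunctors is invertible). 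I would then check that this assignment $\lambda\leftrightarrow\rho$ is a bijection, that it sends equivariant transformations to $\Gamma_1$-equivariant maps and vice versa — which is immediate because in both Definition \ref{def:equivtransformation} and the last sentence of Definition \ref{def:actionanafunctor} the condition is literally that $\eta:F_1\to F_2$ intertwines the relevant structure, and the translation above is natural in $F$ — and that it is compatible with composition and identities of transformations. This gives the claimed isomorphism of categories, and naturality of the construction in $F$ makes it canonical.

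The main obstacle I expect is bookkeeping rather than conceptual: carefully identifying the total spaces of the composite anafunctors $F\circ R_1$ and $R_2\circ(F\times\id)$ and tracking how the two sides of the coherence hexagon in Definition \ref{def:equivanafunctor} act on representatives, since the composition of anafunctors involves quotients and one must check the map $\rho$ and the 2-cell $\lambda$ are well-defined on equivalence classes. The cleanest route is to exploit that $R_1$ and $R_2$ are strict functors, so by Example \ref{ex:anafunctor} their anafunctor incarnations have sections; composing with a functor on one side does not require passing to a genuine quotient, and the total space of $F\circ R_1$ simplifies to $\mathcal{X}_0\lli{}\times F$-type data that is manifestly $F\times\Gamma_0$. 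With that simplification in hand the hexagon becomes an equation between two smooth maps $F\times\Gamma_1\times\Gamma_1\to F$ which one reads off directly as condition (ii), and the verification is the routine algebra in a 2-group that the paper elsewhere abbreviates. I would therefore present the proof as: (1) simplify both composite anafunctors using strictness of $R_1,R_2$; (2) set up the bijection $\lambda\leftrightarrow\rho$; (3) match Definition \ref{def:equivanafunctor}(coherence) with Definition \ref{def:actionanafunctor}(i)--(ii); (4) match the two notions of equivariant transformation; (5) observe functoriality, concluding the isomorphism $\af_\Gamma(\mathcal{X},\mathcal{Y})\cong\Gamma\text{-}\af(\mathcal{X},\mathcal{Y})$.
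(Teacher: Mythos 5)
Your overall route is the same as the paper's: write out the total spaces of the two composite anafunctors $F\circ R_1$ and $R_2\circ(F\times\id)$ explicitly (using that $R_1,R_2$ are strict functors), set up a dictionary between the transformation $\lambda$ and a $\Gamma_1$-action $\rho$ on $F$, and match the conditions on the two sides. The paper realizes the dictionary by $\lambda^{-1}(f,\gamma,\eta)=(\alpha_l(f),t(\gamma),\rho(f,\gamma)\circ\eta)$ and, conversely, $\rho(f,\gamma)=\mathrm{pr}_3\bigl(\lambda^{-1}(f,\gamma,\id)\bigr)$; note that only $F\circ R_1$ simplifies to an honest fibre product, while $R_2\circ(F\times\id)$ remains a genuine quotient $\bigl((F\times\Gamma_1)\times_t\mathcal{Y}_1\bigr)/\!\sim$, and the paper works with that quotient directly rather than identifying it with $F\times\Gamma_0$ as you suggest.

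The substantive problem is that your stated matching of conditions is shifted, and taken at face value it would not close the proof. The correct bookkeeping is: condition (i) of Definition \ref{def:actionanafunctor} corresponds to $\lambda$ landing in the correct fibre product and preserving the right anchors; condition (ii) corresponds to $\lambda$ being well defined on the quotient defining $R_2\circ(F\times\id)$ and to its equivariance under the $\mathcal{X}_1$- and $\mathcal{Y}_1$-actions (so well-definedness of the transformation is \emph{not} equivalent to (i) alone, contrary to what you write); and the coherence hexagon of Definition \ref{def:equivanafunctor} corresponds to $\rho$ being an action of the \emph{group} $(\Gamma_1,\cdot)$ --- not to condition (ii), which only involves the groupoid composition $\circ$. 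In particular, the action axiom $\rho(\rho(f,\gamma_1),\gamma_2)=\rho(f,\gamma_1\cdot\gamma_2)$ cannot be extracted from condition (ii) together with invertibility of $\lambda$, since (ii) never mentions the multiplication; it is precisely the counterpart of the hexagon and must be verified against it in both directions. With the conditions reassigned accordingly, the rest of your plan (the identification of total spaces, the matching of the two notions of equivariant transformation, and functoriality) goes through exactly as in the paper.
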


\begin{proof}
We construct a functor
\begin{equation}
\mathcal{E}: \af_{\Gamma}(\mathcal{X},\mathcal{Y}) \to \Gamma\text{-}\af(\mathcal{X},\mathcal{Y})\text{.}
\end{equation}
Let $F:\mathcal{X} \to \mathcal{Y}$ be an anafunctor with $\Gamma$-action $\rho$. We shall define a transformation
\begin{equation*}
\lambda_{\rho}: F \circ R_1 \Rightarrow R_2 \circ (F \times \id)\text{.}
\end{equation*}
First of all, the composite
\begin{equation*}
\alxydim{}{\mathcal{X} \times \Gamma \ar[r]^-{R_1} & \mathcal{X} \ar[r]^{F} & \mathcal{Y}}
\end{equation*}
is given by the total space $(\mathcal{X}_0 \times \Gamma_0) \lli{R_1} \times_{\alpha_l} F$, left and right anchors send an element $(x,g,f)$ to $(x,g)$ and $\alpha_r(f)$, respectively, and the actions are
\begin{equation*}
(\chi,\gamma) \circ (x,g,f) = (t(\chi),t(\gamma),R_1(\chi,\gamma) \circ f)
\quand
(x,g,f) \circ \eta = (x,g,f\circ \eta)\text{.}
\end{equation*}
On the other hand, the composite
\begin{equation*}
\alxydim{}{\mathcal{X} \times \Gamma \ar[r]^{F \times \id} & \mathcal{Y} \times \Gamma \ar[r]^-{R_2} & \mathcal{Y}} 
\end{equation*}
is given by the total space $((F \times \Gamma_1) \lli{R_2 \circ (\alpha_r \times s)} \times_{t} \mathcal{Y}_1) / \sim$ with the equivalence relation
\begin{equation*}
(f \circ \eta',\gamma \circ \gamma',\eta) \sim (f,\gamma,R_2(\eta',\gamma') \circ \eta)\text{.}
\end{equation*}
The left and right anchors send an element $(f,\gamma,\eta)$ to $(\alpha_l(f),t(\gamma))$ and $s(\eta)$, respectively, and the actions are
\begin{equation*}
(\chi,\gamma') \circ (f,\gamma,\eta) = (\chi \circ f,\gamma'\circ \gamma,\eta)
\quand
(f,\gamma,\eta) \circ \eta' = (f,\gamma,\eta \circ \eta')\text{.}
\end{equation*}
The inverse of the following map will define the transformation $\lambda$:
\begin{equation*}
 (F \times \Gamma_1) \lli{R_2 \circ (\alpha_r \times s)} \times_{t} \mathcal{Y}_1 \to (\mathcal{X}_0 \times \Gamma_0) \lli{R_1} \times_{\alpha_l} F: (f,\gamma,\eta) \mapsto (\alpha_l(f),t(\gamma),\rho(f,\gamma)\circ \eta)\text{.} \end{equation*}
Condition (i) ensures that this map ends in the correct fibre product, and condition (ii) assures that it is well-defined under the equivalence relation $\sim$. The left anchors are automatically respected, and the right anchors require condition (i). Similarly, the left action is respected automatically, and the right actions due to condition (ii). The axiom for a transformation is satisfied because $\rho$ is a group action. This defines the functor $\mathcal{E}$ on objects. On morphisms, it is straightforward to check that the conditions on both hand sides coincide; in particular, $\mathcal{E}$ is full and faithful.

In order to prove that the functor $\mathcal{E}$ is an isomorphism, we start with a given $\Gamma$-equivariant structure $\lambda$ on the anafunctor $F$. Then, an action $\rho: F \times \Gamma_1 \to F$ is defined by
\begin{equation*}
(f,\gamma) \mapsto \mathrm{pr}_3(\lambda^{-1}(f,\gamma,\id_{R_2(\alpha_r(f),s(\gamma))})) \end{equation*}
with $\mathrm{pr}_3: (\mathcal{X}_0 \times \Gamma_0) \lli{R_1} \times_{\alpha_l} F \to F$ the projection. The axiom for an action is satisfied due to the identity $\lambda$ obeys. 
It is straightforward to verify conditions (i) and (ii) of Definition \ref{def:actionanafunctor}.
To close the proof it suffices to notice that the two procedures we have defined are (strictly) inverse to each other.
\end{proof}

We are also concerned with  the composition of anafunctors with $\Gamma$-action. Suppose that $\mathcal{Z}$ is a third Lie groupoid with a $\Gamma$-action $R_3$, and $F:\mathcal{X} \to \mathcal{Y}$ and $G:\mathcal{Y} \to \mathcal{Z}$ are anafunctors with $\Gamma$-actions  $\rho:F \times \Gamma_1 \to F$ and $\tau: G \times \Gamma_1 \to G$. Then, the composition $G \circ F$ is equipped with the $\Gamma$-action defined by
\begin{equation}
\label{eq:companafunctorsaction}
(F \times_{\mathcal{Y}_0} G) \times  \Gamma_1 \to (F \times_{\mathcal{Y}_0} G): ((f,g),\gamma) \mapsto (\rho(f,\gamma),\tau(g,\id_{s(\gamma)}))\text{.}
\end{equation}
We leave it to the reader to check

\begin{lemma}
\label{lem:actioncomposition}
\label{lemactionequivariantidentity}
Let $\mathcal{X}$, $\mathcal{Y}$ and $\mathcal{Z}$ be Lie groupoids with $\Gamma$-actions $R_1$, $R_2$ and $R_3$.
\begin{enumerate}[(a)]
\item 
Let $F: \mathcal{X} \to \mathcal{Y}$ and $G: \mathcal{Y} \to \mathcal{Z}$ be $\Gamma$-equivariant anafunctors. If $\Gamma$-equivariant structures on $F$ and $G$ correspond to $\Gamma_1$-actions under the isomorphism of Lemma \ref{lem:equivarianceidentification}, then the $\Gamma$-equivariant structure on the composite $F \circ G$ corresponds to the $\Gamma_1$-action defined above. 

\item
The isomorphism of Lemma \ref{lem:equivarianceidentification} identifies the trivial $\Gamma$-equivariant structure on the identity anafunctor $\id: \mathcal{X} \to \mathcal{X}$ with the $\Gamma_1$-action $R_1: \mathcal{X}_1 \times \Gamma_1 \to \mathcal{X}_1$ on its total space $\mathcal{X}$.
\end{enumerate}
\end{lemma}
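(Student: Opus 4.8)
The statement to prove is Lemma~\ref{lem:actioncomposition}, which has two parts: (a) that the canonical isomorphism of Lemma~\ref{lem:equivarianceidentification} between $\af_\Gamma(\mathcal X,\mathcal Y)$ and $\Gamma\text{-}\af(\mathcal X,\mathcal Y)$ intertwines composition of equivariant anafunctors on one side with the explicit $\Gamma_1$-action \erf{eq:companafunctorsaction} on the other, and (b) that it sends the trivial equivariant structure on $\id_{\mathcal X}$ to the $\Gamma_1$-action $R_1$ on $\mathcal X_1$. Both are ``abstract nonsense'' verifications and the intended proof is simply to unwind the constructions and compare; the point is only to locate where the comparison happens and why it works.

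For part (a), I would start from two $\Gamma$-equivariant anafunctors $F:\mathcal X \to \mathcal Y$ and $G:\mathcal Y \to \mathcal Z$ whose equivariant structures $\lambda_F$ and $\lambda_G$ correspond, under Lemma~\ref{lem:equivarianceidentification}, to $\Gamma_1$-actions $\rho$ on $F$ and $\tau$ on $G$. The equivariant structure on $G\circ F$ is the pasting of $\lambda_G$ and $\lambda_F$ along the composition; explicitly, using the formula for the composite anafunctor (total space $(F\lli{\alpha_r}\times_{\beta_l}G)/\!\!\sim$) and the definition of horizontal/whiskered composition of transformations. I would then apply the inverse direction of Lemma~\ref{lem:equivarianceidentification}: the $\Gamma_1$-action associated to a structure $\lambda$ is $(e,\gamma)\mapsto \mathrm{pr}_3(\lambda^{-1}(e,\gamma,\id_{\ldots}))$. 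Feeding the pasted $\lambda_{G\circ F}$ into this formula, the $G$-component of $\lambda_{G\circ F}^{-1}$ only ever sees an identity morphism in the $\Gamma$-direction on the right, so $\tau$ is evaluated at $\id_{s(\gamma)}$, while the $F$-component produces $\rho(f,\gamma)$; this is exactly \erf{eq:companafunctorsaction}. The only care needed is that everything is well-defined modulo the equivalence relation $\sim$ on the composite's total space, which is guaranteed by condition (ii) of Definition~\ref{def:actionanafunctor} for $\rho$ and $\tau$ respectively.

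For part (b), the identity anafunctor $\id_{\mathcal X}$ has total space $\mathcal X_1$ (via Example~\ref{ex:anafunctor} applied to $\id:\mathcal X \to \mathcal X$), with anchors $t$ and $s$. The trivial $\Gamma$-equivariant structure is the identity transformation $\lambda = \id$ between $\id \circ R_1$ and $R_1 \circ (\id\times\id)$, i.e. between $R_1$ and $R_1$. Plugging $\lambda = \id$ into the formula $(e,\gamma)\mapsto \mathrm{pr}_3(\lambda^{-1}(e,\gamma,\id))$ and tracking the identifications in the composites $\id\circ R_1$ and $R_1\circ(\id\times\id)$ — both of whose total spaces reduce canonically to $\mathcal X_1$ — one reads off the action $(\chi,\gamma)\mapsto R_1(\chi,\gamma)$, which is the claim. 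This is just a matter of carefully writing out the two composite total spaces from the definition of composition of anafunctors and checking the canonical map between them is the identity under $\lambda=\id$.

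\textbf{Main obstacle.} There is no conceptual difficulty here; the statement follows from the constructions. The only real work — and the reason the paper defers it to the reader — is bookkeeping: one must keep straight the several total spaces (the composite $(F\times_{\mathcal Y_0}G)/\!\!\sim$, the two composites $F\circ R_1$ and $R_2\circ(F\times\id)$ appearing in $\lambda_F$, and their analogues for $G$), the equivalence relations on each, and the direction of each map, so that the whiskering/pasting of $\lambda_F$ and $\lambda_G$ is computed correctly. The cleanest route is to verify the identity on representatives of equivalence classes, choosing representatives with identity $\Gamma$-morphisms wherever possible (which one can always do, exactly as in Example~\ref{ex:tensorproduct} and the remark preceding it), so that the two sides collapse to visibly equal expressions; well-definedness modulo $\sim$ is then exactly conditions (i) and (ii) of Definition~\ref{def:actionanafunctor}. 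I would present it in that compressed form rather than spelling out every pasting diagram.
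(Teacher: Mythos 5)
Your proposal is correct and follows exactly the route the paper intends (and sketches only in a suppressed comment): whisker $\lambda_F$ and $\lambda_G$ to get the equivariant structure on $G\circ F$, feed it into the inverse direction of Lemma \ref{lem:equivarianceidentification}, and trace a representative $((f,g),\gamma)$ choosing the factorization $\gamma=\gamma\cdot$``$\id_{s(\gamma)}$'' across the middle $\mathcal Y\times\Gamma$-quotient, so that the $F$-factor receives $\rho(f,\gamma)$ and the $G$-factor receives $\tau(g,\id_{s(\gamma)})$, which is precisely \erf{eq:companafunctorsaction}; part (b) is the same computation with $\lambda=\id$. Your observation that well-definedness under the equivalence relations is exactly conditions (i) and (ii) of Definition \ref{def:actionanafunctor} is the right justification for the representative-chasing.
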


\setsecnumdepth{1}

\section{Constructing Equivalences between 2-Stacks}

\label{app:bicategories}

Let $\mathcal{C}$ be a bicategory  (we assume that associators and unifiers are  \emph{invertible} 2-morphisms). We fix the following terminology: a 1-\emph{iso}morphism $f: X_1 \to X_2$ in $\mathcal{C}$ always includes the data of an inverse 1-morphism $\bar f\maps X_2 \to X_1$ and of 2-isomorphisms $i: \bar f \circ f \Rightarrow \id$ and $j: \id \Rightarrow f \circ \bar f$ satisfying the zigzag identities. Let $\mathcal{D}$ be another bicategory. A 2-functor $F:\mathcal{C} \to \mathcal{D}$ is assumed to have \emph{invertible} compositors and unitors. 

The following lemma is  certainly \quot{well-known}, although we have not been able to find a reference for exactly this statement. 

\begin{lemma}
\label{lem:equivalence}
Let $F: \mathcal{C} \to \mathcal{D}$ be a 2-functor  that is fully faithful on Hom-categories. 
 Suppose one has chosen:
\begin{enumerate}

\item 
for every object $Y \in \mathcal{D}$ an object $G_Y \in \mathcal{C}$ and   a 1-isomorphism $\xi_Y: Y \to F(G_Y)$.

\item
for all objects $X_1,X_2\in \mathcal{C}$ and all 1-morphisms $g:F(X_1) \to F(X_2)$, a 1-morphism $G_g: X_1 \to X_2$ in $\mathcal{C}$ together with a 2-isomorphism $\eta_g: g \Rightarrow F(G_g)$. \footnote{More accurately we should write $G_{X_1,X_2,g}$ and $\eta_{X_1,X_2,g}$, but we will suppress $X_1$ and $X_2$ in the notation.} 
\end{enumerate}
Then, there is a  2-functor $G:\mathcal{D} \to \mathcal{C}$ and pseudonatural equivalences
\begin{equation*}
a: \id_{\mathcal{D}} \Rightarrow F \circ G
\quand
b:G \circ F \Rightarrow \id_{\mathcal{C}}\text{.}
\end{equation*}
In particular, $F$ is an equivalence of bicategories.
\end{lemma}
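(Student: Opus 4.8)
The statement is a standard "fully faithful plus essentially surjective data $\Rightarrow$ equivalence" result in the bicategorical setting, and the plan is to build the 2-functor $G$ and the two pseudonatural equivalences $a,b$ directly from the chosen data, using the full faithfulness of $F$ on Hom-categories to transport all coherence conditions from $\mathcal{D}$ back to $\mathcal{C}$. First I would define $G$ on objects by $G(Y) := G_Y$, and on a 1-morphism $f\colon Y_1 \to Y_2$ in $\mathcal{D}$ by $G(f) := G_{g}$, where $g\colon F(G_{Y_1}) \to F(G_{Y_2})$ is the composite $\xi_{Y_2} \circ f \circ \bar\xi_{Y_1}$ (using the chosen 1-isomorphisms $\xi$ and their specified inverses). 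On a 2-morphism $\varphi\colon f \Rightarrow f'$, I would form the whiskered 2-morphism $\xi_{Y_2} \circ \varphi \circ \bar\xi_{Y_1}\colon g \Rightarrow g'$, conjugate by the 2-isomorphisms $\eta_g, \eta_{g'}$ to land in $F(G_g) \Rightarrow F(G_{g'})$, and then use that $F$ is \emph{full and faithful on Hom-categories} to obtain a unique 2-morphism $G(\varphi)\colon G(f) \Rightarrow G(f')$ mapping to it. Faithfulness immediately gives functoriality of $G$ on each Hom-category (identities and vertical composites are detected after applying $F$).

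The next step is to construct the compositors and unitors of $G$. For $f_2\colon Y_2 \to Y_3$ and $f_1\colon Y_1 \to Y_2$ one has $G(f_2) \circ G(f_1) = G_{g_2} \circ G_{g_1}$ and $G(f_2 \circ f_1) = G_{g_{21}}$ with $g_{21}$ built from $\xi_{Y_3} \circ f_2 \circ f_1 \circ \bar\xi_{Y_1}$; applying $F$ and using the compositor of $F$, the 2-isomorphisms $\eta$, the associator of $\mathcal{D}$, and the zigzag identities for $\bar\xi_{Y_2}\circ\xi_{Y_2}$, one produces a canonical 2-isomorphism $F(G(f_2)\circ G(f_1)) \Rightarrow F(G(f_2\circ f_1))$, and full faithfulness of $F$ yields the compositor $\gamma_{f_1,f_2}$ of $G$. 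The unitor is obtained the same way from the unitor of $F$ and the 2-isomorphism $\eta_{\id}$. The pentagon and triangle axioms for $G$ then hold because their images under $F$ hold (the pentagon/triangle of $\mathcal{D}$, naturality of the compositor of $F$, and the pentagon/triangle of $F$ combine), and $F$ reflects equalities of 2-morphisms by faithfulness — this is the recurring mechanism throughout.

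Then I would build the pseudonatural equivalences. For $a\colon \id_{\mathcal{D}} \Rightarrow F\circ G$, the component at $Y$ is $a_Y := \xi_Y\colon Y \to F(G_Y) = (F\circ G)(Y)$, which is a 1-isomorphism by hypothesis; the naturality 2-isomorphism $a_f\colon F(G(f)) \circ \xi_{Y_1} \Rightarrow \xi_{Y_2} \circ f$ for $f\colon Y_1 \to Y_2$ is assembled from $\eta_g^{-1}$ (relating $F(G_g)$ to $g = \xi_{Y_2}\circ f \circ \bar\xi_{Y_1}$), the zigzag 2-isomorphism for $\xi_{Y_1}$, and associators/unitors of $\mathcal{D}$. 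For $b\colon G\circ F \Rightarrow \id_{\mathcal{C}}$, the component at $X$ is obtained by applying the chosen data (2) to the 1-morphism $\id_{F(X)}\colon F(X)\to F(X)$: set $b_X := \overline{G_{\id_{F(X)}}}$ (using that $G(F(X)) = G_{F(X)}$ and observing via full faithfulness that $G_{\id_{F(X)}}$ is a 1-isomorphism, since its image $F(G_{\id_{F(X)}})$ is 2-isomorphic to $\id_{F(X)}$ hence invertible, and $F$ reflects invertibility of 1-morphisms up to the full-faithfulness on Homs plus a standard argument). The naturality square for $b$ and the axioms of a pseudonatural transformation (respect for composition and identities) are again checked by pushing everything through $F$ and invoking faithfulness. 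Finally, $a$ and $b$ are pseudonatural \emph{equivalences} because each of their components is a 1-isomorphism by construction, which is exactly what is needed to conclude that $F$ is an equivalence of bicategories.

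\textbf{Main obstacle.} The genuine difficulty is purely bookkeeping: there are no hard ideas, but verifying the pentagon/triangle axioms for $G$ and the three coherence axioms for the pseudonatural transformations $a$ and $b$ requires chasing sizeable pasting diagrams involving the compositors/unitors of $F$, the 2-isomorphisms $\xi, \eta$, and the associators/unitors of both bicategories. The key organizing principle that tames this — and the one I would emphasize in the write-up — is that every such axiom is an equation between 2-morphisms in $\mathcal{C}$ whose image under $F$ is already known to hold (being a consequence of axioms in $\mathcal{D}$ together with the 2-functor axioms for $F$), so faithfulness of $F$ on Hom-categories closes each case without an explicit diagram chase. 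I would therefore state the construction carefully and then remark that all coherence verifications reduce, via this reflection principle, to coherence already available in $\mathcal{D}$ and in the 2-functor $F$.
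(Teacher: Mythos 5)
Your construction of $G$ (on objects via $G_Y$, on 1-morphisms via the conjugate $\tilde g = (\xi_{Y_2}\circ g)\circ\bar\xi_{Y_1}$, on 2-morphisms via unique lifts under the fully faithful $F$), your compositors/unitors obtained as unique preimages, your transformation $a$ with component $\xi_Y$, and your "reflection principle" for discharging all coherence axioms are exactly the paper's proof. However, there is one concrete error: your component of $b$ does not typecheck. You need a 1-morphism $b_X\colon G(F(X))=G_{F(X)}\to X$, but you apply the choice (2) to $\id_{F(X)}\colon F(X)\to F(X)$, which forces $X_1=X_2=X$ and produces $G_{\id_{F(X)}}\colon X\to X$; neither it nor its inverse has source $G_{F(X)}$. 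The correct input to (2) is the chosen inverse $\bar\xi_{F(X)}\colon F(G_{F(X)})\to F(X)$, giving $b(X):=G_{\bar\xi_{F(X)}}\colon G_{F(X)}\to X$ (with pseudoinverse $G_{\xi_{F(X)}}$), which is what the paper does. Once this is fixed, your invertibility argument via reflection of equivalences is not even needed: the inverse component is exhibited explicitly, and the naturality 2-cells of $b$ are again unique preimages under $F$ of pasting composites built from $\eta$, the compositors of $F$, and the zigzag cells — precisely your mechanism. So the proposal is the paper's argument up to this repairable but genuine mis-definition.
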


Proof. 
We recall  our convention concerning 1-isomorphisms: the 1-isomorphisms $\xi_Y$ include choices of inverse 1-morphisms $\bar\xi_Y$  together with 2-isomorphisms $i_Y: \bar \xi_Y \circ \xi_Y \Rightarrow \id$ and $j_Y: \id \Rightarrow \xi_Y \circ \bar \xi_Y$ satisfying the zigzag identities. 

First we explicitly construct the 2-functor $G$. 
On objects, we put $G(Y) := G_Y$.  We use the notation $\tilde g := (\xi_{Y_2} \circ g) \circ \bar\xi_{Y_1}$ for all 1-morphisms $g: Y_1 \to Y_2$ in $\mathcal{D}$, and define $G(g) = G_{\tilde g}$.   If $g,g': Y_1 \to Y_2$ are 1-morphisms, and $\psi:g \Rightarrow g'$ is a 2-morphism, we consider the 2-morphism $\tilde \psi$ defined by
\begin{equation*}
\alxydim{@C=1.5cm}{F(G_{\tilde g}) \ar@{=>}[r]^-{\eta_{\tilde g}^{-1}} & (\xi_{Y_2} \circ g) \circ \bar\xi_{Y_1} \ar@{=>}[r]^-{(\id \circ \psi) \circ \id} & (\xi_{Y_2} \circ g') \circ \bar\xi_{Y_1} \ar@{=>}[r]^-{\eta_{\tilde g'}} & F(G_{\tilde g'})\text{.} }
\end{equation*}
Since $F$ is fully faithful on 2-morphisms, we may choose the unique 2-morphism $G(\psi): G(g) \Rightarrow G(g')$ such that $F(G(\psi))=\tilde \psi$. In order to define the compositor of $G$ we look at 1-morphisms $g_{12}:Y_1 \to Y_2$ and $g_{23}: Y_2 \to Y_3$. We consider the 2-morphism
\begin{equation*}
\alxydim{@C=1.2cm@R=0.6cm}{F(G(g_{23}) \circ G(g_{12})) \ar@{=>}[r]^-{c^{-1}_{G(g_{12}),G(g_{23})}} & F(G_{\tilde g_{23}}) \circ F(G_{\tilde g_{12}}) \ar@{=>}[d]^{\eta_{\tilde g_{23}}^{-1} \circ \eta_{\tilde g_{12}}^{-1}} \\ & ((\xi_{Y_3} \circ g_{23}) \circ \bar\xi_{Y_2}) \circ ((\xi_{Y_2} \circ g_{12}) \circ \bar\xi_{Y_1}) \ar@{=>}[d]^{a,i_{Y_2}} \\ & (\xi_{Y_3} \circ (g_{23} \circ g_{12})) \circ \bar\xi_{Y_1} \ar@{=>}[r]_-{\eta_{\widetilde {g_{23} \circ g_{12}}}} &  F(G(g_{23} \circ g_{12}))\text{;}}
\end{equation*}
its unique preimage under the 2-functor $F$ is the compositor 
\begin{equation*}
c_{g_{12},g_{23}}:G(g_{23}) \circ G(g_{12}) \Rightarrow G(g_{23} \circ g_{12})\text{.}
\end{equation*}
In order to define the unitor of $G$ we consider an object $Y \in \mathcal{D}$ and look at the 2-morphism
\begin{equation*}
\alxydim{@C=1.6cm}{F(G(\id_Y))  \ar@{=>}[r]^-{\eta^{-1}_{\widetilde{\id_Y}}} & (\xi_{Y} \circ \id_Y) \circ \bar\xi_{Y} \ar@{=>}[r]^-{l_{\xi_{Y}},j_Y^{-1}} &   \id_{F(G(Y))} \ar@{=>}[r]^-{u_{G(Y)}^{-1}} &  F(\id_{G(Y)})\text{.}}
\end{equation*}
Its unique preimage under the 2-functor $F$ is the unitor
$u_Y: G(\id_Y) \Rightarrow \id_{G(Y)}$.
The second step is to verify the axioms of a 2-functor. This is simple but extremely tedious and can only be left as an exercise. 
The third step is to  construct the pseudonatural transformation \begin{equation*}
a: \id_{\mathcal{D}} \Rightarrow F \circ G\text{.}
\end{equation*}
Its component at an object $Y$ in $\mathcal{D}$ is the 1-morphism $a(Y) := \xi_Y: Y \to F(G(Y))$. Its component at a 1-morphism $g:Y_1 \to Y_2$ is the 2-morphism $a(g)$ defined by
\begin{equation*}
\alxydim{@R=0.6cm}{a(Y_2) \circ g \ar@{=}[r] & \xi_{Y_2} \circ g \ar@{=>}[d]^{\id \circ l^{-1}_{\xi_{Y_2} \circ g}} \\ & (\xi_{Y_2} \circ g) \circ \id  \ar@{=>}[d]^{a,i^{-1}_{Y_2}}  \\ & ((\xi_{Y_2} \circ g) \circ \bar\xi_{Y_1}) \circ \xi_{Y_1} \ar@{=>}[d]^{\eta_{\tilde g} \circ \id} \\ & F(G_{\tilde g}) \circ \xi_{Y_1} \ar@{=}[r] & F(G(g)) \circ a(Y_1)\text{.}}
\end{equation*}
There are two axioms a pseudonatural transformation has to satisfy, and their proofs are again left as an exercise.
It is easy to see that $a$ is a pseudonatural \emph{equivalence}, with an inverse transformation given by $\bar a(Y) := \bar\xi_Y$.
The fourth and last step is to construct the pseudonatural transformation 
\begin{equation*}
b: G \circ F \Rightarrow \id_{\mathcal{C}}\text{.}
\end{equation*}
Its component at an object $X$ is $b(X) := G_{\bar\xi_{F(X)}}: G(F(X)) \to X$. Its component at a 1-morphism $f\maps X_2 \to X_2$ is the  2-morphism 
\begin{equation*}
b(f): b(X_2) \circ G(F(f)) \Rightarrow f \circ b(X_1)
\end{equation*}
given as the unique preimage under $F$ of the 2-morphism
\begin{equation*}
\alxydim{@R=0.6cm}{F(b(X_2) \circ G(F(f))) \ar@{=>}[r]^-{c^{-1}} & F(b(X_2)) \circ F(G(F(f))) \ar@{=>}[d]_{\eta^{-1}_{\bar\xi_{F(X_2)}} \circ \eta^{-1}_{F(f)}} \\ & \bar\xi_{F(X_2)} \circ ((\xi_{F(X_2)} \circ F(f)) \circ \xi_{F(X_1)}) \ar@{=>}[d]_{a,i_{F(X_2)},r}  \\ & F(f) \circ \bar\xi_{F(X_1)} \ar@{=>}[d]_{\id_{F(f)} \circ \eta_{\bar\xi_{F(X_1)}}} \\ & F(f) \circ F(b(X_1)) \ar@{=>}[r]_-{c} & F(f \circ b(X_1))\text{.}}
\end{equation*}
The proofs of the axioms are again left for the reader, and again it is easy to see that $b$ is a pseudonatural \emph{equivalence} with an inverse transformation given by $\bar b(X) := G_{\xi_{F(X)}}$.
\strut\hfill$\square$

As a consequence of Lemma \ref{lem:equivalence} we obtain the certainly well-known result:

\begin{corollary}
Let $F: \mathcal{C} \to \mathcal{D}$ be essentially surjective, and an equivalence on all Hom-categories. Then, $F$ is an equivalence of bicategories.  
\end{corollary}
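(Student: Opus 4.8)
The statement to prove is the final Corollary: if $F:\mathcal{C}\to\mathcal{D}$ is essentially surjective and an equivalence on all Hom-categories, then $F$ is an equivalence of bicategories. The strategy is simply to verify that $F$ satisfies the hypotheses of Lemma~\ref{lem:equivalence} and then invoke that lemma. Lemma~\ref{lem:equivalence} requires two things: that $F$ be fully faithful on Hom-categories, and that one can make the two families of choices labelled (1) and (2) in its statement. The first requirement is immediate, since being an equivalence on Hom-categories is in particular being fully faithful on them. So the real content is to produce the choices (1) and (2) out of the two hypotheses.

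For choice (1): given an object $Y\in\mathcal{D}$, essential surjectivity of $F$ provides an object $G_Y\in\mathcal{C}$ together with a $1$-isomorphism $\xi_Y:Y\to F(G_Y)$ in $\mathcal{D}$; here I should recall the convention fixed at the start of Appendix~\ref{app:bicategories} that a $1$-isomorphism carries with it a chosen inverse $\bar\xi_Y$ and chosen invertible $2$-morphisms $i_Y,j_Y$ satisfying the zigzag identities, so that these data are part of the choice. For choice (2): given objects $X_1,X_2\in\mathcal{C}$ and a $1$-morphism $g:F(X_1)\to F(X_2)$ in $\mathcal{D}$, the hypothesis that the functor $F:\hom{X_1}{X_2}\to\hom{F(X_1)}{F(X_2)}$ is an equivalence of categories means in particular that it is essentially surjective; hence there exist a $1$-morphism $G_g:X_1\to X_2$ in $\mathcal{C}$ and a $2$-isomorphism $\eta_g:g\Rightarrow F(G_g)$ in $\mathcal{D}$. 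Choosing one such pair for every $(X_1,X_2,g)$ (invoking the axiom of choice) produces exactly the data required. With (1) and (2) in hand, Lemma~\ref{lem:equivalence} yields a $2$-functor $G:\mathcal{D}\to\mathcal{C}$ together with pseudonatural equivalences $a:\id_\mathcal{D}\Rightarrow F\circ G$ and $b:G\circ F\Rightarrow\id_\mathcal{C}$, which is precisely the assertion that $F$ is an equivalence of bicategories.

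There is essentially no obstacle here: the corollary is a pure bookkeeping consequence of the lemma, and the only point worth a sentence is to observe that "equivalence on Hom-categories'' is strictly more than what Lemma~\ref{lem:equivalence} needs on the Hom-categories (it needs only fully faithfulness there), the surplus surjectivity being exactly what feeds into choice (2). I would therefore write the proof in a few lines: state that $F$ is fully faithful on Hom-categories by hypothesis; use essential surjectivity of $F$ to pick the $G_Y$ and $\xi_Y$; use essential surjectivity of each $F:\hom{X_1}{X_2}\to\hom{F(X_1)}{F(X_2)}$ to pick the $G_g$ and $\eta_g$; and conclude by Lemma~\ref{lem:equivalence}. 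If anything deserves care it is merely making explicit that the chosen $\xi_Y$ and $\eta_g$ are genuinely $1$- and $2$-\emph{iso}morphisms (so that they fit the hypotheses of the lemma verbatim), which follows because equivalences of categories reflect and create isomorphisms and because $\mathcal{D}$'s $1$-isomorphisms are understood with their adjoint-equivalence data as per the Appendix's convention.
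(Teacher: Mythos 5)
Your proposal is correct and is exactly the argument the paper intends: the corollary is stated as an immediate consequence of Lemma \ref{lem:equivalence}, with essential surjectivity supplying the choices (1), essential surjectivity on Hom-categories supplying the choices (2), and full faithfulness on Hom-categories being the remaining hypothesis. Your remark that the chosen $\xi_Y$ can be equipped with the adjoint-equivalence data required by the paper's convention is the only point needing a word, and you handle it correctly.
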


Since we work with 2-stacks over manifolds, we need the following \quot{stacky} extension of Lemma \ref{lem:equivalence}.
For a pre-2-stack $\mathcal{C}$, we denote by $\mathcal{C}_M$ the 2-category it associates to a smooth manifold $M$, and by $\psi^{*}: \mathcal{C}_N \to \mathcal{C}_M$ the 2-functor it associates to a smooth map $\psi: M \to N$. The pseudonatural equivalences $\psi^{*} \circ \varphi^{*} \cong (\varphi\circ \psi)^{*}$ will be suppressed from the notation in the following.
If $\mathcal{C}$ and $\mathcal{D}$ are pre-2-stacks, a \emph{ 1-morphism} $F: \mathcal{C} \to \mathcal{D}$ associates 2-functors $F_M: \mathcal{C}_M \to \mathcal{D}_M$ to a smooth manifold $M$,  pseudonatural equivalences
\begin{equation*}
F_{\psi}: \psi^{*} \circ F_N \to F_M \circ \psi^{*}
\end{equation*}
to smooth maps $\psi:M \to N$,
and certain modifications $F_{\psi,\varphi}$ that control the relation between $F_{\psi}$ and $F_{\varphi}$ for composable maps $\psi$ and $\varphi$. 

\begin{lemma}
\label{lem:stackequivalence}
Suppose $\mathcal{C}$ and $\mathcal{D}$ are pre-2-stacks over smooth manifolds, and $F: \mathcal{C} \to \mathcal{D}$ is a  1-morphism. Suppose
that for every smooth manifold $M$
\begin{enumerate}
\item 
the assumptions of Lemma \ref{lem:equivalence} for the 2-functor $F_{M}$ are satisfied and 

\item
the  data $(G_Y,\xi_Y)$ and $(G_g,\eta_g)$ is chosen for all objects $Y$ and 1-morphisms $g$  in $\mathcal{D}_M$. 
\end{enumerate}
Then, there is a  1-morphism 
$G:\mathcal{D} \to \mathcal{C}$
of pre-2-stacks together with 2-isomorphisms
\begin{equation*}
a:F \circ G \Rightarrow \id_{\mathcal{D}}
\quand
b:G \circ F \Rightarrow \id_{\mathcal{C}}
\end{equation*}
such that
for every smooth manifold $M$ the 2-functor $G_M$ and the pseudonatural transformations $a_M$ and $b_M$ are the ones of Lemma  \ref{lem:equivalence}. In particular, $F$ is an equivalence of  pre-2-stacks. 
\end{lemma}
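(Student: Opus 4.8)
The plan is to bootstrap the statement from Lemma~\ref{lem:equivalence} applied levelwise, and then to upgrade the levelwise data to data of pre-2-stacks by a diagram chase using the coherence modifications $F_{\psi,\varphi}$ that are part of a 1-morphism of pre-2-stacks. First I would fix, for each smooth manifold $M$, the choices $(G_Y,\xi_Y)$ and $(G_g,\eta_g)$ supplied by hypothesis (2), and invoke Lemma~\ref{lem:equivalence} to obtain a 2-functor $G_M:\mathcal{D}_M \to \mathcal{C}_M$ together with pseudonatural equivalences $a_M:\id_{\mathcal{D}_M} \Rightarrow F_M\circ G_M$ and $b_M:G_M\circ F_M\Rightarrow \id_{\mathcal{C}_M}$ (I would use the inverse of the $a$ constructed there, so that $a_M$ points as in the statement, i.e.\ $F\circ G \Rightarrow \id$; this only swaps $a_M$ with its chosen inverse $\bar a_M$ and changes nothing essential). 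This gives the objectwise part of everything; what remains is to produce, for each smooth map $\psi:M\to N$, a pseudonatural equivalence $G_\psi:\psi^{*}\circ G_N \to G_M\circ \psi^{*}$, and the higher modifications $G_{\psi,\varphi}$, and then to check that $a$ and $b$ assemble into 2-isomorphisms of pre-2-stacks.

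To construct $G_\psi$ I would use that $F_N$ and $F_M$ are fully faithful on Hom-categories (hypothesis (1) via Lemma~\ref{lem:equivalence}) together with the equivalences $a_N,a_M$ and the coherence equivalence $F_\psi:\psi^{*}\circ F_N \to F_M\circ\psi^{*}$ of the given 1-morphism $F$. Concretely, for an object $Y\in\mathcal{D}_N$ the composite equivalence
\begin{equation*}
\psi^{*}G_N(Y) \xrightarrow{\ \psi^{*}a_N(Y)\ } \psi^{*}F_N(G_N(Y)) \xrightarrow{\ F_\psi(G_N(Y))\ } F_M(\psi^{*}G_N(Y))
\end{equation*}
together with $a_M(\psi^{*}Y): \psi^{*}Y \to F_M(G_M(\psi^{*}Y))$ determines, after applying the chosen preimage construction $G_{(-)}$ and $\eta_{(-)}$ from hypothesis (2) inside $\mathcal{C}_M$, a 1-isomorphism $G_\psi(Y): \psi^{*}G_N(Y)\to G_M(\psi^{*}Y)$; on 1-morphisms $g$ of $\mathcal{D}_N$ the 2-isomorphism $G_\psi(g)$ is the unique 2-morphism whose image under $F_M$ is dictated by $a_N(g)$, $F_\psi(g)$, $a_M(\psi^{*}g)$ and the compositors. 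The modifications $G_{\psi,\varphi}$ are then the unique 2-morphisms determined through $F_M$ by the pasting of $F_{\psi,\varphi}$ with $a_N,a_M$ and the already-constructed $G_\psi,G_\varphi$; their single coherence axiom (the analogue of \erf{eq:compthree}) reduces, after applying the faithful $F_M$, to the corresponding coherence axiom for $F_{\psi,\varphi}$, which holds because $F$ is a 1-morphism of pre-2-stacks. Finally, the componentwise naturality squares for $a$ and $b$ in the stack direction --- i.e.\ the modifications relating $a_N,a_M$ via $F_\psi,G_\psi$, and likewise for $b$ --- are again defined as unique preimages under the faithful 2-functors and their axioms follow by the same diagram-chase pattern.

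I expect the main obstacle to be bookkeeping rather than conceptual: verifying that the $G_\psi$ so defined really is pseudonatural, that the $G_{\psi,\varphi}$ satisfy the pentagon-type coherence \erf{eq:compthree}, and that $a$ and $b$ are genuinely 2-isomorphisms of pre-2-stacks, all require pasting diagrams in bicategories of 2-functors that are large but mechanical. The key structural point that makes every one of these verifications go through is fullness and faithfulness of $F_M$ on Hom-categories: it lets us \emph{define} each higher cell of $G$ as the unique preimage under $F_M$ of an explicit pasting built from the corresponding cell of $F$ and the equivalences $a$, and then every coherence law for $G$ follows by applying the faithful $F_M$ and invoking the already-known coherence law for $F$. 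So, as in the proof of Lemma~\ref{lem:equivalence} itself, I would state the constructions explicitly and leave the (entirely routine) diagram chases to the reader, noting only that they are forced.
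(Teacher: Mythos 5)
Your proposal follows essentially the same route as the paper's (omitted) proof: define $G_\psi(Y)$ by applying the chosen preimage construction $G_{(-)}$, $\eta_{(-)}$ to the composite 1-morphism $F_M(G_M(\psi^{*}Y)) \to \psi^{*}Y \to \psi^{*}F_N(G_N(Y)) \to F_M(\psi^{*}G_N(Y))$ built from $\xi$, $\bar\xi$ and $F_\psi$, define all higher cells of $G$ (and the modifications witnessing naturality of $a$ and $b$) as unique preimages under the locally fully faithful $F_M$, and deduce each coherence law for $G$ from the corresponding one for $F$. The only slip is in your displayed composite, where the source of $\psi^{*}a_N(Y)$ should be $\psi^{*}Y$ rather than $\psi^{*}G_N(Y)$; the intended construction is nevertheless clear and agrees with the paper's.
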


For the proof one constructs the required pseudonatural equivalences $G_{\psi}$ and the modifications $G_{\psi,\varphi}$ from the given ones, $F_{\psi}$ and $F_{\psi,\varphi}$, respectively, in a similar way as explained in the proof of Lemma \ref{lem:equivalence}. Since these constructions are straightforward to do but would consume many pages,  and the statement of the lemma is not too surprising and certainly  well-known to many people, we   leave these constructions for the interested reader.

\end{appendix}

\kobib{.}

\end{document}